\tikzset{
  fl/.style = {path fading=fade l},
  fr/.style = {path fading=fade r},
  wh/.style = {draw=none,fill=none},
  Gc/.style = {draw=none,circle split, inner sep=0pt,minimum size=8pt,rotate=90,path picture={\draw[pattern=#1] (0,0.07) circle (1.5pt); }},
  Gd/.style = {draw=none,circle split, inner sep=0pt,minimum size=8pt,rotate=270,path picture={\draw[pattern=#1] (0,0.07) circle (1.5pt); }},
  Gf/.style={draw=none,circle split, inner sep=1pt,minimum size=8pt,rotate=90},
  G/.style={circle,draw,minimum size=8pt,inner sep=1pt,font=\tiny},
  xx/.style={circle,fill,draw,inner sep=0pt,minimum size=3pt},
  ab/.style={circle,fill,draw=none,inner sep=0pt,minimum size=0pt,as=},
  Gend/.style={inner sep=0pt,minimum size=3pt},
  r/.style={draw=red},
  o/.style={draw=black,fill=white},
  lb/.style args={#1}{label=below:#1},
  lt/.style args={#1}{label=above:#1},
  lr/.style args={#1}{label=right:#1},
  ll/.style args={#1}{label=left:#1},
  g/.style={postaction={decorate,decoration={
        markings,
        mark=at position .7 with {\arrow[#1]{Stealth[sep=-3pt]}}
      }}},
  s/.style={densely dashed,postaction={decorate,decoration={
        markings,
        mark=at position .7 with {\arrow[#1]{Stealth[sep=-3pt]}}
      }}},
  R/.style ={draw=lightgray, thick,densely dotted,edge node={node[above=-7pt] {\color{gray} $R$ }},anchor=south,pos=0.5,postaction={decoration={
        markings,
        mark=at position .7 with {\arrow[#1]{Stealth[sep=-3pt]}}
      },decorate}},
we/.style args ={#1}{draw=lightgray, thick,densely dotted,edge node={node[above=-7pt] {\color{gray} #1 }},anchor=south,pos=0.5},
IEg/.style ={draw=lightgray, thick,densely dotted},
  S/.style ={draw=lightgray, thick,densely dotted,edge node={node[above=-7pt] {\color{gray}$S$}},anchor=south,pos=0.5},  
  T/.style ={draw=lightgray, thick,densely dotted,edge node={node[above=-7pt] {\color{gray}$T$}},anchor=south,pos=0.5,postaction={decoration={
        markings,
        mark=at position .7 with {\arrow[#1]{Stealth[sep=-3pt]}}
      },decorate}},
  rpd/.style ={draw=lightgray,thick,densely dotted,edge node={node[above=-7pt] {\color{red}$\partial$}},anchor=south,pos=0.5,postaction={decoration={
        markings,
        mark=at position .7 with {\arrow[#1]{Stealth[sep=-3pt]}}
      },decorate}},
  pd/.style ={draw=lightgray,thick,densely dotted,edge node={node[above=-7pt] {\color{gray}$\partial$}},anchor=south,pos=0.5,postaction={decoration={
        markings,
        mark=at position .7 with {\arrow[#1]{Stealth[sep=-3pt]}}
      },decorate}},
  pdr/.style ={g,draw=lightgray,thick,densely dotted,edge node={node[above=-7pt] {\color{red}$\partial$}},anchor=south,pos=0.5},
  pdkr/.style args={#1}{g,draw=lightgray,thick,densely dotted,edge node={node[above=-7pt] { \color{gray}$\partial^{#1}$\color{red}$\partial$}},anchor=south,pos=0.5},
  pdh/.style ={g,draw=lightgray,thick,densely dotted,edge node={node[above=-7pt] {\color{gray}$\widehat\partial$}},anchor=south,pos=0.5},
  pdk/.style args={#1}{g,draw=lightgray,thick,densely dotted,edge node={node[above=-7pt] {\color{gray}$\partial^{#1}$}},anchor=south,pos=0.5},
  eq/.style = {double,postaction={decoration={name=none}}},
  inl/.style args={#1}{initial,initial where=left, initial text=#1,initial distance=10pt},
  pdn/.style args={#1}{g,draw=lightgray,thick,densely dotted,edge node={node[above=-7pt] {\color{gray}$\partial^{#1}$}},anchor=south,pos=0.5},
  inr/.style args={#1}{initial,initial where=right, initial text=#1,initial distance=10pt},
  int/.style args={#1}{initial,initial where=above, initial text=#1,initial distance=10pt},
  inb/.style args={#1}{initial,initial where=below, initial text=#1,initial distance=10pt},
  lpf/.style args={#1}{initial,initial where=left, initial text=$\vp\vf$,initial distance=10pt},
  tpf/.style args={#1}{initial,initial where=above, initial text=$\vp\vf$,initial distance=10pt},
  bpf/.style args={#1}{initial,initial where=below, initial text=$\vp\vf$,initial distance=10pt},
  rpf/.style args={#1}{initial,initial where=right, initial text=$\vp\vf$,initial distance=10pt},
  l1/.style args={#1}{initial,initial where=left, initial text=$\bm 1$,initial distance=10pt},
  t1/.style args={#1}{initial,initial where=above, initial text=$\bm 1$,initial distance=10pt},
  b1/.style args={#1}{initial,initial where=below, initial text=$\bm 1$,initial distance=10pt},
  r1/.style args={#1}{initial,initial where=right, initial text=$\bm 1$,initial distance=10pt},
  lm/.style args={#1}{initial,initial where=left, initial text=$\vm$,initial distance=10pt},
  tm/.style args={#1}{initial,initial where=above, initial text=$\vm$,initial distance=10pt},
  bm/.style args={#1}{initial,initial where=below, initial text=$\vm$,initial distance=10pt},
  rm/.style args={#1}{initial,initial where=right, initial text=$\vm$,initial distance=10pt},
  inr/.style args={#1}{initial,initial where=right, initial text=#1,initial distance=10pt},
  B/.style args={#1}{thick,draw=lightgray,decorate,decoration={snake,amplitude=.4mm,segment length=.8mm,post length=1.4mm},edge node={node[above=-7pt] {\color{gray} #1 }},anchor=south,pos=0.5},
  br/.style = {bend right},
  b0/.style = {bend left=0},
  bl/.style = {bend left},
  glb/.style = {looseness=20,in =220, out=320},
  gll/.style = {looseness=20,in =130, out=230},
  glr/.style = {looseness=20,in =310, out=50},
  glt/.style = {looseness=20,in =40, out=140},
  gm/.style={postaction={decorate,decoration={
        markings,
        mark=at position .3 with {\arrow[#1]{Diamond[open,sep=-3pt,width=5pt]}}
      }}},
}
\newcommand\sGraph[1]{
\begin{tikzpicture}[grow=right,baseline={([yshift=-2pt]current bounding box.center)},font=\footnotesize,>=Stealth]
  \graph[simple necklace layout,components go down left aligned,nodes={draw,circle,as=,minimum size=3pt,inner sep=0pt,fill},node distance = 40pt,component sep=25pt,edges=g]{ #1  };
  \end{tikzpicture}
}
\newcommand\ssGraph[1]{
\begin{tikzpicture}[subgraph text none,grow=right,baseline={([yshift=-2pt]current bounding box.center)},font=\footnotesize,>=Stealth]
  \graph[spring electrical layout,components go down left aligned,nodes={draw,circle,as=,minimum size=3pt,inner sep=0pt,fill},node distance = 15pt,component sep=15pt]{ #1  };
  \end{tikzpicture}
}
\tikzset{circle split part fill/.style  args={#1}{%
 alias=tmp@name, 
  postaction={%
    insert path={
     \pgfextra{%
     \pgfpointdiff{\pgfpointanchor{\pgf@node@name}{center}}%
                  {\pgfpointanchor{\pgf@node@name}{east}}%
     \pgfmathsetmacro\insiderad{\pgf@x}
      \fill[white,fill opacity=0] (\pgf@node@name.base) ([xshift=-\pgflinewidth]\pgf@node@name.east) arc
                          (0:180:\insiderad-\pgflinewidth)--cycle;
      \fill[fill=white,preaction={fill, white},pattern=#1] (\pgf@node@name.base) ([xshift=\pgflinewidth]\pgf@node@name.west)  arc
                           (180:360:\insiderad-\pgflinewidth)--cycle;   
      \draw[line width=0.4pt] (\pgf@node@name.base) ([xshift=\pgflinewidth]\pgf@node@name.west)  arc
                           (180:360:\insiderad-\pgflinewidth)--cycle;                                
         }}}}}  
\tikzset{my loop/.style =  {to path={
  \pgfextra{}
  [looseness=6,min distance=4mm]
  \tikz@to@curve@path},font=\sffamily\small
  }}  
\definecolor{col0}{HTML}{FFFFFF}
\definecolor{col1}{HTML}{A2B969}
\definecolor{col2}{HTML}{EBCB38}
\definecolor{col3}{HTML}{0D95BC}
\definecolor{col4}{HTML}{063951}
\definecolor{col5}{HTML}{F36F13}
\definecolor{col6}{HTML}{C13018}
\definecolor{lightgray}{HTML}{CCCCCC}
\newcommand\csum[1]{%
\sum_{\forcsvlist{\createColorCircle@item}{#1}}
}
\newcommand\createColorCircle@item[1]{
\StrDel{#1}{h}[\colNum]
\newif\ifhalf
\IfSubStr{#1}{h}{\halftrue}{\halffalse}
{\color{col\colNum}\ifhalf\circ\else\bullet\fi}
}
\newcommand\plotLambda[1]{
  \def\mgraphspecs{}
  \foreach \ll [count = \countl] in {#1} {
    \csedef{firstCol}{col0}
    \def\graphspecs{}
    \StrCount{\ll}{,}[\graphlen]
    \ifnum\graphlen>1 
    \foreach \node [count = \g] in \ll {
      \StrDel{\node}{m}[\nodenumber]
      \StrDel{\nodenumber}{.}[\nodenumber]
      \global\csedef{tempCol}{col\nodenumber}
      \global\csdef{tempPat}{\col{1}}
      \ifnum\g=1
        \global\csedef{firstCol}{\tempCol}
        \IfSubStr{\node}{.}{\global\csedef{open}{y}}{\global\csedef{open}{n}}
        \IfStrEqCase{\open}{
          {y}{\xappto\graphspecs{\countl\g[as=,draw=none] }}
          {n}{\xappto\graphspecs{\countl\g[as=,preaction={fill, white},fill=\tempCol,pattern=\csname pat\nodenumber\endcsname] }}
        }
      \else
        \IfBeginWith{\node}{m}{\global\csedef{secondArrow}{{<[sep=-3pt,length=8pt]}}}{\global\csedef{secondArrow}{}}%
        \ifnum\g=2
          \IfStrEqCase{\open}{
            {y}{\xappto\graphspecs{ --[dotted,thick] \countl\g[as=,preaction={fill, white},fill=\tempCol,pattern=\csname pat\nodenumber\endcsname] }}
            {n}{\xappto\graphspecs{ --[\firstArrow-\secondArrow] \countl\g[as=,preaction={fill, white},fill=\tempCol,pattern=\csname pat\nodenumber\endcsname] }}
          }
        \else
          \xappto\graphspecs{ --[\firstArrow-\secondArrow] \countl\g[as=,preaction={fill, white},fill=\tempCol,pattern=\csname pat\nodenumber\endcsname] }%
        \fi
      \fi
      \IfEndWith{\node}{m}{\global\csedef{firstArrow}{{>[sep=-3pt,length=8pt]}}}{\global\csedef{firstArrow}{}}
      \global\csedef{prevTempCol}{\tempCol} 
    }
    \IfStrEqCase{\open}{
      {n}{\IfBeginWith{\ll}{m}{\global\csedef{secondArrow}{{<[sep=-3pt,length=8pt]}}}{\global\csedef{secondArrow}{}}
          \xappto\graphspecs{ --[\firstArrow-\secondArrow,decorate,decoration={snake,amplitude=.3mm,segment length=.6mm}] \countl1; } }
      {y}{\xappto\graphspecs{ --[dotted,thick] \countl0[draw=none,as=] ; } }
    }
    \else
      \ifnum\graphlen=0
        \StrDel{\ll}{m}[\nodenumber]
        \csedef{tempCol}{col\nodenumber}
        \IfSubStr{\ll}{m}{\csedef{firstArrow}{{>[sep=-3pt,length=8pt]}}}{\csedef{firstArrow}{}}
        \def\graphspecs{ \countl1[as=,preaction={fill, white},fill = \tempCol,pattern=\csname pat\nodenumber\endcsname] --[my loop, decorate,decoration={snake,amplitude=.3mm,segment length=.6mm},\firstArrow-] \countl1; }
      \else
        \IfSubStr{\ll}{.}{
          \StrDel{\ll}{.}[\nodenumber]
          \StrDel{\nodenumber}{,}[\nodenumber]
          \csedef{tempCol}{col\nodenumber}
          \def\graphspecs{ \countl0[draw=none,as=,orient = left] --[dotted,thick] \countl1[as=,fill = \tempCol,pattern=\csname pat\nodenumber\endcsname] --[dotted,thick] \countl2[draw=none,as=,nudge down=10pt]; }
        }{
          \StrBefore{\ll}{,}[\nodeOne]
          \StrBehind{\ll}{,}[\nodeTwo]
          \StrDel{\nodeOne}{m}[\nodeOneNumber]
          \StrDel{\nodeTwo}{m}[\nodeTwoNumber]
          \def\tempColOne{col\nodeOneNumber}
          \def\tempColTwo{col\nodeTwoNumber}
          \IfEndWith{\nodeOne}{m}{\global\csedef{firstArrowOne}{{>[sep=-3pt,length=8pt]}}}{\global\csedef{firstArrowOne}{}}
          \IfEndWith{\nodeTwo}{m}{\global\csedef{firstArrowTwo}{{<[sep=-3pt,length=8pt]}}}{\global\csedef{firstArrowTwo}{}}
          \IfBeginWith{\nodeOne}{m}{\global\csedef{secondArrowOne}{{>[sep=-3pt,length=8pt]}}}{\global\csedef{secondArrowOne}{}}
          \IfBeginWith{\nodeTwo}{m}{\global\csedef{secondArrowTwo}{{<[sep=-3pt,length=8pt]}}}{\global\csedef{secondArrowTwo}{}}
          \def\graphspecs{ \countl1[as=,preaction={fill, white},fill = \tempColOne,pattern=\csname pat\nodeOneNumber\endcsname] --[bend right,decorate,decoration={snake,amplitude=.3mm,segment length=.6mm}, \firstArrowOne-\secondArrowTwo ] \countl2[as=,preaction={fill, white},fill = \tempColTwo,pattern=\csname pat\nodeTwoNumber\endcsname]; \countl1 --[bend left,\secondArrowOne-\firstArrowTwo] \countl2;}
        }
      \fi
    \fi
    \xappto\mgraphspecs{ \graphspecs }
  }
  \xdef\mgraphspecs{\noexpand\graph[simple necklace layout,componentwise,component packing=skyline,components go right center aligned,orient=0,nodes=G]{ \mgraphspecs }}
  \begin{tikzpicture}[baseline={([yshift=-2pt]current bounding box.center)},font=\tiny,>=Stealth, node distance = 15pt,node sep=12pt,component sep=5pt]
    \mgraphspecs;
  \end{tikzpicture}%
}
\newcommand\pB[1]{
  \StrDel{#1}{h}[\patNum]
  \IfSubStr{#1}{h}{
  \begin{tikzpicture}[baseline={([yshift=-2pt]current bounding box.center)},font=\tiny,>=Stealth, node distance = 0pt,node sep=1pt,component sep=1pt]
   \graph[simple necklace layout,componentwise,component packing=skyline,components go right center aligned,orient=0,nodes=G] { 1[Gf,rotate=270,circle split part fill={\csname pat\patNum\endcsname},as=,minimum size=6pt]; };
  \end{tikzpicture}
  }{
  \begin{tikzpicture}[baseline={([yshift=-2pt]current bounding box.center)},font=\tiny,>=Stealth, node distance = 0pt,node sep=1pt,component sep=1pt]
   \graph[simple necklace layout,componentwise,component packing=skyline,components go right center aligned,orient=0,nodes=G] { 1[pattern=\csname pat\patNum\endcsname,as=,minimum size=6pt]; };
  \end{tikzpicture}
  }
}
\newcommand\plotlLambda[1]{
  \def\mgraphspecs{}
  \foreach \ll [count = \countl] in {#1} {
    \csedef{firstCol}{col0}
    \csedef{openEnd}{no}
    \def\graphspecs{}
    \StrCount{\ll}{,}[\graphlen]
    \foreach \node [count = \g] in \ll {
      \StrDel{\node}{m}[\nodenumber]
      \csedef{type}{n}
      \csedef{marking}{no}
      \IfSubStr{\node}{x}{\csedef{type}{x}}{}
      \IfSubStr{\node}{c}{\csedef{type}{c}}{}
      \IfSubStr{\node}{d}{\csedef{type}{d}}{}
      \IfSubStr{\node}{.}{\csedef{type}{open}}{}
      \IfBeginWith{\node}{m}{\csedef{firstMarking}{yes}\csedef{marking}{yes}}{\csedef{firstMarking}{no}}
      \IfEndWith{\node}{m}{\csedef{secondMarking}{yes}\csedef{marking}{yes}}{\csedef{secondMarking}{no}}
      \StrDel{\nodenumber}{.}[\nodenumber]
      \StrDel{\nodenumber}{c}[\nodenumber]
      \StrDel{\nodenumber}{d}[\nodenumber]
      \StrDel{\nodenumber}{x}[\nodenumber]
      \global\csedef{tempCol}{col\nodenumber}
      \ifnum\g=1
        \global\csedef{firstCol}{\tempCol}
        \IfStrEq{\type}{open}{
          \xappto\graphspecs{ \countl0[xx,as=,grow right,draw=none,fill=white] --[dotted,thick] \countl1[fill=white,preaction={fill, white},pattern=\csname pat\nodenumber\endcsname,as=]}
        }{
        \IfSubStr{\node}{x}{
          \xappto\graphspecs{ \countl0[xx,as=,grow right,fill=lightgray,draw=none] --[color=lightgray]\countl1[Gf,circle split part fill={\csname pat\nodenumber\endcsname},as=]}}
        {
          \IfStrEq{\marking}{yes}
          {
            \xappto\graphspecs{ \countl0[xx,as=,grow right] --[-{<[fill=lightgray,color=black,sep=-3pt,length=8pt]}] \countl1[fill=white,preaction={fill, white},pattern=\csname pat\nodenumber\endcsname,as=]}
          }{
            \xappto\graphspecs{ \countl0[xx,as=,grow right] -- \countl1[fill=white,preaction={fill, white},pattern=\csname pat\nodenumber\endcsname,as=]}
          }
        }}
      \else
        \IfStrEq{\firstMarking}{yes}{\csedef{secondArrow}{{<[sep=-3pt,length=8pt]}}}{\csedef{secondArrow}{}}
        \IfStrEqCase{\type}{
          {n}{\xappto\graphspecs{ --[\firstArrow-\secondArrow] \countl\g[fill=white,preaction={fill, white},pattern=\csname pat\nodenumber\endcsname,as=] }}%
          {c}{\xappto\graphspecs{ --[\firstArrow-\secondArrow] \countl\g[Gc=\csname pat\nodenumber\endcsname,rotate=180,circle split part fill={\csname pat\nodenumber\endcsname},as=] }}%
          {d}{\xappto\graphspecs{ --[\firstArrow-\secondArrow] \countl\g[Gd=\csname pat\nodenumber\endcsname,rotate=180,circle split part fill={\csname pat\nodenumber\endcsname},as=] }}%
          {open}{\global\csedef{openEnd}{yes}}%
        }
      \fi
      \IfStrEq{\secondMarking}{yes}{
        \global\csedef{firstArrow}{{>[sep=-3pt,length=8pt]}}%
      }{
        \global\csedef{firstArrow}{}%
      }
      \global\csedef{prevTempCol}{\tempCol} 
    }
    \IfStrEqCase{\openEnd}{
      {yes}{\xappto\mgraphspecs{ \graphspecs --[dotted,thick] \countl17[Gend,as=,draw=none]; }}%
      {no}{\xappto\mgraphspecs{ \graphspecs -- \countl17[Gend,as=]; }}%
    }%
  }
  \xdef\mgraphspecs{\noexpand\graph[tree layout,componentwise,component packing=skyline,components go down left aligned,nodes=G]{ \mgraphspecs }}
  \begin{tikzpicture}[grow=right,baseline={([yshift=-2pt]current bounding box.center)},font=\tiny,>=Stealth, node distance = 5pt,node sep=12pt,component sep=5pt]
  \mgraphspecs;
  \end{tikzpicture}%
}
\definecolor{intcolor}{HTML}{CA0020}
\definecolor{extcolor}{HTML}{0571B0}
\NewDocumentCommand{\interval}{sO{}mm}
 {
  \IfBooleanTF{#1}
   {
    \yannick_interval:NNnnn \left \right { } { #3 } { #4 }
   }
   {
    \yannick_interval:NNnnn \mathopen \mathclose { #2 } { #3 } { #4 }
   }
 }
\DeclareMathOperator{\diag}{diag}
\DeclareMathOperator{\Tr}{Tr}
\DeclareMathOperator{\Val}{Val}
\DeclareMathOperator{\E}{\mathbf{E}}
\DeclareMathOperator{\Prob}{\mathbf{P}}
\DeclareMathOperator{\WEst}{W-Est}
\DeclareMathOperator{\NEst}{N-Est}
\DeclareMathOperator{\Spec}{Spec}
\newcommand{\ov}{\overline}
\newcommand{\ii}{\mathrm{i}}
\renewcommand{\C}{\mathbf{C}}
\newcommand{\C}{\mathbf{C}}
\newcommand{\HC}{\mathbf{H}}
\renewcommand{\SS}{\mathcal{S}}
\newcommand{\un}{\underline}
\newcommand{\vx}{\bm{x}}
\newcommand{\vy}{\bm{y}}
\newcommand{\cN}{\mathcal{N}}
\newcommand{\wt}{\widetilde}
\newcommand{\wh}{\widehat}
\newcommand{\cE}{\mathcal{E}}
\newcommand{\R}{\mathbf{R}}
\newcommand{\N}{\mathbf{N}}
\newcommand{\Z}{\mathbf{Z}}
\newcommand{\sump}{\sideset{}{'}\sum}
\newcommand{\DD}{\mathbf{D}}
\newcommand{\cG}{\mathcal{G}}
\newcommand{\cO}{\mathcal{O}}
\newcommand{\co}{{\scriptstyle\mathcal{O}}}
\newcommand{\cB}{\mathcal{B}}
\newcommand{\dif}{\operatorname{d}\!{}}
\DeclarePairedDelimiter{\braket}{\langle}{\rangle}%
\DeclarePairedDelimiter{\abs}{\lvert}{\rvert}%
\DeclarePairedDelimiter{\norm}{\lVert}{\rVert}%
\providecommand\given{}
\newcommand\SetSymbol[1][]{\nonscript\:#1\vert\allowbreak\nonscript\:\mathopen{}}
\DeclarePairedDelimiterX{\tuple}[1](){\renewcommand\given{\SetSymbol[\delimsize]}#1}
\DeclarePairedDelimiterX{\set}[1]{\{}{\}}{\renewcommand\given{\SetSymbol[\delimsize]}#1}
\DeclarePairedDelimiterX{\Set}[1]\{\}{\renewcommand\given{\SetSymbol[\delimsize]}#1}
\DeclarePairedDelimiterXPP{\landauO}[1]{\cO}(){}{#1}
\DeclarePairedDelimiterXPP{\landauo}[1]{\co}(){}{#1}
\DeclarePairedDelimiterXPP{\landauOprec}[1]{\cO_\prec}(){}{#1}
\DeclarePairedDelimiterXPP{\Exp}[1]{\E}[]{}{\renewcommand\given{\SetSymbol[\delimsize]}#1}
\DeclareFontFamily{U}{mathx}{\hyphenchar\font45}
\DeclareFontShape{U}{mathx}{m}{n}{
      <5> <6> <7> <8> <9> <10>
      <10.95> <12> <14.4> <17.28> <20.74> <24.88>
      mathx10
      }{}
\DeclareSymbolFont{mathx}{U}{mathx}{m}{n}
\DeclareMathAccent{\widecheck}{0}{mathx}{"71}
\numberwithin{equation}{section}  
\newtheorem{theorem}{Theorem}[section]
\newtheorem{assumption}[theorem]{Assumption}
\newtheorem{lemma}[theorem]{Lemma}
\newtheorem{proposition}[theorem]{Proposition}
\newtheorem{definition}[theorem]{Definition}
\newtheorem{remark}[theorem]{Remark} 
\newtheorem{corollary}[theorem]{Corollary}
\newtheorem{convention}[theorem]{Convention}
\date{\today}
\author{Giorgio Cipolloni\(^{\dagger\ddagger}\) \and L\'aszl\'o Erd\H{o}s\(^{\dagger}\)}
\address{IST Austria, Am Campus 1, 3400 Klosterneuburg, Austria}
\author{Dominik Schr\"oder\(^{\ast}\)}
\address{Institute for Theoretical Studies, ETH Zurich, Clausiusstr.\ 47, 8092 Zurich, Switzerland}
\email{giorgio.cipolloni@ist.ac.at} 
\email{lerdos@ist.ac.at}
\email{dschroeder@ethz.ch}
\thanks{\(^\dagger\)Partially supported by ERC Advanced Grant No.~338804}
\thanks{\(^\ddagger\)This project has received funding from the European Union's Horizon 2020 research and innovation programme under the Marie Sk\l odowska-Curie Grant Agreement No. 665385.}
\thanks{\(^\ast\)Supported by Dr.\ Max R\"ossler, the Walter Haefner Foundation and the ETH Z\"urich Foundation}
\subjclass[2010]{60B20, 15B52} 
\keywords{Dyson Brownian Motion, Local Law, Girko's Formula, Linear Statistics, Central Limit Theorem}
\title[CLT for non-Hermitian random matrices]{Central limit theorem for linear eigenvalue statistics  of non-Hermitian random matrices}
\date{\today}
\begin{document}
\thispagestyle{empty}
\begin{abstract}
    We consider large non-Hermitian random matrices  \(X\) with complex, independent, identically distributed centred entries
    and show that the linear statistics of their eigenvalues are asymptotically Gaussian for test functions having \(2+\epsilon\) derivatives. Previously this result was known only for a few special cases; either the test functions were required to be analytic~\cite{MR2294978}, or the distribution of the matrix elements needed to be Gaussian~\cite{MR2361453}, or at least match the Gaussian up to the first four moments~\cite{MR3306005},~\cite{1510.02987}. We find the exact dependence of the limiting variance  on the fourth cumulant that was not known before.
    The proof relies on two novel ingredients: (i) a local law for  a product of \emph{two} resolvents of the Hermitisation of \(X\) with different spectral parameters and (ii) a coupling of several weakly dependent Dyson Brownian Motions. These methods are also the key inputs
    for our analogous results on the linear eigenvalue statistics of \emph{real} matrices \(X\)
    that are presented in the companion paper~\cite{MR4235475}.
\end{abstract}

\maketitle

\section{Introduction}\label{sec:INT}
Eigenvalues of random matrices form a strongly correlated point process. One manifestation of this fact is the unusually small fluctuation of their
linear statistics making the
eigenvalue process distinctly different from a Poisson point process.
Suppose that the \(n\times n\) random matrix \(X\) has i.i.d.\ entries of zero mean and variance \(1/n\).
The empirical density of the eigenvalues \(\{ \sigma_i\}_{i=1}^n\) converges to a limit distribution; it is the uniform distribution on the unit
disk in the non-Hermitian case \emph{(circular law)} and the semicircular density in the Hermitian case \emph{(Wigner semicircle law)}. For test functions \(f\) defined on the spectrum one may consider the fluctuation of the linear statistics and one expects that
\begin{equation}\label{eq:linst}
    L_n(f):= \sum_{i=1}^n f(\sigma_i)  - \E \sum_{i=1}^n f(\sigma_i)  \sim \cN (0, V_f)
\end{equation}
converges to a centred normal distribution as \(n\to \infty\).
The variance \(V_f\) is expected to depend only on the second and fourth moments of
the single entry distribution. Note that, unlike in the usual central limit theorem, there is no \(1/\sqrt{n}\) rescaling in~\eqref{eq:linst} which is a  quantitative indication
of a strong correlation. The main result of the current paper is the proof of~\eqref{eq:linst} for non-Hermitian  random matrices with
complex i.i.d.\ entries and for general test functions \(f\). We give an explicit
formula for \(V_f\) that involves the fourth cumulant of \(X\) as well, disproving a conjecture by Chafa{\"\i}~\cite{chefai}.  By polarisation,
from~\eqref{eq:linst} it also  follows  that the limiting joint distribution
of \((L_n(f_1), L_n(f_2), \ldots , L_n(f_k))\) for a fixed number of test functions is jointly Gaussian.

We remark that another manifestation of the strong eigenvalue correlation
is the  repulsion between neighbouring eigenvalues. For Gaussian ensembles the local repulsion
is directly seen from the well-known determinantal structure of the joint distribution of all eigenvalues; both in the non-Hermitian \emph{Ginibre} case and in the Hermitian \emph{GUE/GOE} case.
In the spirit of \emph{Wigner-Dyson-Mehta universality}  of the local correlation functions~\cite{MR0220494} level repulsion should also hold
for random matrices with general distributions. While for the
Hermitian case the universality has been rigorously established for a large class of random matrices
(see e.g.~\cite{MR3699468} for a recent monograph), the
analogous result for the non-Hermitian case is still open in the bulk spectrum (see, however,~\cite{MR4221653} for the edge
regime and~\cite{MR3306005} for entry distributions whose first four moments match the Gaussian).

These two manifestations of the eigenvalue correlations cannot be deduced from each other,  however
the proofs often share common tools.  For \(n\)-independent test functions \(f\),~\eqref{eq:linst} apparently
involves understanding the eigenvalues only on the macroscopic scales, while the level
repulsion is expressly a property on the microscopic scale of   individual eigenvalues.
However the suppression of the usual \(\sqrt{n}\) fluctuation is due to delicate correlations on all scales, so~\eqref{eq:linst} also requires understanding local scales.

Hermitian random matrices are much easier to handle, hence fluctuation results of the type~\eqref{eq:linst} have been gradually obtained for more and more general matrix ensembles as well as for broader classes of test functions, see,
e.g.~\cite{MR1487983,MR2189081,MR1411619,MR2561434,MR2829615} and~\cite{MR3116567} for the weakest regularity conditions on \(f\). Considering \(n\)-dependent test functions,
Gaussian fluctuations have been detected even on mesoscopic scales~\cite{MR1678012,MR1689027, MR3959983, MR3678478, MR4009708,  MR3852256, MR4187127,MR4255183,2001.07661}.

Non-Hermitian random matrices pose serious challenges, mainly
because their eigenvalues are potentially very unstable.
When \(X\) has i.i.d.\ centred Gaussian entries
with variance \(1/n\) (this is called the \emph{Ginibre ensemble}),
the explicit determinantal formulas for the correlation functions may be used
to compute the distribution of the linear statistics \(L_n(f)\). Forrester in~\cite{MR1687948}
proved~\eqref{eq:linst} for complex Ginibre ensemble and  radially symmetric \(f\) and obtained
the variance \(V_f = (4\pi)^{-1}\int_{\DD} \abs{\nabla f}^2 \dif^2z\)
where \(\DD \) is the unit disk. He also gave a heuristic argument based on  Coulomb gas theory for general \(f\) and his calculations predicted an additional boundary term \(\frac{1}{2}\norm{f}_{\dot{H}^{1/2}(\partial \DD)}^2\) in the variance \(V_f\). Rider  considered test functions \(f\) depending
only on  the angle~\cite{MR2095933} when \(f\not\in H^1(\DD)\) and accordingly \(V_f\) grows with \(\log n\) (similar growth is
proved for \(f=\log\) in~\cite{MR3161483}).  Finally, Rider and Vir\'ag in~\cite{MR2361453} have rigorously verified Forrester's prediction for general\(f\in C^1(\DD)\)
using a cumulant formula for determinantal processes found first  by Costin and Lebowitz~\cite{MR3155254}
and extended by Soshnikov~\cite{MR1894104}. They also presented a \emph{Gaussian free field (GFF)} interpretation of the result that we extend in Section~\ref{sec GFF}.

The first result beyond the explicitly computable  Gaussian case is due to Rider and Silverstein~\cite[Theorem 1.1]{MR2294978}
who proved~\eqref{eq:linst} for \(X\) with i.i.d.\ complex matrix elements
and for test functions \(f\) that are analytic on a large disk. Analyticity allowed them to use contour
integration and thus deduce the result from analysing the resolvent at spectral parameters far away
from the actual spectrum.
The domain of analyticity was optimized in~\cite{MR3540493}, where extensions to elliptic ensembles were also proven.
Polynomial test functions  via the alternative  moment method  were considered by Nourdin and Peccati in~\cite{MR2738319}.
The analytic method of~\cite{MR2294978}  was recently extended by Coston
and O'Rourke~\cite{MR4125967} to fluctuations of linear statistics for \emph{products} of i.i.d.\ matrices.
However, these method fail  for a larger class of test functions.

Since the first four moments of the matrix elements fully determine the limiting
eigenvalue statistics, Tao and Vu were able to compare the fluctuation of the local eigenvalue density
for a general non-Gaussian  \(X\)  with that of a Ginibre matrix~\cite[Corollary 10]{MR3306005}
assuming the first four moments of \(X\) match those of the complex Ginibre ensemble. This method
was extended by Kopel~\cite[Corollary 1]{1510.02987} to general smooth test functions
with an additional study on the real eigenvalues when \(X\) is real (see also the work of
Simm for polynomial statistics  of the real  eigenvalues~\cite{MR3612267}).

Our result removes the limitations of both previous approaches: we allow general test functions and
general distribution for the matrix elements without constraints on matching moments.
We remark that the dependence of the variance \(V_f\) on the fourth cumulant
of the single matrix entry escaped all previous works. The Ginibre ensemble with its vanishing fourth cumulant clearly
cannot catch this dependence.
Interestingly, even though the fourth cumulant in general  is not zero in
the work Rider and Silverstein~\cite{MR2294978}, it is multiplied by
a functional of \(f\) that happens to vanish for analytic functions (see~\eqref{eq:cov},~\eqref{eq:expv} and Remark~\ref{rem:compo} later). Hence this result did not detect the precise role of the fourth cumulant either. This may have motivated the conjecture~\cite{chefai}
that the variance does not depend on the fourth cumulant at all.

In order to focus on the main new ideas,
in this paper we consider the problem only for \(X\) with genuinely complex entries. Our method also works for  real matrices where
the real axis in the spectrum plays a special role that modifies the exact formula for the expectation and the variance \(V_f\) in~\eqref{eq:linst}. This leads to some additional technical complications that we have resolved in a separate work~\cite{MR4235475}
which contains the real version of our main Theorem~\ref{theo:CLT}.

Finally, we remark that the problem of fluctuations of linear statistics has been considered for
\(\beta\)-log-gases in one and two dimensions;
these are closely related to the eigenvalues
of the  Hermitian, resp.\ non-Hermitian Gaussian matrices for classical values \(\beta=1,2,4\) and for quadratic potential.
In  fact, in two dimensions the logarithmic
interaction also corresponds to the  Coulomb gas from statistical physics. Results analogous to~\eqref{eq:linst}
in one dimension were obtained e.g.\ in~\cite{MR1487983, MR3063494,  1303.1045, MR3865662, MR4021234,MR3885548, MR4009708, MR4168391}.
In two dimensions  similar results  have been established both in the macroscopic~\cite{MR3788208}
and  in the mesoscopic~\cite{MR4063572} regimes.

We now outline the main  ideas in our approach.
We use Girko's formula~\cite{MR773436} in the form given in~\cite{MR3306005}
to express linear eigenvalue statistics of \(X\) in terms
of  resolvents of a family of \(2n\times 2n\) Hermitian matrices
\begin{equation}\label{eq:linz1}
    H^z:= \begin{pmatrix}
        0                & X-z \\
        X^*-\overline{z} & 0
    \end{pmatrix}
\end{equation}
parametrized by \(z\in \C  \).  This formula asserts that
\begin{equation}\label{girko}
    \sum_{\sigma\in \Spec(X)} f(\sigma) = -\frac{1}{4\pi} \int_{\C } \Delta f(z)\int_0^\infty \Im \Tr  G^z(i\eta)\dif\eta \dif^2 z
\end{equation}
for any smooth, compactly supported test function \(f\) (the apparent divergence of the \(\eta\)-integral at infinity can easily be removed, see~\eqref{eq:GirkosplitA}). Here we set \(G^z(w):=  (H^z-w)^{-1}\) to be the resolvent of \(H^z\).
We have thus transformed our problem to a Hermitian one
and all tools and results
developed for Hermitian ensembles in the recent years are available.

Utilizing  Girko's formula requires a good understanding of the  resolvent
of \(H^z\) along the imaginary axis for all \(\eta>0\). On very small scales
\(\eta\ll n^{-1}\), there are no eigenvalues thus  \(\Im \Tr  G^z(\ii\eta)\) is negligible.
All other scales \(\eta\gtrsim n^{-1}\) need to be controlled carefully
since  \emph{a priori} they could  all contribute  to the fluctuation of \(L_n(f)\),
even though \emph{a posteriori} we find that the entire variance comes
from  scales \(\eta\sim 1\).

In the mesoscopic regime \(\eta\gg n^{-1}\), \emph{local laws} from~\cite{MR3770875, MR4408013}
accurately describe the leading order deterministic behaviour of \(\frac{1}{n}\Tr G^z(\ii \eta)\) and even
the matrix elements \(G^z_{ab}(\ii\eta)\); now we need to identify the
next order fluctuating term in the local law. In other words we need to prove a central limit theorem
for the traces of resolvents \(G^z\).  In fact, based upon~\eqref{girko}, for the higher \(k\)-th moments of  \(L_n(f)\)
we need the joint distribution of  \(\Tr  G^{z_l}(\ii \eta)\)  for different spectral parameters \(z_1, z_2, \ldots, z_k\).
This is one of our main technical achievements. Note that the asymptotic joint Gaussianity  of
traces of Wigner resolvents \(\Tr  (H-w_1)^{-1}, \Tr  (H-w_2)^{-1}, \ldots\) at different spectral parameters
has been obtained in~\cite{MR4095015, MR3678478}. However, the method of this result is not applicable since the role of the spectral parameter \(z\) in~\eqref{eq:linz1} is very different from \(w\); it is in an off-diagonal position thus these resolvents do not commute
and they are not
in the spectral resolution of a single matrix.

The microscopic regime, \(\eta\sim n^{-1}\), is much more involved than the mesoscopic one.  Local laws and their
fluctuations are not sufficient, we need
to trace the effect of the individual eigenvalues  \(0\le \lambda_1^z\le \lambda_2^z, \ldots\)
of \(H^z\)   near zero (the spectrum of  \(H^z\) is symmetric, we may focus on the positive eigenvalues).
Moreover, we need their \emph{joint} distribution for different \(z\) parameters
which, for arbitrary \(z\)'s, is not known even in the Ginibre case.
We prove, however, that  \(\lambda_1^z\) and \(\lambda_1^{z'}\) are asymptotically independent if \(z\) and \(z'\) are
far away, say \(\abs{z-z'}\ge n^{-1/100}\). A similar result holds simultaneously for several  small eigenvalues.
Notice that due to the \(z\)-integration in~\eqref{girko}, when the \(k\)-th moment of \(L_n(f)\) is computed,
the integration variables \(z_1, z_2, \ldots, z_k\) are typically far away from each other.
The resulting independence  of the spectra of \(H^{z_1}\), \(H^{z_2}, \ldots \) near zero
ensures that the microscopic regime eventually
does not contribute to the fluctuation of \(L_n(f)\).

The proof of the independence of \(\lambda_1^z\) and \(\lambda_1^{z'}\) relies on the analysis of the \emph{Dyson Brownian motion} (DBM) developed in the recent years~\cite{MR3699468} for the proof of the Wigner-Dyson-Mehta universality conjecture for Wigner matrices. The key mechanism is the fast local equilibration of the eigenvalues \({\bm \lambda}^z(t):= \{ \lambda_i^z(t)\}\) along
the stochastic flow generated by adding a small time-dependent Gaussian  component to the original matrix.
This Gaussian component can  then be removed by the \emph{Green function comparison theorem} (GFT).
One of the main technical results of~\cite{MR3916329} (motivated by the analogous analysis in~\cite{MR3914908}  for Wigner matrices that relied on coupling and homogenisation ideas introduced first in~\cite{MR3541852}) asserts that  for any fixed \(z\) the  DBM process \({\bm \lambda}^z(t)\) can be pathwise approximated by a similar DBM with a different initial condition by \emph{exactly} coupling the driving Brownian motions in their DBMs. We extend this idea to simultaneously trailing \({\bm \lambda}^z(t)\)
and \({\bm \lambda}^{z'}(t)\) by their independent Ginibre counterparts. The  evolutions of \({\bm  \lambda}^z(t)\) and \({\bm \lambda}^{z'}(t)\)  are not independent since their
driving Brownian motions are correlated;
the correlation is given by the eigenfunction overlap \(\braket{ u_i^z, u_j^{z'}}\braket{ v_j^{z'}, v_i^z}\)
where \(w_i^z = (u_i^z, v_i^z)\in \C^n\times \C^n\) denotes the eigenvector of \(H^z\) belonging to \(\lambda_i^z\).
However,  this overlap turns out to be small
if \(z\) and \(z'\) are far away and \(i\) is not too big. Thus the analysis of the microscopic regime
has two ingredients: (i) extending the coupling idea to driving Brownian motions whose
distributions are not identical but close to each other; and (ii) proving the smallness of the overlap.

While (i) can be achieved by relatively minor modifications to the  proofs  in~\cite{MR3916329}, (ii) requires
to develop a new type of local law. Indeed,
the overlap can be estimated in terms of traces of products of resolvents, \(\Tr G^z( \ii\eta) G^{z'}( \ii\eta')\) with \(\eta, \eta'\sim  n^{-1+\epsilon}\) in the mesoscopic regime.
Customary local laws, however, do not apply to a quantity involving \emph{products} of resolvents. In fact, even the leading deterministic
term needs to be identified by solving a new type of deterministic Dyson equation. We first show
the stability of this new equation using the lower bound on \(\abs{z-z'}\). Then we prove the necessary high probability  bound for the error term in the Dyson equation by a diagrammatic cumulant expansion adapted to the
new situation of product of resolvents. The key novelty is to extract the effect that \(G^z\) and \(G^{z'}\) are weakly correlated when
\(z\) and \(z'\) are far away from each other.

We close this section with an important  remark  concerning the proofs for  Hermitian versus non-Hermitian matrices.
Similarly to Girko's formula~\eqref{girko},  the linear eigenvalue statistics for Hermitian matrices are also
expressed by an integral of the resolvents over all spectral parameters.
However, in the corresponding Helffer-Sj\"ostrand formula, sufficient regularity of \(f\) directly neutralizes the potentially singular behaviour of the resolvent near the real axis, giving rise to CLT results even with suboptimal control on the resolvent in the mesoscopic regime. A similar trade-off in~\eqref{girko} is not apparent; it is unclear if and how the integration in \(z\) could help regularize the \(\eta\) integral.
This is a fundamental difference between CLTs for Hermitian and non-Hermitian ensembles that  explains the abundance of Hermitian results in contrast to the scarcity of available non-Hermitian CLTs.

\subsection*{Acknowledgement} L.E.\ would like to thank Nathana\"el Berestycki, and D.S.\ would like to thank Nina Holden for valuable discussions on the Gaussian free field. The authors are grateful to Peter Forrester for pointing out a missing term in \eqref{eq:expv} in the original manuscript. We thank Benjamin Landon for correcting a technical error  in the originally published version proof of Proposition~\ref{pro:ciala}: the BDG inequality
cannot be directly applied for the solution of~\eqref{difeq} in Duhamel form, instead the $\ell^2$-norm of
the solution can be controlled in a similar way, the statement of Proposition~\ref{pro:ciala}
is unchanged. The current arXiv version contains this correction.

\subsection*{Notations and conventions}
We introduce some notations we use throughout the paper. For integers \(k\in\N \) we use the notation \([k]:= \{1,\dots, k\}\). We write \(\HC \) for the upper half-plane \(\HC := \set{z\in\C \given \Im z>0}\), \(\DD\subset \C\) for the open unit disk, and for any \(z\in\C \) we use the notation \(\dif^2 z:= 2^{-1} \ii(\dif z\wedge \dif \overline{z})\) for the two dimensional volume form on \(\C \). For positive quantities \(f,g\) we write \(f\lesssim g\) and \(f\sim g\) if \(f \le C g\) or \(c g\le f\le Cg\), respectively, for some constants \(c,C>0\) which depend only on the constants appearing in~\eqref{eq:hmb}. For any two positive real numbers \(\omega_*,\omega^*\in\R _+\) by \(\omega_*\ll \omega^*\) we denote that \(\omega_*\le c \omega^*\) for some small constant \(0<c\le 1/100\). We denote vectors by bold-faced lower case Roman letters \({\bm x}, {\bm y}\in\C ^k\), for some \(k\in\N\). Vector and matrix norms, \(\norm{\vx}\) and \(\norm{A}\), indicate the usual Euclidean norm and the corresponding induced matrix norm. For any \(2n\times 2n\) matrix \(A\) we use the notation \(\braket{ A}:= (2n)^{-1}\Tr  A\) to denote the normalized trace of \(A\). Moreover, for vectors \({\bm x}, {\bm y}\in\C ^n\) and matrices \(A,B\in \C ^{2n\times 2n}\) we define
\[
    \braket{ {\bm x},{\bm y}}:= \sum \overline{x}_i y_i, \qquad \braket{ A,B}:= \braket{ A^*B}.
\]
We will use the concept of ``with very high probability'' meaning that for any fixed \(D>0\) the probability of the event is bigger than \(1-n^{-D}\) if \(n\ge n_0(D)\). Moreover, we use the convention that \(\xi>0\) denotes an arbitrary small constant which is independent of \(n\).

\section{Main results}\label{sec:MR}
We consider \emph{complex i.i.d.\ matrices} \(X\), i.e.\ \(n\times n\) matrices whose entries are independent and identically distributed as \smash{\(x_{ab}\stackrel{d}{=} n^{-1/2}\chi\)} for some complex random variable \(\chi\), satisfying the following:
\begin{assumption}\label{ass:1}
    We assume that \(\E \chi=\E \chi^2=0\) and \(\E \abs{\chi}^2=1\). In addition we assume the existence of high moments, i.e.\ that there exist constants \(C_p>0\), for any \(p\in\N \), such that
    \begin{equation}\label{eq:hmb}
        \E \abs{\chi}^p\le C_p.
    \end{equation}
\end{assumption}

The \emph{circular law}~\cite{MR1428519, MR863545, MR2663633, MR3813992, MR866352, MR773436, MR2575411, MR2409368} asserts that the empirical distribution of eigenvalues \(\{\sigma_i\}_{i=1}^n\) of a complex i.i.d.\ matrix \(X\) converges to the uniform distribution on the unit disk \(\DD\), i.e.
\begin{equation}\label{eq:circlaw}
    \lim_{n\to \infty}\frac{1}{n}\sum_{i=1}^n f(\sigma_i)=\frac{1}{\pi}\int_\DD f(z)\dif^2z,
\end{equation}
with very high probability for any continuous bounded function \(f\). Our main result is a central limit theorem for the centred \emph{linear statistics}
\begin{equation}\label{eq:linstatmainr}
    L_n(f):=\sum_{i=1}^n f(\sigma_i)-\E \sum_{i=1}^n f(\sigma_i)
\end{equation}
for general complex i.i.d.\ matrices and generic test functions \(f\).

In order to state the result we introduce some notations and certain Sobolev spaces. We fix some open bounded \(\Omega\subset\C\) containing the closed unit disk \(\ov{\DD}\subset \Omega\) and having a piecewise \(C^1\)-boundary, or, more generally, any boundary satisfying the \emph{cone property} (see e.g.~\cite[Section~8.7]{MR1817225}). We consider test functions \(f\in H^{2+\delta}_0(\Omega)\) in the Sobolev space
\(H^{2+\delta}_0(\Omega)\) which is defined as the completion of the smooth compactly supported functions \(C_c^\infty(\Omega)\) under the norm \[\norm{f}_{H^{2+\delta}(\Omega)}:= \norm{(1+\abs{\xi})^{2+\delta} \wh f(\xi)}_{L^2(\Omega)}\]
and we note that by Sobolev embedding such functions are continuously differentiable, and vanish at the boundary of \(\Omega\). For notational convenience we identify \(f\in H^{2+\delta}_0(\Omega)\) with its extension to all of \(\C\) obtained from setting \(f\equiv 0\) in \(\C\setminus\Omega\). We note that our results can trivially be extended to bounded test functions with non-compact support since due to~\cite[Theorem 2.1]{MR4408013}, with high probability, all eigenvalues satisfy \(\abs{\sigma_i}\le 1+\epsilon\) and therefore non-compactly supported test functions can simply be smoothly cut-off.  For \(h\) defined on the boundary of the unit disk \(\partial\DD \) we define its Fourier transform
\begin{equation}\label{eq:furtra}
    \wh{h}(k)=\frac{1}{2\pi}\int_0^{2\pi} h(e^{\ii\theta}) e^{-\ii \theta k}\dif\theta, \qquad k\in\Z .
\end{equation}
For \(f,g\in H_0^{2+\delta}(\Omega)\) we define the homogeneous semi-inner products
\begin{equation}\label{eq:h12norm}
    \begin{split}
        \braket{ g,f}_{\dot{H}^{1/2}(\partial\DD )}&:= \sum_{k\in\Z }\abs{k} \widehat{f}(k) \overline{\widehat{g}(k)}, \qquad \norm{ f}^2_{\dot{H}^{1/2}(\partial\mathbf{D})}:= \braket{ f,f}_{\dot{H}^{1/2}(\partial\DD )},
    \end{split}
\end{equation}
where, with a slight abuse of notation, we identified \(f\) and \(g\) with their restrictions to \(\partial \DD \).

\begin{theorem}[Central Limit Theorem for linear statistics]\label{theo:CLT}
    Let \(X\) be a complex \(n\times n\) i.i.d.\ matrix satisfying Assumption~\ref{ass:1} with eigenvalues \(\{\sigma_i\}_{i=1}^n\), and denote the fourth \emph{cumulant} of \(\chi\) by \(\kappa_4:= \E\abs{\chi}^4-2\). Fix \(\delta>0\), an open complex domain \(\Omega\) with \(\ov\DD\subset\Omega\subset\C\) and
    a complex valued test function \(f\in H_0^{2+\delta}(\Omega)\). Then the centred linear statistics \(L_n(f)\), defined in~\eqref{eq:linstatmainr}, converges
    \[L_n(f) \Longrightarrow L(f),  \]
    to a complex Gaussian random variable \(L(f)\) with expectation \(\E L(f)=0\) and variance \(\E \abs{L(f)}^2=C(f,f)=: V_f\) and \(\E L(f)^2 = C(\ov f, f)\), where
    \begin{equation}\label{eq:cov}
        \begin{split}
            C\left( g,f\right)&:= \frac{1}{4\pi}\braket{ \nabla g,\nabla f}_{L^2(\DD )}+\frac{1}{2}\braket{ g,f}_{\dot{H}^{1/2}(\partial\DD )} \\
            &\quad + \kappa_4 \left(\frac{1}{\pi}\int_\DD \overline{g(z)}\dif^2 z- \frac{1}{2\pi}\int_0^{2\pi} \overline{g(e^{\ii\theta})}\dif \theta\right) \\
            &\qquad\times\left(\frac{1}{\pi}\int_\DD f(z)\dif^2 z-\frac{1}{2\pi}\int_0^{2\pi} f(e^{\ii\theta})\dif\theta\right).
        \end{split}
    \end{equation}
    More precisely, any finite moment of \(L_n(f)\) converges at a rate \(n^{-c(k)}\), for some small \(c(k)>0\), i.e.
    \begin{equation}\label{eq moment convergence}
        \E L_n(f)^k \ov{L_n(f)}^l = \E L(f)^k \ov{ L(f)}^l +\mathcal{O}\left( n^{-c(k+l)}\right).
    \end{equation}
    Moreover, the expectation in~\eqref{eq:linstatmainr} is given by
    \begin{equation}\label{eq:expv}
        \E \sum_{i=1}^n f(\sigma_i)=\frac{n}{\pi}\int_\DD f(z)\dif^2 z+\frac{1}{8\pi}\int_\DD \Delta f(z)\, \dif^2z-\frac{\kappa_4}{\pi}\int_\DD f(z)(2\abs{z}^2-1)\dif^2 z+\mathcal{O}\left( n^{-c}\right)
    \end{equation}
    for some small constant \(c>0\). The implicit constants in the error terms in~\eqref{eq moment convergence}--\eqref{eq:expv} depend on the \(H^{2+\delta}\)-norm of \(f\) and  \(C_p\) from~\eqref{eq:hmb}.
\end{theorem}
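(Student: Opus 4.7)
The plan is to follow Girko's Hermitisation scheme and reduce the theorem to a joint central limit statement for resolvents $G^z(\ii\eta)$ of the matrices $H^z$ defined in~\eqref{eq:linz1}, at several spectral parameters $z_1,\dots,z_k$ simultaneously. Starting from~\eqref{girko}, I would first regularise the $\eta$–integral by a large cutoff $T = n^{100}$ (the tail $\eta > T$ is handled trivially by the bound $\norm{G^z(\ii\eta)} \le \eta^{-1}$ and the high-moment assumption on $\chi$) and then split the remaining integral at a small scale $\eta_0 := n^{-1+\omega}$, $0 < \omega \ll 1$. This decomposes
\begin{equation*}
L_n(f) = M_n(f) - \E M_n(f) + R_n(f),
\end{equation*}
where $M_n(f)$ is the mesoscopic/macroscopic piece ($\eta \in [\eta_0, T]$) and $R_n(f)$ the microscopic remainder. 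I would prove Theorem~\ref{theo:CLT} by the method of moments, showing $\E M_n(f)^k \ov{M_n(f)}^l$ converges to the Gaussian answer at rate $n^{-c(k+l)}$ and $R_n(f) \to 0$ in the same sense.

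For $M_n(f)$, the leading behaviour $\braket{G^z(\ii\eta)} \approx m(z,\ii\eta)$ is known from the single-resolvent local laws of~\cite{MR3770875, 1907.13631}. The central new input I would develop is a two-resolvent local law for $\braket{G^{z_1}(\ii\eta_1) G^{z_2}(\ii\eta_2)}$, obtained by writing down and solving a new matrix Dyson equation whose kernel depends on both $z_1$ and $z_2$, proving its stability using $\abs{z_1 - z_2} \gtrsim n^{-1/100}$, and bounding the error term by a diagrammatic cumulant expansion adapted to products of resolvents. Iterating the argument (or applying Wick-type recursions for higher cumulants of $\Tr G^{z_l}(\ii\eta_l)$) yields joint Gaussianity of the centred traces at different spectral parameters. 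Integrating against $\Delta f$ in~\eqref{girko} and performing the $\eta$ integral identifies the variance: the first two lines of~\eqref{eq:cov} come from the Gaussian contribution, while the $\kappa_4$ term arises from the quartic cumulant contractions that survive the expansion. Because the factor multiplying $\kappa_4$ vanishes on analytic test functions, this term is invisible in~\cite{MR2294978}, which explains the discrepancy with the conjecture of~\cite{chefai}. The expectation formula~\eqref{eq:expv} comes from tracking the $1/n$ correction to $m(z,\ii\eta)$ in the single-resolvent local law and integrating in $\eta$.

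For the microscopic remainder $R_n(f)$, naive bounds on $\Im\Tr G^z(\ii\eta)$ on scales $\eta \sim n^{-1}$ are of order one and insufficient. The plan is to show $\E \abs{R_n(f)}^{2k} = o(1)$ by exploiting cancellations between different $z$-values. In the double integral against $\Delta f(z)\Delta f(z')$, the typical pair $(z,z')$ satisfies $\abs{z - z'} \gg n^{-1/100}$, so it is enough to prove that the small eigenvalues of $H^z$ and $H^{z'}$ are then asymptotically independent. I would establish this by running Dyson Brownian motion $\bm\lambda^z(t)$ and $\bm\lambda^{z'}(t)$ for a short time $t \sim n^{-1+\epsilon}$ and coupling each to an \emph{independent} Ginibre reference flow using the pathwise coupling / homogenisation technology of~\cite{MR3541852, MR3914908, MR3916329}. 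The driving Brownian motions of $\bm\lambda^z$ and $\bm\lambda^{z'}$ are not identical but correlated through the eigenvector overlaps $\braket{u_i^z,u_j^{z'}}\braket{v_j^{z'},v_i^z}$; the two-resolvent local law from the previous step bounds these overlaps and hence controls the mismatch between the two DBMs, yielding the desired asymptotic independence. A standard Green function comparison argument finally removes the small Gaussian component added by the DBM regularisation.

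With (i) joint Gaussianity in the mesoscopic regime and (ii) asymptotic independence in the microscopic regime in hand, mixed moments of $L_n(f)$ are evaluated by Wick's rule on the dominant piece and seen to match those of the complex Gaussian with covariance $C$ in~\eqref{eq:cov}; polarising in $f$ gives both $\E\abs{L(f)}^2$ and $\E L(f)^2$. I expect the main obstacle to be the two-resolvent local law with distinct spectral parameters: the identification of the deterministic kernel, establishing stability uniformly in $\abs{z-z'} \ge n^{-1/100}$, and performing a cumulant expansion that captures the weak correlation between $G^z$ and $G^{z'}$ simultaneously. The DBM coupling step is a minor modification of~\cite{MR3916329} once the overlap bound is available, and the Girko / Wick combinatorics are by now standard.
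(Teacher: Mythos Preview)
Your proposal is correct and follows essentially the same approach as the paper: Girko's formula, a split of the $\eta$-integral into a mesoscopic/macroscopic piece (handled by a resolvent CLT built on a new two-resolvent local law) and a microscopic piece (handled by DBM coupling with the eigenvector overlap bound coming from the same two-resolvent local law), with the $\kappa_4$ terms in~\eqref{eq:cov} and~\eqref{eq:expv} arising exactly as you describe. The only structural refinement in the paper is a finer three-way split $[0,\eta_0]\cup[\eta_0,\eta_c]\cup[\eta_c,T]$ with $\eta_0=n^{-1-\delta_0}$ and $\eta_c=n^{-1+\delta_1}$, where the very small regime $[0,\eta_0]$ is disposed of separately via a smallest-singular-value bound rather than the DBM argument.
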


\begin{remark}[\(V_f\) is strictly positive]\label{remark Vf pos}
    The variance \(V_f=\E \abs{L(f)}^2\) in Theorem~\ref{theo:CLT} is strictly positive. Indeed, by the Cauchy-Schwarz inequality it follows that
    \[
        \abs*{\frac{1}{\pi}\int_\mathbf{D} f(z)\, \dif ^2z-\frac{1}{2\pi}\int_0^{2\pi} f(e^{i\theta})\, d\theta}^2\le\frac{1}{8\pi} \int_\mathbf{D} \abs*{ \nabla f}^2\, \dif^2 z.
    \]
    Hence, since \(\kappa_4\ge -1\) in~\eqref{eq:cov}, this shows that
    \[
        V_f\ge\frac{1}{8\pi} \int_\mathbf{D} \abs*{ \nabla f}^2\, \dif^2 z + \frac{1}{2}\norm{ f}^2_{\dot{H}^{1/2}(\partial\mathbf{D})}>0.
    \]
\end{remark}

By polarisation, a multivariate Central Limit Theorem readily follows from Theorem~\ref{theo:CLT}:
\begin{corollary}\label{cor:multCLT}
    Let \(X\) be an \(n\times n\) i.i.d.\ complex matrix satisfying Assumption~\ref{ass:1}, and let \(L_n(f)\) be defined in~\eqref{eq:linstatmainr}. For a fixed open bounded complex domain \(\Omega\) with \(\ov\DD\subset\Omega\subset\C\), \(\delta>0\), \(p\in\N \) and for any finite collection of test functions \(f^{(1)},\dots,f^{(p)} \in H_0^{2+\delta}(\Omega)\) the vector
    \begin{equation}\label{eq:muclt}
        (L_n(f^{(1)}),\dots,L_n(f^{(p)}))\Longrightarrow (L(f^{(1)}),\dots,L(f^{(p)})),
    \end{equation}
    converges to a multivariate complex Gaussian of zero expectation \(\E L(f)=0\) and covariance \(\E L(f) \ov{L(g)}=\E L(f) L(\ov{g})=C(f,g) \) with \(C\) as in~\eqref{eq:cov}. Moreover, for any mixed \(k\)-moments we have an effective convergence rate of order \(n^{-c(k)}\), as in~\eqref{eq moment convergence}
\end{corollary}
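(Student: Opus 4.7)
The plan is to deduce the corollary from Theorem~\ref{theo:CLT} by linearity and a polarisation argument, with no new random-matrix input.

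First, I would exploit that \(H_0^{2+\delta}(\Omega)\) is a complex vector space and that \(f\mapsto L_n(f)\), defined in~\eqref{eq:linstatmainr}, is \(\C\)-linear. Hence for any \(\alpha = (\alpha_1,\ldots,\alpha_p)\in\C^p\) the function \(g_\alpha := \sum_j \alpha_j f^{(j)}\) still lies in \(H_0^{2+\delta}(\Omega)\), and \(L_n(g_\alpha) = \sum_j \alpha_j L_n(f^{(j)})\). Applying Theorem~\ref{theo:CLT} to \(g_\alpha\) gives the scalar CLT for every complex linear combination of the coordinates of the vector in~\eqref{eq:muclt}; the Cram\'er-Wold device, applied to the real \(2p\)-dimensional vector of real and imaginary parts of the \(L_n(f^{(j)})\), then promotes this to joint weak convergence to a centred complex Gaussian vector.

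Second, I would extract the quantitative rate for mixed moments via a standard multilinear polarisation identity. Any mixed moment
\[ \E \prod_{j=1}^p L_n(f^{(j)})^{k_j}\, \ov{L_n(f^{(j)})}^{l_j}, \qquad k := \sum_j (k_j + l_j), \]
can be written as a finite linear combination, with coefficients depending only on \(p\) and \((k_j, l_j)\), of pure moments \(\E L_n(g_\alpha)^a\, \ov{L_n(g_\alpha)}^b\) with \(a + b = k\), where \(\alpha\) ranges over a fixed finite subset of \(\{0, \pm 1, \pm \ii\}^p\). Each such pure moment converges at rate \(n^{-c(k)}\) by~\eqref{eq moment convergence} applied to \(g_\alpha\), whose \(H^{2+\delta}\)-norm is controlled by \(\max_j \norm{f^{(j)}}_{H^{2+\delta}}\); summing the finitely many terms yields the analogous rate for the mixed moment.

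Third, I would identify the limiting covariance and pseudo-covariance matrices by polarising the scalar identities \(\E\abs{L(g)}^2 = C(g,g)\) and \(\E L(g)^2 = C(\ov g, g)\) supplied by Theorem~\ref{theo:CLT} at \(g = \alpha f^{(j)} + \beta f^{(k)}\) with \((\alpha, \beta) \in \{1, \ii\}^2\). Since the form \(C(\cdot, \cdot)\) in~\eqref{eq:cov} is manifestly sesquilinear, this recovers the covariance structure stated in the corollary. There is no genuine obstacle here: the argument is pure linear-algebraic bookkeeping, and the only reason the \emph{quantitative} scalar CLT~\eqref{eq moment convergence} is needed (rather than just distributional convergence) is to equip each of the finitely many summands in the polarisation identity with an explicit convergence rate that can be summed.
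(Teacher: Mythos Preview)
Your proposal is correct and matches the paper's approach: the paper does not give a separate proof of the corollary but simply states that it ``readily follows'' from Theorem~\ref{theo:CLT} ``by polarisation,'' which is exactly what you have spelled out. Your observation that \(\overline{L_n(f)}=L_n(\bar f)\) (implicit in your polarisation step) is the only small point worth making explicit, since it reduces every mixed moment to a symmetric multilinear expression in \(L_n\) evaluated at functions in \(H_0^{2+\delta}(\Omega)\), to which the standard polarisation identity applies.
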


\begin{remark}\label{rem:compo}
    We may compare Theorem~\ref{theo:CLT} with the previous results in~\cite[Eq. (5.18)]{MR2972857}, \cite[Theorem 1]{MR2361453} and~\cite[Theorem 1.1]{MR2294978}:
    \begin{enumerate}[label=(\roman*)]

        \item For $\kappa_4=0$ the expansion \eqref{eq:expv} agrees with the subleading order correction to the circular law from~\cite[Eq. (5.18)]{MR2972857} (see also \cite[Eq. (5.16)]{MR2240466} and \cite[Eq. (1.14)]{MR3735628}).
        \item Note that for a single \(f\colon\C \to \R \) in the Ginibre case, i.e.\ \(\kappa_4=0\), Theorem~\ref{theo:CLT} implies~\cite[Theorem 1]{MR2361453} with \(\sigma_f^2+ \widetilde{\sigma}_f^2=C(f,f)\), using the notation therein and with \(C(f,f)\) defined in~\eqref{eq:cov}.
        \item If additionally \(f\) is complex analytic in a neighbourhood of \(\overline{\DD }\), using the notation \(\partial:= \partial_z\), the expressions in~\eqref{eq:cov},\eqref{eq:expv} of Theorem~\ref{theo:CLT} simplify to
              \begin{equation}\label{eq:comr}
                  \E  \sum_{i=1}^n f(\sigma_i)=nf(0)+\mathcal{O}\left(n^{-\delta'}\right), \quad C\left(f,g\right)= \frac{1}{\pi}\int_\DD  \partial f(z) \overline{\partial g(z)}\dif^2 z,
              \end{equation}
              where we used that for any \(f,g\) complex analytic in a neighbourhood of \(\overline{\DD }\) we have
              \begin{equation}
                  \label{eq:chan}
                  \frac{1}{2\pi}\int_\DD  \braket{ \nabla g,\nabla f }\dif^2 z=\frac{1}{\pi}\int_{\DD } \partial f(z)\overline{\partial g(z)} \dif^2 z=\sum_{k\in \Z } \abs{k} \widehat{f\restriction_{\partial \DD }}(k) \overline{\widehat{g\restriction_{\partial \DD }}(k)},
              \end{equation}
              and that
              \[
                  \frac{1}{\pi}\int_\DD  f(z)\dif^2 z=\frac{1}{2\pi}\int_0^{2\pi} f(e^{i\theta})\dif \theta=f(0).
              \]
              The second equality in~\eqref{eq:chan} follows by writing \(f\) and \(g\) in Fourier series. The result in~\eqref{eq:comr} exactly agrees  with~\cite[Theorem 1.1]{MR2294978}.
    \end{enumerate}
\end{remark}

\begin{remark}[Mesoscopic regime]\label{rem:meso}
    We formulated our result for \emph{macroscopic} linear statistics, i.e.\ for test functions \(f\) that are independent of \(n\). One may also consider \emph{mesoscopic}
    linear statistics as well when \(f(\sigma)\) is replaced with  \( \varphi( n^a(\sigma-z_0))\) for some  fixed scale \(a>0\), reference point \(z_0\in \DD\)
    and function \(\varphi \in H^{2+\delta}(\C)\). Our proof  can directly  handle this situation as well for any  small \(a\le 1/500\)\footnote{The upper bound \(1/500\) for \(a\) is a crude overestimate, we did not optimise it along the proof. The actual value of \(a\) comes from the fact that it has to be smaller than \(\omega_d\) (see of Proposition~\ref{prop:indmr}) and from Lemma~\ref{lem:overb} (which is the main input of Proposition~\ref{prop:indmr}) it follows that \(\omega_d\le 1/100\).}, say,  since all our error terms
    are effective as a small power of \(1/n\). For \(a>0\) the leading term to the variance \(V_f\) comes solely from the \(\norm{\nabla f}^2\) term in~\eqref{eq:cov}, in particular the effect of the fourth cumulant is negligible.
\end{remark}

\subsection{Connection to the Gaussian free field}\label{sec GFF}
It has been observed in~\cite{MR2361453} that for the special case \(\kappa_4=0\) the limiting random field \(L(f)\) can be viewed as a variant of the \emph{Gaussian free field}~\cite{MR2322706}. The Gaussian free field on some bounded domain \(\Omega\subset\C\) can formally be defined as a \emph{Gaussian Hilbert space} of random variables \(h(f)\) indexed by functions in the homogeneous Sobolev space \(f\in \dot H_0^1(\Omega)\) such that the map \(f\mapsto h(f)\) is linear and
\begin{equation}
    \E h(f) = 0 ,\quad \E \ov{h(f)} h(g) = \braket{f,g}_{\dot H^1(\Omega)}.
\end{equation}
Here for \(\Omega\subset\C\) we defined the homogeneous Sobolev space \(\dot H_0^1(\Omega)\) as the completion of smooth compactly supported function \(C_c^\infty(\Omega)\) with respect to the semi-inner product
\[\braket{g,f}_{\dot{H}^{1}(\Omega)}:= \braket{\nabla g,\nabla f}_{L^2(\Omega)}, \qquad \norm{f}_{\dot{H}^{1}(\Omega)}^2:= \braket{f,f}_{\dot{H}^{1}(\Omega)}.\]
By the Poincar\'e inequality the space \(\dot H_0^1(\Omega)\) is in fact a Hilbert space and as a vector space coincides with the usual Sobolev space \(H_0^1(\Omega)\) with an equivalent norm but a different scalar product.

Since \(\ov{\DD}\subset\Omega\), the Sobolev space  \(\dot H^1_0(\Omega)\) can be orthogonally decomposed as
\[ \dot H_0^1(\Omega) = \dot H_{0}^1(\DD) \oplus  \dot H_{0}^1(\ov\DD^c)  \oplus  \dot H_0^1((\partial\DD)^c)^\perp,\]
where the complements are understood as the complements within \(\Omega\).
The orthogonal complement \smash{\(\dot H_0^1((\partial\DD)^c)^\perp\)} is (see e.g.~\cite[Thm.~2.17]{MR2322706}) given by the closed subspace of functions which are harmonic in \(\DD\cup\ov\DD^c=(\partial\DD)^c\), i.e.\ away from the unit circle.
For closed subspaces \(S\subset \dot H^1_0(\Omega)\) we denote the orthogonal projection onto \(S\) by \(P_S\).
Then by orthogonality and conformal symmetry it follows~\cite[Lemma 3.1]{MR2361453}\footnote{In Eq.~(3.1), and in the last displayed equation of the proof of Lemma 3.1 factors of \(2\) are missing. In the notation of~\cite{MR2361453} the correct equations read \[\frac{1}{2}\norm{P_{H}f}_{H^1(\C)}^2 = \norm{P_H f}_{H^1(\mathbf{U})}^2 = 2\pi \norm{f}_{H^{1/2}(\partial\mathbf U)}^2 \quad \text{and}\quad \braket{g_1,g_2}_{H^1(\mathbf U)}=2\pi\braket{g_1,g_2}_{H^{1/2}(\partial\mathbf U)}.\]} that
\begin{gather}
    \begin{aligned}
        \norm*{ P_{ \dot H_{0}^1(\DD)} f + P_{ \dot H_0^1((\partial\DD)^c)^\perp}  f}_{ \dot H^1(\Omega)}^2 & = \norm{ f }^2_{ \dot H^1(\DD)} + \norm{  P_{ \dot H_0^1((\partial\DD)^c)^\perp} f}_{ \dot H^1(\DD)}^2 \\
                                                                                                            & = \norm{ f }^2_{ \dot H^1(\DD)} + 2\pi \norm{f }_{\dot H^{1/2}(\partial\DD)}^2,
    \end{aligned}\label{eq f decomp}\raisetag{-4em}
\end{gather}
where we canonically identify \(f\in \dot H_0^1(\Omega)\) with its restriction to \(\DD\). If \(\kappa_4=0\), then the rhs.\ of~\eqref{eq f decomp} is precisely \(4\pi C(f,f)\) and therefore \(L(f)\) can be interpreted~\cite[Corollary 1.2]{MR2361453} as the projection
\begin{equation} \label{eq L h proj}
    L = (4\pi)^{-1/2} P h, \qquad P:= \Bigl(P_{ \dot H_{0}^1(\DD)} + P_{ \dot H_0^1((\partial\DD)^c)^\perp}  \Bigr)
\end{equation}
of the Gaussian free field \(h\) onto \( \dot H_{0}^1(\DD)  \oplus  \dot H_0^1((\partial\DD)^c)^\perp\), i.e.\ the Gaussian free field conditioned to be harmonic in \(\DD^c\). The projection~\eqref{eq L h proj} is defined via duality, i.e.\ \((Ph)(f) := h(Pf)\)
so that indeed
\[ \E\abs*{\left[\frac{1}{\sqrt{4\pi}} P h\right](f) }^2 = \frac{1}{4\pi} \Bigl( \norm{ f }^2_{ \dot H^1(\DD)} + 2\pi \norm{f }_{\dot H^{1/2}(\partial\DD)}^2 \Bigr) = C(f,f) = \E \abs{L(f)}^2.\]
If \(\kappa_4> 0\), then \(L\) can be interpreted as the sum
\begin{equation}\label{eq L kappa4 pos}
    L = \frac{1}{\sqrt{4\pi}} Ph + \sqrt{\kappa_4} \Bigl(\braket{\cdot}_{\DD}-\braket{\cdot}_{\partial\DD}\Bigr) \Xi
\end{equation}
of the Gaussian free field \(Ph\) conditioned to be harmonic in \(\DD^c\), and an independent standard real Gaussian \(\Xi\) multiplied by difference of the averaging functionals \(\braket{\cdot}_\DD\), \(\braket{\cdot}_{\partial\DD}\) on \(\DD\) and \(\partial\DD\). For \(\kappa_4<0\) there seems to be no direct interpretation of \(L\) similar to~\eqref{eq L kappa4 pos}.

\section{Proof strategy}
For the proof of Theorem~\ref{theo:CLT} we study the \(2n \times 2n\) matrix \(H^z\) defined in~\eqref{eq:linz1}, that is the Hermitisation of \(X-z\). Denote by \(\{\lambda^z_{\pm i}\}_{i=1}^n\) the eigenvalues of \(H^z\) labelled in an increasing order (we omit the index \(i=0\) for notational convenience). As a consequence of the block structure of \(H^z\) its spectrum is symmetric with respect to zero, i.e.\ \(\lambda^z_{-i}=-\lambda^z_i\) for any \(i\in [n]\).

Let \(G(w)=G^z(w):= (H^z-w)^{-1}\) denote the resolvent of \(H^z\) with \(\eta=\Im w\ne 0\). It is well known (e.g.\ see~\cite{MR3770875, MR4408013}) that \(G^z\) becomes approximately deterministic, as \(n\to \infty\), and its limit is expressed via the unique solution of the scalar equation
\begin{equation}\label{eq m}
    - \frac{1}{m^z} = w + m^z -\frac{\abs{z}^2}{w + m^z}, \quad \eta\Im m^z(w) >0,\quad \eta=\Im w\ne 0,
\end{equation}
which is a special case of the \emph{matrix Dyson equation} (MDE), see e.g.~\cite{MR3916109}. We note that on the imaginary axis \(m^z(\ii\eta)=\ii\Im m^z(\ii\eta)\). To find the limit of \(G^z\) we define a \(2n\times 2n\) block-matrix
\begin{equation}\label{eq M}
    M^z(w) := \begin{pmatrix}
        m^z(w) & -z u^z(w) \\ - \ov z u^z(w) & m^z(w)
    \end{pmatrix}, \quad u^z(w):=  \frac{m^z(w)}{w+ m^z(w)},
\end{equation}
where each block is understood to be a scalar multiple of the \(n\times n\) identity matrix. We note that \(m,u,M\) are uniformly bounded in \(z,w\), i.e.
\begin{equation}\label{eq M bound}
    \norm{M^z(w)}+\abs{m^z(w)}+\abs{u^z(w)}\lesssim 1.
\end{equation}
Indeed, taking the imaginary part of~\eqref{eq m} we have (dropping \(z, w\))
\begin{equation}\label{beta ast def}
    \beta_\ast \Im m = (1-\beta_\ast) \Im w,\qquad \beta_\ast := 1-\abs{m}^2-\abs{u}^2\abs{z}^2,
\end{equation}
which implies
\begin{equation}\label{mubound}
    \abs{m}^2 + \abs{u}^2\abs{z}^2< 1,
\end{equation}
as \(\Im m\) and \(\Im w\) have the same sign.  Note that~\eqref{mubound} saturates if \(\Im w\to 0\) and \(\Re w\) is in the support
of the \emph{self-consistent density of states}, \(\rho^z(E):=\pi^{-1} \Im m^z(E+\ii 0)\). Moreover,~\eqref{eq m} is equivalent to \(u= -m^2 + u^2 \abs{z}^2\), thus \(\abs{u}<1\) and~\eqref{eq M bound} follows.

For our analysis the derivative \(m'(w)\) in the \(w\)-variable plays a central role and we note that by taking the derivative of~\eqref{eq m} we obtain
\begin{equation}\label{beta def}
    m' = \frac{1-\beta}{\beta}, \qquad \beta:= 1-m^2-u^2 \abs{z}^2.
\end{equation}
On the imaginary axis, \(w=\ii\eta\), where by taking the real part of~\eqref{eq m} it follows that \(\Re m(\ii\eta)=0\), we can use~\cite[Eq.~(3.13)]{MR4408013}
\begin{equation}
    \label{eq:expm}
    \Im m(\ii\eta)\sim \begin{cases}
        \eta^{1/3}+\abs{1-\abs{z}^2}^{1/2}  & \text{if}\quad \abs{z}\le 1, \\
        \frac{\eta}{\abs{z}^2-1+\eta^{2/3}} & \text{if}\quad \abs{z}> 1,
    \end{cases},\qquad \eta\lesssim 1
\end{equation}
to obtain asymptotics for
\begin{equation}
    \label{eq:bbou}
    \beta_\ast \sim \frac{\eta}{\Im m}, \quad \beta = \beta_\ast + 2 (\Im m)^2, \qquad \eta\lesssim 1.
\end{equation}

The optimal local law from Theorem~\cite[Theorem 5.2]{MR3770875} and~\cite[Theorem 5.2]{MR4408013}\footnote{The local laws in~\cite[Theorem 5.2]{MR3770875} and~\cite[Theorem 5.2]{MR4408013} have been proven for \(\eta\ge \eta_f(z)\), with \(\eta_f(z)\) being the fluctuation scale defined in~\cite[Eq.~(5.2)]{MR4408013}, but they can be easily extend to any \(\eta>0\) by a standard argument, see~\cite[Appendix A]{MR4221653}.},
which for the application in Girko's formula~\eqref{girko} is only needed on the imaginary axis, asserts that \(G^z\approx M^z\) in the following sense:
\begin{theorem}[Optimal local law for \(G\)]\label{theo:Gll}
    The resolvent \(G^z\) is very well approximated by the deterministic matrix \(M^z\) in the sense
    \begin{equation}\label{single local law}
        \abs{\braket{(G^z(\ii \eta)-M^z(\ii \eta))A}} \le \frac{\norm{ A} n^\xi }{n\eta}, \qquad \abs{\braket{\vx,(G^z(\ii \eta)-M^z(\ii \eta))\vy}}\le \frac{\norm{\vx}\norm{\vy}n^\xi}{\sqrt{n\eta}},
    \end{equation}
    with very high probability, uniformly for \(\eta>0\) and for any deterministic matrices and vectors \(A,\vx,\vy\).
\end{theorem}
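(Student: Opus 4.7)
The plan is to derive the bounds in \eqref{single local law} from the standard matrix Dyson equation (MDE) methodology applied to the block Hermitisation \(H^z\), following the route of~\cite{MR3770875, 1907.13631}. Writing \(H^z = H^{(0)} + Z^z\) with \(H^{(0)}\) the Hermitisation of \(X\) and \(Z^z\) the deterministic off-diagonal block encoding \(z\), the resolvent identity \((H^z - \ii\eta)G^z = I\) combined with the MDE satisfied by \(M^z\) (which reads \((-\ii\eta - Z^z - \mathcal{S}[M^z])M^z = I\), where \(\mathcal{S}\) is the self-energy superoperator associated with the entries of \(X\)) yields the perturbed equation
\begin{equation}\label{eq proof stability}
    \mathcal{L}^z[G^z - M^z] = -M^z \mathcal{D}^z - M^z \mathcal{S}[G^z - M^z](G^z - M^z),
\end{equation}
where \(\mathcal{L}^z[R] := R + M^z\mathcal{S}[R]M^z\) is the \emph{stability operator} and \(\mathcal{D}^z := H^{(0)} G^z + \mathcal{S}[G^z] G^z\) is the renormalised fluctuation.

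The probabilistic bulk of the proof is to control \(\mathcal{D}^z\) in the averaged and isotropic senses. I would estimate \(\E \abs{\braket{A\mathcal{D}^z}}^{2p}\) and \(\E \abs{\braket{\vx, \mathcal{D}^z \vy}}^{2p}\) by the cumulant expansion of each \(H^{(0)}\)-factor, using Assumption~\ref{ass:1} to bound the higher cumulants. Each expansion step supplies factors of \(n^{-1/2}\) from the cumulants together with extra resolvent entries that are controlled via the deterministic bound \eqref{eq M bound}, the a priori estimate \(\norm{G^z}\le \eta^{-1}\), and, most importantly, the Ward identity \(G^z(G^z)^* = \eta^{-1}\Im G^z\), which recovers factors of \((n\eta)^{-1/2}\). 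After sufficiently many expansion steps one arrives, with very high probability, at \(\abs{\braket{A\mathcal{D}^z}} \lesssim n^\xi \norm{A}/(n\eta)\) and the corresponding isotropic bound with an extra factor \(\sqrt{n\eta}\).

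The main analytic obstacle is the inversion of \(\mathcal{L}^z\). Near the spectral edge, when \(\abs{z}\simeq 1\) and \(\eta\) is small, the coefficient \(\beta\) from \eqref{beta def} becomes small (cf.~\eqref{eq:bbou}), so \(\mathcal{L}^z\) has a nearly degenerate direction of size \(\beta\) and its naive inverse blows up. The remedy worked out in~\cite{MR3770875, 1907.13631} is a two-scale decomposition of \(G^z - M^z\) into its component along the unstable eigenvector of \(\mathcal{L}^z\) (whose explicit form is dictated by \(M^z\)) and a transverse component, together with a self-improving identity that exploits a hidden cancellation in the projection of \(\mathcal{D}^z\) onto the unstable direction. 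Feeding the probabilistic bound on \(\mathcal{D}^z\) into \eqref{eq proof stability} and bootstrapping from large \(\eta\) downward via a standard continuity argument then upgrades the weak \(\eta^{-1/2}\)-type a priori control to the optimal \((n\eta)^{-1}\) averaged and \((n\eta)^{-1/2}\) isotropic bounds.

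Finally, since~\cite{MR3770875, 1907.13631} state the local law only for \(\eta\) above the fluctuation scale \(\eta_f(z)\), the extension to all \(\eta>0\) stated in \eqref{single local law} follows by the monotonicity argument of~\cite[Appendix A]{MR4221653}: the map \(\eta \mapsto \eta \Im \braket{G^z(\ii\eta)}\) is monotone increasing in \(\eta\), while \(\eta \Im m^z(\ii\eta)\) varies only mildly below \(\eta_f(z)\), which transfers the averaged local law across this threshold; the isotropic bound is then transferred analogously using \(\braket{\vx, \Im G^z(\ii\eta)\vx}\) as a monotone quantity.
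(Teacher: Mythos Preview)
Your sketch is broadly faithful to the machinery of~\cite{MR3770875,1907.13631}, but note that the paper does not actually prove Theorem~\ref{theo:Gll}: it is stated as a direct citation of~\cite[Theorem~5.2]{MR3770875} and~\cite[Theorem~5.2]{1907.13631}, with a footnote pointing out that those references cover only \(\eta\ge\eta_f(z)\) and that the extension to all \(\eta>0\) follows from the standard monotonicity argument of~\cite[Appendix~A]{MR4221653}. So the ``paper's proof'' is just two citations plus the monotonicity remark; you have essentially expanded on what those citations contain, which is fine but more than is required here.

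One small correction to your expansion: the stability operator in this setting is \(\mathcal{L}^z[R]=R-M^z\mathcal{S}[R]M^z\), not \(R+M^z\mathcal{S}[R]M^z\); compare with the paper's \(\cB=1-M\SS[\cdot]M\) in~\eqref{cB def} and the derivation in~\eqref{G deviation}. With the correct sign the nearly degenerate eigenvalue is \(\beta=1-m^2-u^2\abs{z}^2\) from~\eqref{beta def}, matching your discussion of the unstable direction.
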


The matrix \(H^z\) can be related to the linear statistics of eigenvalues \(\sigma_i\) of \(X\) via the precise (regularised) version of Girko's Hermitisation formula~\eqref{girko}
\begin{gather}
    \begin{aligned}
        L_n(f) & =\frac{1}{4\pi} \int_\C  \Delta f(z) \Big[\log\abs{\det (H^z-\ii T)}-\E  \log \abs{\det (H^z-\ii T)}\Big]\dif^2 z                                                                                \\
               & \quad-\frac{n}{2\pi \ii} \int_\C  \Delta f \left[\left(\int_0^{\eta_0}+\int_{\eta_0}^{\eta_c}+\int_{\eta_c}^T \right) \bigl[\braket{ G^z(\ii\eta)-\E G^z(\ii\eta)}\bigr]\dif \eta\right] \dif^2z \\
               & =:  J_T+I_0^{\eta_0}+I_{\eta_0}^{\eta_c}+I_{\eta_c}^T,
    \end{aligned}\label{eq:GirkosplitA}\raisetag{-5em}
\end{gather}
for
\begin{equation}\label{eq:scales}
    \eta_0:= n^{-1-\delta_0}, \quad \eta_c:= n^{-1+\delta_1},
\end{equation}
and some very large \(T>0\), say \(T=n^{100}\). Note that in~\eqref{eq:GirkosplitA} we used that \(\braket{ G^z(\ii\eta)}=\ii\braket{ \Im G^z(\ii\eta)}\) by spectral symmetry. The test function \(f\colon\C \to \C \) is in \(H^{2+\delta}\) and it is compactly supported. \(J_T\) in~\eqref{eq:GirkosplitA} consists of the first line in the rhs., whilst \(I_0^{\eta_0},I_{\eta_0}^{\eta_c},I_{\eta_c}^T\) corresponds to the three different \(\eta\)-regimes in the second line of the rhs.\ of~\eqref{eq:GirkosplitA}.

\begin{remark}
    We remark that in~\eqref{eq:GirkosplitA} we split the \(\eta\)-regimes in a different way compared to~\cite[Eq.~(32)]{MR4221653}. We also use a different notation to identify the \(\eta\)-scales: here we use the notation \(J_T, I_0^{\eta_0}, I_{\eta_0}^{\eta_c}, I_{\eta_c}^T\), whilst in~\cite[Eq.~(32)]{MR4221653}
    we used the notation \(I_1, I_2, I_3, I_4\).
\end{remark}

The different regimes in~\eqref{eq:GirkosplitA} will be treated using different techniques. More precisely, the integral \(J_T\) is easily estimated as in~\cite[Proof of Theorem 2.3]{MR4408013}, which uses similar computations to~\cite[Proof of Theorem 2.5]{MR3770875}. The term \(I_0^{\eta_0}\) is estimated using the fact that with high probability there are no eigenvalues in the regime \([0,\eta_0]\); this follows by~\cite[Theorem 3.2]{MR2684367}. Alternatively (see Remark~\ref{rem:2ass} and Remark~\ref{rem:altern} later), the contribution of the regime \(I_0^{\eta_0}\) can be estimated without resorting to the quite sophisticated proof of~\cite[Theorem 3.2]{MR2684367} if the entries of \(X\) satisfy the additional assumption~\eqref{eq:addass}. More precisely, this can be achieved using~\cite[Proposition 5.7]{MR3770875} (which follows adapting the proof of~\cite[Lemma 4.12]{MR2908617}) to bound the very small regime \([0,n^{-l}]\), for some large \(l\in\N \), and then using~\cite[Corollary 4]{1908.01653} to bound the regime \([n^{-l},\eta_0]\).

The main novel work is done for the integrals \(I_{\eta_0}^{\eta_c}\) and \(I_{\eta_c}^T\). The main contribution to \(L_n(f)\) comes from the mesoscopic regime in \(I_{\eta_c}^T\), which is analysed using the following Central Limit Theorem for resolvents.

\begin{proposition}[CLT for resolvents]\label{prop:CLTresm}
    Let \(\epsilon,\xi>0\) be arbitrary. Then for \(z_1,\dots,z_p\in\C\) and \(\eta_1,\dots,\eta_p\ge n^{\xi-1} \max_{i\ne j}\abs{z_i-z_j}^{-2}\), denoting the pairings on \([p]\) by \(\Pi_p\), we have
    \begin{gather}
        \begin{aligned}
            \E\prod_{i\in[p]} \braket{G_i-\E G_i} & = \sum_{P\in \Pi_p}\prod_{\{i,j\}\in P} \E  \braket{G_i-\E G_i}\braket{G_j-\E G_j}  + \mathcal{O}\left(\Psi\right)     \\
                                                  & = \frac{1}{n^p}\sum_{P\in \Pi_p}\prod_{\{i,j\}\in P} \frac{V_{i,j}+\kappa_4 U_i U_j}{2}+ \mathcal{O}\left(\Psi\right),
        \end{aligned}\label{eq CLT resovlent}\raisetag{-5em}
    \end{gather}
    where \(G_i=G^{z_i}(\ii \eta_i)\),
    \begin{equation}\label{eq psi error}
        \Psi:= \frac{n^\epsilon}{(n\eta_*)^{1/2}}\frac{1}{\min_{i\ne j}\abs{z_i-z_j}^4}\prod_{i\in[p]}\frac{1}{\abs{1-\abs{z_i}}n\eta_i},
    \end{equation}
    \(\eta_*:= \min_i\eta_i\), and \(V_{i,j}=V_{i,j}(z_i,z_j,\eta_i,\eta_j)\) and \(U_i=U_i(z_i,\eta_i)\) are defined as
    \begin{equation}
        \label{eq:exder}
        \begin{split}
            V_{i,j}&:= \frac{1}{2}\partial_{\eta_i}\partial_{\eta_j} \log \bigl[ 1+(u_i u_j\abs{z_i}\abs{z_j})^2-m_i^2 m_j^2-2u_i u_j\Re z_i\overline{z_j}\bigr], \\
            U_i&:= \frac{\ii}{\sqrt{2}}\partial_{\eta_i} m_i^2,
        \end{split}
    \end{equation}
    with \(m_i=m^{z_i}(\ii\eta_i)\) and \(u_i=u^{z_i}(\ii\eta_i)\).

    Moreover, the expectation of \(G\) is given by
    \begin{equation}\label{prop clt exp}
        \braket{\E G}= \braket{M} - \frac{\ii\kappa_4}{4n}\partial_\eta(m^4) + \cO\Bigl(\frac{1}{\abs{1-\abs{z}}n^{3/2} (1+\eta)}+\frac{1}{\abs{1-\abs{z}}(n\eta)^2}\Bigr).
    \end{equation}
\end{proposition}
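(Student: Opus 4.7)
My approach would combine an iterated cumulant expansion in the entries of $X$ with a new local law for products of \emph{two} resolvents at different spectral parameters, the latter being the key technical novelty. Starting from $H^{z_i}G_i = I + \ii\eta_i G_i$, I would apply the standard cumulant expansion
$\E\, x_{ab} f(X) = \sum_{k\ge 1} (k!)^{-1}\kappa_{k+1}(x_{ab}) \E\, \partial_{x_{ab}}^k f(X)$
to each factor $\braket{G_i - \E G_i}$ in the product $\E\prod_{i} \braket{G_i - \E G_i}$. By Assumption~\ref{ass:1}, every second-cumulant contraction must pair $x_{ab}$ with $\ov{x_{ab}}$; each such contraction either ``glues'' $G_i$ to some other factor $G_j$ through a two-resolvent observable of the form $\braket{G^{z_i} A G^{z_j}}$, or produces an expression already controlled by the single-resolvent local law~\eqref{single local law}. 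Odd cumulants contribute subleading corrections, while the fourth cumulant $\kappa_4$ produces a distinguished rank-one term proportional to $\partial_{\eta_i} m_i^2$; this explains both the $U_i$ in the expectation identity~\eqref{prop clt exp} and, after pairing, the $\kappa_4 U_i U_j$ contributions to~\eqref{eq CLT resovlent}. Iterating the expansion and discarding non-pairing partitions (which are of smaller order by the moment bounds and the single local law) produces exactly the Wick pairing structure of a Gaussian vector.

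\textbf{Two-resolvent local law and covariance.} The central new ingredient is a deterministic approximation for $\braket{G^{z_i} A G^{z_j}}$ when $z_i \ne z_j$. I would set up a linear matrix equation $(1-\mathcal{L}_{i,j})[X_{i,j}] = M^{z_i} A M^{z_j}$ for the leading deterministic $2n\times 2n$ block matrix $X_{i,j}$, where the stability operator $\mathcal{L}_{i,j}$ is built from $M^{z_i}, M^{z_j}$ via the covariance structure of $X$. A direct computation using~\eqref{eq M} and~\eqref{mubound} shows that the spectrum of $\mathcal{L}_{i,j}$ stays away from $1$ by an amount of order $\abs{z_i-z_j}^2 + (\Im m_i + \Im m_j)^2$, so that $(1-\mathcal{L}_{i,j})^{-1}$ has operator norm controlled by $\abs{z_i-z_j}^{-2}$; this is exactly the stability bound forced by the hypothesis $\eta_i \ge n^{\xi-1}\min_{i\ne j}\abs{z_i-z_j}^{-2}$. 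With stability in hand, a bootstrap cumulant expansion in the spirit of Theorem~\ref{theo:Gll} would then yield the local law with errors of the type~\eqref{eq psi error}. To identify $V_{i,j}$ explicitly, I would observe that $\E\braket{G_i-\E G_i}\braket{G_j-\E G_j}$ reduces to (a derivative of) $\log\det(1-\mathcal{L}_{i,j})$, and a block-matrix computation would identify this determinant with the bracket inside the logarithm in~\eqref{eq:exder}.

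\textbf{Main obstacle.} The hardest step is the two-resolvent local law with a sharp dependence on both $\abs{z_i - z_j}$ and $n\eta_i$. The stability constant $\abs{z_i-z_j}^{-2}$ enters every iteration of the cumulant expansion and must not be amplified by higher-order terms; the diagrammatic accounting is substantially more involved than for standard Wigner-type local laws because the spectral parameters $z_i, z_j$ sit off-diagonally inside $H^z$, so $G^{z_i}$ and $G^{z_j}$ do not commute and cannot be treated through a single spectral integration. A secondary obstacle is the algebraic manipulation reducing $\det(1-\mathcal{L}_{i,j})$ to the clean form inside the logarithm in~\eqref{eq:exder}, and the verification that all third- and higher-cumulant contributions to both the covariance and to the expectation formula~\eqref{prop clt exp} are indeed subleading against the already delicate error budget~\eqref{eq psi error}.
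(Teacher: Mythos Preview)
Your proposal is essentially the paper's own proof: the paper also reduces $\braket{G_i-\E G_i}$ to the self-renormalised quantity $\braket{-\un{WG_i}A_i-\cE_i}$ with $A_i=((\cB_i^\ast)^{-1}[1])^\ast M_i$, then performs an iterated cumulant expansion whose Gaussian term is controlled by the two-resolvent local law (Theorem~\ref{thm local law G2}) through the stability operator $\wh\cB_{ij}$ with $\norm{\wh\cB_{ij}^{-1}}\lesssim\abs{z_i-z_j}^{-2}$, and whose $k=3$ term yields the $\kappa_4 U_iU_j$ and $\cE_i$ contributions. Your observation that $V_{i,j}$ arises as $\tfrac12\partial_{\eta_i}\partial_{\eta_j}\log\det(1-\mathcal L_{i,j})$ is correct and slightly cleaner than the paper's route, which first obtains the explicit rational expression~\eqref{eq:vw} and then verifies by direct computation that it equals the derivative form in~\eqref{eq:exder}; indeed the bracket in~\eqref{eq:exder} is exactly the product $\wh\beta\wh\beta_\ast$ of the two nontrivial eigenvalues of $\wh\cB_{ij}$ computed in Lemma~\ref{lemma:betaM}.
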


\begin{remark}
    In Section~\ref{sec:PCLT} we will apply this proposition in the regime where \(\min_{i\ne j}\abs{z_i-z_j}\) is quite large, i.e.\ it is at least \(n^{-\delta}\), for some small \(\delta>0\), hence we did not optimise the estimates for the opposite regime. However, using the more precise~\cite[Lemma 6.1]{MR4235475} instead of Lemma~\ref{lemma:betaM} within the proof, one can immediately strengthen Proposition~\ref{prop:CLTresm} on two accounts. First, the condition on \(\eta_*=\min\eta_i\) can be relaxed to
    \[\eta_*\gtrsim n^{\xi-1} \Bigl(\min_{i\ne j} \abs{z_i-z_j}^2 + \eta_*\Bigr)^{-1}.\]
    Second, the denominator \(\min_{i\ne j} \abs{z_i-z_j}^4\) in~\eqref{eq psi error} can be improved to
    \[\Bigl(\min_{i\ne j} \abs{z_i-z_j}^2 +\eta_*\Bigr)^2.\]
\end{remark}

In order to show that the contribution of \(I_{\eta_0}^{\eta_c}\) to \(L_n(f)\) is negligible, in Proposition~\ref{prop:indmr} we prove that \(\braket{  G^{z_1}(\ii\eta_1)}\) and \(\braket{  G^{z_2}(\ii\eta_2)}\) are asymptotically independent if \(z_1\), \(z_2\) are far enough from each other, they are well inside \(\mathbf{D}\), and \(\eta_0 \le \eta_1, \eta_2 \le \eta_c\).

\begin{proposition}[Independence of resolvents with small imaginary part]\label{prop:indmr}
    Fix \(p\in \mathbf{N}\). For any sufficiently small \(\omega_d,\omega_h,\omega_f>0\) such that \(\omega_h\ll \omega_f\), there exist \(\omega,\widehat{\omega}, \delta_0,\delta_1>0\) such that \(\omega_h\ll \delta_m\ll \widehat{\omega}\ll \omega\ll\omega_f\), for \(m=0,1\), such that for any \(\abs{z_l}\le 1-n^{-\omega_h}\), \(\abs{z_l-z_m}\ge n^{-\omega_d}\), with \(l,m \in [p]\), \(l\ne m\), it holds
    \begin{equation}
        \label{eq:indtrlm}
        \E \prod_{l=1}^p \braket{ G^{z_l}(\ii\eta_l)}=\prod_{l=1}^p\E  \braket{ G^{z_l}(\ii\eta_l)}+\mathcal{O}\left(\frac{n^{p(\omega_h+\delta_0)+\delta_1}}{n^{\omega}}+\frac{n^{\omega_f+3\delta_0}}{\sqrt{n}}\right),
    \end{equation}
    for any \(\eta_1,\dots,\eta_p\in [n^{-1-\delta_0},n^{-1+\delta_1}]\).
\end{proposition}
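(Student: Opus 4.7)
The plan is to execute the DBM-based decoupling strategy outlined in the introduction. First I would apply the Green function comparison theorem (GFT) to add a small Gaussian component: set $X_t := e^{-t/2} X_0 + \sqrt{1-e^{-t}}\,\Xi$ for a standard complex Ginibre matrix $\Xi$ independent of $X_0$, and a small time $t = n^{-1+\widehat{\omega}}$. A continuity argument along this Ornstein--Uhlenbeck flow together with resolvent expansions should show that the joint expectation in~\eqref{eq:indtrlm} for $X_0$ differs from that for $X_t$ by $\mathcal{O}(n^{-c})$ for some $c>0$, reducing the problem to the evolved matrix.

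Next I would study $H^{z_l}_t$ via Dyson Brownian Motion. For each $l$, the eigenvalues $\bm\lambda^{z_l}(s)$ of $H^{z_l}_s$ satisfy a DBM driven by Brownian motions $\beta^{z_l}_i(s)$, and these drivers are correlated across different $l$ through the eigenvector overlaps of $H^{z_l}_s$ and $H^{z_m}_s$. The idea, extending the pathwise coupling of~\cite{MR3916329}, is to introduce \emph{independent} Ginibre reference ensembles $Y^{(l)}$ and simultaneously couple each process $\bm\lambda^{z_l}(s)$ to a reference DBM $\tilde{\bm\lambda}^{z_l}(s)$ run on $Y^{(l)} + \sqrt{s}\,\Xi^{(l)}$ with the $Y^{(l)},\Xi^{(l)}$ mutually independent across $l$. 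Homogenization for a single $l$, combined with the local law for $H^{z_l}$ (Theorem~\ref{theo:Gll}), delivers the pathwise estimate $\lambda_i^{z_l}(t) - \tilde\lambda_i^{z_l}(t) = o(1/n)$ for bulk indices near zero, \emph{provided} the drivers of the two flows can be matched up to negligible error.

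The driver matching is the main obstacle. What is needed is that the covariance of $d\beta^{z_l}_i$ and $d\beta^{z_m}_j$ is negligible when $l\ne m$; this covariance is a bilinear form in the eigenvector overlap $\braket{w^{z_l}_i, w^{z_m}_j}$ (with $w = (u,v)$ from the block structure of $H^z$), which in integrated form reduces to expressions of the type $\frac{1}{n}\Tr \Im G^{z_l}(\ii\eta)\,\Im G^{z_m}(\ii\eta')$. Controlling this requires a \emph{new local law for products of two resolvents at distinct base points}: one identifies the deterministic leading term as the solution of a matrix Dyson equation whose stability is proved using the separation hypothesis $\abs{z_l-z_m}\ge n^{-\omega_d}$, and one establishes the fluctuation bound through a diagrammatic cumulant expansion adapted to the product structure. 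The quantitative overlap estimate it yields is the main ingredient powering the decoupling.

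Finally, combining the ingredients: the reference processes $\tilde{\bm\lambda}^{z_l}(t)$ are independent across $l$ by construction, and the coupling gives $\bm\lambda^{z_l}(t) \approx \tilde{\bm\lambda}^{z_l}(t)$. Writing
\[
\braket{G^{z_l}(\ii\eta_l)} = \frac{\ii}{2n}\sum_{i}\frac{\eta_l}{(\lambda_i^{z_l})^2 + \eta_l^2}
\]
and substituting the coupled eigenvalues factorises $\E\prod_l \braket{G^{z_l}(\ii\eta_l)}$ up to an error of the size stated in~\eqref{eq:indtrlm}: the term $n^{-\omega}$ traces back to the overlap/coupling estimate, the factor $n^{p(\omega_h+\delta_0)+\delta_1}$ to summing over bulk eigenvalues and to the $\eta_l$-window, and $n^{-1/2+O(\omega_f+\delta_0)}$ to the GFT step. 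The scale hierarchy $\omega_h \ll \delta_m \ll \widehat\omega \ll \omega \ll \omega_f$ is engineered so that all intermediate DBM and GFT errors are dominated by these two terms. The hardest step is the product local law: standard single-resolvent techniques do not apply since $G^{z_l}$ and $G^{z_m}$ do not commute, and the analysis of their joint resolvent identity is genuinely new.
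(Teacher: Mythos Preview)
Your proposal is correct and follows the paper's strategy: GFT to add a Gaussian component, DBM coupling to independent Ginibre references, and the product local law (Theorem~\ref{thm local law G2}) to bound the eigenvector overlaps that control the driver correlations. One implementation detail you gloss over: you cannot \emph{directly} couple the correlated processes $\bm\lambda^{z_l}$ to mutually independent references $\tilde{\bm\lambda}^{z_l}$ on a single probability space, since the drivers of the latter cannot be simultaneously close to the correlated $b^{z_l}$ and independent across $l$. The paper resolves this with a four-process sandwich (Section~\ref{sec:COMPPRO}): first couple $\bm\lambda^{z_l}$ to auxiliary $\tilde{\bm\lambda}^{(l)}$ sharing the \emph{same} small-index drivers $b_i^{z_l}$ (Lemma~\ref{lem:firststepmason}, so $L\equiv 0$ and standard homogenisation applies); then construct $\tilde{\bm\mu}^{(l)}$ on a separate space with the same \emph{joint} law as $(\tilde{\bm\lambda}^{(l)})_l$ via Doob's martingale representation; finally couple $\tilde{\bm\mu}^{(l)}$ to the genuinely independent Ginibre processes $\bm\mu^{(l)}$ (Lemma~\ref{lem:secondstepmason}), and it is in this last step that the overlap bound (Lemma~\ref{lem:overb}) enters to show the driver mismatch $L$ is $O(n^{-\omega_Q})$. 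Also, the time scale for the flow should be $t_f=n^{-1+\omega_f}$, not $n^{-1+\widehat\omega}$; in the paper $\widehat\omega$ denotes the index cutoff $|i|\le n^{\widehat\omega}$ for which the pathwise coupling is effective, and rigidity handles the remaining indices.
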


The paper is organised as follows: In Section~\ref{sec:PCLT} we conclude Theorem~\ref{theo:CLT} by combining Propositions~\ref{prop:CLTresm} and~\ref{prop:indmr}. In Section~\ref{sec local law G2} we prove a local law for \(G_1A G_2\), for a deterministic matrix \(A\). In Section~\ref{sec:CLTres}, using the result in Section~\ref{sec local law G2} as an input, we prove Proposition~\ref{prop:CLTresm}, the Central Limit Theorem for resolvents. In Section~\ref{sec:IND} we prove Proposition~\ref{prop:indmr} using the fact that the correlation among small eigenvalues of \(H^{z_1}\), \(H^{z_2}\) is ``small'', if \(z_1\), \(z_2\) are far from each other, as a consequence of the local law in Section~\ref{sec local law G2}.

\section{Central limit theorem for linear statistics}\label{sec:PCLT}
In this section, using Proposition~\ref{prop:CLTresm}--\ref{prop:indmr} as inputs, we prove our main result Theorem~\ref{theo:CLT}.

\subsection{Preliminary reductions in Girko's formula}
In this section we prove that the main contribution to \(L_n(f)\) in~\eqref{eq:GirkosplitA} comes from the regime \(I_{\eta_c}^T\). This is made rigorous in the following lemma.

\begin{lemma}\label{lem:i4}
    Fix \(p\in \N \) and some bounded open \(\ov\DD\subset\Omega\subset\C\), and for any \(l\in [p]\) let \(f^{(l)}\in H_0^{2+\delta}(\Omega)\). Then
    \begin{equation}
        \label{eq:allrsma}
        \E \prod_{l=1}^p L_n\bigl(f^{(l)}\bigr)= \E\prod_{l=1}^p I_{\eta_c}^T\bigl( f^{(l)}\bigr)+\mathcal{O}\left( n^{-c(p)}\right),
    \end{equation}
    for some small \(c(p)>0\), with \(L_n(f^{(l)})\) and \(I_{\eta_c}^T( f^{(l)})\) defined in~\eqref{eq:GirkosplitA}. The constant in \(\mathcal{O}(\cdot)\) may depend on \(p\) and on the \(L^2\)-norm of \(\Delta f^{(1)},\dots, \Delta f^{(p)}\).
\end{lemma}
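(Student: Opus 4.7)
The plan is to expand the $p$-fold product via Girko's decomposition~\eqref{eq:GirkosplitA} into $4^p$ contributions and to show that only $\E\prod_l I_{\eta_c}^T(f^{(l)})$ survives. Writing $R:= J_T+I_0^{\eta_0}+I_{\eta_0}^{\eta_c}$ so that $L_n=I_{\eta_c}^T+R$, we obtain
\[
\E\prod_{l=1}^p L_n(f^{(l)})-\E\prod_{l=1}^p I_{\eta_c}^T(f^{(l)})=\sum_{\emptyset\ne S\subseteq [p]}\E\prod_{l\in S} R(f^{(l)})\prod_{l\notin S} I_{\eta_c}^T(f^{(l)}),
\]
and by H\"older's inequality it is enough to establish the bounds $\norm{R(f)}_{L^q}\lesssim n^{-c}$ and $\norm{I_{\eta_c}^T(f)}_{L^q}\lesssim n^{\xi}$ for every fixed $q$ and some $c=c(p)>0$. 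The second bound is an immediate consequence of the single-point local law in Theorem~\ref{theo:Gll}: $\abs{\braket{G^z(\ii\eta)-\E G^z(\ii\eta)}}\lesssim n^\xi/(n\eta)$ integrated over $\eta\in[\eta_c,T]$ yields $\abs{I_{\eta_c}^T(f)}\lesssim n^\xi\log n$ with very high probability.

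For the three pieces of $R$ we argue separately. The $J_T$-contribution is essentially deterministic at the very large scale $T=n^{100}$, where $H^z-\ii T$ is dominated by $-\ii T$; repeating the computations of~\cite[Proof of Theorem 2.3]{1907.13631} (in turn based on~\cite[Proof of Theorem 2.5]{MR3770875}) gives $\abs{J_T(f)}\lesssim n^{-c}$ with very high probability. For $I_0^{\eta_0}$ we invoke the smallest singular value lower bound of~\cite[Theorem 3.2]{MR2684367} (or the alternative~\cite[Proposition 5.7]{MR3770875} plus~\cite[Corollary 4]{1908.01653} under~\eqref{eq:addass} as in Remark~\ref{rem:2ass}): with very high probability $H^z$ has no eigenvalues in $[0,\eta_0]$ for each relevant $z$. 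On this good event, the integral $\int_0^{\eta_0}\Im\braket{G^z(\ii\eta)}\dif\eta\lesssim \eta_0$ using the trivial bound $\Im\braket{G^z(\ii\eta)}\lesssim 1$, so after multiplication by the $n$ prefactor and integration against $\Delta f$ one obtains $O(n^{-\delta_0})$. On the complementary event of probability $n^{-D}$ we close the estimate by the crude deterministic bound $\norm{G^z(\ii\eta)}\le\eta^{-1}$.

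The main obstacle is the mesoscopic contribution $I_{\eta_0}^{\eta_c}$, which is not pointwise small: the local law alone gives only $\abs{I_{\eta_0}^{\eta_c}(f)}\lesssim n^\xi$. Here we crucially use Proposition~\ref{prop:indmr}. Expanding
\[
\E\abs*{I_{\eta_0}^{\eta_c}(f)}^{2q}\lesssim n^{2q}\int_{\C^{2q}}\prod_l\abs{\Delta f(z_l)}\int_{[\eta_0,\eta_c]^{2q}}\Bigl\lvert\E\prod_l\bigl(\braket{G^{z_l}(\ii\eta_l)}-\E\braket{G^{z_l}(\ii\eta_l)}\bigr)\Bigr\rvert\prod\dif\eta_l\dif^2 z_l,
\]
we split the $z$-integration into the well-separated region $\mathcal{W}=\{\abs{z_l-z_m}\ge n^{-\omega_d}\text{ for all }l\ne m,\ \abs{z_l}\le 1-n^{-\omega_h}\}$ and its complement. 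On $\mathcal{W}$, Proposition~\ref{prop:indmr} combined with the inclusion--exclusion identity $\sum_{S\subseteq [2q]}(-1)^{\abs{S}}=0$ shows that the innermost expectation of the centred product is $\mathcal{O}(n^{-\omega+C(q)(\omega_h+\delta_0)+\delta_1})$ uniformly, provided the parameter hierarchy $\omega_h\ll \delta_m\ll\widehat{\omega}\ll\omega\ll\omega_f$ from the proposition is respected; this leads to a contribution of order $n^{-c}$ after integration against $\dif\eta_l\dif^2z_l$ and multiplication by $n^{2q}$. On the complementary region the Lebesgue measure is at most $n^{-2\omega_d}+n^{-\omega_h}$ (from close-pair or close-to-boundary events), while the integrand is bounded pointwise by $n^{C(q)\xi}$ via Theorem~\ref{theo:Gll}, again giving $n^{-c}$. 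A concluding H\"older step propagates $\norm{R(f)}_{L^q}\lesssim n^{-c}$ to each of the mixed moments in the expansion, finishing the proof.
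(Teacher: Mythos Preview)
Your approach follows the paper's closely: both expand \(L_n=I_{\eta_c}^T+R\) and show that every mixed term involving at least one \(R\) factor is \(O(n^{-c})\). The paper's combination mechanism is, however, simpler than your H\"older route: it uses the very high probability bounds \(\abs{J_T},\abs{I_0^{\eta_0}},\abs{I_{\eta_0}^{\eta_c}},\abs{I_{\eta_c}^T}\le n^\xi\) from Lemma~\ref{lem:aprior} to pull out all but one factor in \(L^\infty\), and then uses only the \emph{first moment} bound \(\E\abs{I_0^{\eta_0}}+\E\abs{I_{\eta_0}^{\eta_c}}\le n^{-\delta'}\) from Lemma~\ref{lem:bbexp}. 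In particular the paper only needs Proposition~\ref{prop:indmr} with \(p=2\) (to bound \(\E\abs{I_{\eta_0}^{\eta_c}}^2\)), whereas your \(L^q\) scheme requires \(p=2q\) and then inclusion--exclusion; this works but is unnecessary once the \(L^\infty\) a priori bound is available.

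There is a genuine gap in your treatment of \(I_0^{\eta_0}\). The event \(\{\lambda_1^z\ge\eta_0\}\) with \(\eta_0=n^{-1-\delta_0}\) does \emph{not} hold with very high probability: since \(\eta_0\) is just below the natural spacing \(n^{-1}\), the Tao--Vu tail bound \(\Prob(\lambda_1^z\le t)\lesssim (nt)^2\) only gives \(\Prob(\lambda_1^z\le\eta_0)\lesssim n^{-2\delta_0}\), which is a fixed small power, not \(n^{-D}\). Consequently your ``complementary event of probability \(n^{-D}\)'' is wrong, and the crude bound \(\norm{G^z(\ii\eta)}\le\eta^{-1}\) you propose for it is not even integrable down to \(\eta=0\). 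Moreover, even on \(\{\lambda_1^z\ge\eta_0\}\) the claim \(\Im\braket{G^z(\ii\eta)}\lesssim1\) fails: at \(\eta\) near \(\eta_0\) the contribution of a few small eigenvalues with \(\lambda_i\) barely above \(\eta_0\) can make \(\Im\braket{G^z(\ii\eta)}\) as large as \(n^{\delta_0+\xi}\). The correct route (the paper's Lemma~\ref{lem:bbexp} and Remark~\ref{rem:altern}) combines the polynomially small probability \(\Prob(\lambda_1^z\le\eta_0)\) with the high-probability a priori bound \(\abs{I_0^{\eta_0}}\le n^\xi\) to obtain \(\E\abs{I_0^{\eta_0}}\le n^{-\delta'}\); this first-moment bound, together with the \(L^\infty\) bound, then yields your needed \(L^q\) control via \(\E\abs{I_0^{\eta_0}}^q\le n^{(q-1)\xi}\,\E\abs{I_0^{\eta_0}}+O(n^{-D})\).
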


\begin{remark}\label{rem:2ass}
    In the remainder of this section we need to ensure that with high probability the matrix \(H^z\), defined in~\eqref{eq:linz1}, does not have eigenvalues very close to zero, i.e.\ that
    \begin{equation}
        \label{eq:exversmall}
        \Prob \left(\Spec(H^z)\cap \left[ -n^{-l},n^{-l}\right] \ne \emptyset\right)\le C_l n^{-l/2},
    \end{equation}
    for any \(l\ge 2\) uniformly in \(\abs{z}\le 1\). The bound~\eqref{eq:exversmall} directly follows from~\cite[Theorem 3.2]{MR2684367}. Alternatively,~\eqref{eq:exversmall} follows by~\cite[Proposition 5.7]{MR3770875} (which follows adapting the proof of~\cite[Lemma 4.12]{MR2908617}), without recurring to the quite sophisticated proof of~\cite[Theorem 3.2]{MR2684367}, under the additional assumption that
    there exist \(\alpha, \beta>0\) such that the random variable \(\chi\) has a density \(g\colon\C \to \interval{co}{0,\infty}\) which satisfies
    \begin{equation}
        \label{eq:addass}
        g\in L^{1+\alpha}(\C ), \qquad \norm{ g}_{L^{1+\alpha}(\C )}\le n^\beta.
    \end{equation}
\end{remark}

We start proving \emph{a priori} bounds for the integrals defined in~\eqref{eq:GirkosplitA}.

\begin{lemma}\label{lem:aprior}
    Fix some bounded open \(\ov\DD\subset\Omega\subset \C\) and let \(f\in H_0^{2+\delta}(\Omega)\). Then for any \(\xi>0\) the bounds
    \begin{equation}\label{eq:apb}
        \abs{J_T}\le \frac{n^{1+\xi}\norm{ \Delta f}_{L^1(\Omega)}}{T^2}, \qquad \abs*{I_0^{\eta_0}}+\abs*{I_{\eta_0}^{\eta_c}}+\abs{I_{\eta_c}^T}\le n^\xi \norm{ \Delta f}_{L^2(\Omega)} \abs{\Omega}^{1/2},
    \end{equation}
    hold with very high probability, where \(\abs{\Omega}\) denotes the Lebesgue measure of the set \(\Omega\).
\end{lemma}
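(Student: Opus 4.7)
My plan is to treat the four pieces of Girko's splitting separately, each with its own technique, but with a common organisation: produce a pointwise-in-$z$ estimate and then integrate against $\abs{\Delta f(z)}$. For $J_T$ this directly yields $\norm{\Delta f}_{L^1(\Omega)}$, while for the three $I$-integrals Cauchy--Schwarz in $z$ (using $\supp f\subset \Omega$) produces $\norm{\Delta f}_{L^2(\Omega)}\abs{\Omega}^{1/2}$.

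For $J_T$ I would use the explicit formula $\log\abs{\det(H^z-\ii T)} = \tfrac12\sum_j\log(T^2+(\lambda_j^z)^2) = n\log T^2+\tfrac12 \sum_j\log(1+(\lambda_j^z)^2/T^2)$; the constant cancels after centring and the inequality $0\le\log(1+x)\le x$ leaves $\norm{H^z}_{\rm HS}^2/T^2 = 2\norm{X-z}_{\rm HS}^2/T^2$. Concentration of $\norm{X}_{\rm HS}^2$ around its mean $n$ (using the moment bound~\eqref{eq:hmb}) yields, with very high probability, the pointwise bound $\lesssim n(1+\abs{z}^2)/T^2$. Integrating against $\abs{\Delta f}$ over bounded $\Omega$ delivers the claimed $n^{1+\xi}\norm{\Delta f}_{L^1(\Omega)}/T^2$.

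For the mesoscopic and macroscopic integrals $I_{\eta_0}^{\eta_c}$ and $I_{\eta_c}^T$ I would apply the local law of Theorem~\ref{theo:Gll} directly: $\abs{\braket{G^z(\ii\eta)-M^z(\ii\eta)}}\le n^\xi/(n\eta)$ with very high probability, uniformly in $\eta>0$. Passing to expectations via the trivial bound $\norm{G(\ii\eta)}\le 1/\eta$ on the exceptional event transfers the same bound to $\abs{\braket{\E G^z-M^z}}$, so the deterministic $M^z$ cancels and $\abs{\braket{G^z-\E G^z}}\le 2n^\xi/(n\eta)$. Integrating this bound over $\eta\in[\eta_0,T]$ produces $\lesssim n^{\xi-1}\log(T/\eta_0)\lesssim n^{\xi-1}\log n$ per $z$; multiplying by the prefactor $n/(2\pi)$ and applying Cauchy--Schwarz in $z$ gives the claimed estimate (absorbing logarithms into $\xi$).

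The subcritical regime $I_0^{\eta_0}$ is the main technical point, since for $\eta<1/n$ the local law is no longer informative. Using the spectral symmetry $\braket{G^z(\ii\eta)}=\ii\braket{\Im G^z(\ii\eta)}$ noted after~\eqref{eq:GirkosplitA} and Fubini, I would evaluate
\begin{equation*}
\int_0^{\eta_0}\braket{\Im G^z(\ii\eta)}\,\dif\eta=\frac{1}{4n}\sum_j\log\bigl(1+\eta_0^2/(\lambda_j^z)^2\bigr).
\end{equation*}
On the event $\Xi=\{\min_j\abs{\lambda_j^z}\ge n^{-l}\}$, which has probability $\ge 1-C_ln^{-l/2}$ by~\eqref{eq:exversmall} and which I would upgrade uniformly in $z\in\Omega$ via a polynomial net together with Lipschitz dependence of the spectrum on $z$, I would split the sum at $\abs{\lambda_j}=\eta_0$. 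For the small eigenvalues each logarithm is $\lesssim l\log n$, while their count is bounded by $n^\xi$ using the local law at $\ii\eta_0$ combined with the monotonicity of $\eta\Im\braket{G^z(\ii\eta)}$ evaluated at a mesoscopic reference scale $\eta^\ast=n^{-1+\xi}$. For the remaining eigenvalues $\log(1+x)\le x$ together with the identity $\sum_j 1/((\lambda_j^z)^2+\eta_0^2)=(2n/\eta_0)\Im\braket{G^z(\ii\eta_0)}$ produces a contribution $\lesssim n^\xi$. Dividing by $4n$ yields a per-$z$ bound of $n^{\xi-1}$ on $\Xi$. The corresponding bound on $\abs{\E\int_0^{\eta_0}\braket{G^z(\ii\eta)}\,\dif\eta}$ follows by controlling the exceptional event through $\E\abs{\log\lambda_1^z}^2\le C$, obtained by integrating the tail~\eqref{eq:exversmall}. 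The prefactor $n$ and Cauchy--Schwarz in $z$ then complete the argument. The chief obstacle is making the local-law bound and the no-eigenvalue event sufficiently uniform in $z\in\Omega$ so that the pointwise estimates can be integrated.
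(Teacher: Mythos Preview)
Your proposal is correct and follows essentially the same strategy as the paper: the explicit Hilbert--Schmidt argument for \(J_T\), the local law bound \(\abs{\braket{G^z-\E G^z}}\le n^\xi/(n\eta)\) for the \(I\)-pieces, and the smallest-singular-value input~\eqref{eq:exversmall} to control the subcritical regime. Two small remarks. First, your treatment of \(I_0^{\eta_0}\) via the explicit logarithmic formula and the eigenvalue splitting at \(\eta_0\) is more elaborate than necessary; the paper simply excises \([0,n^{-l}]\) (on the very-high-probability event \(\abs{\lambda_1^z}\ge n^{-l/2}\) one has \(n\int_0^{n^{-l}}\abs{\braket{G^z}}\,\dif\eta\le\tfrac14\sum_j n^{-2l}/\lambda_j^2\le n^{1-l}\)) and then integrates the local-law bound \(n^\xi/(n\eta)\) directly over \([n^{-l},T]\), which yields the same \(n^\xi\log n\) per \(z\) with less work. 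Second, the tail~\eqref{eq:exversmall} gives \(\E\abs{\log\lambda_1^z}^2\) bounded polynomially in \(n\) rather than by a constant, but since you pair it with an event of probability \(n^{-D}\) for arbitrary \(D\), this does not affect your argument.
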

\begin{proof}
    The proof of the bound for \(J_T\) is identical to~\cite[Proof of Theorem 2.3]{MR4408013} and so omitted.

    The bound for \(I_0^{\eta_0}, I_{\eta_0}^{\eta_c}, I_{\eta_c}^T\) relies on the local law of Theorem~\ref{theo:Gll}. More precisely, by Theorem~\ref{theo:Gll} and~\eqref{prop clt exp} of Proposition~\ref{prop:CLTresm} it follows that
    \begin{equation}
        \label{eq:llexp}
        \abs*{\braket{ G^z-\E  G^z}}\le \frac{n^\xi}{n\eta},
    \end{equation}
    with very high probability uniformly in \(\eta>0\) and \(\abs{z}\le C\) for some large \(C>0\). First of all we remove the regime \([0,n^{-l}]\)  by~\cite[Theorem 3.2]{MR2684367}, i.e.\ its contribution is smaller than \(n^{-l}\), for some large \(l\in\N \), with very high probability. Alternatively, this can be achieved by~\cite[Proposition 5.7]{MR3770875} under the additional assumption~\eqref{eq:addass} in Remark~\ref{rem:2ass}. Then for any \(a,b\ge n^{-l}\), by~\eqref{eq:llexp}, we have
    \begin{equation}
        \label{eq:impbbfin}
        n\abs*{\int_\Omega \dif^2 z \Delta f(z)\int_a^b \dif \eta \bigl[\braket{ G(\ii\eta) -\E  G(\ii\eta)}\bigr] }\lesssim n^\xi \abs{\Omega}^{1/2} \norm{ \Delta f}_{L^2(\Omega)},
    \end{equation}
    with very high probability. This concludes the proof of the second bound in~\eqref{eq:apb}.
\end{proof}

We have a better bound for \(I_0^{\eta_0}\), \(I_{\eta_0}^{\eta_c}\) which holds true in expectation.

\begin{lemma}\label{lem:bbexp}
    Fix some bounded open \(\ov\DD\subset\Omega\subset \C\) and let \(f\in H_0^{2+\delta}(\Omega)\). Then there exists \(\delta'>0\) such that
    \begin{equation}
        \label{eq:impexb}
        \E \abs*{I_0^{\eta_0}}+\E \abs*{I_{\eta_0}^{\eta_c}}\le n^{-\delta'}\norm{ \Delta f}_{L^2(\Omega)}.
    \end{equation}
\end{lemma}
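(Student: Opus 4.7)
The plan is to handle the two integrals separately since the nature of the $\eta$-regimes $[0,\eta_0]$ and $[\eta_0,\eta_c]$ is fundamentally different. For $I_0^{\eta_0}$ I will give an $L^1$-type argument exploiting the non-negativity of $\braket{\Im G^z(\ii\eta)}$, whereas for $I_{\eta_0}^{\eta_c}$ I will give an $L^2$-type argument exploiting the asymptotic independence of the resolvents at different $z$'s provided by Proposition~\ref{prop:indmr}. In both cases the na\"ive pointwise use of the local law Theorem~\ref{theo:Gll} only yields a logarithmic factor $\log(\eta_c/\eta_0)=\mathcal{O}(\log n)$, which is insufficient for~\eqref{eq:impexb}.

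For $I_0^{\eta_0}$, the spectral symmetry $\lambda_{-j}^z=-\lambda_j^z$ gives $\braket{G^z(\ii\eta)}=\ii\braket{\Im G^z(\ii\eta)}$ and the spectral decomposition leads to
\[
S^z:= \int_0^{\eta_0}\braket{\Im G^z(\ii\eta)}\dif\eta=\frac{1}{4n}\sum_j\log\!\Bigl(1+\frac{\eta_0^2}{(\lambda_j^z)^2}\Bigr)\ge 0,
\]
so that $I_0^{\eta_0}=-\frac{n}{2\pi}\int_\Omega\Delta f(z)\,(S^z-\E S^z)\,\dif^2 z$. The non-negativity gives the trivial bound $\E\abs{S^z-\E S^z}\le 2\E S^z$, reducing the task to showing $\E S^z\lesssim\eta_0\log n$ uniformly in $z$. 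I plan to split the sum $\sum_j$ according to whether $\abs{\lambda_j^z}\gtrless\eta_0$: the contribution of $\abs{\lambda_j^z}\ge\eta_0$ is estimated using $\log(1+x)\le x$ together with $\sum_{\abs{\lambda_j^z}\ge\eta_0}\lambda_j^{-2}\lesssim n/\eta_0$, a consequence of the boundedness of the self-consistent density $\rho^z$ near zero via Theorem~\ref{theo:Gll}; whereas the contribution of $\abs{\lambda_j^z}<\eta_0$ is controlled via the crude bound $\log(1+\eta_0^2/\lambda^2)\le 2\log_+(\eta_0/\abs{\lambda})+\log 2$, a dyadic decomposition of the interval $[0,\eta_0]$, and the quantitative smallest-eigenvalue estimate from Remark~\ref{rem:2ass}. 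This yields $\E\abs{I_0^{\eta_0}}\lesssim n\eta_0\log n\cdot\abs{\Omega}^{1/2}\norm{\Delta f}_{L^2(\Omega)}\lesssim n^{-\delta_0/2}\norm{\Delta f}_{L^2(\Omega)}$.

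For $I_{\eta_0}^{\eta_c}$ the non-negativity trick is unavailable because the $\eta$-integral of $\braket{\Im G}$ is of order $n^{-1+\delta_1}$ rather than $\eta_0$. Instead I will use Jensen's inequality and Cauchy--Schwarz in $z$ to write
\[
(\E\abs{I_{\eta_0}^{\eta_c}})^2\lesssim n^2\norm{\Delta f}_{L^2(\Omega)}^2\!\int_{\Omega^2}\!\!\abs*{\int_{[\eta_0,\eta_c]^2}\!\!\Cov\bigl(\braket{G^{z_1}(\ii\eta_1)},\braket{G^{z_2}(\ii\eta_2)}\bigr)\dif\eta_1\dif\eta_2}\dif^2 z_1\dif^2 z_2,
\]
and split $\Omega\times\Omega$ into the ``far'' region $\set{\abs{z_1-z_2}\ge n^{-\omega_d},\ \abs{z_l}\le 1-n^{-\omega_h}}$ and its complement. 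On the far region Proposition~\ref{prop:indmr} with $p=2$ directly bounds the covariance by $n^{-\omega'}$ for some small $\omega'>0$, and combined with the $\eta$-integration of total length $\le n^{-1+\delta_1}$ this leaves a total contribution $n^{2\delta_1-\omega'}$, a negative power of $n$ as long as $\omega_h,\delta_0,\delta_1\ll\omega$ in the sense of Proposition~\ref{prop:indmr}. On the complement, whose Lebesgue measure is $\lesssim n^{-2\omega_d}+n^{-\omega_h}$, the pointwise bound $\abs{\braket{G-\E G}}\le n^\xi/(n\eta)$ from Theorem~\ref{theo:Gll} gives $n^2\abs{\Cov}\le n^{2\xi}/(\eta_1\eta_2)$, and the $\eta$-integration contributes only $(\log n)^2$; the product with the small area of the exceptional set still yields a small negative power of $n$ provided $\omega_h,\omega_d\gg\xi$.

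The main obstacle is the careful balancing of the small parameters $\omega_h,\omega_d,\delta_0,\delta_1,\xi$ against $\omega$ from Proposition~\ref{prop:indmr}: one needs $\omega_h,\delta_0,\delta_1,\xi$ smaller than both $\omega$ and $\omega_d$, while $\omega_d$ itself must remain small enough that the hypothesis $\abs{z_1-z_2}\ge n^{-\omega_d}$ of the covariance bound is satisfied on a set of nearly full measure. The hierarchy $\omega_h\ll\delta_m\ll\widehat{\omega}\ll\omega\ll\omega_f$ stipulated in Proposition~\ref{prop:indmr}, together with the freedom to choose $\xi>0$ arbitrarily small in Theorem~\ref{theo:Gll}, is precisely what makes this balancing possible, giving a positive $\delta'>0$ in~\eqref{eq:impexb}.
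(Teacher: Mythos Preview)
Your treatment of \(I_{\eta_0}^{\eta_c}\) is essentially the paper's approach (computing \(\E|I_{\eta_0}^{\eta_c}|^2\), splitting the \((z_1,z_2)\)-domain, and applying Proposition~\ref{prop:indmr} on the far region), but there is a real gap in your handling of the ``complement''. You claim this set has Lebesgue measure \(\lesssim n^{-2\omega_d}+n^{-\omega_h}\), but \(\{|z_l|>1-n^{-\omega_h}\}\cap\Omega\) contains all of \(\Omega\setminus\ov\DD\), which has area of order one for generic \(\Omega\supset\ov\DD\). The paper deals with this by treating \(|z_l|\ge 1+n^{-2\omega_h}\) separately via the \emph{improved} local law outside the spectrum~\eqref{eq:betll}, which supplies an extra \(n^{-\gamma/3}\) and makes the \(O(1)\)-area region harmless; only the thin annulus \(|1-|z_l|^2|\le n^{-2\omega_h}\) is handled by the small-area argument. (Also, your displayed inequality has one factor too many: after Cauchy--Schwarz in \(z\) you get a \emph{single} \(z\)-integral of \(\E|Y_z|^2\), which loses the \(z_1\ne z_2\) structure; the paper keeps \(\Delta f(z_1)\overline{\Delta f(z_2)}\) inside the double integral and never applies Cauchy--Schwarz there.)

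Your \(I_0^{\eta_0}\) argument is a genuinely different route, but the crucial step \(\E S^z\lesssim \eta_0\log n\) does not follow from the inputs you cite. The local law~\eqref{single local law} at scale \(\eta_0=n^{-1-\delta_0}\) only gives \(\braket{\Im G(\ii\eta_0)}\lesssim n^{\xi+\delta_0}\), hence \(\sum_{|\lambda_j|\ge\eta_0}\lambda_j^{-2}\le 2\sum_j(\lambda_j^2+\eta_0^2)^{-1}\lesssim n^{2+2\delta_0+\xi}\), not your claimed \(n/\eta_0=n^{2+\delta_0}\). Plugging this in yields \(n\E S^z\lesssim n^\xi\), which merely reproduces the a~priori bound of Lemma~\ref{lem:aprior}. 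What you really need is that the \emph{expected} eigenvalue density near zero is bounded (a Wegner-type statement), equivalently \(\E\braket{\Im G^z(\ii\eta)}\lesssim 1\) uniformly for \(\eta\in(0,\eta_0]\); this is precisely the content of the smallest-singular-value bound of~\cite[Theorem~3.2]{MR2684367} (or the \(\Prob(\lambda_1^z\le\eta_0)\le n^{-\delta_0/4}\) route of Remark~\ref{rem:altern}), not of the local law. The estimate~\eqref{eq:exversmall} you invoke is stated only for \(l\ge 2\), so it does not cover the relevant scale \(\eta_0=n^{-1-\delta_0}\) either. Once one inputs the correct singular-value bound, your non-negativity trick \(\E|S^z-\E S^z|\le 2\E S^z\) becomes redundant: the paper simply restricts to the high-probability event \(\{\lambda_1^z>\eta_0\}\) and uses the crude bound \(|I_0^{\eta_0}|\le n^\xi\) on its small-probability complement.
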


\begin{proof}[Proof of  Lemma~\ref{lem:i4}]
    Lemma~\ref{lem:i4} readily follows (see e.g.~\cite[Lemma 4.2]{MR4221653}) combining Lemma~\ref{lem:aprior} and Lemma~\ref{lem:bbexp}.
\end{proof}

We conclude this section with the proof of Lemma~\ref{lem:bbexp}.

\begin{proof}[Proof of Lemma~\ref{lem:bbexp}]
    The bound for \(\E \abs*{I_0^{\eta_0}}\) immediately follows by~\cite[Theorem 3.2]{MR2684367} (see also Remark~\ref{rem:altern} for an alternative proof).

    By the local law outside the spectrum, given in the second part of~\cite[Theorem 5.2]{MR4408013}, it follows that for \(0<\gamma<1/2\) we have
    \begin{equation}
        \label{eq:betll}
        \abs*{\braket{  G^z(\ii\eta)- M^z(\ii \eta)}} \le \frac{n^\xi}{n^{1+\gamma/3}\eta},
    \end{equation}
    uniformly for all \(\abs{z}^2\ge 1+(n^\gamma \eta)^{2/3}+n^{(\gamma-1)/2}\), \(\eta>0\), and \(\abs{z}\le 1+\tau^*\), for some \(\tau^*\sim 1\). We remark that the local law~\eqref{eq:betll} was initially proven only for \(\eta\) above the fluctuation scale \(\eta_f(z)\), which is defined in~\cite[Eq.~(5.2)]{MR4408013}, but it can be easily extend to any \(\eta>0\) using the monotonicity of the function \(\eta \mapsto \eta \braket{ \Im G(i\eta)}\) and the fact that
    \begin{equation}
        \label{eq:detb}
        \abs*{n^\xi \eta_f(z)\braket{  M^z(\ii n^\xi\eta_f(z)) }} +\abs*{ \eta \braket{ M^z(\ii \eta) }}\lesssim n^{2\xi} \frac{\eta_f(z)^2}{\abs{z}^2-1},
    \end{equation}
    uniformly in \(\eta>0\), since \(\Im M^z(\ii\eta)=\Im m^z(\ii\eta) I\) by~\eqref{eq M}, with \(I\) the \(2n\times 2n\) identity matrix, and \(\Im m^z(\ii\eta)\le \eta(\abs{z}^2-1)^{-1}\) by~\cite[Eq.~(3.13)]{MR4408013}. Note that we assumed the additional term \(n^{(\gamma-1)/2}\) in the lower bound for \(\abs{z}^2\) compared with~\cite[Theorem 5.2]{MR4408013} in order to ensure that the rhs.\ in~\eqref{eq:detb}, divided by \(\eta\), is smaller than the error term in~\eqref{eq:betll}.

    Next, in order to bound \(\E \abs{I_{\eta_0}^{\eta_c}}\), we consider
    \begin{align}
        \label{eq:bound2}
        \E & \abs{I_{\eta_0}^{\eta_c}}^2={}-\frac{n^2}{4\pi^2}\int_\C  \dif^2 z_1 (\Delta f)(z_1)\int_\C  \dif^2 z_2  (\Delta\overline{f})(z_2) \int_{\eta_0}^{\eta_c} \dif \eta_1 \int_{\eta_0}^{\eta_c} \dif \eta_2 \, F \\
        F  & =F(z_1,z_2,\eta_1,\eta_2):={} \E \Big[\braket{  G^{z_1}(\ii\eta_1) -\E  G^{z_1}(\ii\eta_1)}\braket{  G^{z_2}(\ii\eta_2) -\E  G^{z_2}(\ii\eta_2)}\Big].
    \end{align}
    By~\eqref{eq:impbbfin} it follows that the regimes \(1-n^{-2\omega_h}\le \abs{z_l}^2 \le 1+n^{-2\omega_h}\), with \(l=1,2\), and \(\abs{z_1-z_2}\le n^{-\omega_d}\) in~\eqref{eq:bound2}, with \(\omega_h, \omega_d\) defined in Proposition~\ref{prop:indmr}, are bounded by \(n^{-2\omega_h+\xi}\) and \(n^{-\omega_d/2+\xi}\), respectively. Moreover, the contribution from the regime \(\abs{z_l}\ge 1+n^{-2\omega_h}\) is also bounded by \(n^{-2\omega_h+\xi}\) using~\eqref{eq:betll} with \(\gamma\le 1-3\omega_h-2\delta_1\), say \(\gamma=1/4\). After collecting these error terms we conclude that
    \begin{equation}
        \label{eq:bound3}
        \begin{split}
            \E \abs{I_{\eta_0}^{\eta_c}}^2&=\frac{n^2}{4\pi^2}\int_{\abs{z_1}\le 1-n^{-\omega_h}} \dif^2 z_1 \Delta f(z_1)\int_{\substack{\abs{z_2}\le 1-n^{-\omega_h}, \\ \abs{z_2-z_1}\ge n^{-\omega_d}}} \dif^2 z_2 \Delta \overline{f(z_2)}  \\
            &\qquad\quad\times \int_{\eta_0}^{\eta_c} \dif \eta_1 \int_{\eta_0}^{\eta_c} \dif \eta_2 F+\cO\left(\frac{n^\xi}{n^{\omega_h}}+\frac{n^\xi}{n^{\omega_d/2}} \right).
        \end{split}
    \end{equation}
    We remark that the implicit constant in \(\cO(\cdot)\) in~\eqref{eq:bound3} and in the remainder of the proof may depend on \(\norm{ \Delta f}_{L^2(\Omega)}\).

    Then by Proposition~\ref{prop:indmr} it follows that
    \begin{equation}
        \label{eq:yetao}
        \E \Big[ \braket{  G^{z_1}(\ii\eta_1) -\E \langle  G^{z_1}(\ii\eta_1)}\braket{ G^{z_2}(\ii\eta_2) -\E   G^{z_2}(\ii\eta_2)}\Big]=\mathcal{O}\left(\frac{n^{c(\omega_h+\delta_0)+\delta_1}}{n^{\omega}}\right),
    \end{equation}
    with \(\omega_h\ll\delta_0\ll \omega\). Hence, plugging~\eqref{eq:yetao} into~\eqref{eq:bound3} it follows that
    \begin{equation}
        \label{eq:yetao2}
        \E \abs{I_{\eta_0}^{\eta_c}}^2=\mathcal{O}\left(\frac{n^{c(\omega_h+\delta_0)+2\delta_1}}{n^{\omega}}\right).
    \end{equation}
    This concludes the proof under the assumption \(\omega_h\ll\delta_m\ll \omega\), with \(m=0,1\), of Proposition~\ref{prop:indmr} (see Section~\ref{rem:s} later for a summary on all the scales involved in the proof of Proposition~\ref{prop:indmr}).
\end{proof}

\begin{remark}[Alternative proof of the bound for \(\E \abs{I_0^{\eta_0}}\)]\label{rem:altern}
    Under the additional assumption~\eqref{eq:addass} in Remark~\ref{rem:2ass}, we can prove the same bound for \(\E \abs{I_0^{\eta_0}}\) in~\eqref{eq:impexb} without relying on the fairly sophisticated proof of~\cite[Theorem 3.2]{MR2684367}.

    In order to bound \(\E \abs{I_0^{\eta_0}}\) we first remove the regime \(\eta\in [0,n^{-l}]\) as in the proof of Lemma~\ref{lem:aprior}. Then, using~\eqref{eq:impbbfin} to bound the integral over the regime \(\abs{1-\abs{z}^2}\le 1+n^{-2\omega_h}\), with \(\omega_h\) defined in Proposition~\ref{prop:indmr}, and~\eqref{eq:betll} for the regime \(\abs{z}^2\ge 1-n^{-2\omega_h}\), we conclude that
    \begin{equation}
        \label{eq:bound1}
        \E \abs{I_0^{\eta_0}} =\E \frac{n}{2\pi}\int_{\abs{z}\le 1-n^{-2\omega_h}} \abs*{ \Delta f} \abs*{\int_0^{\eta_0} \braket{ G^z -\E  G^z}\dif \eta }\dif^2 z+ \mathcal{O}\left( \frac{n^{\xi}}{n^{\omega_h}}\right).
    \end{equation}

    By universality of the smallest eigenvalue of \(H^z\) (which directly follows by Proposition~\ref{pro:ciala} for any fixed \(\abs{z}^2\le 1-n^{-2\omega_h}\); see also~\cite{MR3916329}), and the bound in~\cite[Corollary 2.4]{1908.01653} we have that
    \[
        \Prob \left(\lambda_1^z\le \eta_0 \right)\le n^{-\delta_0/4},
    \]
    with \(\eta_0=n^{-1-\delta_0}\) and \(\omega_h\ll \delta_0\). This concludes the bound in~\eqref{eq:impexb} for \(I_0^{\eta_0}\) following exactly the same proof of~\cite[Lemma 4.6]{MR4221653}, by~\eqref{eq:bound1}. We warn the reader that in~\cite[Corollary 2.4]{1908.01653} \(\lambda_1\) denotes the smallest eigenvalue of \((X-z)(X-z)^*\), whilst here \(\lambda_1^z\) denotes the smallest (positive) eigenvalue of \(H^z\).
\end{remark}

\subsection{Computation of the expectation in Theorem~\ref{theo:CLT}}\label{sec:exexex}

In this section we compute the expectation \(\E \sum_i f(\sigma_i)\) in~\eqref{eq:expv} using the computation of \(\E  \braket{ G }\) in~\eqref{prop clt exp} of Proposition~\ref{prop:CLTresm} as an input. More precisely, we prove the following lemma. Note that~\eqref{eq:exval} proves~\eqref{eq:expv} in Theorem~\ref{theo:CLT}.

\begin{lemma}\label{lem:compe}
    Fix some bounded open \(\ov\DD\subset\Omega\subset \C\) and let \(f\in H_0^{2+\delta}(\Omega)\), and let \(\kappa_4:= n^2[\E \abs{x_{11}}^4-2(\E \abs{x_{11}}^2)]\), then
    \begin{equation}
        \label{eq:exval}
        \E \sum_{i=1}^n f(\sigma_i)=\frac{n}{\pi}\int_\DD  f(z)\dif^2 z+\frac{1}{8\pi}\int_\DD \Delta f(z)\, \dif^2z-\frac{\kappa_4}{\pi}\int_\DD  f(z)(2\abs{z}^2-1)\dif^2 z+\mathcal{O}\left( n^{-\delta'}\right),
    \end{equation}
    for some small \(\delta'>0\).
\end{lemma}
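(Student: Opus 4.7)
The plan is to take the expectation of the uncentred Girko formula and to extract the leading (circular law) contribution and the \(\kappa_4\)-correction using the expansion~\eqref{prop clt exp} of Proposition~\ref{prop:CLTresm}. Starting from the identity \(\log\abs{\det H^z}=\log\abs{\det(H^z-\ii T)}-\int_0^T \Im\Tr G^z(\ii t)\dif t\) combined with the planar Poisson identity \(f(\sigma)=\frac{1}{4\pi}\Delta_z\log\abs{\sigma-z}^2\) gives
\[\E \sum_i f(\sigma_i)=\frac{1}{4\pi}\int \Delta f(z)\,\E\log\abs{\det(H^z-\ii T)}\dif^2 z-\frac{n}{2\pi}\int \Delta f(z)\int_0^T \Im \E \braket{G^z(\ii\eta)}\dif\eta\dif^2 z.\]
With \(T=n^{100}\) the first integral is \(\cO(nT^{-2})\), since \(\int\Delta f=0\) annihilates the leading \(2n\log T\) asymptotics of \(\log\abs{\det(H^z-\ii T)}\) and the next order is quadratic in the entries of \(H^z\). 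For the leading deterministic contribution, substituting \(m^z(\ii\eta)\) for \(\E\braket{G^z(\ii\eta)}\) and performing the \(\eta\)-integral yields \(\int_0^T\Im m^z(\ii\eta)\dif\eta=\log T-U(z)+\cO(T^{-2})\), where \(U(z):=\int_\C\log\abs{z-w}\pi^{-1}\mathbf{1}_\DD(w)\dif^2 w\) is the logarithmic potential of the uniform measure on \(\DD\); the identification \(\int\log\abs{\lambda}\rho^z(\lambda)\dif\lambda=U(z)\) is the deterministic analogue of Girko's formula applied to the circular law. Since \(\Delta_z U=2\mathbf{1}_\DD\) and \(\int\Delta f=0\), two integrations by parts produce \(\frac{n}{\pi}\int_\DD f\dif^2 z\) modulo \(\cO(nT^{-2})\).

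For the \(\kappa_4\)-contribution, note that \(m^z(\ii\eta)=\ii\Im m^z(\ii\eta)\) is purely imaginary on the positive imaginary axis, so \((m^z)^4\) is real-valued and \(\Im\bigl[-\frac{\ii\kappa_4}{4n}\partial_\eta(m^z)^4\bigr]=-\frac{\kappa_4}{4n}\partial_\eta(\Im m^z)^4\). The \(\eta\)-integral thus telescopes, producing
\[\frac{\kappa_4}{8\pi}\int \Delta f(z)\bigl[(\Im m^z(\ii T))^4-(\Im m^z(\ii 0))^4\bigr]\dif^2 z.\]
Solving~\eqref{eq m} at \(w=0\) gives \(\Im m^z(\ii 0^+)=\sqrt{1-\abs{z}^2}\) for \(\abs{z}\le 1\) and \(\Im m^z(\ii 0^+)=0\) otherwise, while \(\Im m^z(\ii T)=\cO(T^{-1})\). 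The bracket therefore reduces to \(-(1-\abs{z}^2)^2\mathbf{1}_\DD(z)+\cO(T^{-4})\). Since both \((1-\abs{z}^2)^2\) and its normal derivative vanish on \(\partial\DD\), two integrations by parts together with the identity \(\Delta(1-\abs{z}^2)^2=8(2\abs{z}^2-1)\) yield
\[-\frac{\kappa_4}{8\pi}\int_\DD\Delta f(z)(1-\abs{z}^2)^2\dif^2 z=-\frac{\kappa_4}{\pi}\int_\DD f(z)(2\abs{z}^2-1)\dif^2 z,\]
which matches the second term in~\eqref{eq:exval}.

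The main technical obstacle is to control the error in~\eqref{prop clt exp}, whose coefficient \(\abs{1-\abs{z}}^{-1}\) is non-integrable across \(\partial\DD\) and whose \(\eta\)-integral demands a lower cutoff \(\eta_*\gtrsim n^{-1+\epsilon}\). I split the \(z\)-domain into three regions: the interior \(\abs{z}\le 1-n^{-\omega_h}\), where the error produces a contribution of size \(n^{-c}\) after \(\eta\)-integration (the \(\log\)-divergence of \(\int\abs{1-\abs{z}}^{-1}\dif^2 z\) is absorbed by the small exponent \(\omega_h\)); the narrow boundary strip \(\bigl|\abs{z}-1\bigr|\le n^{-\omega_h}\), of Lebesgue measure \(\cO(n^{-\omega_h})\), where the a priori bound of Lemma~\ref{lem:aprior} is used; and the exterior \(\abs{z}\ge 1+n^{-\omega_h}\), treated via the outside-the-spectrum local law~\eqref{eq:betll}. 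The small-\(\eta\) regime \(\eta\in[0,\eta_*]\) is handled by the same combination of no-small-eigenvalues estimates (cf.~Remark~\ref{rem:2ass}) and Theorem~\ref{theo:Gll} used in the proof of Lemma~\ref{lem:bbexp}. Balancing \(\omega_h\) and \(\epsilon\) against the other scales produces the final error \(\cO(n^{-\delta'})\) in~\eqref{eq:exval}.
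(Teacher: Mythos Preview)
Your proof is correct and follows essentially the same route as the paper: both use Girko's formula, apply the expansion~\eqref{prop clt exp} for \(\E\braket{G}\), integrate \(\partial_\eta(m^z)^4\) explicitly (yielding \(-(1-\abs{z}^2)^2\bm 1_\DD\)), and then integrate by parts twice to reach the \(\kappa_4\)-term; and both excise a thin strip around \(\partial\DD\) to avoid the non-integrable factor \(\abs{1-\abs{z}}^{-1}\). The only organizational differences are that the paper cites the circular law~\eqref{eq:claw} directly for the leading term rather than re-deriving it via the logarithmic potential, and that the paper packages the small-\(\eta\) and boundary reductions into Lemma~\ref{lem:i4} before invoking~\eqref{prop clt exp}, whereas you redo these steps inline.
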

\begin{proof}
    By estimating the regimes \(\eta<\eta_0\) and \(\eta > T\) using~\cite[Thm.\ 3.2]{MR2684367} and~\cite[Proof of Thm.\ 2.3]{MR4408013}, respectively, we have that
    \begin{equation}\label{lemma eq sum}
        \E\sum_i f(\sigma_i) =  -\frac{n}{2\pi\ii} \int_\C \Delta f(z) \biggl(\int_{\eta_0}^{\eta_c}+\int_{\eta_c}^T\biggr)\E\braket{G^z(\ii\eta)}\dif\eta\dif^2 z + \mathcal{O}(n^{-c})
    \end{equation}
    for some small \(c>0\).

    We now consider the regime $\eta\in [\eta_c,T]$.  Since the error term in~\eqref{prop clt exp} is not affordable for \(1-\abs{z}\) very close to zero, we remove the regime \(\abs{1-\abs{z}^2}\le n^{-2\nu}\) in the \(z\)-integral by the optimal local law at the expense of an error term \(n^{-\nu+\xi}\), for some very small \(\nu>0\) we will choose shortly. The regime \(\abs{1-\abs{z}^2}\ge n^{-2\nu}\), instead, is computed using~\eqref{prop clt exp}. Hence, collecting these error terms we conclude that there exists \(\delta'>0\) such that
    \begin{align}\label{eq:med2}
         & -\frac{n}{2\pi\ii} \int \dif^2 z \Delta f \int_{\eta_c}^T \dif \eta\, \E\braket{G}                                                                                                                                            \\\nonumber
         & \qquad=-\frac{n}{2\pi\ii} \int \dif^2 z \Delta f \int_{\eta_c}^T \dif \eta\, \Bigl(\braket{M}- \frac{\ii\kappa_4}{4n}\partial_\eta(m^4)\Bigr)+\mathcal{O}\left( n^{-\delta'}+ n^{-\nu+\xi} + \frac{n^{2\nu}}{n\eta_c} \right) \\\nonumber
         & \qquad=\frac{n}{\pi}\int_\DD f(z)\dif^2 z- \frac{\kappa_4}{\pi} \int_\DD  f(z) (2\abs{z}^2-1)\dif^2 z+\mathcal{O}\left( n^{-\delta'} + \frac{n^{2\nu}}{n\eta_c}+n^{2\nu}\eta_c+n^{-\nu+\xi}\right),
    \end{align}
    with \(\eta_c=n^{-1+\delta_1}\) defined in~\eqref{eq:scales}.  To add back the regimes \(\eta\in [0,\eta_c]\), \(\eta\ge T\), and the regime \(\abs{1-\abs{z}^2}\le n^{-2\nu}\) we used that \(\abs{\partial_\eta(m^4)}\lesssim n^{2\nu}\) and that using \(\abs{m}\le \eta^{-1}\) we have \(\abs{\partial_\eta(m^4)}\lesssim \eta^{-5}\) by~\eqref{beta def}. Choosing \(\nu, \delta'>0\) so that \(\nu\ll\delta_1 \ll \delta'\) we conclude
    \begin{equation}\label{lemma eq sum concl}
        \begin{split}
            \E \sum_i f(\sigma_i) &= \frac{n}{\pi}\int_\DD f(z)\dif^2 z- \frac{\kappa_4}{\pi} \int_\DD  f(z) (2\abs{z}^2-1)\dif^2 z \\
            &\quad - \frac{n}{2\pi\ii} \int_\C \Delta f(z) \int_{\eta_0}^{\eta_c}\E\braket{G^z(\ii\eta)}\dif\eta\dif^2 z + \mathcal{O}\left( n^{-c}\right)
        \end{split}
    \end{equation}
    from~\eqref{lemma eq sum}--\eqref{eq:med2}.

    Finally, we consider the regime $\eta\in [\eta_0,\eta_c]$. First we note that in the Ginibre case we have the expansion (see \cite[Eq. (5.18)]{MR2972857} and \cite[Eq. (1.14)]{MR3735628})
    \[
        \E \sum_i f(\widetilde{\sigma}_i)=\frac{n}{\pi}\int_\DD f(z)\,\dif^2z+\frac{1}{8\pi}\int_\DD \Delta f(z)\, \dif^2z,
    \]
    which, using~\eqref{lemma eq sum concl} with \(\kappa_4=0\), implies
    \begin{equation}\label{small eta Ginibre}
        - \frac{n}{2\pi\ii} \int_\C \Delta f(z) \int_{\eta_0}^{\eta_c}\E\braket{\widetilde G^z(\ii\eta)}\dif\eta\dif^2 z = \frac{1}{8\pi}\int_\DD \Delta f(z)\, \dif^2z + \mathcal{O}\left( n^{-c}\right),
    \end{equation}
    where \(\widetilde G\) denote the resolvent of the Hermitization of a complex Ginibre matrix. In order to compare \(\E\braket{G}\) and \(\E\braket{\widetilde G}\) we use that for \(\abs{1-\abs{z}^2}>n^{-2\nu}\) we have
    \begin{equation}\label{eta c G tilde G}
        \int_{\eta_0}^{\eta_c}\dif\eta\, \E\braket{G^z}=   \int_{\eta_0}^{\eta_c}\dif\eta\,\E\braket{\widetilde{G}^z}+\mathcal{O}\left(n^{-1-c}\right),
    \end{equation}
    for some small $c>0$, the complementary regime is negligible by its small volume. The relation~\eqref{eta c G tilde G} follows using Lemma~\ref{lem:firststepmason} and computations analogous (actually easier) to \eqref{eq:thirdstemason}. By combining~\eqref{lemma eq sum concl},~\eqref{small eta Ginibre} and~\eqref{eta c G tilde G} the proof of the lemma is concluded.
\end{proof}

\subsection{Computation of the second and higher moments in Theorem~\ref{theo:CLT}}\label{sec variance computation}
In this section we conclude the proof of Theorem~\ref{theo:CLT}, i.e.\ we compute
\begin{gather}
    \begin{aligned}
        \E \prod_{i\in[p]} L_n(f^{(i)}) & = \E\prod_{i\in[p]} I_{\eta_c}^T(f^{(i)}) +  \mathcal{O}(n^{-c(p)})                                                                                    \\
                                        & = \E\prod_{i\in[p]} \biggl[-\frac{n}{2\pi \ii}\int_\C \Delta f^{(i)}(z) \int_{\eta_c}^T \braket{G^z(\ii\eta)-\E G^z(\ii\eta)} \dif \eta\dif^2 z\biggr] \\
                                        & \quad+ \mathcal{O}(n^{-c(p)})
    \end{aligned}\label{eq: prod clt}\raisetag{-5em}
\end{gather}
to leading order using~\eqref{eq CLT resovlent}.
\begin{lemma}\label{lem:b}
    Let \(f^{(i)}\) be as in Theorem~\ref{theo:CLT} and set \(f^{(i)}=f\) or \(f^{(i)}=\overline{f}\) for any \(i\in [p]\), and recall that \(\Pi_p\) denotes the set of pairings on \([p]\). Then
    \begin{equation}
        \label{eq:bast}
        \begin{split}
            &\E\prod_{i\in[p]} \biggl[-\frac{n}{2\pi\ii}\int_\C \Delta f^{(i)}(z) \int_{\eta_c}^T \braket{G^{z}(\ii\eta)-\E G^z(\ii\eta)} \dif \eta\dif^2 z\biggr] \\
            &= \sum_{P\in \Pi_p} \prod_{\{i,j\}\in P} \biggl[-\int_\C \dif^2 z_i \Delta f^{(i)} \int_\C\dif^2 z_j\Delta f^{(j)} \int_{0}^\infty \dif\eta_i\int_{0}^\infty\dif\eta_j \frac{V_{i,j}+\kappa_4 U_i U_j}{8\pi^2}\biggr] \\
            &\qquad + \mathcal{O}(n^{-c(p)}),
        \end{split}
    \end{equation}
    for some small \(c(p)>0\), where \(V_{i,j}\) and \(U_i\) are as in~\eqref{eq:exder}. The implicit constant in \(\mathcal{O}(\cdot)\) may depend on \(p\).
\end{lemma}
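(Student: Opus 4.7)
The plan is to apply the CLT for resolvents (Proposition~\ref{prop:CLTresm}) pointwise inside the multiple integral over $(z_i,\eta_i)_{i\in[p]}$ and integrate the resulting Gaussian moments by pairs. After moving the expectation inside the integrals via Fubini, the factor $(-n/(2\pi\ii))^p$ outside cancels exactly against the $1/n^p$ produced by Proposition~\ref{prop:CLTresm}, and each pair $\{i,j\}\in P$ contributes precisely the factor $(V_{i,j}+\kappa_4 U_i U_j)/(8\pi^2)$ after inserting the constants $(-n/(2\pi\ii))^2 \cdot 1/(2n^2) = -1/(8\pi^2)$.

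The main obstacle is the error term $\Psi$ in~\eqref{eq psi error}, which is singular when two integration variables come close, when some $|z_i|$ approaches the unit circle, or when some $\eta_i$ is too small. I would therefore split each $z$-integration domain into a \emph{bulk-separated} region
\[ \mathcal{B} := \set[\big]{(z_i)_{i\in[p]} \given |z_i|\le 1-n^{-\omega_h},\ \min_{i\ne j}|z_i-z_j|\ge n^{-\omega_d}} \]
and its complement, for suitable small $\omega_h,\omega_d>0$ chosen so that $\omega_d \ll \delta_1$ and the hypothesis $\eta_i \gtrsim n^{\xi-1}\min|z_i-z_j|^{-2}$ of Proposition~\ref{prop:CLTresm} holds throughout $\mathcal{B}\times[\eta_c,T]^p$. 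On the complement of $\mathcal{B}$, Cauchy--Schwarz together with the a priori bound $\abs{I_{\eta_c}^T(f)}\le n^{\xi}\norm{\Delta f}_{L^2(\Omega)}\abs{\Omega}^{1/2}$ from Lemma~\ref{lem:aprior} and the fact that the excluded region has measure $\mathcal{O}(n^{-2\omega_h}+n^{-2\omega_d})$ shows that its contribution to~\eqref{eq: prod clt} is bounded by $n^{-c(p)}$.

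On the good region $\mathcal{B}$, Proposition~\ref{prop:CLTresm} yields, after multiplying by the $(-n/(2\pi\ii))^p$ prefactor and integrating in the $\eta_i\in[\eta_c,T]$ and $z_i\in\mathcal{B}$,
\begin{equation*}
  \sum_{P\in\Pi_p}\prod_{\{i,j\}\in P}\biggl[-\int_{\mathcal{B}_{ij}}\!\!\dif^2 z_i\dif^2 z_j\,\Delta f^{(i)}\Delta f^{(j)}\int_{\eta_c}^T\!\!\dif\eta_i\int_{\eta_c}^T\!\!\dif\eta_j\,\frac{V_{i,j}+\kappa_4 U_iU_j}{8\pi^2}\biggr] + \mathcal{O}\Bigl(\int_{\mathcal{B}}\Psi\Bigr),
\end{equation*}
where $\mathcal{B}_{ij}$ is the appropriate two-variable restriction. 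Because in $\mathcal{B}$ we have $\min|z_i-z_j|\ge n^{-\omega_d}$ and $|1-|z_i||\ge n^{-\omega_h}$, the error $\int_{\mathcal{B}}\Psi$ can be bounded (using $\int_{\eta_c}^T(n\eta)^{-1}\dif\eta=\mathcal{O}(\log n)$ and the half-power loss in $(n\eta_*)^{-1/2}$) by $n^{C(\omega_h+\omega_d)-1/2+\xi}$, which is $\mathcal{O}(n^{-c(p)})$ once $\omega_h,\omega_d$ are chosen small enough.

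It remains to extend the $\eta_i,\eta_j$-integrals from $[\eta_c,T]$ to $[0,\infty)$ and the $z_i,z_j$-integrals from $\mathcal{B}_{ij}$ to $\C\times\C$. For the $\eta$-extension I would use the explicit formulas in~\eqref{eq:exder}: with \(m,u\) uniformly bounded by~\eqref{eq M bound} and by~\eqref{beta def}--\eqref{eq:bbou} the quantities $V_{i,j}$ and $U_iU_j$ are integrable in each $\eta$ on $[0,\infty)$, with the tails $\eta_i\in[0,\eta_c]\cup[T,\infty)$ and $\eta_j$ analogous contributing $\mathcal{O}(\eta_c\log n)+\mathcal{O}(T^{-c})$; for the $z$-extension, the truncation to $\mathcal{B}_{ij}$ changes the double integral only by $\mathcal{O}(n^{-c})$ because the integrand (after the $\eta$-integrations produce logarithmic factors in $|z_i-z_j|$ and $|1-|z_i||$) is locally integrable in $z$ up to the boundary and the diagonal. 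Collecting these error terms proves~\eqref{eq:bast}. Finally, Lemma~\ref{lem:compe} plus Lemma~\ref{lem:b} identifies the variance $V_f = C(f,f)$ of Theorem~\ref{theo:CLT} with the Gaussian-moment formula for $L(f)$, and Wick's theorem for complex Gaussians concludes~\eqref{eq moment convergence}.
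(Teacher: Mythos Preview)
Your approach is essentially the paper's: excise thin bad regions in the $z_i$'s, apply Proposition~\ref{prop:CLTresm} on the good set, integrate the error $\Psi$, and then restore the excised pieces using explicit pointwise bounds on $V_{i,j}$ and $U_i$. Two inaccuracies are worth flagging.

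First, your good set $\mathcal{B}$ is defined by $|z_i|\le 1-n^{-\omega_h}$, so its complement contains the whole region $\{|z_i|>1-n^{-\omega_h}\}\cap\supp\Delta f^{(i)}$, which has area of order one, not $\mathcal{O}(n^{-2\omega_h})$. The a~priori bound $|I_{\eta_c}^T|\le n^\xi$ from Lemma~\ref{lem:aprior} does not make this piece small. The paper instead excises only the thin annulus $Z_i=\{\,|1-|z_i|^2|\le n^{-2\nu}\,\}$ (and the diagonal tubes $\widehat Z_i$) with a single scale $\nu\ll\delta_1$; both components of $Z_i^c$ are kept, and Proposition~\ref{prop:CLTresm} applies there since the factor $|1-|z_i||^{-1}$ in $\Psi$ is harmless on either side of the circle. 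With this correction your complement really does have small measure and the rest of your argument goes through.

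Second, the gain you extract from $(n\eta_*)^{-1/2}$ is $n^{-\delta_1/2}$ (take $\eta_*=\eta_c=n^{-1+\delta_1}$), not $n^{-1/2}$. The integrated error is therefore of size $n^{C\nu-\delta_1/2+\xi}$ rather than $n^{C\nu-1/2+\xi}$; this still closes once $\nu\ll\delta_1$, which is exactly the scale hierarchy the paper imposes. Similarly, your tail bound $\mathcal{O}(\eta_c\log n)$ for the $\eta\in[0,\eta_c]$ restoration should carry an $n^{C\nu}$ prefactor coming from~\eqref{eq:VWbound}, but this is again absorbed by $\nu\ll\delta_1$.
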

\begin{proof}
    In order to prove the lemma we have to check that the integral of the error term in~\eqref{eq CLT resovlent} is at most of size \(n^{-c(p)}\), and that the integral of \(V_{i,j}+\kappa_4 U_i U_j\) for \(\eta_i\le \eta_c\) or \(\eta_i\ge T\) is similarly negligible. In the remainder of the proof we assume that \(p\) is even, since the terms with \(p\) odd are of lower order by~\eqref{eq CLT resovlent}.

    Note that by the explicit form of \(m_i, u_i\) in~\eqref{eq m}--\eqref{eq M}, by the definition of \(V_{i,j}\), \(U_i, U_j\) in~\eqref{eq:exder}, the fact that by \(-m_i^2+\abs{z_i}^2u_i^2=u_i\) we have
    \[
        V_{i,j}=\frac{1}{2}\partial_{\eta_i}\partial_{\eta_j}\log \left(1-u_i u_j\Big[ 1-\abs{z_i-z_j}^2+(1-u_i)\abs{z_i}^2+(1-u_j)\abs{z_j}^2\Big]\right),
    \]
    and using \(\abs{\partial_{\eta_i} m_i}\le [\Im m^{z_i}(\ii\eta_i)+\eta_i]^{-2}\) by~\eqref{beta def}--\eqref{eq:bbou}, we conclude (see also~\eqref{eq:bbst1}--\eqref{eq:bbst2} later)
    \begin{equation}
        \label{eq:VWbound}
        \abs{V_{i,j}}\lesssim \frac{[(\Im m^{z_i}(\ii\eta_i)+\eta_i)(\Im m^{z_j}(\ii\eta_j)+\eta_j)]^{-2}}{[\abs{z_i-z_j}^2+(\eta_i+\eta_j)(\min \{\Im m^{z_i}, \Im m^{z_j} \}^2)]^2},\,\, \abs{U_i}\lesssim \frac{1}{\Im m^{z_i}(\ii\eta_i)^2+\eta_i^3}.
    \end{equation}

    Using the bound~\eqref{eq:impbbfin} to remove the regime \(Z_i:=  \set{ \abs{1-\abs{z_i}^2}\le n^{-2\nu}}\) for any \(i\in[p]\), for some small \(\nu>0\), we conclude that the lhs.\ of~\eqref{eq:bast} is equal to
    \begin{equation}
        \label{eq:wickprod}
        \frac{(-n)^p}{(2\pi \ii)^p}\prod_{i\in [p]}\int_{Z_i^c}\dif^2 z_i \Delta f^{(i)}(z_i) \E \prod_{i\in[p]} \int_{\eta_c}^T \braket{G^{z_i}(\ii\eta_i)-\E G^{z_i}(\ii\eta_i)} \dif \eta_i + \mathcal{O}\left(\frac{n^{p\xi}}{n^\nu}\right),
    \end{equation}
    for any very small \(\xi>0\). Additionally, since the error term \(\Psi\) defined in~\eqref{eq psi error} behaves badly for small \(\abs{z_i-z_j}\), we remove the regime
    \[
        \widehat{Z}_i:= \bigcup_{j<i}\set{  z_j : \,\abs{z_i-z_j}\le n^{-2\nu}}
    \]
    in each \(z_i\)-integral in~\eqref{eq:wickprod} using~\eqref{eq:impbbfin}, and, denoting \(f^{(i)}=f^{(i)}(z_i)\), get
    \begin{equation}
        \label{eq:wickprod2}
        \frac{(-n)^p}{(2\pi \ii)^p}  \prod_{i\in[p]}\int_{Z_i^c \cap \widehat{Z}_i^c}\dif^2 z_i \Delta f^{(i)}\E \prod_{i\in[p]} \int_{\eta_c}^T \braket{G^{z_i}(\ii\eta_i)-\E G^{z_i}(\ii\eta_i)} \dif \eta_i+ \mathcal{O}\left(\frac{n^{p\xi}}{n^\nu}\right).
    \end{equation}
    Plugging~\eqref{eq CLT resovlent} into~\eqref{eq:wickprod2}, and using the first bound in~\eqref{eq:apb} to remove the regime \(\eta_i\ge T\) for the lhs.\ of~\eqref{eq:bast} we get
    \begin{gather}
        \begin{aligned}
             & \frac{1}{(2\pi \ii)^p}  \prod_{i\in[p]}\int_{Z_i^c \cap \widehat{Z}_i^c}\dif^2 z_i \Delta f^{(i)}\sum_{P\in \Pi_p} \prod_{\{i,j\}\in P} \int_0^\infty\int_0^\infty -\frac{V_{i,j}+\kappa_4 U_i U_j}{8\pi^2} \dif \eta_j\dif \eta_i \\
             & \qquad\qquad + \mathcal{O}\left(\frac{n^{p\xi}}{n^\nu}+\frac{n^{20\nu p+\delta_1}}{n}+\frac{n^{\xi p+2p\nu}}{n^{\delta_1/2}}\right),
        \end{aligned}\label{eq:wickprod3}\raisetag{-5em}
    \end{gather}
    where \(\eta_c=n^{-1+\delta_1}\), the second last error term comes from adding back the regimes \(\eta_i\in [0,\eta_c]\) using  that
    \[
        \abs{V_{i,j}}\le \frac{n^{20\nu}}{(1+\eta_i^2)(1+\eta_j^2)}, \qquad \abs{U_i}\le \frac{n^{4\nu}}{1+\eta_i^3},
    \]
    for \(z_i\in Z_i^c\cap \widehat{Z}_i^c\) and \(z_j\in Z_j^c\cap \widehat{Z}_j^c\)  by~\eqref{eq:VWbound}. The last error term in~\eqref{eq:wickprod3} comes from the integral of \(\Psi\), with \(\Psi\) defined in~\eqref{eq psi error}. Finally, we perform the \(\eta\)-integrations using the explicit formulas~\eqref{eq:expintV} and~\eqref{eq:expintW} below. After that, we add back the domains \(Z_i\) and  \(\widehat{Z_i}\)  for \(i\in [p]\) at a negligible error, since these domains have volume of order \(n^{-2\nu}\),  \(\Delta f^{(i)}\in L^2\), and the logarithmic singularities from~\eqref{eq:expintV} are integrable. This concludes~\eqref{eq:bast} choosing \(\nu\) so that \(\nu\ll \delta_1\ll 1\).
\end{proof}

In the next three sub-sections we compute the integrals in~\eqref{eq:bast} for any \(i,j\)'s. To make our notation simpler we use only the indices \(1,2\), i.e.\ we compute the integral of \(V_{1,2}\) and \(U_1U_2\).

\subsubsection{Computation of the \((\eta_1,\eta_2)\)-integrals}
Using the relations in~\eqref{eq:exder} we explicitly compute the \((\eta_1,\eta_2)\)-integral of \(V_{1,2}\):
\begin{gather}
    \begin{aligned}
         & -\int_0^\infty \int_0^\infty V_{1,2}\, \dif\eta_1 \dif \eta_2 =-\frac{1}{2}\log A\restriction_{\substack{\eta_1=0, \\ \eta_2=0}} \\
         & \quad=\Theta(z_1,z_2):=
        \frac{1}{2}\begin{cases}
                       -\log \abs{z_1-z_2}^2,                                  & \abs{z_1},\abs{z_2} \le 1,    \\
                       \log \abs{z_l}^{2}-\log\abs{z_1-z_2}^2,                 & \abs{z_m} \le 1, \abs{z_l}>1, \\
                       \log \abs{z_1 z_2}^{2}-\log\abs{1-z_1\overline{z}_2}^2, & \abs{z_1}, \abs{z_2}>1,
                   \end{cases}
    \end{aligned}\label{eq:expintV}\raisetag{-4em}
\end{gather}
with \(A(\eta_1,\eta_2,z_1,z_2)\) defined by
\[
    A(\eta_1,\eta_2,z_1,z_2):=1+(u_1u_2\abs{z_1}\abs{z_2})^2-m_1^2 m_2^2-2u_1u_2\Re z_1\overline{z}_2.
\]
Then the \(\eta_i\)-integral of \(U_i\), for \(i\in\{1,2\}\), is given by
\begin{equation}\label{eq:expintW}
    \int_0^\infty U_i\, \dif \eta_i=\frac{\ii}{\sqrt{2}}(1-\abs{z_i}^2).
\end{equation}
Before proceeding we rewrite \(\Theta(z_1,z_2)\) as
\[
    \begin{split}
        2\Theta(z_1,z_2)&=-\log\abs{z_1-z_2}^2+\log\abs{z_1}^{2} \bm1(\abs{z_1}> 1)+\log\abs{z_2}^{2} \bm1(\abs{z_2}> 1)\\
        &\quad +\left[\log\abs{z_1-z_2}^2-\log \abs{1-z_1\overline{z}_2}^2 \right]\bm1(\abs{z_1},\abs{z_2}> 1).
    \end{split}
\]

In the remainder of this section we use the notations
\[
    \dif z:= \dif z+\ii \dif y, \quad \dif\overline{z}:=\dif x-\ii \dif y, \quad  \quad\partial_z :=\frac{\partial_x-\ii\partial_y}{2}, \quad \partial_{\overline{z}} :=\frac{\partial_x+\ii\partial_y}{2},
\]
and \(\partial_l:= \partial_{z_l}\), \(\overline{\partial}_l:=\partial_{\overline{z}_l}\). With this notation \(\Delta_{z_l}=4\partial_{z_l}\partial_{\overline{z}_l}\).

We split the computation of the leading term in the rhs.\ of~\eqref{eq:bast} into two parts: the integral of \(V_{1,2}\), and  the integral of \(U_1U_2\).

\subsubsection{Computation of the \((z_1,z_2)\)-integral of \(V_{1,2}\)}
In this section we compute the integral of \(V_{1,2}\) in~\eqref{eq:bast}. To make our notation easier in the remainder of this section we use the notation \(f\) and \(g\),  instead of \(f^{(1)}\), \(f^{(2)}\), with \(f\) in Theorem~\ref{theo:CLT} and \(g=f\) or \(g=\overline{f}\).

\begin{lemma}\label{lem:vi}
    Let \(V_{1,2}\) be defined in~\eqref{eq:exder}, then
    \begin{equation}
        \label{eq:finV}
        \begin{split}
            &-\frac{1}{8\pi^2}\int_\C  \dif^2 z_1\int_\C  \dif^2 z_2 \Delta f(z_1) \Delta \overline{g(z_2)} \int_0^{\infty} \dif \eta_1\int_0^{\infty} \dif \eta_2  V_{1,2} \\
            &\qquad =\frac{1}{4\pi} \int_\DD  \braket{ \nabla g, \nabla f} \dif^2 z+\frac{1}{2}\sum_{m\in\Z } \abs{m} \widehat{f\restriction_{\partial \DD }}(m) \overline{\widehat{g\restriction_{\partial \DD }}}(m).
        \end{split}
    \end{equation}
\end{lemma}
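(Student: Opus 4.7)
The plan is to substitute the explicit antiderivative (4.22) and reduce the statement to
\[
I := \frac{1}{8\pi^2}\iint_{\C\times\C}\Delta f(z_1)\, \Delta\overline{g(z_2)}\,\Theta(z_1, z_2)\, \dif^2 z_1\, \dif^2 z_2,
\]
which I will evaluate by integrating by parts twice: first moving $\Delta_{z_1}$ off $f$ onto $\Theta(\cdot, z_2)$, then moving $\Delta_{z_2}$ off $\overline g$ onto the resulting function of $z_2$ alone. The key observation is that $\Theta(\cdot, z_2)$ is only piecewise smooth: it is continuous across $\abs{z_1}=1$ (a useful sanity check using $\abs{1-z_1\overline{z_2}}=\abs{z_1-z_2}$ when $\abs{z_1}=1$), it is harmonic on each of $\DD$ and $\DD^c$ away from the single logarithmic singularity at $z_1=z_2$, and the latter is present only in case (a) when both points lie in $\DD$. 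Hence $\Delta_{z_1}\Theta$ is a distribution consisting of a Dirac mass at $z_1=z_2$ times the indicator $\bm1(\abs{z_2}\le 1)$ plus an arc-length measure on $\abs{z_1}=1$.

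For the first integration by parts I compute the jump of $\partial_r \Theta(\cdot, z_2)$ across $\abs{z_1}=1$ directly from (4.22); a short calculation yields $\frac12 + \frac{1-\abs{z_2}^2}{2\abs{e^{\ii\theta}-z_2}^2}$. When $\abs{z_2}>1$ the second summand is $-(2\pi)^{-1}$ times the exterior Poisson kernel, so pairing against $f$ gives
\[
A(z_2) := \int_\C \Delta f(z_1)\,\Theta(z_1, z_2)\, \dif^2 z_1 = \int_0^{2\pi} f(e^{\ii\theta})\, \dif\theta - 2\pi\cdot\begin{cases} f(z_2), & \abs{z_2}\le 1,\\ \wt H f(z_2), & \abs{z_2}>1, \end{cases}
\]
where $\wt H f$ denotes the harmonic extension of $f\restriction_{\partial\DD}$ to $\DD^c$ that is bounded at infinity. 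Continuity of $A$ at $\abs{z_2}=1$ follows from $\wt H f = f$ on $\partial\DD$, which serves as a consistency check.

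For the second integration by parts I substitute $A$ into $I$ and note that the constant contribution $\int_0^{2\pi}f\,\dif\theta$ drops out because $\overline g$ is compactly supported so $\int_\C\Delta\overline g\,\dif^2 z=0$. Applying Green's identity separately on $\DD$ and on $\DD^c$ (the latter using that $\overline g$ vanishes at infinity), the two boundary integrals on $\partial\DD$ cancel because $\wt H f = f$ there, leaving
\[
I = \frac{1}{4\pi}\int_{\DD}\nabla f\cdot \nabla\overline g\, \dif^2 z + \frac{1}{4\pi}\int_{\DD^c}\nabla \wt H f\cdot \nabla\overline g\, \dif^2 z.
\]
The first term matches the desired $(4\pi)^{-1}\braket{\nabla g, \nabla f}_{L^2(\DD)}$ directly. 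For the second, one more integration by parts, using that $\wt H f$ is harmonic in $\DD^c$, replaces $\nabla\overline g$ by $\nabla \wt H \overline g$; the resulting integral is then evaluated by expanding $\wt H f(re^{\ii\theta}) = \sum_m \wh f(m)\, r^{-\abs{m}} e^{\ii m\theta}$ and the analogous series for $\overline g$, with direct integration on the annulus $(1,\infty)\times(0,2\pi)$ producing $2\pi\sum_m \abs{m}\,\wh f(m)\,\overline{\wh g(m)}$.

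The main thing needing care is the distributional bookkeeping across $\abs{z_1}=1$: one must correctly combine the Dirac mass from the case (a) singularity with the boundary measure arising from the normal-derivative jump, and then verify the cancellation of the two $\partial\DD$ boundary integrals in the second step. Once this is handled, the remaining manipulations are standard Green's-identity and Fourier-series calculations.
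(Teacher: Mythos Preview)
Your argument is correct, but it follows a genuinely different route from the paper's. The paper writes \(\Delta=4\partial\bar\partial\), splits \(\Theta=\Xi+\Lambda\), and integrates by parts separately in \((\partial_1,\bar\partial_2)\) and \((\bar\partial_1,\partial_2)\) with explicit \(\epsilon\)-regularisations around the two singularities (Lemma~\ref{lem:intbp}, proved in Appendix~\ref{sec:INTBP}); this leaves the residual kernel \((1-z_1\bar z_2)^{-2}\) on \(\{|z_1|,|z_2|\ge1\}\), which is then mapped to the disk by \(z\mapsto 1/\bar z\) and evaluated via a polynomial density argument. You instead exploit directly that \(\Theta(\cdot,z_2)\) is piecewise harmonic with a single logarithmic pole and a normal-derivative jump across \(\partial\DD\), so that one real-variable Green identity produces the exterior Poisson integral and hence the exterior harmonic extension \(\wt Hf\); the \(H^{1/2}\) term then appears transparently as the Dirichlet energy of \(\wt Hf\) on \(\DD^c\). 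Your route is more conceptual and dovetails with the GFF projection picture in Section~\ref{sec GFF} (compare~\eqref{eq f decomp}); the paper's route is more hands-on but avoids invoking harmonic extensions. One small correction: the jump you state, \(\tfrac12+\tfrac{1-|z_2|^2}{2|e^{i\theta}-z_2|^2}\), is off by a factor of two and is not uniform in the two cases---for \(|z_2|>1\) the jump is \(1+\tfrac{1-|z_2|^2}{|e^{i\theta}-z_2|^2}\), while for \(|z_2|<1\) it is simply \(1\) (the \(-2\pi f(z_2)\) there comes from the delta mass, not the jump). Your formula for \(A(z_2)\) and everything downstream is nonetheless correct.
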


Note that the rhs.\ of~\eqref{eq:finV} gives exactly the first two terms in~\eqref{eq:cov}.

Using the expression of \(V_{1,2}\) in~\eqref{eq:exder} and the computation of its \((\eta_1,\eta_2)\)-integral in~\eqref{eq:expintV}, we have that
\begin{equation}\label{eq:comp1}
    \begin{split}
        &-\frac{1}{8\pi^2}\int_\C  \dif^2 z_1\int_\C  \dif^2 z_2 \Delta f(z_1) \Delta \overline{g(z_2)} \int_0^{\infty} \dif \eta_1\int_0^{\infty} \dif \eta_2  V_{1,2} \\
        &\qquad = \frac{2}{\pi^2}\int_\C  \dif^2 z_1\int_\C  \dif^2 z_2 \partial_1\overline{\partial}_1 f(z_1)\partial_2\overline{\partial}_2 \overline{g(z_2)} \Theta(z_1,z_2),
    \end{split}
\end{equation}
with \(\Theta(z_1,z_2)\) is defined in the rhs.\ of~\eqref{eq:expintV}.

We compute the r.h.s.\ of~\eqref{eq:comp1} as stated in Lemma~\ref{lem:intbp}. The proof of this lemma is postponed to Appendix~\ref{sec:INTBP}.

\begin{lemma}\label{lem:intbp}
    Let \(\Theta(z_1,z_2)\) be defined in~\eqref{eq:expintV}, then we have that
    \begin{gather}
        \begin{aligned}
             & \frac{2}{\pi^2}\int_\C  \dif^2 z_1\int_\C  \dif^2 z_2 \partial_1\overline{\partial}_1 f(z_1)\partial_2\overline{\partial}_2 \overline{g(z_2)} \Theta(z_1,z_2) =\frac{1}{4\pi} \int_\DD  \braket{ \nabla g, \nabla f} \dif^2 z \\
             & \qquad\quad+ \lim_{\epsilon\to 0}\Bigg[ \frac{1}{2\pi^2} \int_{\abs{z_1}\ge 1} \dif^2 z_1 \int_{\substack{\abs{1-z_1\overline{z}_2}\ge \epsilon,                                                                              \\ \abs{z_2}\ge 1}} \dif^2 z_2 \, \partial_1 f(z_1) \overline{\partial}_2 \overline{g(z_2)}  \frac{1}{(1-\overline{z}_1z_2)^2} \\
             & \qquad\qquad\quad+ \frac{1}{2\pi^2} \int_{\abs{z_1}\ge 1} \dif^2 z_1 \int_{\substack{\abs{1-z_1\overline{z}_2}\ge \epsilon,                                                                                                   \\ \abs{z_2}\ge 1}} \dif^2 z_2\,  \overline{\partial}_1 f(z_1) \partial_2 \overline{g(z_2)} \frac{1}{(1-z_1\overline{z}_2)^2} \Bigg].
        \end{aligned}\label{eq:intbp}\raisetag{-5em}
    \end{gather}
\end{lemma}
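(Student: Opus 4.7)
The plan is to compute
\[
I := \frac{2}{\pi^2}\int_\C\int_\C \partial_1\bar\partial_1 f(z_1)\cdot \partial_2\bar\partial_2 \overline{g(z_2)}\cdot\Theta(z_1,z_2) \dif^2 z_1\dif^2 z_2
\]
by performing two complementary integrations by parts and averaging the resulting expressions. Since $f,g$ are compactly supported, no boundary contributions arise at infinity. Moving $\bar\partial_1$ off $f$ and $\partial_2$ off $\overline{g}$ transforms $I$ into $\frac{2}{\pi^2}\int\!\!\int \partial_1 f\cdot\bar\partial_2\overline{g(z_2)}\cdot\partial_2\bar\partial_1\Theta\,\dif^2 z_1\dif^2 z_2$, while the complementary ordering (moving $\partial_1$ off $f$ and $\bar\partial_2$ off $\overline{g}$) produces $\frac{2}{\pi^2}\int\!\!\int \bar\partial_1 f\cdot\partial_2\overline{g(z_2)}\cdot\bar\partial_2\partial_1\Theta\,\dif^2 z_1\dif^2 z_2$.

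The central step is the distributional computation of the mixed derivative $\partial_2\bar\partial_1\Theta$ on $\C^2$. Split $\C^2$ into the four regions $R_I=\DD\times\DD$, $R_{II}=\DD\times\DD^c$, $R_{III}=\DD^c\times\DD$, $R_{IV}=\DD^c\times\DD^c$, on which $\Theta$ is given by the piecewise formula~\eqref{eq:expintV}. The crucial observations are that $\Theta$ is continuous on $\C^2\setminus\{z_1=z_2\}$ and that $\bar\partial_1\Theta$ is continuous in $z_2$ across the circle $|z_2|=1$; both facts follow from the algebraic identity $|1-z_1\bar z_2|^2=|z_1-z_2|^2$ when $|z_1|=1$ or $|z_2|=1$, combined with $\log|z_l|^2=0$ at $|z_l|=1$. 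Consequently, no surface-supported delta distributions appear from the jumps across $|z_1|=1$ or $|z_2|=1$, and $\partial_2\bar\partial_1\Theta$ reduces to its pointwise value in each region, computed using $\partial_z\bar\partial_z\log|z-w|^2=\pi\delta(z-w)$ and Wirtinger calculus:
\[
\partial_2\bar\partial_1\Theta = \frac{\pi}{2}\delta(z_1-z_2)\bm1_{R_I} + \frac{1}{2(1-\bar z_1 z_2)^2}\bm1_{R_{IV}},
\]
with the analogous formula for $\bar\partial_2\partial_1\Theta$ obtained by replacing $(1-\bar z_1 z_2)^{-2}$ with $(1-z_1\bar z_2)^{-2}$ on $R_{IV}$. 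The contributions from $R_{II}, R_{III}$ vanish because the $\frac{1}{2}\log|z_l|^2$ pieces depend on a single variable, while the $\delta(z_1-z_2)$ produced by $-\frac{1}{2}\log|z_1-z_2|^2$ is supported outside these regions.

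Substituting these distributions into the two expressions for $I$ and averaging then yields
\[
I = \frac{1}{2\pi}\int_\DD\bigl(\partial f\cdot\bar\partial\overline{g}+\bar\partial f\cdot\partial\overline{g}\bigr)\dif^2 z +\frac{1}{2\pi^2}\int\!\!\int_{R_{IV}}\Bigl[\frac{\partial_1 f\cdot \bar\partial_2\overline{g(z_2)}}{(1-\bar z_1 z_2)^2}+\frac{\bar\partial_1 f\cdot\partial_2\overline{g(z_2)}}{(1-z_1\bar z_2)^2}\Bigr]\dif^2 z_1\dif^2 z_2.
\]
Using the identity $\braket{\nabla g,\nabla f}=2(\partial f\cdot\bar\partial\overline{g}+\bar\partial f\cdot\partial\overline{g})$, the first term equals $\frac{1}{4\pi}\int_\DD\braket{\nabla g,\nabla f}\dif^2 z$. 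The kernel $(1-\bar z_1 z_2)^{-2}$ is smooth on the interior of $R_{IV}$ but singular on $\{z_1\bar z_2=1\}\cap\overline{R_{IV}}=\{z_1=z_2\in\partial\DD\}$, so a regularization is needed: interpreting the $R_{IV}$-integrals as the $\epsilon\to 0$ limits of the integrals over $\{|1-z_1\bar z_2|\ge\epsilon\}$ produces exactly the right-hand side of~\eqref{eq:intbp}.

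The main technical difficulty is verifying the continuity statements for $\bar\partial_1\Theta$ (and symmetrically for $\partial_1\Theta$) in $z_2$ across $|z_2|=1$, which is what ensures that no boundary integrals over the unit circles appear in the final expression, leaving only the $R_I$ bulk term and the two $R_{IV}$ integrals. Once these continuities are established, the remainder of the proof is a direct Wirtinger calculation in each of the four regions.
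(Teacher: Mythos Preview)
Your approach via the continuity of $\Theta$ and of $\bar\partial_1\Theta$ across the circles $|z_l|=1$ is a clean way to see why no surface terms on $\partial\DD$ survive; it is equivalent to the cancellation the paper obtains in~\eqref{eq:xider}--\eqref{eq:lader} between the $\Xi$ and $\Lambda$ pieces, but without performing the split $\Theta=\Xi+\Lambda$ explicitly.

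There is, however, a genuine gap in your handling of the $R_{IV}$ contribution. The kernel $(1-\bar z_1 z_2)^{-2}\bm1_{R_{IV}}$ is \emph{not} locally integrable near the boundary diagonal $\{z_1=z_2\in\partial\DD\}$: parametrising $z_l=(1+s_l)e^{i(\theta_0+\phi_l)}$ with $s_l>0$ small gives $|1-\bar z_1 z_2|^2\approx(s_1+s_2)^2+(\phi_1-\phi_2)^2$, and the resulting four-dimensional integral diverges logarithmically. Consequently your formula $\partial_2\bar\partial_1\Theta=\tfrac{\pi}{2}\delta(z_1-z_2)\bm1_{R_I}+\tfrac{1}{2}(1-\bar z_1 z_2)^{-2}\bm1_{R_{IV}}$ does not define a distribution on $\C^2$, and the integration by parts you describe is only formal. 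Saying ``a regularization is needed'' at the end does not close this: one must (i) prove that the $\epsilon$-regularised $R_{IV}$ integral actually converges as $\epsilon\to0$, and (ii) prove that the original pairing equals this limit, i.e.\ that the boundary terms on the cutoff surface $\{|1-z_1\bar z_2|=\epsilon\}$ produced when integrating by parts against $\bm1(|1-z_1\bar z_2|\ge\epsilon)$ vanish in the limit. These two steps are exactly the paper's Lemmas~\ref{lem:fin0e} and~\ref{lem:epslimit0} and constitute the main technical content of the argument; the continuity checks you identify as ``the main technical difficulty'' are, by comparison, short computations. The correct order of operations is to regularise first (as in~\eqref{eq:epslimitA}), then integrate by parts, and only then pass to the limit while controlling both the bulk term and the new $\epsilon$-boundary terms.
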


\begin{proof}[Proof of Lemma~\ref{lem:vi}]

    By Lemma~\ref{lem:intbp} it follows that to prove Lemma~\ref{lem:vi} it is enough to compute the last two lines in the rhs.\ of~\eqref{eq:intbp}.

    Note that using the change of variables \(\overline{z}_1\to 1/{\overline{z}_1}\), \(z_2\to 1/z_2\) the integral in the rhs.\ of~\eqref{eq:intbp} is equal to the same integral on the domain \(\abs{z_1},\abs{z_2}\le 1\), \(\abs{1-z_1\overline{z}_2}\ge \epsilon\). By a standard density argument, using that \(f,g\in H_0^{2+\delta}\), it is enough to compute the limit in~\eqref{eq:intbp} only for polynomials, hence, from now on, we consider polynomials \(f\), \(g\) of the form
    \begin{equation}
        \label{eq:poly}
        f(z_1)=\sum_{k,l\ge 0} z_1^k \overline{z}_1^l a_{kl}, \qquad g(z_2)=\sum_{k,l\ge 0} z_2^k \overline{z}_2^l b_{kl},
    \end{equation}
    for some coefficients \(a_{kl}, b_{kl}\in \C \). We remark that the summations in~\eqref{eq:poly} are finite since \(f\) and \(g\) are polynomials. Then, using that
    \[
        \lim_{\epsilon\to 0}\int_{\abs{z_1}\le 1} \int_{\substack{\abs{1-z_1\overline{z}_2}\ge \epsilon, \\ \abs{z_2}\le 1}} z_1^\alpha \overline{z}_1^\beta z_2^{\alpha'} \overline{z}_2^{\beta'}\dif^2 z_1 \dif^2 z_2=\frac{\pi^2}{(\alpha+1)(\alpha'+1)}\delta_{\alpha,\beta}\delta_{\alpha',\beta'},
    \]
    we compute the limit in the rhs.\ of~\eqref{eq:intbp} as follows
    \begin{gather}
        \begin{aligned}
             & \lim_{\epsilon\to 0}\sum_{k,l,k',l', m\ge 0} \frac{1}{2\pi^2}\int_{\abs{z_1}\le 1}\int_{\substack{\abs{1-z_1\overline{z}_2}\ge \epsilon,                                         \\ \abs{z_2}\le 1}} \dif^2 z_1 \dif^2 z_2 \, m a_{kl} \overline{b_{k'l'}} \\
             & \qquad\qquad\qquad\times\Big[k k' z_1^{k-1}\overline{z}_1^{l+m-1} z_2^{l'+m-1} \overline{z}_2^{k'-1}+l l' z_1^{k+m-1} \overline{z}_1^{l-1}z_2^{k'+m-1}\overline{z}_2^{l'-1}\Big] \\
             & \qquad= \frac{1}{2}\sum_{\substack{k,l,k',l',                                                                                                                                    \\ m\ge0}}  m a_{kl} \overline{b_{k'l'}}\Big[\delta_{k,l+m}\delta_{k',l'+m} +\delta_{k,l-m}\delta_{k',l'-m}\Big] \\
             & \qquad= \frac{1}{2}\sum_{\substack{k,l,k',l'\ge0,                                                                                                                                \\ m\in\Z} }  \abs{m} a_{kl} \overline{b_{k'l'}}\delta_{k,l+m}\delta_{k',l'+m}.
        \end{aligned}\label{eq:comp3}\raisetag{-5em}
    \end{gather}
    On the other hand
    \begin{equation}
        \label{eq:h12}
        \sum_{m\in\Z } \abs{m} \widehat{f\restriction_{\partial \DD }}(m) \overline{\widehat{g\restriction_{\partial \DD }}}(m)= \sum_{m\in\Z }  \abs{m} \sum_{k,l,k',l'\ge 0} a_{kl}\overline{b_{k'l'}}\delta_{m,k-l} \delta_{m,k'-l'},
    \end{equation}
    where
    \[
        \widehat{f\restriction_{\partial \DD }}(k):=  \frac{1}{2\pi}\int_0^{2\pi} f\restriction_{\partial \DD }(e^{i\theta}) e^{-ik\theta}\dif \theta, \quad f\restriction_{\partial \DD }(e^{i\theta_j})=\sum_{k\in\Z } \widehat{f\restriction_{\partial \DD }}(k) e^{i\theta_j k}.
    \]
    Finally, combining~\eqref{eq:intbp} and~\eqref{eq:comp3}--\eqref{eq:h12}, we conclude the proof of~\eqref{eq:finV}.

\end{proof}

\subsubsection{Computation of the \((z_1,z_2)\)-integral of \(U_1 U_2\)}

In order to conclude the proof of Theorem~\ref{theo:CLT}, in this section we compute the integral of \(U_1 U_2\) in~\eqref{eq:bast}. Similarly to the previous section, we use the notation \(f\) and \(g\),  instead of \(f^{(1)}\), \(f^{(2)}\), with \(f\) in Theorem~\ref{theo:CLT} and \(g=f\) or \(g=\overline{f}\).

\begin{lemma}\label{lem:wi}
    Let \(\kappa_4=n^2[\E \abs{x_{11}}^2-2(\E \abs{x_{11}}^2)]\), and let \(U_1\), \(U_2\) be defined in~\eqref{eq:exder}, then
    \begin{gather}
        \begin{aligned}
             & -\frac{\kappa_4}{8\pi^2}\int_\C  \dif^2 z_1\int_\C  \dif^2 z_2 \Delta f(z_1) \Delta \overline{g(z_2)} \int_0^{\infty} \dif \eta_1\int_0^{\infty} \dif \eta_2  U_1 U_2                                                       \\
             & \qquad =\kappa_4 \left(\frac{1}{\pi}\int_\DD f(z)\dif^2 z-\widehat{f\restriction_{\partial\DD }}(0)\right)\left(\frac{1}{\pi}\int_\DD \overline{g(z)}\dif^2 z- \overline{\widehat{g\restriction_{\partial\DD }}}(0)\right).
        \end{aligned}\label{eq:finW}\raisetag{-3.5em}
    \end{gather}
\end{lemma}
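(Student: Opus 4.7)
The plan is to exploit the fact that $U_1 U_2$ factorises completely across the two integration variables, so the quadruple integral on the left-hand side of~\eqref{eq:finW} reduces to a product of two single $z$-integrals with weight $1-|z|^2$. A Green's identity computation then converts each factor into the desired averaging functional.

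First I would evaluate the $\eta$-integrals in closed form. From~\eqref{eq:expintW}, together with the fact that $m^z(0)^2=-(1-|z|^2)\bm1_{|z|\le 1}$ (obtained by solving the scalar MDE~\eqref{eq m} at $w=0$, which forces $m^z(0)=\ii\sqrt{1-|z|^2}$ for $|z|\le 1$ and $m^z(0)=0$ for $|z|>1$) and $m^z(\ii\eta)\to 0$ as $\eta\to\infty$, we have
\begin{equation*}
\int_0^\infty U_i\,\dif\eta_i=\frac{\ii}{\sqrt{2}}(1-|z_i|^2)\bm1_{|z_i|\le 1}.
\end{equation*}
Substituting this into the left-hand side of~\eqref{eq:finW} and using $(\ii/\sqrt{2})^2=-1/2$, the expression factorises as
\begin{equation*}
\frac{\kappa_4}{16\pi^2}\left[\int_\DD \Delta f(z)(1-|z|^2)\,\dif^2 z\right]\left[\int_\DD \Delta\overline{g(z)}(1-|z|^2)\,\dif^2 z\right],
\end{equation*}
where the $z$-integration is automatically restricted to $\DD$ by the indicator functions.

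Next I would evaluate each factor by integration by parts. Setting $u(z):=1-|z|^2$, which vanishes on $\partial\DD$, Green's identity gives $\int_\DD u\,\Delta f\,\dif^2 z=-\int_\DD \nabla u\cdot\nabla f\,\dif^2 z=2\int_\DD r\partial_r f\,\dif^2 z$ after using $\nabla u=-2(x,y)$. A second integration by parts in $r$ in polar coordinates $z=re^{\ii\theta}$, with the Jacobian $r\,\dif r\,\dif\theta$, yields
\begin{equation*}
\int_\DD \Delta f(z)(1-|z|^2)\,\dif^2 z=2\int_0^{2\pi}f(e^{\ii\theta})\,\dif\theta-4\int_\DD f(z)\,\dif^2 z,
\end{equation*}
and analogously for $\overline g$. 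Recognising $\int_0^{2\pi}f(e^{\ii\theta})\,\dif\theta=2\pi\,\widehat{f\restriction_{\partial\DD}}(0)$, each factor rewrites as $-4\pi\bigl[\pi^{-1}\int_\DD f\,\dif^2 z-\widehat{f\restriction_{\partial\DD}}(0)\bigr]$. The product $(-4\pi)^2=16\pi^2$ cancels exactly the prefactor $\kappa_4/(16\pi^2)$, reproducing the right-hand side of~\eqref{eq:finW}.

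There is no serious analytical obstacle here; the lemma is a direct calculation once~\eqref{eq:expintW} is available. The only points requiring care are the signs in $(\ii/\sqrt{2})^2$ and in the integration by parts, and noting that the vanishing of $m^z(0)$ outside $\overline\DD$ (rather than an artificial cut-off) is what restricts the $z$-integration to the unit disk and produces the boundary term on $\partial\DD$ from the weight $u$ vanishing there.
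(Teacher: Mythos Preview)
Your proof is correct and follows essentially the same route as the paper: evaluate the $\eta$-integrals via~\eqref{eq:expintW} to factorise the quadruple integral, then integrate by parts twice in each $z$-variable to produce the averaging functionals. The only cosmetic difference is that the paper carries out the two integrations by parts using the complex formulas $\int_\DD \partial_z h\,\dif^2 z=\tfrac{\ii}{2}\int_{\partial\DD}h\,\dif\bar z$ and its conjugate, whereas you use Green's identity and polar coordinates; the computations are equivalent and yield the same factor $-4\pi$ per integral.
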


\begin{proof}[Proof of Theorem~\ref{theo:CLT}]
    Theorem~\ref{theo:CLT} readily follows combining Lemma~\ref{lem:b}, Lemma~\ref{lem:vi} and Lemma~\ref{lem:wi}.
\end{proof}

\begin{proof}[Proof of Lemma~\ref{lem:wi}]
    First of all, we recall the following formulas of integration by parts
    \begin{equation}
        \label{eq:ibp}
        \int_\DD  \partial_z f(z,\overline{z}) \dif^2 z=\frac{\ii}{2}\int_{\partial \DD } f(z,\overline{z}) \dif\overline{z}, \quad \int_\DD  \partial_{\overline{z}} f(z,\overline{z})\dif^2 z=-\frac{\ii}{2}\int_{\partial\DD } f(z,\overline{z})  \dif z.
    \end{equation}
    Then, using the computation of the \(\eta\)-integral of \(U\) in~\eqref{eq:expintW}, and integration by parts~\eqref{eq:ibp} twice, we conclude that
    \[
        \begin{split}
            \int_\C    \Delta f\int_0^\infty U\, \dif \eta\dif^2 z &= \ii 2\sqrt{2} \int_{\DD }  \partial\overline{\partial} f(z) (1-\abs{z}^2) \dif^2 z=\ii 2\sqrt{2}  \int_{\DD } \overline{\partial} f(z) \overline{z}\dif^2z \\
            &=- \ii 2\sqrt{2} \left( \int_{\DD }   f(z)\,\dif^2z +\frac{\ii}{2}\int_{\partial(\DD )} f(z) \overline{z}\dif z\right) \\
            &=-\ii 2\sqrt{2} \left( \int_{\DD }   f(z)\dif^2 z -\pi \widehat{f\restriction_{\partial\DD }}(0)\right).
        \end{split}
    \]
    This concludes the proof of this lemma.
\end{proof}

\section{Local law for products of resolvents}\label{sec local law G2}
The main technical result of this section is a local law for \emph{products} of resolvents with different spectral parameters \(z_1\ne z_2\). Our goal is to find a deterministic
approximation to \(\braket{AG^{z_1} B G^{z_2} }\) for generic bounded deterministic matrices \(A,B\). Due to the correlation between the two resolvents the deterministic approximation to \(\braket{A G^{z_1}B G^{z_2}}\) is not simply \(\braket{A M^{z_1}B M^{z_2}}\).
In the context of linear statistics such local laws for products of resolvents have previously been obtained e.g.\ for Wigner matrices in~\cite{MR3805203} and for sample-covariance matrices in~\cite{MR4119592} albeit with  weaker error bounds. In the current non-Hermitian setting we need such local law  twice; for the resolvent CLT in Proposition~\ref{prop:CLTresm}, and
for the asymptotic independence of resolvents in Proposition~\ref{prop:indmr}. The key point
for the latter  is to obtain an improvement in the error term for mesoscopic  separation \(\abs{z_1-z_2}\sim n^{-\epsilon}\), a fine effect that has not been captured before.

Our proof  applies verbatim to both real and complex i.i.d.\ matrices, as well as to resolvents \(G^z(w)\) evaluated at an
arbitrary spectral parameter \(w\in \HC \).  We therefore work with this more general setup in this section, even
though for the application in the proofs of Propositions~\ref{prop:CLTresm}--\ref{prop:indmr} this generality is not necessary.

We recall from~\cite{MR4408013} that with the shorthand notations
\begin{equation}\label{G_i def}
    G_i:= G^{z_i}(w_i),\quad M_i :=  M^{z_i}(w_i),
\end{equation}
the deviation of \(G_i\) from \(M_i\) is computed from the identity
\begin{equation}\label{G deviation}  G_i = M_i -M_i \un{W G_i} + M_i \SS[G_i-M_i] G_i,\quad W := \begin{pmatrix}
        0 & X \\X^\ast &0
    \end{pmatrix}.\end{equation}
The relation~\eqref{G deviation} requires some definitions. First, the  linear \emph{ covariance} or \emph{self-energy operator} \(\SS\colon\C^{2n\times 2n}\to\C^{2n\times 2n}\) is given by
\begin{equation}\label{S def}
    \SS\biggl[\begin{pmatrix}
            A & B \\ C & D
        \end{pmatrix}\biggr] :=  \wt\E \wt W \begin{pmatrix}
        A & B \\ C & D
    \end{pmatrix} \wt W= \begin{pmatrix}
        \braket{D} & 0 \\ 0 & \braket{A}
    \end{pmatrix} ,\quad \wt W=\begin{pmatrix}
        0 & \wt X \\ \wt X^\ast & 0
    \end{pmatrix},
\end{equation}
where \(\wt X\sim\mathrm{Gin}_\C\), i.e.\ it averages the diagonal blocks and swaps them. Here \(\mathrm{Gin}_\C\) stands for the standard complex Ginibre ensemble. The ultimate equality in~\eqref{S def} follows directly from \(\E \wt x_{ab}^2=0\), \(\E\abs{\wt x_{ab}}^2=n^{-1}\). Second, underlining denotes, for any given function \(f\colon\C^{2n\times 2n}\to\C^{2n\times 2n}\), the \emph{self-renormalisation} \(\un{W f(W)}\) defined by
\begin{equation}\label{self renom}
    \un{W f(W)}:= W f(W) - \wt\E \wt W (\partial_{\wt W} f)(W),
\end{equation}
where \(\partial\) indicates a directional derivative in the direction \(\wt W\) and \(\wt W\) denotes an independent random matrix
as in~\eqref{S def} with \(\wt X\) a complex Ginibre matrix with expectation \(\wt\E\).
Note that we use complex Ginibre \(\wt X\) irrespective of the symmetry class of \(X\). Therefore, using the resolvent identity, it follows that
\[\un{W G} = WG + \wt\E \wt W G \wt W G = WG+\SS[G]G.\]
We now use~\eqref{G deviation} and~\eqref{self renom} to compute
\begin{gather}
    \begin{aligned}
        G_1 B G_2 & = M_1 B G_2 - M_1 \un{W G_1} B G_2 + M_1 \SS[G_1-M_1]G_1 B G_2               \\
                  & = M_1B M_2 + M_1 B(G_2-M_2) - M_1 \un{W G_1 B G_2} + M_1 \SS[ G_1 B G_2 ]M_2 \\
                  & \qquad + M_1 \SS[G_1 B G_2](G_2-M_2) + M_1 \SS[G_1-M_1]G_1 B G_2,
    \end{aligned}\label{G12 devition}\raisetag{-3em}
\end{gather}
where, in the second equality, we used
\[
    \begin{split}
        \un{W G_1 B G_2} &= W G_1 B G_2 + \SS[G_1] G_1 B G_2 + \SS[G_1 B G_2] G_2  \\
        &= \un{W G_1} B G_2 + \SS[G_1 B G_2] G_2.
    \end{split}
\]
Assuming that the self-renormalised terms and the ones involving \(G_i-M_i\) in~\eqref{G12 devition} are small,~\eqref{G12 devition} implies
\begin{equation}\label{G12 approx local law}
    G_1 B G_2 \approx M_B^{z_1,z_2},
\end{equation}
where
\begin{equation}\label{eq M12 def}
    M_B^{z_1,z_2}(w_1,w_2):= (1-M^{z_1}(w_1)\SS[\cdot]M^{z_2}(w_2))^{-1}[M^{z_1}( w_1)B M^{z_2}(w_2)].
\end{equation}
We define the corresponding \emph{\(2\)-body stability operator}
\begin{equation}\label{eq:stabop12}
    \wh\cB=\wh\cB_{12}=\wh\cB_{12}(z_1,z_2, w_1,w_2):= 1-M_1 \SS[\cdot]M_2,
\end{equation}
acting on the space of \(2n \times 2n\) matrices equipped with the usual Euclidean matrix norm which induces a natural norm for \(\wh\cB\).

Our main technical result of this section is making~\eqref{G12 approx local law} rigorous in the sense of Theorem~\ref{thm local law G2} below. To keep notations compact, we first introduce a commonly used (see, e.g.~\cite{MR3068390}) notion of high-probability bound.
\begin{definition}[Stochastic Domination]\label{def:stochDom}
    If \[X=\tuple*{ X^{(n)}(u) \given n\in\N, u\in U^{(n)} }\quad\text{and}\quad Y=\tuple*{ Y^{(n)}(u) \given n\in\N, u\in U^{(n)} }\] are families of non-negative random variables indexed by \(n\), and possibly some parameter \(u\), then we say that \(X\) is stochastically dominated by \(Y\), if for all \(\epsilon, D>0\) we have \[\sup_{u\in U^{(n)}} \Prob\left[X^{(n)}(u)>n^\epsilon  Y^{(n)}(u)\right]\leq n^{-D}\] for large enough \(n\geq n_0(\epsilon,D)\). In this case we use the notation \(X\prec Y\).
\end{definition}

\begin{theorem}\label{thm local law G2} Fix \(z_1,z_2\in\C\) and \(w_1,w_2\in\C\)
    with \(\abs{\eta_i}:=\abs{\Im w_i}\ge n^{-1}\) such that
    \[
        \eta_*:= \min\{\abs{\eta_1},\abs{\eta_2}\}\ge n^{-1+\epsilon}\norm{\wh\cB_{12}^{-1}}
    \]
    for some \(\epsilon>0\). Assume that \(G^{z_1}(w_1),G^{z_2}(w_2)\) satisfy the local laws  in the form
    \[ \abs{\braket{A(G^{z_i}-M^{z_i})}} \prec \frac{\norm{A}}{n\abs{\eta_i}}, \quad  \abs{\braket{\vx,(G^{z_i}-M^{z_i})\vy}} \prec \frac{\norm{\vx}\norm{\vy}}{\sqrt{n\abs{\eta_i}}}\]
    for any bounded deterministic matrix and vectors \(A,\vx,\vy\).
    Then, for any bounded deterministic matrix \(B\), with \(\norm{B}\lesssim 1\), the product of resolvents \( G^{z_1}B G^{z_2}=G^{z_1}(w_1)BG^{z_2}(w_2)\) is approximated by \(M_B^{z_1,z_2} =M_B^{z_1,z_2}(w_1,w_2)\) defined in~\eqref{eq M12 def}
    in the sense that
    \begin{gather}
        \begin{aligned}
            \abs{\braket{A (G^{z_1} BG^{z_2}-M_B^{z_1,z_2})}}     & \prec \frac{\norm{A}\norm{\wh\cB_{12}^{-1}}}{n\eta_\ast \abs{\eta_1\eta_2}^{1/2} }                                                                                    \\
                                                                  & \;\; \times\Bigl(\eta_\ast^{1/12}+\eta_\ast^{1/4}\norm{\wh\cB_{12}^{-1}} +\frac{1}{\sqrt{n\eta_\ast}}+\frac{\norm{\wh\cB_{12}^{-1}}^{1/4}}{(n\eta_\ast)^{1/4}}\Bigr), \\
            \abs{\braket{\vx,(G^{z_1}BG^{z_2}-M_B^{z_1,z_2})\vy}} & \prec
            \frac{\norm{\vx} \norm{\vy}\norm{\wh\cB_{12}^{-1}}}{(n\eta_\ast)^{1/2}|\eta_1\eta_2|^{1/2}}
        \end{aligned}\label{final local law}\raisetag{-5em}
    \end{gather}
    for any  deterministic \(A,\vx,\vy\).
\end{theorem}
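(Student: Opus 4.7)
My plan is to start from the expansion \eqref{G12 devition}, rearrange it into the self-consistent identity
\[\wh\cB_{12}\bigl[G_1 B G_2 - M_B^{z_1,z_2}\bigr] = M_1 B(G_2-M_2) + M_1 \SS[G_1 B G_2](G_2-M_2) + M_1 \SS[G_1-M_1] G_1 B G_2 - M_1 \un{WG_1 B G_2},\]
and then invert \(\wh\cB_{12}\). The problem is thereby reduced to two tasks: first, estimating the three terms on the right that carry an explicit factor of \(G_i-M_i\), which can be handled using the assumed single-resolvent local laws together with the trivial bound \(\norm{G_i}\le \abs{\eta_i}^{-1}\); and second, producing a high-probability bound for the fluctuation term \(\wh\cB_{12}^{-1}[M_1 \un{W G_1 B G_2}]\), which is the heart of the proof.

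For the fluctuation term I would run a high-moment estimate, controlling \(\E\abs{\braket{A\wh\cB_{12}^{-1}[M_1 \un{WG_1BG_2}]}}^{2p}\) via a cumulant expansion of each factor of \(W\). Since the self-renormalisation \eqref{self renom} subtracts the second-cumulant contribution of a single copy, only the third and higher cumulants generate single-copy contractions, while second-cumulant contractions are forced to pair two distinct copies in the \(2p\)-th moment. Each resulting diagram is a product of traces and entries of alternating chains of resolvents \(G^{z_1}\) and \(G^{z_2}\) separated by deterministic matrices; these are bounded using Ward identities of the form \(GG^\ast = \abs{\eta}^{-1}\Im G\), Cauchy–Schwarz, and the assumed isotropic and averaged single-resolvent local laws. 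The single application of \(\wh\cB_{12}^{-1}\) outside the renormalised expression translates into exactly one factor \(\norm{\wh\cB_{12}^{-1}}\) in the final bound, matching the prefactor in \eqref{final local law}.

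The natural closing mechanism is a simultaneous bootstrap on the two target quantities
\[\Phi_{\mathrm{av}}:= \sup_{\norm{A}\le 1}\abs{\braket{A(G_1 B G_2-M_B^{z_1,z_2})}}, \qquad \Phi_{\mathrm{iso}}:= \sup_{\norm{\vx},\norm{\vy}\le 1}\abs{\braket{\vx,(G_1 B G_2-M_B^{z_1,z_2})\vy}}.\]
A trivial initial bound follows from \(\norm{G_i}\le \abs{\eta_i}^{-1}\) and \(\norm{M_B^{z_1,z_2}}\lesssim \norm{\wh\cB_{12}^{-1}}\abs{\eta_1\eta_2}^{-1}\). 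The cumulant-expansion estimate for the self-renormalised term depends on the currently available bounds on \(\Phi_{\mathrm{av}},\Phi_{\mathrm{iso}}\), since expanding \(G_1 B G_2\) inside the expectation reintroduces the quantity we are trying to estimate. Iterating this self-improvement produces the four separate correction contributions in the parenthesis of \eqref{final local law}: the term \(1/\sqrt{n\eta_\ast}\) arises from the standard single-resolvent CLT scale, while \(\eta_\ast^{1/12}\), \(\eta_\ast^{1/4}\norm{\wh\cB_{12}^{-1}}\) and \(\norm{\wh\cB_{12}^{-1}}^{1/4}(n\eta_\ast)^{-1/4}\) encode the successive gains attainable per bootstrap step before the scheme saturates.

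I expect the principal technical obstacle to be the combinatorial bookkeeping of the cumulant expansion. Each higher-cumulant contraction produces either a useful additional smallness factor or a dangerous reproducing chain of the form \(\cdots G_1 \cdots G_2\cdots\) that reintroduces the quantity under control only through \(\Phi_{\mathrm{av}}\) or \(\Phi_{\mathrm{iso}}\). Classifying all such diagrams uniformly in \(p\), while ensuring that the factor of \(\norm{\wh\cB_{12}^{-1}}\) appears exactly once and is not compounded at each iteration, is the delicate point; the isotropic estimate in \eqref{final local law} should in fact come out cleaner than the averaged one precisely because fewer reproducing chains appear. Once the diagrammatic bookkeeping is in place, Markov's inequality converts the moment estimates into the stochastic-domination statement \eqref{final local law}.
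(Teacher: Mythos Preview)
Your proposal matches the paper's approach: rearrange \eqref{G12 devition} into the self-consistent identity for \(\wh\cB_{12}[G_{12}-M_{12}]\), bound the self-renormalised term \(\un{WG_1BG_2}\) via a graphical cumulant expansion (this is the paper's Proposition~\ref{prop prob bound}), and iterate because the averaged fluctuation estimate depends on an a priori bound \(\theta\) on \(\abs{\braket{AG_1BG_2}}\) through a \((\theta\eta_\ast)^{1/4}\)-type term coming from resumming degree-two vertices into traces. One small correction: the bootstrap runs only on the averaged quantity --- the isotropic fluctuation bound \eqref{prop iso bound} comes out directly from Ward identities with no \(\theta\)-dependence, so the isotropic conclusion in \eqref{final local law} follows from the identity and the single-resolvent local laws without any iteration.
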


The estimates in~\eqref{final local law} will be complemented by a upper bound on \(\norm{\wh\cB^{-1}}\) in Lemma~\ref{lemma:betaM}, where we will prove in particular that \(\norm{\wh\cB^{-1}}\lesssim n^{2\delta}\) whenever \(\abs{z_1-z_2}\gtrsim n^{-\delta}\), for some small fixed \(\delta>0\).

The proof of Theorem~\ref{thm local law G2} will follow from a
bootstrap argument once the main input, the following high-probability bound on \(\un{W G_1 B G_2}\) has been established.
\begin{proposition}\label{prop prob bound}Under the assumptions of Theorem~\ref{thm local law G2}, the following estimates hold uniformly in \(n^{-1}\lesssim \abs{\eta_1},\abs{\eta_2}\lesssim 1\).
    \begin{subequations}
        \begin{enumerate}[label=(\roman*)]
            \item We have the isotropic bound
                  \begin{equation}\label{prop iso bound}
                      \abs{\braket{\vx,\un{WG_1 B G_2} \vy}} \prec \frac{1}{( n\eta_\ast)^{1/2}\abs{\eta_1\eta_2}^{1/2}}
                  \end{equation}
                  uniformly for deterministic vectors and matrix \(\norm{\vx}+\norm{\vy}+\norm{B}\le1\).
            \item Assume that for some positive deterministic \(\theta=\theta(z_1,z_2,\eta_\ast)\) an a priori bound
                  \begin{equation}\label{eq a priori theta}\abs{\braket{A G_1 B G_2}}\prec\theta   \end{equation}
                  has already been established uniformly in deterministic matrices \(\norm{A}+\norm{B}\le 1\). Then we have the improved averaged bound
                  \begin{equation}\label{prop av bound}
                      \abs{\braket{\un{W G_1 B G_2 A}}} \prec \frac{1}{n\eta_\ast \abs{\eta_1\eta_2}^{1/2}}\Bigl((\theta\eta_\ast)^{1/4}+\frac{1}{\sqrt{n\eta_\ast}}+\eta_\ast^{1/12}\Bigr),
                  \end{equation}
                  again uniformly in deterministic matrices \(\norm{A}+\norm{B}\le 1\).
        \end{enumerate}
    \end{subequations}
\end{proposition}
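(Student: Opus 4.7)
The strategy is the high-moment method: bound
\[\E\abs{\braket{\vx,\un{W G_1 B G_2}\vy}}^{2p}\quad\text{and}\quad \E\abs{\braket{\un{W G_1 B G_2 A}}}^{2p}\]
for an arbitrary integer \(p\), then convert these into high-probability estimates via Markov's inequality. To expand the moments one uses the standard cumulant formula
\[\E\, x_{ab}\, f(W)=\sum_{k\ge 1}\frac{\kappa_{k+1}(x_{ab})}{k!}\,\E\,\partial_{x_{ab}}^k f(W),\]
applied to every factor of \(W=\sum_{a,b}x_{ab}e_{ab}\) appearing in the moment expression. The \(k=1\) (Gaussian) contribution is precisely what the self-renormalisation \(\un{\,\cdot\,}\) defined in~\eqref{self renom} subtracts, so only higher cumulants \(k\ge 2\), carrying the a priori prefactor \(n^{-(k+1)/2}\), survive, together with the chain contractions where \(\partial_{x_{ab}}\) hits resolvents in \emph{other} factors \(\un{W G_1 B G_2}\) of the moment.

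The cumulant expansion is organised diagrammatically: each term corresponds to a graph whose vertices encode the matrix indices that remain after differentiation and whose edges represent resolvent entries \((G_i)_{ab}\), resolvent-chain entries \((G_1 B G_2)_{ab}\), or test vectors and matrices. The value of each graph is then controlled via the following power-counting ingredients. First, the single-resolvent local law gives \(\abs{(G_i)_{ab}}\prec 1\) and \(\abs{\braket{A G_i}}\prec\norm{A}/(n\abs{\eta_i})\). Second, the Ward identity
\[\sum_b\abs{(G_i)_{ab}}^2=(G_iG_i^\ast)_{aa}=\frac{(\Im G_i)_{aa}}{\abs{\eta_i}}\prec\frac{1}{\abs{\eta_i}}\]
converts a free index summation into a \(1/\abs{\eta_i}\) gain. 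Third, a Cauchy--Schwarz application yields the basic two-resolvent bound \(\abs{(G_1 B G_2)_{ab}}\prec 1/\sqrt{\abs{\eta_1\eta_2}}\). Combining these with the \(n^{-(k+1)/2}\) cumulant prefactors and the index summations in each graph produces the isotropic bound~\eqref{prop iso bound}, once one verifies that the number of graphs is bounded by a combinatorial constant depending only on \(p\) and the maximal cumulant order retained (higher orders being absorbed into the \(\prec\) notation).

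For the averaged bound~\eqref{prop av bound} the additional input is the a priori hypothesis \(\abs{\braket{A G_1 B G_2}}\prec\theta\). Whenever a diagram contains a \(G_1 B G_2\) chain that is closed into a trace by the internal contractions, the generic entry bound \(1/\sqrt{\abs{\eta_1\eta_2}}\) is replaced by the sharper \(\theta\); balancing this against the worst-case diagrams in the expansion and using that only a fraction of the graphs benefits from the a priori input produces the contribution \((\theta\eta_\ast)^{1/4}\), the quarter power being the standard balance exponent in such feedback arguments. The term \((n\eta_\ast)^{-1/2}\) is the usual averaged gain coming from a single-resolvent local-law factor \(\braket{A G_i}\prec 1/(n\abs{\eta_i})\), which appears in the graphs carrying at least one tracial single-resolvent edge. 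The anomalous term \(\eta_\ast^{1/12}\) comes from a delicate sub-class of diagrams where the only available improvement over trivial bounds is an interpolation between the Ward-identity estimate \(\braket{G_i G_i^\ast}\prec 1/\abs{\eta_i}\), a H\"older-type bound trading entries of \(\Im G_i\) for small powers of \(\eta_i\), and a crude global bound on \(\braket{G_1\vphantom{I}^\ast G_2}\); the exponent \(1/12\) arises by optimising the resulting interpolation parameter.

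The principal obstacle is the organisation of the cumulant expansion in the novel two-resolvent setting with \(w_1\ne w_2\) and \(z_1\ne z_2\): naive application of the single-resolvent local law leaves too many diagrams of the wrong size, and only the careful interplay of Ward identities, Cauchy--Schwarz inequalities linking \(G_1 B G_2\) to \(\Im G_1\) and \(\Im G_2\), the a priori input \(\theta\), and the interpolation producing \(\eta_\ast^{1/12}\) yields the three distinct error contributions in~\eqref{prop av bound}. Once Proposition~\ref{prop prob bound} is established, Theorem~\ref{thm local law G2} follows by a bootstrap: starting from the trivial bound \(\theta_0\sim 1/\abs{\eta_1\eta_2}\), one iterates the identity~\eqref{G12 devition}, applying~\eqref{prop iso bound}--\eqref{prop av bound} at each step and inverting the stability operator \(\wh\cB_{12}\) (picking up the factor \(\norm{\wh\cB_{12}^{-1}}\) displayed in~\eqref{final local law}) until the a priori exponent \(\theta\) stabilises at the claimed final value.
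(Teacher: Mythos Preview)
Your high-level strategy---cumulant expansion of \(\E\abs{\braket{\un{WG_{12}}A}}^{2p}\), graphical organisation, Ward-identity gains, and conversion to stochastic domination---matches the paper's approach and correctly accounts for the isotropic bound~\eqref{prop iso bound}. However, your treatment of the averaged bound~\eqref{prop av bound} has two concrete gaps.

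First, you do not mention the \emph{resummation} step that is the crux of the argument: after the Ward improvements, the paper identifies all degree-\(2\) \(\kappa\)-edges and sums up their adjacent \(G\)-edges explicitly (using that \(R_{ab}\) is block-constant) into matrix products. This contracts each graph into a collection of \emph{cycle} subgraphs \(\Gamma_k^\circ\) yielding genuine traces \(\Tr(G\cdots G)\), and \emph{chain} subgraphs \(\Gamma_k^-\) yielding matrix entries, together with a reduced graph on the degree-\(\ge 3\) vertices. Only after this resummation can the a~priori input \(\abs{\braket{AG_1BG_2}}\prec\theta\) be invoked: it applies precisely to those cycles that contain at least one \(G_{12}\) edge, giving a gain of \(\sqrt{\eta_\ast\theta}\) per such cycle (Lemma~\ref{gain lemma}(ii), obtained by Cauchy--Schwarz isolating one \(G_{12}\)). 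Your description---``whenever a \(G_1BG_2\) chain is closed into a trace''---is the right intuition, but without the explicit resummation and the counting of how many such cycles are guaranteed (at least \(p-c'-d_{\ge3}/2\), cf.~\ref{count 12}), one cannot derive the exponent \(1/4\). It is not a ``standard balance exponent'' from a feedback argument; it is \((\sqrt{\eta_\ast\theta})^p\) in the \(2p\)-th moment after the cycle counting.

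Second, your explanation of \(\eta_\ast^{1/12}\) as arising from an interpolation/H\"older optimisation is incorrect. In the paper this term is purely combinatorial: the Ward estimate~\eqref{ward improvement} already gives
\[\WEst(\Gamma)\le\frac{1}{(n\eta_\ast)^{2p}\abs{\eta_1\eta_2}^p}\,\eta_\ast^{\,2p+\sum_i(d_G(i)/2-2)},\]
and using \(\abs{E_\kappa}\le 2p\) and the elementary inequality \((d-2)/2\ge d/6\) for \(d\ge 3\) one obtains the factor \((\eta_\ast^{1/6})^{d_{\ge 3}}\) with \(d_{\ge 3}=\sum_{d_G(i)\ge 3}d_G(i)\). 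No interpolation parameter is optimised; the \(1/12=(1/6)/2\) simply reflects that degree-\(\ge 3\) vertices carry excess \(\eta_\ast\)-powers in the Ward bound. Similarly, the \((n\eta_\ast)^{-1/2}\) term does not come from a tracial single-resolvent factor \(\braket{AG_i}\) as you suggest, but from \emph{long chains} \(\Gamma_k^-\) with \(k\ge 3\) in the reduced graph: each excess \(G\)-edge beyond the minimal length contributes an additional \((n\eta_\ast)^{-1/2}\) via Lemma~\ref{gain lemma}(i), and the counting~\ref{count excess} guarantees at least \(2c'-d_{\ge 3}\) such excess edges.
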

\begin{proof}[Proof of Theorem~\ref{thm local law G2}]
    We note that from~\eqref{eq M12 def} and~\eqref{eq M bound} we have
    \begin{equation}\label{eq M12 bound}
        \norm{M_B^{z_1,z_2}}\lesssim \norm{\wh\cB^{-1}}
    \end{equation}
    and abbreviate \(G_{12}:= G_1 B G_2\), \(M_{12}:= M_B^{z_1,z_2}\). We now assume an a priori bound \(\abs{\braket{G_{12}A}}\prec \theta_1\), i.e.\ that~\eqref{eq a priori theta} holds with \(\theta=\theta_1\). In the first step we may take \(\theta_1=\abs{\eta_1\eta_2}^{-1/2}\) due to the local law for \(G_i\) from which it follows that \[
        \begin{split}
            \abs{\braket{A G_1 B G_2}}&\le \sqrt{\braket{A G_1 G_1^\ast A^\ast}}\sqrt{\braket{B G_2 G_2^\ast B^\ast}} \\
            &=\frac{1}{\sqrt{\abs{\eta_1}\abs{\eta_2}}}\sqrt{\braket{A \Im G_1 A^\ast}}\sqrt{\braket{B \Im G_2 B^\ast}} \prec\theta_1.
        \end{split}
    \]
    By~\eqref{G12 devition} and~\eqref{eq M12 def} we have
    \begin{gather}
        \begin{aligned}
            \wh\cB[G_{12}-M_{12}] & = M_1 B (G_2-M_2)-M_1\underline{WG_{12}}+M_1\SS[G_{12}](G_2-M_2) \\
                                  & \quad +M_1\SS[G_1-M_1]G_{12},
        \end{aligned}\label{eq B G-M}\raisetag{-3em}
    \end{gather}
    and from~\eqref{single local law} and~\eqref{prop av bound} we obtain
    \[\begin{split}
            \abs{\braket{A(G_{12}-M_{12})}} &= \abs{\braket{A^\ast, \wh\cB^{-1}\wh\cB[G_{12}-M_{12}] }} = \abs{\braket{(\wh\cB^{\ast})^{-1}[A^\ast]^\ast \wh\cB[G_{12}-M_{12}]}} \\
            & \prec \norm{\wh\cB^{-1}} \Bigl[ \frac{1}{n\eta_\ast} + \frac{(\theta_1\eta_\ast)^{1/4}+(\sqrt{n\eta_\ast})^{-1}+\eta_\ast^{1/12}}{n\eta_\ast\abs{\eta_1\eta_2}^{1/2}}+ \frac{\theta_1}{n\eta_\ast} \Bigr].
        \end{split}\]
    For the terms involving \(G_i-M_i\) we used that \(\SS[R]=\braket{R E_2}E_1 + \braket{R E_1} E_2\) with the \(2n\times 2n\) block matrices
    \begin{equation}\label{E1 E2 def}
        E_1=\begin{pmatrix}
            1 & 0 \\0&0
        \end{pmatrix},\qquad E_2=\begin{pmatrix}
            0 & 0 \\0&1
        \end{pmatrix},
    \end{equation}
    i.e.\ that \(\SS\) effectively acts as a trace, so that the averaged bounds are applicable. Therefore with~\eqref{eq M12 bound} it follows that
    \begin{equation}\label{eq G12 iter}
        \abs{\braket{G_{12}A}} \prec \theta_2 := \norm{\wh\cB^{-1}} \Bigl[1+ \frac{1}{n\eta_\ast} + \frac{(\theta_1\eta_\ast)^{1/4}+(\sqrt{n\eta_\ast})^{-1}+\eta_\ast^{1/12}}{n\eta_\ast\abs{\eta_1\eta_2}^{1/2}}+ \frac{\theta_1}{n\eta_\ast}\Bigr].
    \end{equation}
    By iterating~\eqref{eq G12 iter} we can use \(\abs{\braket{G_{12}A}}\prec\theta_2\ll\theta_1\) as new input in~\eqref{eq a priori theta} to obtain \(\abs{\braket{G_{12}A}}\prec\theta_3\ll\theta_2\) since \(n\eta_\ast\gg\norm{\wh\cB^{-1}}\). Here \(\theta_j\), for \(j=3,4,\dots\), is defined iteratively by replacing \(\theta_1\) with \(\theta_{j-1}\) in the rhs.\ of the defining equation for \(\theta_2\) in~\eqref{eq G12 iter}. This improvement continues until the fixed point of this iteration, i.e.\ until \(\theta_N^{3/4}\) approaches \(\norm{\wh\cB^{-1}}n^{-1}\eta_\ast^{-7/4}\). For any given \(\xi> 0\), after finitely many steps \(N=N(\xi)\) the iteration stabilizes to
    \[ \theta_\ast \lesssim n^\xi\biggl[ \norm{\wh\cB^{-1}} + \frac{ \norm{\wh\cB^{-1}}}{n\eta_\ast}\frac{\eta_\ast^{1/12}}{\abs{\eta_1\eta_2}^{1/2}} + \frac{1}{\eta_\ast}\Bigl(\frac{ \norm{\wh\cB^{-1}}}{n\eta_\ast}\Bigr)^{4/3}\biggr], \]
    from which
    \[ \abs{\braket{A(G_{12}-M_{12})}}\prec \frac{\norm{\wh\cB^{-1}}}{ n\eta_\ast\abs{\eta_1\eta_2}^{1/2}}\Bigl( \eta_\ast^{1/12}+\eta_\ast^{1/4}\norm{\wh\cB^{-1}} +\frac{1}{\sqrt{n\eta_\ast}}+\Bigl(\frac{\norm{\wh\cB^{-1}}}{n\eta_\ast}\Bigr)^{1/4}\Bigr), \]
    and therefore the averaged bound in~\eqref{final local law} follows.

    For the isotropic bound in~\eqref{final local law} note that
    \[\braket{\vx,(G_{12}-M_{12})\vy} = \Tr \bigl[(\wh\cB^\ast)^{-1}[\vx\vy^\ast]\bigr]^\ast\wh\cB[G_{12}-M_{12}]  \]
    and that due to the block-structure of \(\wh\cB\) we have
    \[(\wh\cB^\ast)^{-1}[\vx\vy^\ast] = \sum_{i=1}^4 \vx_i \vy_i^\ast,\qquad \norm{\vx_i}\norm{\vy_i}\lesssim \norm{\wh\cB^{-1}},\]
    for some vectors \(\vx_i,\vy_i\). The isotropic bound in~\eqref{final local law} thus follows in combination with the isotropic bound in~\eqref{single local law},~\eqref{eq B G-M} and~\eqref{prop iso bound} applied to the pairs of vectors \(\vx_i,\vy_i\). This completes the proof of the theorem modulo the proof of Proposition~\ref{prop prob bound}.
\end{proof}

\subsection{Probabilistic bound and the proof of Proposition~\ref{prop prob bound}}\label{section prob bound}
We follow the graphical expansion outlined in~\cite{MR3941370,MR4134946} adapted to the current setting. We focus on the case when \(X\) has complex entries  and additionally mention the few  changes required when \(X\) is a real matrix. We abbreviate \(G_{12}=G_1 B G_2\) and use iterated cumulant expansions to expand \(\E\abs{\braket{\vx,\un{WG_{12}}\vy}}^{2p}\) and \(\E\abs{\braket{\un{WG_{12}}A}}^{2p}\) in terms of polynomials in entries of \(G\). For the expansion of the first \(W\) we have in the complex case
\begin{equation}\label{1 cum exp}\begin{split}
        &\E \Tr(\un{W G_{12}}A) \Tr(\un{W G_{12}}A)^{p-1} \Tr(A^\ast\un{ G_{12}^\ast W})^p \\
        &\quad= \frac{1}{n}\E\sum_{ab} R_{ab} \Tr(\Delta^{ab} G_{12}A) \partial_{ba} \Bigl[ \Tr(\un{W G_{12}}A)^{p-1} \Tr(A^\ast \un{G_{12}^\ast W})^p  \Bigr] \\
        &\qquad + \sum_{k\ge 2}\sum_{ab}\sum_{\bm\alpha\in\{ab,ba\}^k} \frac{\kappa(ab,\bm\alpha)}{k!}  \\
        &\qquad\qquad\quad \times\E\partial_{\bm\alpha} \Bigl[ \Tr(\Delta^{ab} G_{12}A) \Tr(\un{W G_{12}}A)^{p-1} \Tr(A^\ast \un{G_{12}^\ast W})^p  \Bigr]
    \end{split}\end{equation}
and similarly for \(\braket{\vx,\un{WG_{12}}\vy}\), where unspecified summations \(\sum_a\) are understood to be over \(\sum_{a\in[2n]}\), and \((\Delta^{ab})_{cd}:= \delta_{ac}\delta_{bd}\).
Here we introduced the matrix \(R_{ab}:= \bm1(a\le n,b>n)+\bm 1(a>n,b\le n) \) which is the rescaled second order cumulant (variance), i.e.\ \(R_{ab}=n\kappa(ab,ba)\). For \(\bm\alpha=(\alpha_1,\dots,\alpha_k)\) we denote the joint cumulant of \(w_{ab},w_{\alpha_1},\dots,w_{\alpha_k}\) by \(\kappa(ab,\bm\alpha)\) which is non-zero only for \(\bm\alpha\in\{ab,ba\}^k\). The derivative \(\partial_{\bm\alpha}\) denotes the derivative with respect to \(w_{\alpha_1},\dots,w_{\alpha_k}\). Note that in~\eqref{1 cum exp} the \(k=1\) term differs from the \(k\ge 2\) terms in two aspects. First, we only consider the \(\partial_{ba}\) derivative since in the complex case we have \(\kappa(ab,ab)=0\). Second, the action of the derivative on the first trace is not present since it is cancelled by the \emph{self-renormalisation} of \(\un{WG_{12}}\).

In the real case~\eqref{1 cum exp} differs slightly. First, for the \(k=1\) terms both \(\partial_{ab}\) and \(\partial_{ba}\) have to be taken into account with the same weight \(R\) since \(\kappa(ab,ab)=\kappa(ab,ba)\). Second, we chose only to renormalise the effect of the \(\partial_{ba}\)-derivative and hence the \(\partial_{ab}\)-derivative acts on all traces. Thus in the real case, compared to~\eqref{1 cum exp} there is an additional term given by
\[\frac{1}{n}\E\sum_{ab}R_{ab} \partial_{ab}\Bigl[\Tr(\Delta^{ab} G_{12} A ) \Tr(\un{W G_{12}}A)^{p-1} \Tr(A^\ast \un{G_{12}^\ast W})^p \Bigr].\]

The main difference to~\cite[Section 4]{MR3941370} and~\cite[Section 4]{MR4134946} is that therein instead of \(\un{WG_{12}}\) the single-\(G\) renormalisation \(\un{WG}\) was considered. With respect to the action of the derivatives there is, however, little difference between the two since we have
\[ \partial_{ab}G=-G\Delta^{ab}G, \quad \partial_{ab}G_{12}=-G_1 \Delta^{ab}G_{12} - G_{12}\Delta^{ab}G_2.\]
Therefore after iterating the expansion~\eqref{1 cum exp} we structurally obtain the same polynomials as in~\cite{MR3941370,MR4134946}, except of the slightly different combinatorics and the fact that exactly \(2p\) of the \(G\)'s are \(G_{12}\)'s and the remaining \(G\)'s are either \(G_1\) or \(G_2\). Thus, using the local law for \(G_i\) in the form
\begin{gather}
    \begin{aligned}
        \abs{\braket{\vx,G_i\vy}}    & \prec 1,                                                                                                                                                          \\
        \abs{\braket{\vx,G_{12}\vy}} & \le \sqrt{\braket{\vx,G_{1} G_1^\ast\vx}}\sqrt{\braket{\vy,BG_2G_2^\ast B^\ast\vy}}                                                                               \\
                                     & =\frac{1}{\sqrt{\abs{\eta_1}\abs{\eta_2}}}\sqrt{\braket{\vx,(\Im G_1)\vx}}\sqrt{\braket{\vy,B(\Im G_2)B^\ast\vy}} \prec \frac{1}{\sqrt{\abs{\eta_1}\abs{\eta_2}}}
    \end{aligned}\label{eq G trivial bound}\raisetag{-6em}
\end{gather}
for \(\norm{\vx}+\norm{\vy}\lesssim 1\), we obtain exactly the same bound as in~\cite[Eq.~(23a)]{MR3941370} times a factor of \((\abs{\eta_1}\abs{\eta_2})^{-p}\) accounting for the \(2p\) exceptional \(G_{12}\) edges, i.e.
\begin{equation}\label{eq ward before}
    \E\abs{\braket{\vx,\un{WG_{12}}\vy}}^{2p} \lesssim \frac{n^\epsilon}{(n\eta_\ast)^{p}\abs{\eta_1}^p\abs{\eta_2}^p},\quad \E\abs{\braket{\un{WG_{12}}A}}^{2p} \lesssim \frac{n^\epsilon}{(n\eta_\ast)^{2p}\abs{\eta_1}^p\abs{\eta_2}^p}.
\end{equation}
The isotropic bound from~\eqref{eq ward before} completes the proof of~\eqref{prop iso bound}.

It remains to improve the averaged bound in~\eqref{eq ward before} in order to obtain~\eqref{prop av bound}. We first have to identify where the bound~\eqref{eq ward before} is suboptimal. By iterating the expansion~\eqref{1 cum exp} we obtain a complicated polynomial expression in terms of entries of \(G_{12},G_1,G_2\) which is most conveniently represented graphically as
\begin{equation}\label{eq graph expansion}
    \E\abs{\braket{\un{W G_{12}}A}}^{2p} = \sum_{\Gamma\in \mathrm{Graphs}(p)} c(\Gamma)\E\Val(\Gamma) + \cO\Bigl(n^{-2p}\Bigr)
\end{equation}
for some finite collection of \(\mathrm{Graphs}(p)\). Before we precisely define the \emph{value of \(\Gamma\)}, \(\Val(\Gamma)\), we first give two examples. Continuing~\eqref{1 cum exp} in case \(p=1\) we have
\begin{subequations}\label{eq graph ex}
    \begin{equation}\label{eq graph ex a2} \begin{split}
            &\E\Tr(\un{W G_{12}}A) \Tr(A^\ast \un{G_{12}^\ast W}) \\
            &= \sum_{ab} \frac{R_{ab}}{n}\E \Tr(\Delta^{ab}G_{12}A)\Tr(A^\ast G_{12}^\ast \Delta^{ba} ) \\
            &\quad -\sum_{ab}\frac{R_{ab}}{n}\E \Tr(\Delta^{ab}G_{12}A) \Tr(A^\ast\un{ G_2^\ast \Delta^{ba} G_{12}^\ast W}) \\
            &\quad - \sum_{ab} \frac{R'_{ab}}{2n^{3/2}} \E \Tr(\Delta^{ab}G_1\Delta^{ba}G_{12}A) \Tr(A^\ast G_{12}^\ast \Delta^{ba}) +\cdots
        \end{split}
    \end{equation}
    where, for illustration, we only kept two of the three Gaussian terms (the last being when \(W\) acts on \(G_1^\ast\)) and one non-Gaussian term. For the non-Guassian term we set \(R'_{ab}:=n^{3/2}\kappa(ab,ba,ba)\), \(\abs{R'_{ab}}\lesssim 1\). Note that in the case of i.i.d.\ matrices with \(\sqrt{n}x_{ab}\stackrel{\mathsf{d}}{=}x\), we have \(R'_{ab}=\kappa(x,\overline{x},\overline{x})\) for \(a\le n,b>n\) and \(R'_{ab}=\kappa(x,x,\overline{x})=\overline{\kappa(x,\overline{x},\overline{x})}\) for \(a>n,b\le n\). For our argument it is of no importance whether matrices representing cumulants of degree at least three like \(R'\) are block-constant. It is important, however, that the variance \(\kappa(ab,ba)\) represented by \(R\) is block-constant since later we will perform certain resummations. For the second term on the rhs.\ of~\eqref{eq graph ex a2} we then obtain by another cumulant expansion that
    \begin{equation}\label{eq graph ex a}
        \begin{split}
            &\sum_{ab}\frac{R_{ab}}{n}\E \Tr(\Delta^{ab}G_{12}A) \Tr(A^\ast\un{ G_2^\ast \Delta^{ba} G_{12}^\ast W})\\
            &= - \sum_{ab}\sum_{cd}\frac{R_{ab}R_{cd}}{n^2}\E (G_{12}\Delta^{dc}G_2A)_{ba} \Tr(A^\ast G_2^\ast \Delta^{ba}G_{12}^\ast \Delta^{cd})+\cdots\\
            & \;\;- \sum_{ab}\sum_{cd} \frac{R_{ab}R'_{cd}}{2!n^{5/2}} \E (G_{12}\Delta^{dc}G_2A)_{ba} \Tr (A^\ast G_2^\ast\Delta^{ba}G_{12}^\ast\Delta^{dc}G_{1}^\ast\Delta^{cd}),
        \end{split}
    \end{equation}
    where we kept one of the two Gaussian terms and one third order term.
    After writing out the traces,~\eqref{eq graph ex a2}--\eqref{eq graph ex a} become
    \begin{equation}
        \begin{split}
            &\sum_{ab}\frac{R_{ab}}{n} \E (G_{12}A)_{ba} (A^\ast G_{12}^\ast)_{ab}+\cdots \\
            &-  \sum_{ab} \frac{R'_{ab}}{n^{3/2}} \E (G_1)_{bb} (G_{12}A)_{aa} (A^\ast G_{12}^\ast)_{ab} \\
            & + \sum_{ab}\sum_{cd} \frac{R_{ab}R_{cd}}{n^2} \E (G_{12})_{bd} (G_2A)_{ca} (A^\ast G_2^\ast)_{d b} (G_{12}^\ast)_{ac}\\
            & +  \sum_{ab}\sum_{cd}\frac{R_{ab}R'_{cd}}{2!n^{5/2}} \E (G_{12})_{bd} (G_{2}A)_{ca} (G_1^\ast)_{cc} (A^\ast G_2^\ast)_{db} (G_{12}^\ast)_{ad}.
        \end{split}
    \end{equation}
\end{subequations}

If \(X\) is real, then in~\eqref{eq graph ex} some additional terms appear since \(\kappa(ab,ab)=\kappa(ab,ba)\) in the real case, while \(\kappa(ab,ab)=0\) in the complex case. In the first equality of~\eqref{eq graph ex} this results in additional terms like
\begin{equation}\label{eq real additional terms}
    \begin{split}
        \sum_{ab}\frac{R_{ab}}{n} \E \Bigl(& - \Tr(\Delta^{ab}G_1 \Delta^{ab}G_{12}A)\Tr(A^\ast \un{G_{12}^\ast W}) \\
        &+ \Tr( \Delta^{ab}G_{12}A)\Tr(A^\ast G_{12}^\ast \Delta^{ab}) \\
        &- \Tr( \Delta^{ab}G_{12}A)\Tr(A^\ast \un{G_2^\ast \Delta^{ab} G_{12}^\ast W}) +\dots \Bigr).
    \end{split}
\end{equation}
Out of the three terms in~\eqref{eq real additional terms}, however, only the first one is qualitatively different from the terms already considered in~\eqref{eq graph ex} since the other two are simply transpositions of already existing terms. After another expansion of the first term in~\eqref{eq real additional terms} we obtain terms like
\begin{equation}\label{eq real additional terms 2}
    \begin{split}
        &\sum_{ab}\frac{R_{ab}}{n} (G_{12}A)_{ba}(A^\ast G_{12}^\ast)_{ba} +\cdots \\
        & +  \sum_{ab}\sum_{cd}\frac{R_{ab}R_{cd}}{n^2} (G_1)_{ba}(G_{12}A)_{ba} (A^\ast G_2^\ast)_{dc} (G_{12}^\ast)_{dc}\\
        & + \sum_{ab}\sum_{cd}\frac{R_{ab}R_{cd}'}{2!n^{5/2}} (G_{12})_{bc}(G_2A)_{da} (A^\ast G_2^\ast)_{da} (G_{12}^\ast)_{bd} (G_2^\ast)_{cc}
    \end{split}
\end{equation}
specific to the real case.

Now we explain how to encode~\eqref{eq graph ex} in the graphical formalism~\eqref{eq graph expansion}. The summation labels \(a_i,b_i\) correspond to vertices, while matrix entries correspond to edges between respective labelled vertices.
We distinguish between the cumulant- or \(\kappa\)-edges \(E_\kappa\), like \(R,R'\) and \(G\)-edges \(E_G\), like \((A^\ast G_2^\ast)_{db}\) or \((G_{12}^\ast)_{ab}\), but do not graphically distinguish between \(G_1,G_{12}\), \(A^\ast G_2^\ast\), etc. The four terms from the rhs.\ of~\eqref{eq graph ex} would thus be represented as
\begin{equation}\label{graph example}
    \ssGraph{a1[label=\(a\)] --[g,gray,dotted] b1[label=\(b\)]; a1 --[g,bl] b1; b1 --[g,bl] a1;},\quad \ssGraph{a1[label=\(a\)] --[g,gray,dotted] b1[label=\(b\)]; a1 --[g,bl] b1; a1 --[g,gll] a1; b1 --[g,glr] b1;}, \quad \sGraph{a1[label=\(a\)] --[gray, dotted] b1[label=left:\(b\)]; a2[label=above:\(c\)] --[gray, dotted] b2[label=left:\(d\)]; b1 --[bl] b2; a2 --[bl] a1; b2 --[bl] b1; a1 --[bl] a2; }\quad \text{and}\quad\sGraph{a1[label=above:\(a\)] --[gray, dotted] b1[label=left:\(b\)];b1 --[br] b2; a2[label=left:\(c\)] --[gray,dotted] b2[label=above:\(d\)]; a2 --[] a1; a2 --[glt] a2; b2 --[br] b1; a1 --[] b2; },
\end{equation}
where the edges from \(E_G\) are solid and those from \(E_\kappa\) dotted. Similarly, the three examples from~\eqref{eq real additional terms 2} would be represented as
\begin{equation}\label{eq real graph ex}
    \ssGraph{a1[label=\(a\)] --[g,gray,dotted] b1[label=\(b\)]; b1 --[g,br] a1; b1 --[g,bl] a1;},\quad \sGraph{a1[label=above:\(a\)] --[gray, dotted] b1[label=right:\(b\)]; a2[label=below:\(c\)] --[gray, dotted] b2[label=left:\(d\)]; b1 --[g,bl] a1; b1 --[g,br] a1;  b2 --[g,bl] a2; b2 --[g,br] a2;  }\quad \text{and}\quad \sGraph{a1[label=below:\(a\)] --[gray, dotted] b1[label=left:\(b\)]; a2[label=above:\(c\)] --[gray, dotted] b2[label=right:\(d\)]; b1 --[g] a2; b2 --[g,bl] a1;  b2 --[g,br] a1; b1 --[g] b2; a2 --[glr] a2;   }.
\end{equation}

It is not hard to see that after iteratively performing cumulant expansions up to order \(4p\) for each remaining \(W\) we obtain a finite collection of polynomial expressions in \(R\) and \(G\) which correspond to graphs \(\Gamma\) from a certain set \(\mathrm{Graphs}(p)\) with the following properties. We consider a directed graph \(\Gamma = (V, E_\kappa\cup E_G)\) with an even number \(\abs{V}=2k\) of vertices, where \(k\) is the number of cumulant expansions along the iteration. The edge set is partitioned into two types of disjoint edges, the elements of \(E_\kappa\) are called \emph{cumulant edges} and the elements of \(E_G\) are called \emph{\(G\)-edges}. For \(u\in V\) we define the \(G\)-degree of \(u\) as
\[\begin{split}
        d_G(u):={}& d_G^\mathrm{out}(u) +d_G^\mathrm{in} (u),\\
        d_G^\mathrm{out}(u):={}& \abs{\set{v\in V \given (uv)\in E_G}},\quad d_G^\mathrm{in}(u):=\abs{\set{v\in V \given (vu)\in E_G}}.
    \end{split}\]
We now record some structural attributes.
\begin{enumerate}[label=(A\arabic*)]
    \item\label{perfect matching} The graph \((V,E_\kappa)\) is a perfect matching and in particular \(\abs{V}=2\abs{E_\kappa}\). For convenience we label the vertices by \(u_1,\dots,u_k,v_1,\dots,v_k\) with cumulant edges \((u_1v_1),\dots,(u_k v_k)\). The ordering of the elements of \(E_\kappa\) indicated by \(1,\dots,k\) is arbitrary and irrelevant.
    \item\label{number of kappa edges} The number of \(\kappa\)-edges is bounded by \(\abs{E_\kappa}\le 2p\) and therefore \(\abs{V}\le 4p\)
    \item\label{degree equal} For each \((u_i v_i)\in E_\kappa\), the \emph{\(G\)-degree}  of both vertices agrees, i.e.\ \(d_G(u_i)=d_G(v_i)=: d_G(i)\). Furthermore the \(G\)-degree satisfies \(2\le d_G(i)\le 4p\). Note that loops \((uu)\) contribute a value of \(2\) to the degree.
    \item\label{no loops} If \(d_G(i)=2\), then no loops are adjacent to either \(u_i\) or \(v_i\).
    \item\label{number of G edges} We distinguish two types of \(G\)-edges \(E_G=E_G^1\cup E_G^2\) whose numbers are given by \[\abs{E_G^2}=2p, \quad \abs{E_G^1}=\sum_i  d_G(i)-2p, \quad \abs{E_G}=\abs{E_G^1}+\abs{E_G^2}.\]
\end{enumerate}
Note that in the examples~\eqref{eq real graph ex} above we had \(\abs{E_\kappa}=1\) in the first and \(\abs{E_\kappa}=2\) in the other two cases. For the degrees we had \(d_G(1)=2\) in the first case, \(d_G(1)=d_G(2)=2\) in the second case, and \(d_G(1)=2, d_G(2)=3\) in the third case. The number of \(G\)-edges involving \(G_{12}\) is \(2\) in all cases, while the number of remaining \(G\)-edges is \(0\), \(2\) and \(3\), respectively, in agreement with~\ref{number of G edges}. We now explain how we relate the graphs to the polynomial expressions they represent.
\begin{enumerate}[label=(I\arabic*)]
    \item\label{vertex inter} Each vertex \(u\in V\) corresponds to a summation \(\sum_{a\in[2n]}\) with a label \(a\) assigned to the vertex \(u\).
    \item\label{edge inter} Each \(G\)-edge \((uv)\in E_G^1\) represents a matrix \(\cG^{(uv)}=A_1 G_i A_2\) or \(\cG^{(uv)}=A_1 G_i^\ast A_2\) for some norm-bounded deterministic matrices \(A_1,A_2\). Each \(G\)-edge \((uv)\in E_G^2\) represents a matrix \(\cG^{(uv)}=A_1 G_{12} A_2\) or \(\cG^{(uv)}=A_1 G_{12}^\ast A_2\) for norm bounded matrices \(A_1,A_2\). We denote the matrices \(\cG^{(uv)}\) with a calligraphic ``G'' to avoid confusion with the ordinary resolvent matrix \(G\).
    \item\label{kappa edge rule} Each \(\kappa\)-edge \((uv)\) represents the matrix \[ R_{ab}^{(uv)} = \kappa(\underbrace{\sqrt{n}w_{ab},\dots,\sqrt{n}w_{ab}}_{d_G^\mathrm{in}(u)}, \underbrace{\sqrt{n}\ov{w_{ab}},\dots,\sqrt{n}\ov{w_{ab}}}_{d_G^\mathrm{out}(u)} ),\]
    where \(d_G^\mathrm{in}(u)=d_G^\mathrm{out}(v)\) and \(d_G^\mathrm{out}(u)=d_G^\mathrm{in}(v)\) are the in- and out degrees of \(u,v\).
    \item\label{value inter} Given a graph \(\Gamma\) we define its value\footnote{In~\cite{MR4134946} we defined the value with an expectation so that~\eqref{eq graph expansion} holds without expectation. In the present paper we follow the convention of~\cite{MR3941370} and consider the value as a random variable.} as
    \begin{equation}\label{Val Gamma def} \Val(\Gamma) := n^{-2p} \prod_{(u_i v_i)\in E_\kappa} \biggl(\sum_{a_i,b_i\in[2n]} n^{-d_G(i)/2}R^{(u_i v_i)}_{a_i b_i}\biggr) \prod_{(u_i v_i)\in E_G} \cG^{(u_i v_i)}_{a_i b_i},\end{equation}
    where \(R^{(u_i v_i  )}\) is as in~\ref{kappa edge rule} and \(a_i,b_i\) are the summation indices associated with \(u_i,v_i\).
\end{enumerate}
\begin{proof}[Proof of~\eqref{eq graph expansion}]
    In order to prove~\eqref{eq graph expansion} we have to check that the graphs representing the polynomial expressions of the cumulant expansion up to order \(4p\) indeed have the attributes~\ref{perfect matching}--\ref{number of G edges}. Here~\ref{perfect matching}--\ref{degree equal} follow directly from the construction, with the lower bound \(d_G(i)\ge 2\) being a consequence of \(\E w_{ab}=0\) and the upper bound \(d_G(i)\le 4p\) being a consequence of the fact that we trivially truncate the expansion after the \(4p\)-th cumulant. The error terms from the truncation are estimated trivially using~\eqref{eq G trivial bound}. The fact~\ref{no loops} that no \(G\)-loops may be adjacent to degree two \(\kappa\)-edges follows since due to the self-renormalisation \(\un{WG_{12}}\) the  second cumulant of \(W\) can only act on some \(W\) or \(G\) in another trace, or if it acts on some \(G\) in its own trace then it generates a \(\kappa(ab,ab)\) factor (only possible when \(X\) is real). In the latter case one of the two vertices has two outgoing, and the other one two incoming \(G\)-edges, and in particular no loops are adjacent to either of them. The counting of \(G_{12}\)-edges in \(E_G^2\) in~\ref{number of G edges} is trivial since along the procedure no \(G_{12}\)-edges can be created or removed. For the counting of \(G_i\) edges in \(E_G^1\) note that the action of the \(k\)-th order cumulant in the expansion of \(\un{WG_{12}}\) may remove \(k_1\) \(W\)'s and may create additional \(k_2\) \(G_i\)'s with \(k=k_1+k_2\), \(k_1\ge 1\). Therefore, since the number of \(G_i\) edges is \(0\) in the beginning, and the number of \(W\)'s is reduced from \(2p\) to \(0\) the second equality in~\ref{number of G edges} follows.

    It now remains to check that with the interpretations~\ref{vertex inter}--\ref{value inter} the values of the constructed graphs are consistent in the sense of~\eqref{eq graph expansion}. The constant \(c(\Gamma)\sim 1\) accounts for combinatorial factors in the iterated cumulant expansions and the multiplicity of identical graphs. The factor \(n^{-2p}\) in~\ref{value inter} comes from the \(2p\) normalised traces. The relation~\ref{kappa edge rule} follows from the fact that the \(k\)-th order cumulant of \(k_1\) copies of \(w_{ab}\) and \(k_2\) copies of \(\overline{w_{ab}}=w_{ba}\) comes together with \(k_1\) copies of \(\Delta^{ab}\) and \(k_2\) copies of \(\Delta^{ba}\). Thus \(a\) is the first index of some \(G\) a total of \(k_2\) times, while the remaining \(k_1\) times the first index is \(b\), and for the second indices the roles are reversed.
\end{proof}
Having established the properties of the graphs and the formula~\eqref{eq graph expansion}, we now estimate the value of any individual graph.
\subsubsection*{Naive estimate}
We first introduce the so called \emph{naive estimate}, \(\NEst(\Gamma)\), of a graph \(\Gamma\) as the bound on its value obtained by estimating the factors in~\eqref{Val Gamma def} as \(\abs{\cG^e_{ab}}\prec 1\) for \(e\in E_G^1\) and \(\abs{\cG^e_{ab}}\prec (\abs{\eta_1}\abs{\eta_2})^{-1/2}\) for \(e\in E_G^2\), \(\abs{R^{e}_{ab}}\lesssim 1\) and estimating summations by their size. Thus, we obtain
\begin{equation}\label{eq Gamma naive est}
    \begin{split}
        \Val(\Gamma)\prec \NEst(\Gamma):&=\frac{1}{n^{2p}\abs{\eta_1}^p\abs{\eta_2}^{p}} \prod_i \Bigl(n^{2-d_G(i)/2}\Bigr) \\
        &\le \frac{n^{\abs{E_\kappa^2}} n^{\abs{E_\kappa^3}/2}}{n^{2p}\abs{\eta_1}^p\abs{\eta_2}^{p}}   \le \frac{1}{\abs{\eta_1}^p\abs{\eta_2}^{p}},
    \end{split}
\end{equation}
where \[E_\kappa^j:=\set{(u_i,v_i)\given d_G(i)=j}\] is the set of degree \(j\) \(\kappa\)-edges, and in the last inequality we used \(\abs{E_\kappa^2}+\abs{E_\kappa^3}\le \abs{E_\kappa}\le 2p\).

\subsubsection*{Ward estimate}
The first improvement over the naive estimate comes from the effect that sums of resolvent entries are typically smaller than the individual entries times the summation size. This effect can easily be seen from the \emph{Ward} or resolvent identity \(G^\ast G=\Im G/\eta=(G-G^\ast)/(2\ii\eta)\). Indeed, the naive estimate of \(\sum_a G_{ab}\) is \(n\) using \(\abs{G_{ab}}\prec 1\). However, using  the Ward identity we can improve this to
\[ \biggl\lvert\sum_a G_{ab}\biggr\rvert \le \sqrt{2n} \sqrt{\sum_{a}\abs{G_{ab}}^2} = \sqrt{2n} \sqrt{ (G^\ast G)_{bb} }  = \sqrt{\frac{2n}{\eta}} \sqrt{(\Im G)_{bb}} \prec n \frac{1}{\sqrt{n\eta}}, \]
i.e.\ by a factor of \((n\eta)^{-1/2}\). Similarly, we can gain two such factors if the summation index \(a\) appears in two \(G\)-factors off-diagonally, i.e.\
\[  \biggl\lvert\sum_a (G_1)_{ab} (G_2)_{ca}\biggr\rvert \le \sqrt{(G_1^\ast G_1)_{bb}}\sqrt{(G_2 G_2^\ast)_{cc}}\prec n\frac{1}{n\eta}. \]
However, it is impossible to gain more than two such factors per summation. We note that we have the same gain also for summations of \(G_{12}\). For example, the naive estimate on \(\sum_{a}(G_{12})_{ab}\) is \(n\abs{\eta_1\eta_2}^{-1/2}\) since \(\abs{(G_{12})_{ab}}\prec\abs{\eta_1\eta_2}^{-1/2}\). Using the Ward identity, we obtain an improved bound of
\[ \begin{split}
        \biggl\lvert\sum_a (G_{12})_{ab}\biggr\rvert &\le \sqrt{2n}\sqrt{(G_{12}^\ast G_{12})_{bb}} =\sqrt{\frac{2n}{\abs{\eta_1}}} \sqrt{(G_2^\ast B^\ast(\Im G_1)B G_2 )_{bb}} \\
        &\lesssim \sqrt{\frac{n}{\abs{\eta_1}^2}} \sqrt{(G_2^\ast G_2 )_{bb}} \prec \frac{\sqrt{n}}{\abs{\eta_1}\abs{\eta_2}^{1/2}}\le\frac{n}{\abs{\eta_1\eta_2}^{1/2}}\frac{1}{\sqrt{n\eta_\ast}},
    \end{split} \]
where we recall \(\eta_\ast=\min\{\abs{\eta_1},\abs{\eta_2}\}\). Each of these improvements is associated with a specific \(G\)-edge with the restriction that one cannot gain simultaneously from more than two edges adjacent to any given vertex \(u\in V\) while summing up the index \(a\) associated with \(u\). Note, however, that globally it is nevertheless possible to gain from arbitrarily many \(G\)-edges adjacent to any given vertex, as long as the summation order is chosen correctly. In order to count the number edges giving rise to such improvements we recall a basic definition~\cite{MR266812} from graph theory.
\begin{definition}
    For \(k\ge 1\) a graph \(\Gamma=(V,E)\) is called \emph{\(k\)-degenerate} if any induced subgraph has minimal degree at most \(k\).
\end{definition}
The relevance of this definition in the context of counting the number of gains of \((n\eta_\ast)^{-1/2}\) lies in the following equivalent characterisation~\cite{MR193025}.
\begin{lemma}\label{lemma equiv coloring deg}
    A graph \(\Gamma=(V,E)\) is \(k\)-degenerate if and only
    if there exists an ordering of vertices \(\{v_1,\dots,v_n\}=V\) such that for each \(m\in[n]\) it holds that
    \begin{equation}\deg_{\Gamma[\{v_1,\dots,v_m\}]}(v_m)\le k \label{vertex ordering}\end{equation}
    where for \(V'\subset V\), \(\Gamma[V']\) denotes the induced subgraph on the vertex set \(V'\).
\end{lemma}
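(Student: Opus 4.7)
The plan is to prove both implications by a greedy/peeling argument, constructing the ordering in reverse for one direction and exhibiting a low-degree vertex in an induced subgraph for the other.

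For the ($\Rightarrow$) direction, assume $\Gamma$ is $k$-degenerate. I would define the ordering \emph{backwards}: since $\Gamma$ itself is one of its induced subgraphs, $k$-degeneracy provides a vertex of degree at most $k$, which I set as $v_n$. Then consider the induced subgraph $\Gamma[V\setminus\{v_n\}]$, which is again an induced subgraph of $\Gamma$ and thus has a vertex of degree at most $k$; declare this $v_{n-1}$. Iterating, at step $m$ (counting down from $n$) the induced subgraph $\Gamma[\{v_1,\dots,v_m\}]$ has a vertex of degree at most $k$, which I call $v_m$. By construction, $v_m$ has degree at most $k$ in $\Gamma[\{v_1,\dots,v_m\}]$, yielding exactly~\eqref{vertex ordering}.

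For the ($\Leftarrow$) direction, suppose such an ordering $v_1,\dots,v_n$ exists. Given any non-empty $V'\subseteq V$, let $m$ be the largest index for which $v_m\in V'$. Then every neighbour of $v_m$ in $\Gamma[V']$ lies in $V'\cap\{v_1,\dots,v_{m-1}\}\subseteq\{v_1,\dots,v_{m-1}\}$, so
\[
\deg_{\Gamma[V']}(v_m)\le \deg_{\Gamma[\{v_1,\dots,v_m\}]}(v_m)\le k,
\]
which exhibits $v_m$ as a vertex of degree at most $k$ in $\Gamma[V']$. Since $V'$ was arbitrary, every induced subgraph has minimum degree at most $k$, i.e.\ $\Gamma$ is $k$-degenerate.

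Both directions hinge on the same elementary observation: peeling off a last vertex in the ordering reduces degrees to those within the already-considered prefix. There is no real technical obstacle here; the only care required is to perform the construction in the forward direction in reverse (so that the ``last remaining'' vertex plays the role of $v_m$), and to note that the hypothesis of $k$-degeneracy must be applied to each successive induced subgraph rather than only to $\Gamma$ itself.
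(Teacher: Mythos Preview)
Your proof is correct and is the standard argument for this classical equivalence. Note that the paper does not actually prove this lemma; it simply cites it as a known result from~\cite{MR193025}, so there is no paper proof to compare against.
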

We consider a subset of non-loop edges \(E_\mathrm{Ward}\subset E_G\setminus\set{(vv)\given v\in V}\) for which Ward improvements will be obtained. We claim that if \(\Gamma_\mathrm{Ward}=(V,E_\mathrm{Ward})\) is \(2\)-degenerate, then we may gain a factor of \((n\eta_\ast)^{-1/2}\) from each edge in \(E_\mathrm{Ward}\). Indeed, take the ordering \(\{v_1,\dots,v_{2\abs{E_\kappa}}\}\) guaranteed to exist in Lemma~\ref{lemma equiv coloring deg} and first sum up the index \(a_1\) associated with \(v_1\). Since \(\Gamma_\mathrm{Ward}\) is \(2\)-degenerate there are at most two edges from \(E_\mathrm{Ward}\) adjacent to \(v_1\) and we can gain a factor of \((n\eta_\ast)^{-1/2}\) for each of them. Next, we can sum up the index associated with vertex \(v_2\) and again gain the same factor for each edge in \(E_\mathrm{Ward}\) adjacent to \(v_2\). Continuing this way we see that in total we can gain a factor of \((n\eta_\ast)^{-\abs{E_\mathrm{Ward}}/2}\) over the naive bound~\eqref{eq Gamma naive est}.
\begin{definition}[Ward estimate]\label{def ward est}
    For a graph \(\Gamma\) with fixed subset \(E_\mathrm{Ward}\subset E_G\) of edges we define \[\WEst(\Gamma):=\frac{\NEst(\Gamma)}{(n\eta_\ast)^{\abs{E_\mathrm{Ward}}/2}}.\]
\end{definition}
By considering only \(G\)-edges adjacent to \(\kappa\)-edges of degrees \(2\) and \(3\) it is possible to find such a \(2\)-degenerate set with
\[\abs{E_\mathrm{Ward}} = \sum_{i} (4-d_G(i))_+\]
elements, cf.~\cite[Lemma 4.7]{MR4134946}. As a consequence, as compared with the first inequality in~\eqref{eq Gamma naive est}, we obtain an improved bound
\begin{gather}
    \begin{aligned}
        \Val(\Gamma) & \prec \WEst(\Gamma)={} \frac{1}{n^{2p}\abs{\eta_1\eta_2}^p} (n\eta_\ast)^{-\abs{E_\mathrm{Ward}}/2}\prod_i\Bigl(n^{2-d_G(i)/2}\Bigr)                                                                  \\
                     & ={}\frac{1}{n^{2p}\abs{\eta_1\eta_2}^p} \prod_{d_G(i)=2}\Bigl(\frac{n}{n\eta_\ast}\Bigr) \prod_{d_G(i)=3}\Bigl(\frac{\sqrt{n}}{\sqrt{n\eta_\ast}}\Bigr) \prod_{d_G(i)\ge 4}\Bigl(n^{2-d_G(i)/2}\Bigr) \\
                     & \lesssim{} \frac{1}{(n\eta_\ast)^{2p}\abs{\eta_1\eta_2}^p} \eta_\ast^{2p+\sum_i(d_G(i)/2-2)} \lesssim \frac{1}{(n\eta_\ast)^{2p}\abs{\eta_1\eta_2}^p},
    \end{aligned}\label{ward improvement}\raisetag{-5em}
\end{gather}
where in the penultimate inequality we used \(n^{-1}\le \eta_\ast\), and in the ultimate inequality that \(d_G(i)\ge 2\) and \(\abs{E_\kappa}\le 2p\) which implies that the exponent of \(\eta_\ast\) is non-negative and \(\eta_\ast\lesssim 1\). Thus we gained a factor of \((n\eta_\ast)^{-2p}\) over the naive estimate~\eqref{eq Gamma naive est}.

\subsubsection*{Resummation improvements}
The bound~\eqref{ward improvement} is optimal if \(z_1=z_2\) and if \(\eta_1,\eta_2\) have opposite signs. In the general case \(z_1\ne z_2\) we have to use two additional improvements which both rely on the fact that the summations \(\sum_{a_i,b_i}\) corresponding to \((u_i,v_i)\in E_\kappa^2\) can be written as matrix products since \(d_G(u_i)=d_G(v_i)=2\). Therefore we can sum up the \(G\)-edges adjacent to \((u_i v_i)\) as
\begin{subequations}\label{eq summing up deg 2}
    \begin{equation}\label{eq summing up deg 2a}
        \begin{split}
            &\sum_{a_i b_i} G_{xa_i}G_{a_i y} G_{zb_i}G_{b_i w} R_{a_i b_i} \\
            &\quad=\sum_{a_i b_i} G_{xa_i}G_{a_i y} G_{zb_i}G_{b_i w} \Bigl[\bm 1(a_i>n,b_i\le n)+\bm 1(a_i\le n,b_i>n)\Bigr]\\
            &\quad =(G E_1 G)_{xy} (GE_2 G)_{zw} + (G E_2 G)_{xy} (GE_1 G)_{zw},
        \end{split}
    \end{equation}
    where \(E_1\), \(E_2\) are defined in~\eqref{E1 E2 def}, in the case of four involved \(G\)'s and \(d_G^\mathrm{in}=d_G^\mathrm{out}=1\). If one vertex has two incoming, and the other two outgoing edges (which is only possible if \(X\) is real), then we similarly can sum up
    \begin{equation} \sum_{a b} G_{x a}G_{ya} G_{b z}G_{b w} R_{a b} = (GE_1G^t)_{xy} (G^t E_2 G)_{zw}+(GE_2G^t)_{xy} (G^t E_1 G)_{zw},\end{equation}
    so merely some \(G\) is replaced by its transpose \(G^t\) compared to~\eqref{eq summing up deg 2a} which will not change any estimate. In the remaining cases with two and three involved \(G\)'s we similarly have
    \begin{equation}
        \begin{split}
            \sum_{a b} G_{b a} G_{a b}R_{a b}& = \Tr G E_1 G E_2 + \Tr G E_2 GE_1\\
            \sum_{a b} G_{x a} G_{a b} G_{b y} R_{a b}& = (G E_1 G E_2 G)_{xy} + (G E_2 G E_1 G)_{xy}.
        \end{split}
    \end{equation}
\end{subequations}

By carrying out all available \emph{partial summations} at degree-\(2\) vertices as in~\eqref{eq summing up deg 2} for the value \( \Val(\Gamma)\) of some graph \(\Gamma\) we obtain a collection of \emph{reduced graphs}, in which cycles of \(G\)'s are contracted to the trace of their matrix product, and chains of \(G\)'s are contracted to single edges, also representing the matrix products with two \emph{external} indices. We denote generic cycle-subgraphs of \(k\) edges from \(E_G\) with vertices of degree two by \(\Gamma^\circ_k\), and generic chain-subgraphs of \(k\) edges from \(E_G\) with \emph{internal} vertices of degree two and external vertices of degree at least three by \(\Gamma^-_k\). With a slight abuse of notation we denote the \emph{value} of \(\Gamma_k^\circ\) by \(\Tr\Gamma_k^\circ\), and the \emph{value} of \(\Gamma_k^{-}\) with external indices \((a,b)\) by \((\Gamma_k^-)_{ab}\), where for a fixed choice of \(E_1,E_2\) in~\eqref{eq summing up deg 2} the internal indices are summed up. The actual choice of \(E_1, E_2\) is irrelevant for our analysis, hence we will omit it from the notation. The concept of the \emph{naive} and \emph{Ward} estimates of any graph \(\Gamma\) carry over naturally to these chain and cycle-subgraphs by setting
\begin{equation}\label{chain cycle ests}
    \begin{split}
        \NEst(\Gamma_k^\circ):={}&\frac{n^k}{\abs{\eta_1\eta_2}^{\abs{E_G^2(\Gamma_k^\circ)}/2}}, \quad \NEst(\Gamma_k^-):=\frac{n^{k-1}}{\abs{\eta_1\eta_2}^{\abs{E_G^2(\Gamma_k^-)}/2}}, \\
        \WEst(\Gamma_k^{\circ/-})={}&\frac{\NEst(\Gamma_k^{\circ/-})}{(n\eta_\ast)^{\abs{E_\mathrm{Ward}(\Gamma_k^{\circ/-})}/2}}, \,\,\, E_\mathrm{Ward}(\Gamma_k^{\circ/-})=E_G(\Gamma_k^{\circ/-})\cap E_\mathrm{Ward}(\Gamma).
    \end{split}
\end{equation}

After contracting the chain- and cycle-subgraphs we obtain \(2^{\abs{E_\kappa^2}}\) reduced graphs \(\Gamma_\mathrm{red}\) on the vertex set
\[ V(\Gamma_\mathrm{red}):= \set{v\in V(\Gamma) \given d_G(v)\ge 3}\]
with \(\kappa\)-edges
\[ E_\kappa(\Gamma_\mathrm{red}):= E_\kappa^{\ge 3}(\Gamma) \]
and \(G\)-edges
\[E_G(\Gamma_\mathrm{red}) := \set{(uv)\in E_G(\Gamma)\given \min\{d_G(u),d_G(v)\}\ge 3} \cup E_G^\mathrm{chain}(\Gamma_\mathrm{red}),\]
with additional \emph{chain-edges}
\[
    \begin{split}
        E_G^\mathrm{chain}(\Gamma_\mathrm{red}):&= \set*{(u_1u_{k+1})\given \parbox{18em}{\(k\ge 2\), \(u_1,u_{k+1}\in V(\Gamma_\mathrm{red})\), \(\exists \Gamma_k^-\subset \Gamma\), \(V(\Gamma_k^-)=(u_1,\dots, u_{k+1})\) }}.
    \end{split}
\]
The additional chain edges \((u_1u_{k+1})\in E_G^\mathrm{chain}\) naturally represent the matrices
\[\cG^{(u_1u_{k+1})} := \bigl((\Gamma_k^-)_{ab}\bigr)_{a,b\in[2n]}\]
whose entries are the values of the chain-subgraphs. Note that due to the presence of \(E_1,E_2\) in~\eqref{eq summing up deg 2} the matrices associated with some \(G\)-edges can be multiplied by \(E_1,E_2\). However, since in the definition~\ref{edge inter} of \(G\)-edges the multiplication with generic bounded deterministic matrices is implicitly allowed, this additional multiplication will not be visible in the notation. Note that the reduced graphs contain only vertices of at least degree three, and only \(\kappa\)-edges from \(E_\kappa^{\ge 3}\). The definition of value, naive estimate and Ward estimate naturally extend to the reduced graphs and we have
\begin{equation} \label{eq reduced graph cal}
    \Val(\Gamma)= \sum \Val(\Gamma_\mathrm{red}) \prod_{\Gamma_k^\circ\subset\Gamma} \Tr\Gamma_k^\circ
\end{equation}
and
\begin{equation}
    \begin{split}
        \NEst(\Gamma) &= \NEst(\Gamma_\mathrm{red}) \prod_{\Gamma_k^\circ\subset\Gamma} \NEst(\Gamma_k^\circ),  \\
        \WEst(\Gamma)&= \WEst(\Gamma_\mathrm{red}) \prod_{\Gamma_k^\circ\subset\Gamma} \WEst(\Gamma_k^\circ).
    \end{split}
\end{equation}
The irrelevant summation in~\eqref{eq reduced graph cal} of size \(2^{\abs{E_\kappa^2}}\) is due to the sums in~\eqref{eq summing up deg 2}.

Let us revisit the examples~\eqref{graph example} to illustrate the summation procedure. The first two graphs in~\eqref{graph example} only have degree-\(2\) vertices, so that the reduced graphs are empty with value \(n^{-2p}=n^{-2}\), hence
\[ \Val(\Gamma)=\frac{1}{n^2}\sum \Tr \Gamma_2^\circ \qquad \Val(\Gamma)=\frac{1}{n^2}\sum (\Tr\Gamma_2^\circ) (\Tr \Gamma_2^\circ),\]
where the summation is over two and, respectively, four terms. The third graph in~\eqref{graph example} results in no traces but in four reduced graphs
\[\Val(\Gamma) = \sum \Val(\hspace{-1em}\sGraph{a --[gray,dotted] b; a --[glt] a; b --[glt,double] b; a --[bl,double] b;}\hspace{-1em}),\]
where for convenience we highlighted the chain-edges \(E_G^\mathrm{chain}\) representing \(\Gamma_k^-\) by double lines (note that the two endpoints of a chain edge may coincide, but it is not interpreted as a cycle graph since this common vertex has degree more than two, so it is not summed up into a trace along the reduction process). Finally, to illustrate the reduction for a more complicated graph, we have
\[ \Val\left(\sGraph{ a1[label=left:\(a_1\)]--[gray,dotted]b1[label=left:\(b_1\)]; a2[label=right:\(a_2\)]--[gray,dotted]b2[label=right:\(b_2\)]; a3[label=below:\(a_3\)]--[gray,dotted]b3[label=left:\(b_3\)]; a3--a2--a1--[bl]b2--a1; a3--[glr] a3 --[bl] b3 --[glt] b3; b1 --[glb] b1 --[] b3; a4[label=right:\(a_4\)] --[gray,dotted] b4[label=left:\(b_4\)] --[bl] a4 --[bl] b4; }\right)= \sum (\Tr \Gamma_2^-) \Val\left(\sGraph{ a1[label=left:\(a_1\)]--[gray,dotted]b1[label=left:\(b_1\)];  a3[label=right:\(a_3\)]--[gray,dotted] b3[label=right:\(b_3\)]; a3--[double] a1--[glt,double]a1; a3--[glt] a3 --[bl] b3 --[glt] b3; b1 --[glr] b1 --[] b3;}\right) \]
where we labelled the vertices for convenience, and the summation on the rhs.\ is over four assignments of \(E_1,E_2\).

Since we have already established a bound on \(\Val(\Gamma)\prec\WEst(\Gamma)\) we only have to identify the additional gain from the resummation compared to the \emph{Ward-estimate}~\eqref{ward improvement}.

We will need to exploit two additional effects:
\begin{enumerate}[label=(\roman*)]
    \item\label{suboptimal Ward} The Ward-estimate is sub-optimal whenever, after resummation, we have some contracted cycle \(\Tr \Gamma_k^\circ\) or a reduced graph with a chain-edge \(\Gamma_k^-\) with \(k\ge 3\).
    \item\label{G12 Ward} When estimating \(\Tr \Gamma_k^\circ\), \(k\ge2\) with \(\Gamma_k^\circ\) containing some \(G_{12}\), then also the improved bound from~\ref{suboptimal Ward} is sub-optimal and there is an additional gain from using the a priori bound \(\abs{\braket{G_{12}A}}\prec \theta \).
\end{enumerate}
We now make the additional gains~\ref{suboptimal Ward}--\ref{G12 Ward} precise.
\begin{lemma}\label{gain lemma} For \(k\ge 2\) let \(\Gamma_k^\circ\) and \(\Gamma_k^-\) be some cycle and chain subgraphs.
    \begin{enumerate}[label=(\roman*)]
        \begin{subequations}
            \item We have
            \begin{equation}\label{long Gk gain}
                \abs{\Tr \Gamma_k^\circ} \prec (n\eta_\ast)^{-(k-2)/2} \WEst(\Gamma_k^\circ)
            \end{equation}
            and for all \(a,b\)
            \begin{equation}\label{long Gk iso gain}
                \abs{(\Gamma_k^-)_{ab}}\prec (n\eta_\ast)^{-(k-2)/2} \WEst(\Gamma_k^-).
            \end{equation}
            \item If \(\Gamma_k^\circ\) contains at least one \(G_{12}\) then we have a further improvement of \((\eta_\ast\theta)^{1/2}\), i.e.
            \begin{equation}\label{long Gk gain theta}
                \abs{\Tr \Gamma_k^\circ} \prec \sqrt{\eta_\ast\theta} (n\eta_\ast)^{-(k-2)/2} \WEst(\Gamma_k^\circ),
            \end{equation}
        \end{subequations}
        where \(\theta\) is as in~\eqref{eq a priori theta}.
    \end{enumerate}
\end{lemma}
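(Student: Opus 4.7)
The proof will proceed by iterated Cauchy--Schwarz, peeling off one edge of the cycle (or chain) at a time. The basic ingredients are the Ward identity $G^\ast G = \Im G/\eta$ together with the single-resolvent local law $\braket{\Im G^z(\ii\eta)}\prec 1$, which together yield the bound $\Tr(\cG^\ast \cG)\prec n/\abs{\eta}$ for any single-resolvent edge $\cG = AG^z(\ii\eta)A'$. The analogous bound $\Tr(\cG^\ast \cG)\prec n/\abs{\eta_1\eta_2}$ for an $E_G^2$-edge $\cG = AG_1 BG_2 A'$ follows from the identity $\cG\cG^\ast = AG_1 B(\Im G_2/\eta_2)B^\ast G_1^\ast A^\ast$ together with a second use of the single-resolvent law. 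Analogous isotropic Ward bounds, e.g.\ $\sum_a \abs{G_{ab}}^2 = (\Im G/\eta)_{bb} \prec 1/\abs\eta$, will be used for part~(ii).

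For the cycle bound~\eqref{long Gk gain} I would split the trace via Cauchy--Schwarz as
\[
\abs{\Tr(\cG^{(1)}\cdots \cG^{(k)})}^2 \le \Tr\bigl(PP^\ast\bigr)\cdot \Tr\bigl(QQ^\ast\bigr),\qquad P := \cG^{(1)}\cdots \cG^{(\lceil k/2\rceil)},\;\; Q:=\cG^{(\lceil k/2\rceil+1)}\cdots \cG^{(k)},
\]
and then iteratively expand each palindromic trace $\Tr(PP^\ast)$, collapsing adjacent pairs $\cG^{(j)}(\cG^{(j)})^\ast$ into $A(\Im G_{i_j}/\eta_{i_j})A^\ast$ via the Ward identity, absorbing deterministic bounded matrix factors along the way. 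The recursion terminates at a short trace of the form $\Tr(\Im G_{i_1}B_1\Im G_{i_2}B_2)$ with deterministic bounded $B_1,B_2$, which is of order $n$ by the single-resolvent local law. Tracking powers of $n$ and $\eta_\ast$, each peeling step saves a factor $(n\eta_\ast)^{-1/2}$ over the naive Ward estimate, and after $k-2$ steps the savings saturate to give~\eqref{long Gk gain}. The isotropic chain bound~\eqref{long Gk iso gain} follows from the same peeling applied to the vector $\cG^{(1)}\cdots \cG^{(k)}\bm e_b$, with Cauchy--Schwarz performed in $\C^{2n}$ rather than in the Hilbert--Schmidt sense.

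For the additional improvement~\eqref{long Gk gain theta}, suppose that some edge $\cG^{(j)} = AG_1 BG_2 A'$ is in $E_G^2$. I would rewrite the cycle as $2n\braket{G_1 BG_2 R}$, where $R = A'\cG^{(j+1)}\cdots \cG^{(j-1)}A$ collects the remaining $k-1$ edges, and decompose $R = \E R + (R-\E R)$. The operator norm $\norm{\E R}$ is controlled by iterative application of the single- and two-resolvent local laws, so that the a priori bound $\abs{\braket{G_1 BG_2 C}}\prec\theta$ applied with $C = \E R$ replaces what would have been the HS contribution $\sqrt{n/\abs{\eta_1\eta_2}}$ of $\cG^{(j)}$ in part~(i) by a $\theta$-based estimate. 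A direct comparison of the two bounds produces exactly the extra factor $\sqrt{\eta_\ast\theta}$, while the fluctuation $R-\E R$ is absorbed by induction on the cycle length, applying parts~(i)--(ii) to the shorter cycle obtained after removing $\cG^{(j)}$.

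The main technical obstacle is the bookkeeping during the iterated peeling: one must verify that the deterministic matrix factors absorbed at each Ward step remain operator-norm bounded, and that the power counting precisely saturates at $(n\eta_\ast)^{-(k-2)/2}$ savings, rather than being off by one. In part~(iii), a further delicate point is justifying the replacement of the random residual $R$ by its expectation at the required precision; this depends on closing an induction in which the already-established gains from parts~(i)--(ii), applied to the shorter cycle, suppress the fluctuation contribution at the order $\sqrt{\eta_\ast\theta}\,\WEst(\Gamma_k^\circ)$.
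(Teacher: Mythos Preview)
Your argument for part~(i) is essentially the paper's: iterated Cauchy--Schwarz with Ward identities, which the paper packages as a separate ``general products'' lemma and then applies. The bookkeeping you flag is routine.

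Part~(ii), however, has a real gap. The a~priori bound~\eqref{eq a priori theta} reads \(\abs{\braket{AG_1BG_2}}\prec\theta\) only for \emph{deterministic} \(A,B\) with \(\norm{A}+\norm{B}\lesssim 1\). Your residual factor \(R\), being a product of \(k-1\) resolvent edges, has operator norm of order \(\eta_\ast^{-(k-1)}\) or worse (each \(G_i\) contributes \(1/\eta_i\), each \(G_{12}\) contributes \(1/(\eta_1\eta_2)\)), and taking expectation does not improve this: the local laws give tracial and isotropic control, not an \(\cO(1)\) operator-norm bound on \(\E R\). So applying the a~priori bound with \(C=\E R\) costs you a factor \(\norm{\E R}\), which swamps the claimed \(\sqrt{\eta_\ast\theta}\) gain. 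The fluctuation piece \(\braket{G_1BG_2(R-\E R)}\) is not a cycle trace either, so the proposed induction does not close.

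The paper's device is much simpler and sidesteps the operator-norm issue entirely: a single Cauchy--Schwarz isolates one \(G_{12}\) factor,
\[
\abs{\Tr\Gamma_k^\circ}\le\sqrt{\Tr(G_{12}G_{12}^\ast)}\,\sqrt{\abs{\Tr\Gamma_{2(k-1)}^\circ}},
\]
where \(\Gamma_{2(k-1)}^\circ\) is the ``square'' of the remaining \(k-1\) edges. Now \(\Tr(G_{12}G_{12}^\ast)=\abs{\eta_1\eta_2}^{-1}\Tr\bigl[(\Im G_1)B(\Im G_2)B^\ast\bigr]\), and after expanding each \(\Im G_j=(G_j-G_j^\ast)/(2\ii)\) this becomes a sum of traces \(\braket{G_1^{(\ast)}BG_2^{(\ast)}B^\ast}\) with \emph{deterministic bounded} matrices on both sides of \(G_1,G_2\), so~\eqref{eq a priori theta} applies directly and yields \(\Tr(G_{12}G_{12}^\ast)\prec n\theta/\abs{\eta_1\eta_2}\). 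The other factor \(\Tr\Gamma_{2(k-1)}^\circ\) is handled by the already-established part~(i). Combining the two square roots produces exactly the \(\sqrt{\eta_\ast\theta}\) improvement, with no expectation splitting or induction needed.
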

The proof of Lemma~\ref{gain lemma} follows from the following optimal bound on general products \(G_{j_1\dots j_k}\) of resolvents and generic deterministic matrices.
\begin{lemma}\label{lemma general products}
    Let \(w_1,w_2,\dots\), \(z_1,z_2,\dots\) denote arbitrary spectral parameters with \(\eta_i=\Im w_i>0\). With \(G_j=G^{z_j}(w_{j})\) we then denote generic products of resolvents \(G_{j_1},\dots G_{j_k}\) or their adjoints/transpositions (in that order) with arbitrary bounded deterministic matrices in between by \(G_{j_1\dots j_k}\), e.g.\ \(G_{1i1}=A_1G_1A_2G_i A_3G_1A_4\).
    \begin{enumerate}[label=(\roman*)]
        \begin{subequations}
            \item For \(j_1,\dots j_k\) we have the isotropic bound
            \begin{equation}\label{eq general iso bound}
                \abs{\braket{\vx,G_{j_1\dots j_k}\vy}} \prec \norm{\vx}\norm{\vy}\sqrt{\eta_{j_1}\eta_{j_k}}\Bigl(\prod_{n=1}^k \eta_{j_n}\Bigr)^{-1}.
            \end{equation}
            \item For \(j_1,\dots,j_k\) and any \(1\le s< t\le k\) we have the averaged bound
            \begin{equation}\label{eq general av bound}
                \abs{\braket{G_{j_1\dots j_k}}} \prec \sqrt{\eta_{j_{s}}\eta_{j_{t}}}\Bigl(\prod_{n=1}^k \eta_{j_n}\Bigr)^{-1}.
            \end{equation}
        \end{subequations}
    \end{enumerate}
\end{lemma}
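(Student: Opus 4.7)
\emph{Proof proposal.} The plan is to derive both bounds from two elementary single-resolvent inputs. First, the trivial operator bound $\norm{G_j}\le \eta_j^{-1}$. Second, the Ward identity $G_j G_j^\ast = \Im G_j/\eta_j$ combined with the isotropic local law and $\norm{\Im M_j}\lesssim 1$ from~\eqref{eq M bound} yields
\[
\norm{G_j \vy}^2 = \eta_j^{-1}\braket{\vy,\Im G_j \vy}\prec \eta_j^{-1}\norm{\vy}^2.
\]
The analogous bounds hold for $G_j^\ast$ and, after entrywise conjugation, for $G_j^t$. The averaged local law moreover gives $\Tr(\Im G_j)\prec 2n$, which will be the mechanism to save a factor of $\eta_j$ over the crude operator estimate in (ii).

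For the isotropic bound~\eqref{eq general iso bound}, I would perform a single Cauchy-Schwarz at the leading factor,
\[
\abs{\braket{\vx,G_{j_1}A_1 G_{j_2}\cdots G_{j_k}\vy}}\le \norm{G_{j_1}^\ast\vx}\cdot\norm{A_1 G_{j_2}\cdots A_{k-1}G_{j_k}\vy},
\]
then bound $\norm{G_{j_1}^\ast\vx}\prec \eta_{j_1}^{-1/2}\norm{\vx}$ and $\norm{G_{j_k}\vy}\prec \eta_{j_k}^{-1/2}\norm{\vy}$ via Ward, and insert the trivial operator bounds $\norm{G_{j_n}}\le \eta_{j_n}^{-1}$ for each interior factor $n=2,\dots,k-1$. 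Multiplying these estimates with $\prod_n\norm{A_n}\lesssim 1$ gives exactly $\sqrt{\eta_{j_1}\eta_{j_k}}\bigl(\prod_n\eta_{j_n}\bigr)^{-1}\norm{\vx}\norm{\vy}$.

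For the averaged bound~\eqref{eq general av bound}, the key idea is to exploit the cyclic invariance of $\Tr$ in order to position the two distinguished resolvents $G_{j_s}$ and $G_{j_t}$ at the starts of two arcs. Concretely, I would write $(2n)\braket{G_{j_1\dots j_k}}=\Tr(XY)$ with
\[
X:=G_{j_s}A_s G_{j_{s+1}}\cdots G_{j_{t-1}}A_{t-1},\qquad Y:=G_{j_t}A_t G_{j_{t+1}}\cdots G_{j_{s-1}}A_{s-1},
\]
where the indices are cyclic, and then apply $\abs{\Tr(XY)}\le \norm{X}_{\mathrm{HS}}\norm{Y}_{\mathrm{HS}}$. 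For each arc, peeling off the boundary resolvent via $G_{j_s}^\ast G_{j_s}=\Im G_{j_s}/\eta_{j_s}$ reduces the trace to
\[
\Tr(XX^\ast)=\eta_{j_s}^{-1}\Tr(\Im G_{j_s}\cdot R R^\ast)\le \eta_{j_s}^{-1}\norm{R}^2\Tr(\Im G_{j_s})\prec \eta_{j_s}^{-1}\cdot 2n\prod_{n=s+1}^{t-1}\eta_{j_n}^{-2},
\]
where $R$ denotes the interior portion of $X$. An analogous bound holds for $\Tr(YY^\ast)$; multiplying, taking square roots, and dividing by $2n$ produces precisely $\sqrt{\eta_{j_s}\eta_{j_t}}\bigl(\prod_n \eta_{j_n}\bigr)^{-1}$.

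The only point requiring real care is the symmetric cyclic splitting in (ii): had $X$ instead contained both $G_{j_s}$ and $G_{j_t}$, only one Ward gain per half would be accessible in $XX^\ast$, losing a factor $\sqrt{\eta_{j_s}\eta_{j_t}}$. With the correct placement each arc receives exactly one Ward $\eta^{-1/2}$ improvement over the naive operator bound, and the rest reduces to a routine combination of Cauchy-Schwarz and local law. The presence of adjoints or transposes among the $G_j$'s does not affect the argument, as the underlying norm identities carry over verbatim.
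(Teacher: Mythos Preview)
Your proposal is correct and follows essentially the same approach as the paper: a Cauchy--Schwarz split, Ward identity $G_j^\ast G_j=\Im G_j/\eta_j$ to gain an $\eta^{-1/2}$ rather than $\eta^{-1}$ at each boundary resolvent, and trivial operator bounds for the interior factors. The paper's proof of (ii) first reduces to $s=1$ by cyclicity and then iterates the norm estimate on the interior factors before applying Ward at the two ends, whereas you phrase the same steps via the Hilbert--Schmidt inequality $\abs{\Tr(XY)}\le\norm{X}_{\mathrm{HS}}\norm{Y}_{\mathrm{HS}}$ and $\Tr(\Im G_{j_s}\cdot RR^\ast)\le\norm{R}^2\Tr(\Im G_{j_s})$; these are equivalent formulations of the same argument.
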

Lemma~\ref{lemma general products} for example implies \(\abs{(G_{1i})_{ab}}\prec (\eta_1\eta_i)^{-1/2}\) or \(\abs{(G_{i1i})_{ab}}\prec (\eta_1\eta_i)^{-1}\). Note that the averaged bound~\eqref{eq general av bound} can be applied more flexibly by choosing \(s,t\) freely, e.g.
\[\abs{\braket{G_{1i1i}}}\prec \min\{\eta_1^{-1}\eta_i^{-2},\eta_1^{-2}\eta_i^{-1}\},\]
while \(\abs{\braket{\vx,G_{1i1i}\vy}}\prec \norm{\vx}\norm{\vy} (\eta_1\eta_i)^{-3/2}\).
\begin{proof}[Proof of Lemma~\ref{lemma general products}]
    We begin with
    \[  \begin{split}
            &\abs{\braket{\vx,G_{j_1\dots j_k}\vy}} \\
            &\quad\le \sqrt{\braket{\vx,G_{j_1}G_{j_1}^\ast \vx}}\sqrt{\braket{\vy,G_{j_2 \dots j_k}^\ast G_{j_2 \dots j_k}\vy}} \prec \frac{\norm{\vx}}{\sqrt{\eta_{j_1}}} \sqrt{\braket{\vy,G_{j_2 \dots j_k}^\ast G_{j_2 \dots j_k}\vy}}\\
            & \quad\lesssim \frac{\norm{\vx}}{\sqrt{\eta_{j_1}}} \frac{1}{\eta_{j_2}} \sqrt{\braket{\vy,G_{j_3 \dots j_k}^\ast G_{j_3 \dots j_k}\vy}} \lesssim \dots  \\
            &\quad \lesssim \frac{\norm{\vx}}{\sqrt{\eta_{j_1}}} \frac{1}{\eta_{j_2}\dots \eta_{j_{k-1}}} \sqrt{\braket{\vy,G_{j_k}^\ast G_{j_k}\vy}} \prec \frac{\norm{\vx}\norm{\vy}}{\sqrt{\eta_{j_1}\eta_{j_k}}} \frac{1}{\eta_{j_2}\dots \eta_{j_{k-1}}},
        \end{split}
    \]
    where in each step we  estimated the middle \(G_{j_2}^\ast G_{j_2}, G_{j_3}^\ast G_{j_3},\dots\) terms trivially by \(1/\eta_{j_2}^2,1/\eta_{j_3}^2,\dots\), and in the last step we used Ward estimate. This proves~\eqref{eq general iso bound}. We now turn to~\eqref{eq general av bound} where by cyclicity without loss of generality we may assume \(s=1\). Thus
    \[\begin{split}
            \abs{\braket{G_{j_1\dots j_k}}} &\le \sqrt{\braket{G_{j_1\dots j_{t-1}}G_{j_1\dots j_{t-1}}^\ast}}\sqrt{\braket{ G_{j_{t}\dots j_{k}}^\ast G_{j_{t}\dots j_{k}}}} \\
            &=\sqrt{\braket{G_{j_1\dots j_{t-1}}G_{j_1\dots j_{t-1}}^\ast}}\sqrt{\braket{  G_{j_{t}\dots j_{k}}G_{j_{t}\dots j_{k}}^\ast}} \\
            &\lesssim \Bigl(\prod_{n\ne 1,t} \frac{1}{\eta_{j_n}}\Bigr) \sqrt{\braket{G_{j_1}G_{j_1}^\ast}}\sqrt{\braket{G_{j_{t}}G_{j_{t}}^\ast}} \prec \frac{1}{\sqrt{\eta_{j_{1}}\eta_{j_{t}}}}\Bigl(\prod_{n\ne 1,t} \frac{1}{\eta_{j_n}}\Bigr),
        \end{split}\]
    where in the second step we used cyclicity of the trace, the norm-estimate in the third step und the Ward-estimate in the last step.
\end{proof}
\begin{proof}[Proof of Lemma~\ref{gain lemma}]
    For the proof of~\eqref{long Gk gain} we recall from the definition of the Ward-estimate in~\eqref{chain cycle ests} that for a cycle \(\Gamma_k^\circ\) we have
    \[
        \WEst(\Gamma_k^\circ)\ge \frac{\NEst(\Gamma_k^\circ)}{(n\eta_\ast)^{k/2}} = \frac{n^{k/2}}{\abs{\eta_1\eta_2}^{\abs{E_G^2(\Gamma_k^\circ)}/2}} \frac{1}{\eta_\ast^{k/2}}
    \]
    since \(\abs{E_\mathrm{Ward}(\Gamma_k^\circ)}\le \abs{E_G(\Gamma_k^\circ)}\le k\). Thus, together with~\eqref{eq general av bound} and interpreting \(\Tr \Gamma_k^\circ\) as a trace of a product of \(k+\abs{E_G^2(\Gamma_k^\circ)}\) factors of \(G\)'s we conclude
    \begin{equation}\label{Tr Gamma circ}\abs{\Tr\Gamma_k^\circ} \prec \frac{n}{\abs{\eta_1\eta_2}^{\abs{E_G^2(\Gamma_k^\circ)}}\eta_\ast^{k-\abs{E_G^2(\Gamma_k^\circ)}-1}}\le \frac{n}{\abs{\eta_1\eta_2}^{\abs{E_G^2(\Gamma_k^\circ)}/2}\eta_\ast^{k-1}}\le \frac{\WEst(\Gamma_k^\circ)}{(n\eta_\ast)^{k/2-1}}.\end{equation}
    Note that Lemma~\ref{lemma general products} is applicable here even though therein (for convenience) it was assumed that all spectral parameters \(w_i\) have positive imaginary parts. However, the lemma also applies to spectral parameters with negative imaginary parts since it allows for adjoints and \(G^z(\ov w)=(G^z(w))^\ast\). The first inequality in~\eqref{Tr Gamma circ} elementarily follows from~\eqref{eq general av bound} by distinguishing the cases \(\abs{E_G^2}=k,k-1\) or \(\le k-2\), and always choosing \(s\) and \(t\) such that the \(\sqrt{\eta_{j_s} \eta_{j_t}}\) factor contains the highest possible \(\eta_\ast\) power. Similarly to~\eqref{Tr Gamma circ}, for~\eqref{long Gk iso gain} we have, using~\eqref{eq general iso bound},
    \begin{equation}\label{Tr Gamma chain} \abs{(\Gamma_k^-)_{ab}} \prec \frac{n^{k-1}}{\abs{\eta_1\eta_2}^{\abs{E_G^2(\Gamma_k^-)}/2}} \frac{1}{(n\eta_\ast)^{k/2}}\le  \frac{\WEst(\Gamma_k^-)}{(n\eta_\ast)^{k/2-1}}.
    \end{equation}

    For the proof of~\eqref{long Gk gain theta} we use a Cauchy-Schwarz estimate to isolate a single \(G_{12}\) factor from the remaining \(G\)'s in \(\Gamma_l^\circ\). We may represent the ``square'' of all the remaining factors by an appropriate cycle graph \(\Gamma_{2(k-1)}^\circ\) of length \(2(k-1)\) with \(\abs{E_G^2(\Gamma_{2(k-1)}^\circ)}=2(\abs{E_G^2(\Gamma_k^\circ)}-1)\). We obtain
    \[
        \begin{split}
            \abs{\Tr \Gamma_k^\circ} &\le \sqrt{\Tr(G_{12}G_{12}^\ast)}\sqrt{\abs{\Tr \Gamma_{2(k-1)}^\circ}} = \sqrt{\Tr G_1^\ast G_1 B G_2 G_2^\ast B^\ast }\sqrt{\abs{\Tr \Gamma_{2(k-1)}^\circ}} \\
            &= \frac{ \sqrt{\Tr (\Im G_1)B(\Im G_2)B^\ast} \sqrt{\abs{\Tr \Gamma^\circ_{2(k-1)}}}}{\sqrt{\abs{\eta_1\eta_2}}} \\
            &\prec \frac{\sqrt{\theta n}}{\sqrt{\abs{\eta_1\eta_2}}} \frac{\sqrt{n}}{\abs{\eta_1\eta_2}^{\abs{E_2^G(\Gamma_k^\circ)}/2-1/2 } \eta_\ast^{k-3/2} } \\
            &\le \sqrt{\eta_\ast\theta} (n\eta_\ast)^{-(k-2)/2} \WEst(\Gamma_k^\circ)
        \end{split}\]
    where in the penultimate step we wrote out \(\Im G=(G-G^\ast)/(2\ii)\) in order to use~\eqref{eq a priori theta}, and used~\eqref{Tr Gamma circ} for \(\Gamma_{2(k-1)}^\circ\).
\end{proof}
Now it remains to count the gains from applying Lemma~\ref{gain lemma} for each cycle- and chain subgraph of \(\Gamma\). We claim that
\begin{subequations}
    \begin{equation}\label{eq ward-estimate gain}
        \WEst(\Gamma) \le \bigl(\eta_\ast^{1/6}\bigr)^{d_{\ge3}} \frac{1}{(n\eta_\ast)^{2p}\abs{\eta_1\eta_2}^{p}}, \qquad d_{\ge3}:= \sum_{d_G(i)\ge 3 } d_G(i).
    \end{equation}
    Furthermore, suppose that \(\Gamma\) has \(c\) degree-\(2\) cycles \(\Gamma_k^\circ\) which according to~\ref{degree equal} has to satisfy \(0\le c':= \abs{E_\kappa^2}-c\le \abs{E_\kappa^2}\). Then we claim that
    \begin{equation}\label{eq value gain}
        \abs{\Val(\Gamma)} \prec \Bigl(\frac{1}{n\eta_\ast}\Bigr)^{(c'-d_{\ge3}/2)_+} \bigl(\sqrt{\eta_\ast\theta}\bigr)^{(p-c'-d_{\ge3}/2)_+} \WEst(\Gamma).
    \end{equation}
\end{subequations}
Assuming~\eqref{eq ward-estimate gain}--\eqref{eq value gain} it follows immediately that
\[ \abs{\Val(\Gamma)} \prec \frac{1}{(n\eta_\ast)^{2p}\abs{\eta_1\eta_2}^{p}} \Bigl(\sqrt{\eta_\ast\theta} + \frac{1}{n\eta_\ast} + \eta_\ast^{1/6}\Bigr)^p,\]
implying~\eqref{prop av bound}. In order to complete the proof of the Proposition~\ref{prop prob bound} it remains to verify~\eqref{eq ward-estimate gain} and~\eqref{eq value gain}.
\begin{proof}[Proof of~\eqref{eq ward-estimate gain}]
    This follows immediately from the penultimate inequality in~\eqref{ward improvement} and
    \[\eta_\ast^{2p+\sum_i(d_G(i)/2-2)}\le \eta_\ast^{\sum_i (d_G(i)/2-1)} = \eta_\ast^{\frac{1}{2}\sum_{d_G(i)\ge 3} (d_G(i)-2) }\le \eta_\ast^{\frac{1}{6}\sum_{d_G(i)\ge 3} d_G(i)},\]
    where we used~\ref{number of kappa edges} in the first inequality.
\end{proof}
\begin{proof}[Proof of~\eqref{eq value gain}]
    For cycles \(\Gamma_k^\circ\) or chain-edges \(\Gamma_k^-\) in the reduced graph we say that \(\Gamma_k^{\circ/-}\) has \((k-2)_+\) \emph{excess \(G\)-edges}. Note that for cycles \(\Gamma_k^\circ\) every additional \(G\) beyond the minimal number \(k\ge 2\) is counted as an excess \(G\)-edge, while for chain-edges \(\Gamma_k^-\) the first additional \(G\) beyond the minimal number \(k\ge 1\) is not counted as an excess \(G\)-edge. We claim that:
    \begin{enumerate}[label=(C\arabic*)]
        \item\label{count excess} The total number of excess \(G\)-edges is at least \(2c'-d_{\ge 3}\).
        \item\label{count 12} There are at least \(p-c'-d_{\ge3}/2\) cycles in \(\Gamma\) containing \(G_{12}\).
    \end{enumerate}
    Since the vertices of the reduced graph are \(u_i,v_i\) for \(d_G(i)\ge 3\), it follows that the reduced graph has \(\sum_{d_G(i)\ge 3} (d_G(u_i)+d_G(v_i))/2=d_{\ge 3}\) edges while the total number of \(G\)'s beyond the minimally required \(G\)'s (i.e.\ two for cycles and one for edges) is \(2c'\). Thus in the worst case there are at least \(2c'-d_{\ge3}\) excess \(G\)-edges, confirming~\ref{count excess}.

    The total number of \(G_{12}\)'s is \(2p\), while the total number of \(G_i\)'s is \(2 \abs{E_\kappa^2}+d_{\ge3}-2p\), according to~\ref{number of G edges}. For fixed \(c\) the number of cycles with \(G_{12}\)'s is minimised in the case when all \(G_i\)'s are in cycles of length \(2\) which results in \(\abs{E_\kappa^2}-p+\lfloor d_{\ge3}/2\rfloor\) cycles without \(G_{12}\)'s. Thus, there are at least
    \[ c - \Bigl(\abs{E_\kappa^2}-p+\lfloor d_{\ge3}/2\rfloor\Bigr)= p - c' - \lfloor d_{\ge 3}/2\rfloor \ge p - c' - d_{\ge 3}/2 \]
    cycles with some \(G_{12}\), confirming also~\ref{count 12}.

    The claim~\eqref{eq value gain} follows from~\ref{count excess}--\ref{count 12} in combination with Lemma~\ref{gain lemma}.
\end{proof}

\section{Central limit theorem for resolvents}\label{sec:CLTres}
The goal of this section is to prove the CLT for resolvents, as stated in~Proposition~\ref{prop:CLTresm}. We begin by analysing the \(2\)-body stability operator \(\wh\cB\) from~\eqref{eq:stabop12}, as well as its special case, the \(1\)-body stability operator
\begin{equation}\label{cB def}
    \cB:=\wh\cB(z,z,w,w)=1-M\SS[\cdot]M.
\end{equation}
Note that other than in the previous Section~\ref{sec local law G2}, all spectral parameters \(\eta,\eta_1,\dots,\eta_p\) considered in the present section are positive, or even, \(\eta,\eta_i\ge 1/n\).

\begin{lemma}\label{lemma:betaM}
    For \(w_1=\ii\eta_1,w_2=\ii\eta_2\in\ii\R\setminus\{0\}\) and \(z_1,z_2\in\C\) we have
    \begin{equation}\label{beta ast bound}
        \norm{\wh\cB^{-1}}^{-1} \gtrsim (\abs{\eta_1}+\abs{\eta_2})\min\set{(\Im m_1)^2, (\Im m_2)^2 } + \abs{z_1-z_2}^2.
    \end{equation}
    Moreover, for \(z_1=z_2=z\) and \(w_1=w_2=\ii\eta\) the operator \(\cB=\wh\cB\) has two non-trivial eigenvalues \(\beta,\beta_\ast\) with \(\beta,\beta_\ast\) as in~\eqref{beta ast def},~\eqref{beta def}, and the remaining eigenvalues being \(1\).
\end{lemma}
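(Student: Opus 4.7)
The operator $\SS$ from~\eqref{S def} has rank two, with image $\mathrm{span}(E_1,E_2)$; hence the composition $T:=M_1\SS[\cdot]M_2$ has range $V:=\mathrm{span}(v_1,v_2)$, where $v_j:=M_1 E_j M_2$, and kernel $W:=\{R\in\C^{2n\times 2n}:\Tr(RE_1)=\Tr(RE_2)=0\}$ of codimension two. The subspaces $V$ and $W$ are generically complementary and both invariant under $\wh\cB=I-T$, which acts as the identity on $W$. Consequently all but (at most) two eigenvalues of $\wh\cB$ equal $1$, and estimating $\wh\cB^{-1}$ reduces to controlling the $2\times 2$ restriction $\wh\cB|_V$. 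The passage to the operator norm of $\wh\cB^{-1}$ is standard, using the uniform transversality of $V$ and $W$ which follows from $\norm{M_i}\lesssim 1$ and~\eqref{mubound}.

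The matrix $T|_V$ is found by a direct block computation. Since each block of $v_j$ is a scalar multiple of $I_n$, evaluating $\SS[v_j]$ and then $M_1\SS[v_j]M_2$ is immediate and yields
\[
T|_V=\begin{pmatrix}\bar z_1 z_2 u_1 u_2 & m_1 m_2\\ m_1 m_2 & z_1\bar z_2 u_1 u_2\end{pmatrix}
\]
in the basis $(v_1,v_2)$. In the one-body case $z_1=z_2=z$, $w_1=w_2=\ii\eta$ this matrix is real symmetric with eigenvectors $(1,\pm1)$ and eigenvalues $u^2|z|^2\pm m^2$, so the eigenvalues of $\cB|_V=I-T|_V$ are $1-u^2|z|^2\mp m^2$. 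On the imaginary axis $m^2=-|m|^2$, so these values are precisely $\beta=1-m^2-u^2|z|^2$ and $\beta_\ast=1-|m|^2-u^2|z|^2$, establishing the second assertion of the lemma.

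For the general lower bound, compute the $2\times 2$ determinant
\[
A:=\det(\wh\cB|_V)=\bigl(1-\bar z_1 z_2 u_1 u_2\bigr)\bigl(1-z_1\bar z_2 u_1 u_2\bigr)-m_1^2 m_2^2=\bigl|1-u_1 u_2\bar z_1 z_2\bigr|^2-s_1^2 s_2^2,
\]
where $s_i:=\Im m_i$ and $m_i=\ii s_i$, $u_i\in\R$ on the imaginary axis. The crux is to combine the identity $2\Re(z_1\bar z_2)=|z_1|^2+|z_2|^2-|z_1-z_2|^2$ with the scalar MDE consequence $u_i^2|z_i|^2=u_i-s_i^2$ (equivalently $u_i|z_i|^2=1-s_i^2/u_i$) and the relation $(1-u_i)/u_i=|\eta_i|/s_i$ coming from $u_i=s_i/(s_i+|\eta_i|)$, to rewrite $A$ as a manifestly non-negative sum
\[
A=(1-u_1)(1-u_2)+u_2 s_1|\eta_1|+u_1 s_2|\eta_2|+u_1 u_2|z_1-z_2|^2.
\]
From this formula each piece of the claimed bound is transparent. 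Assuming WLOG $s_1\le s_2$, the two $\eta$-terms dominate $s_1^2(|\eta_1|+|\eta_2|)=\min\{s_1,s_2\}^2(|\eta_1|+|\eta_2|)$, since $u_i\gtrsim s_i$ in the relevant regime $s_i,|\eta_i|\lesssim 1$; for the $|z_1-z_2|^2$ contribution one splits into two cases, either $u_1 u_2\gtrsim 1$ (in which case the last term supplies it directly) or some $1-u_i$ is bounded below, so that $(1-u_1)(1-u_2)\gtrsim 1$ covers the bounded quantity $|z_1-z_2|^2$. The elementary $2\times 2$ inequality $s_{\min}(T)\ge|\det T|/\norm{T}$, combined with $\norm{\wh\cB|_V}\lesssim 1$, then converts $A\gtrsim(|\eta_1|+|\eta_2|)\min\{s_1,s_2\}^2+|z_1-z_2|^2$ into the claimed lower bound on $\norm{\wh\cB^{-1}}^{-1}$.

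The main obstacle is the algebraic simplification of $A$: without substituting $u_i^2|z_i|^2=u_i-s_i^2$ the difference of squares $|1-u_1 u_2\bar z_1 z_2|^2-s_1^2 s_2^2$ is neither manifestly positive nor visibly dependent on $|z_1-z_2|^2$ as an independent summand, and it is exactly this separation that is needed both to see that all four contributions to the bound are present simultaneously and to handle the regime $s_1\ne s_2$ where the one-body expressions $\beta,\beta_\ast$ are no longer directly available. Once the decomposition is in hand, everything else reduces to block multiplication and routine $2\times 2$ matrix estimates.
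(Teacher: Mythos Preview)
Your approach is essentially the paper's: reduce to a $2\times 2$ block, compute the determinant $A=\wh\beta\wh\beta_\ast$, and rewrite it via the MDE identities to expose the $|z_1-z_2|^2$ contribution. Your closed form $A=(1-u_1)(1-u_2)+u_2 s_1|\eta_1|+u_1 s_2|\eta_2|+u_1 u_2|z_1-z_2|^2$ is one algebraic step beyond the paper's displayed expression but equivalent to it after substituting $(1-u_i)/u_i=|\eta_i|/s_i$.

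There is a genuine gap in your final case split. You assert that if $u_1 u_2$ is not bounded below then ``some $1-u_i$ is bounded below, so that $(1-u_1)(1-u_2)\gtrsim 1$''; this implication is false (take $u_1=\delta$, $u_2=1-\delta$, so that both $u_1u_2$ and $(1-u_1)(1-u_2)$ are of order $\delta$). The correct argument, which is what the paper means by the case being ``trivial'', combines two of your four terms: from $s_i|\eta_i|=(1-u_i)(1-u_i|z_i|^2)$ one gets
\[
(1-u_1)(1-u_2)+u_2 s_1|\eta_1|=(1-u_1)\bigl(1-u_1 u_2|z_1|^2\bigr),
\]
and this is $\gtrsim 1$ whenever $u_1\le\delta$ and $|z_1|\lesssim 1$, giving $A\gtrsim 1\gtrsim|z_1-z_2|^2$. (Both proofs tacitly use boundedness of $|z_i|$, which is the only regime needed in the applications.) A secondary point: your appeal to ``uniform transversality of $V$ and $W$'' is not as immediate as claimed, since these subspaces are not orthogonal and the relevant angle determinant can degenerate. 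The paper sidesteps this by working in the orthogonal four-dimensional block-constant subspace, where the reduction to the $2\times 2$ matrix $B_1=I-T_1$ goes through a block-triangular $4\times 4$ structure and only needs $\norm{T_2}\lesssim 1$; alternatively, one can solve the rank-two perturbation equation $\wh\cB^{-1}R=R+\alpha v_1+\beta v_2$ directly for $(\alpha,\beta)$ to obtain $\norm{\wh\cB^{-1}}\lesssim 1+\norm{(I-T|_V)^{-1}}$ without any transversality argument.
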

\begin{proof}
    Throughout the proof we assume that \(\eta_1,\eta_2>0\), all the other cases are completely analogous. With the shorthand notations \(m_i:= m^{z_i}(w_i),
    u_i:= u^{z_i}(w_i)\) and the partial trace \(\Tr_2\colon\C^{2n\times 2n}\to \C^4\) rearranged into a \(4\)-dimensional vector, the stability operator \(\wh\cB\), written as a \(4\times 4\) matrix is given by
    \begin{equation}\label{eq Bhat decomp}
        \wh\cB = 1- \Tr_2^{-1}\circ\begin{pmatrix}
            T_1 & 0 \\ T_2 & 0
        \end{pmatrix}\circ \Tr_2, \quad \Tr_2 \begin{pmatrix}
            R_{11} & R_{12} \\R_{21}&R_{22}
        \end{pmatrix}:= \begin{pmatrix}
            \braket{R_{11}} \\\braket{R_{22}}\\\braket{R_{12}}\\\braket{R_{21}}
        \end{pmatrix}.
    \end{equation}
    Here we defined
    \[
        T_1 := \begin{pmatrix}
            z_1 \ov{z_2} u_1 u_2 & m_1 m_2 \\ m_1 m_2 & \ov{z_1} z_2 u_1 u_2
        \end{pmatrix}, \quad T_2 := \begin{pmatrix}
            -z_1 u_1 m_2 & -z_2 u_2 m_1 \\ -\ov{z_2} u_2 m_1 & -\ov{z_1} u_1 m_2
        \end{pmatrix},
    \]
    and \(\Tr_2^{-1}\) is understood to map \(\C^4\) into \(\C^{2n\times 2n}\) in such a way that each \(n\times n\) block is a constant multiple of the identity matrix.
    From~\eqref{eq Bhat decomp} it follows that \(\wh\cB\) has eigenvalue \(1\) in the \(4(n^2-1)\)-dimensional kernel of \(\Tr_2\), and that the remaining four eigenvalues are \(1,1\) and the eigenvalues \(\wh\beta,\wh\beta_\ast\) of \(B_1:=1-T_1\), i.e.\
    \begin{equation}\label{B eigs}
        \wh\beta,\wh\beta_\ast:= 1 - u_1 u_2 \Re z_1 \ov{z_2} \pm \sqrt{ m_1^2 m_2^2 - u_1^2 u_2^2 (\Im z_1 \ov{z_2})^2  }.
    \end{equation}
    Thus the claim about the \(w_1=w_2\), \(z_1=z_2\) special case follows.
    The bound~\eqref{beta ast bound} follows directly from
    \begin{equation}
        \label{eq:lowbeta}
        \abs*{\widehat{\beta}\widehat{\beta}_*}\gtrsim (\eta_1+\eta_2)\min\{(\Im m_1)^2, (\Im m_2)^2 \}+\abs{z_1-z_2}^2,
    \end{equation}
    since \(\abs{\widehat{\beta}}, \abs{\widehat{\beta}_*}\lesssim 1\) and \(\norm{\wh\cB^{-1}}\lesssim \norm{B_1^{-1}} = (\min\set{\abs{\wh\beta},\abs{\wh\beta_\ast}})^{-1}\) due to \(B_1\) being normal.

    We now prove~\eqref{eq:lowbeta}. By~\eqref{B eigs}, using that \(u_i=-m_i^2+u_i^2 \abs{z_i}^2\) repeatedly, it follows that
    \begin{equation}
        \label{eq:bbst1}
        \begin{split}
            \widehat{\beta}\widehat{\beta}_* &=1-u_1u_2\Big[ 1-\abs{z_1-z_2}^2+(1-u_1)\abs{z_1}^2+(1-u_2)\abs{z_2}^2\Big] \\
            &=u_1u_2\abs{z_1-z_2}^2+(1-u_1)(1-u_2)-m_1^2u_2\left( \frac{1}{u_1}-1\right) \\
            &\quad-m_2^2u_1\left( \frac{1}{u_2}-1\right).
        \end{split}
    \end{equation}
    Then, using \(1-u_i=\eta_i/(\eta_i+\Im m_i)\gtrsim \eta_i/(\Im m_i)\), that \(m_i=\ii \Im m_i\), and assuming \(u_1,u_2\in [\delta,1]\), for some small fixed \(\delta>0\), we get that
    \begin{equation}
        \label{eq:bbst2}
        \begin{split}
            \abs*{\widehat{\beta}\widehat{\beta}_*}&\gtrsim \abs{z_1-z_2}^2+(\Im m_1)^2 (1-u_1)+(\Im m_2)^2 (1-u_2)\\
            &\gtrsim \abs{z_1-z_2}^2+\min\{(\Im m_1)^2, (\Im m_2)^2 \}(2-u_1-u_2) \\
            &\gtrsim \abs{z_1-z_2}^2+\min\{(\Im m_1)^2, (\Im m_2)^2 \} \left( \frac{\eta_1}{\Im m_1}+\frac{\eta_2}{\Im m_2}\right).
        \end{split}
    \end{equation}
    If instead at least one \(u_i\in [0,\delta]\) then, by the second equality in the display above, the bound~\eqref{eq:lowbeta} is trivial.
\end{proof}

We now turn to the computation of the expectation \(\E\braket{G^z(\ii\eta)}\) to higher precision beyond the approximation \(\braket{G}\approx\braket{M}\). Recall the definition of the \(1\)-body stability operator from~\eqref{cB def} with non-trivial eigenvalues \(\beta,\beta_\ast\) as in~\eqref{beta ast def},~\eqref{beta def}.
\begin{lemma}\label{lemma exp}
    For \(\kappa_4\ne0\) we have a correction of order \(n^{-1}\) to \(\E\braket{G}\) of the form
    \begin{subequations}
        \begin{equation}\label{G-M next order}
            \begin{split}
                \E\braket{G} = \braket{M} + \cE + \cO\Bigl(\frac{1}{\abs{\beta}}\Bigl(\frac{1}{n^{3/2} (1+\eta)}+\frac{1}{(n\eta)^2}\Bigr)\Bigr),
            \end{split}
        \end{equation}
        where
        \begin{equation}\label{beta bound}
            \frac{1}{\abs{\beta}} = \norm{(\cB^{\ast})^{-1}[1]}\lesssim  \frac{1}{\abs{1-\abs{z}^2}+\eta^{2/3}}
        \end{equation}
        and
        \begin{equation}\label{cE def}
            \cE := \frac{\kappa_4}{n} m^3\Bigl(\frac{1}{1-m^2-\abs{z}^2}-1\Bigr)=-\frac{\ii\kappa_4}{4n}\partial_\eta(m^4).
        \end{equation}
    \end{subequations}
\end{lemma}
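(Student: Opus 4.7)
The plan is to invert the $1$-body stability operator $\cB=1-M\SS[\cdot]M$ from~\eqref{cB def} applied to~\eqref{G deviation}, and then compute the remainder by a low-order cumulant expansion in the entries of $W$. Splitting $\SS[G-M]G=\SS[G-M]M+\SS[G-M](G-M)$ and moving the first piece across gives $\cB[G-M]=-M\un{WG}+M\SS[G-M](G-M)$. Pairing both sides with $R:=(\cB^{\ast})^{-1}[I]$ yields the master relation
\begin{equation*}
\E\braket{G-M} \,=\, -\E\braket{R,M\un{WG}} + \E\braket{R,M\SS[G-M](G-M)}.
\end{equation*}
Since $\SS$ acts as a pair of normalised traces of $G-M$, the last term is bounded by $\norm{R}(n\eta)^{-2}$ via the averaged form of~\eqref{single local law}, supplying the $|\beta|^{-1}(n\eta)^{-2}$ part of the error once the bound $\norm{R}\lesssim|\beta|^{-1}$ in~\eqref{beta bound} is established. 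That bound in turn follows from Lemma~\ref{lemma:betaM} specialised to $z_1=z_2=z$, $w_1=w_2=\ii\eta$ (where $I$ lies in the two-dimensional invariant subspace on which the non-trivial eigenvalues of $\cB$ are $\beta,\beta_\ast$, with $|\beta|\le|\beta_\ast|$), combined with the quantitative asymptotics~\eqref{eq:expm}--\eqref{eq:bbou}.

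For the main term $\E\braket{R,M\un{WG}}$ I will perform an iterated cumulant expansion in the entries of $W$, in the graphical framework of Section~\ref{section prob bound} but now for a single expectation rather than a $2p$-th moment. By construction the self-renormalisation $\un{WG}=WG+\SS[G]G$ cancels the Gaussian ($k=2$) contribution exactly, so the expansion effectively starts at the third cumulant. The $k=3$ diagrams carry a $\kappa_3/n^{3/2}$ weight; after replacing each $G$ by $M$ (an exchange that costs at most $|\beta|^{-1}(n\eta)^{-2}$ by the local law) and exploiting the sparsity of the cumulant tensor against the block structure of $M$, the residual is bounded by $|\beta|^{-1}n^{-3/2}(1+\eta)^{-1}$, where the $(1+\eta)^{-1}$ comes from the uniform decay $|m|,|u|\lesssim(1+\eta)^{-1}$ implicit in~\eqref{eq M bound} combined with the large-$\eta$ asymptotics of the MDE. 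Cumulants of order five and above are handled analogously via the naive estimates of~\eqref{eq Gamma naive est} and fit inside the same error budget.

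The heart of the proof is the $k=4$ contribution, which produces $\cE$. A single $\kappa_4$ insertion, after reading off the admissible index patterns from the Hermitisation structure of $W$, generates terms of schematic form $\frac{\kappa_4}{n^2}\sum_{ab}\kappa^{(4)}_{ab}(RM)_{ba}M_{aa}M_{aa}M_{aa}$ and cyclic variants. Replacing each $G$ by $M$ (again at the cost of $|\beta|^{-1}(n\eta)^{-2}$), carrying out the double index sum using the block structure of $M$, and resumming a geometric series arising from further $\SS$-insertions generated through the defining equation for $R$, one arrives at $\frac{\kappa_4}{n}m^3\bigl((1-m^2-|z|^2)^{-1}-1\bigr)$. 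The equivalent representation $-\tfrac{\ii\kappa_4}{4n}\partial_\eta(m^4)$ is then a calculus check using $\partial_\eta m=\ii m'(\ii\eta)$ together with the explicit form of $m'$ derived from~\eqref{beta def} and the algebraic identity $u=-m^2+u^2|z|^2$. The main obstacle is precisely this last computation: one must track which sub-diagrams resum into the factor $(1-m^2-|z|^2)^{-1}$, identify the right combinatorial coefficient, and check that all remaining sub-leading contributions are absorbed into the stated error term $|\beta|^{-1}\bigl(n^{-3/2}(1+\eta)^{-1}+(n\eta)^{-2}\bigr)$.
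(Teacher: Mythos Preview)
Your outline matches the paper's proof: invert $\cB$ on~\eqref{G deviation} to write $\braket{G-M}=-\braket{\un{WG}A}+\cO_\prec(|\beta|^{-1}(n\eta)^{-2})$ with $A=((\cB^\ast)^{-1}[1])^\ast M$, then cumulant-expand $\E\braket{\un{WG}A}$; third-cumulant terms are $\cO\bigl(|\beta|^{-1}n^{-3/2}(1+\eta)^{-1}+|\beta|^{-1}n^{-2}\eta^{-3/2}\bigr)$ because parity forces at least one off-diagonal $G$, orders five and above are handled by naive power counting, and the fourth cumulant produces $\cE$. The only over-complication is your $\kappa_4$ step: no geometric series needs resumming, since the factor $\beta^{-1}-1$ is already built into $A$ through $\braket{MA}=(1-\beta)/\beta$ (read off directly from the explicit $4\times4$ form of $\cB$ in~\eqref{eq Bhat decomp} and recorded in~\eqref{MA eq}), so the single all-diagonal assignment $\bm\alpha=(ab,ba,ab)$ yields $-\tfrac{\kappa_4}{n}\braket{M}^3\braket{MA}$ in one stroke and $\cE$ follows.
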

\begin{proof}
    Using~\eqref{G deviation} we find
    \begin{gather}
        \begin{aligned}
            \braket{G-M} & = \braket{1,\cB^{-1}\cB[G-M]} = \braket{(\cB^\ast)^{-1}[1],\cB[G-M] }                                                \\
                         & = -\braket{M^\ast (\cB^\ast)^{-1}[1], \un{WG} } + \braket{M^\ast (\cB^\ast)^{-1}[1], \SS[G-M](G-M) }                 \\
                         & = -\braket{M^\ast (\cB^\ast)^{-1}[1],\un{WG}} + \cO_\prec \Bigl(\frac{\norm{(\cB^\ast)^{-1}[1]}}{(n\eta)^{2}}\Bigr).
        \end{aligned}\label{eq G-M exp}\raisetag{-3em}
    \end{gather}
    With
    \[A:=\big( (\cB^\ast)^{-1}[1]\big)^\ast M\]
    we find from the explicit formula for \(\cB\) given in~\eqref{eq Bhat decomp} and~\eqref{beta def} that
    \begin{equation}\label{MA eq}
        \braket{MA}= \frac{1-\beta}{\beta}  =\frac{1}{1-m^2-\abs{z}^2 u^2}-1=-\ii\partial_\eta m,
    \end{equation}
    and, using a cumulant expansion we find
    \begin{equation}\label{eq single WGA exp} \E\braket{\un{WG}A} = \sum_{k\ge 2}\sum_{ab}\sum_{\bm\alpha\in\{ab,ba\}^k} \frac{\kappa(ba,\bm \alpha)}{k!} \E \partial_{\bm\alpha}\braket{\Delta^{ba} G A}.\end{equation}
    We first consider \(k=2\) where by parity at least one \(G\) factor is off-diagonal, e.g.
    \[ \frac{1}{n^{5/2}}\sum_{a\le n}\sum_{b>n} \E G_{ab}G_{aa}(GA)_{bb}\]
    and similarly for \(a>n\), \(b\le n\).
    By writing \(G=M+G-M\) and using the isotropic structure of the local law~\eqref{single local law} we obtain
    \[
        \begin{split}
            &\frac{1}{n^{5/2}}\sum_{a\le n}\sum_{b>n} \E G_{ab}G_{aa}(GA)_{bb} \\
            &= \frac{1}{n^{5/2}}\E m (MA)_{n+1,n+1}\braket{E_1\bm 1,G E_2\bm 1} + \cO_\prec\Bigl( n^2 n^{-5/2} (n\eta)^{-3/2} \abs{\beta}^{-1}\Bigr)\\
            &=\cO_\prec\Bigl( \frac{1}{\abs{\beta}n^{3/2} (1+\eta)} + \frac{1}{\abs{\beta}n^2\eta^{3/2}}\Bigr),
        \end{split} \]
    where \(\bm1=(1,\dots,1)\) denotes the constant vector of norm \(\norm{\bm1}=\sqrt{2n}\). Thus we can bound all \(k=2\) terms by \(\abs{\beta}^{-1}\bigl(n^{-3/2} (1+\eta)^{-1}+n^{-2}\eta^{-3/2}\bigr)\).

    For \(k\ge 4\) we can afford bounding each \(G\) entrywise and obtain bounds of \(\abs{\beta}^{-1}n^{-3/2}\). Finally, for the \(k=3\) term there is an assignment \((\bm\alpha)=(ab,ba,ab)\) for which all \(G\)'s are diagonal and which contributes a leading order term given by
    \begin{equation} -\frac{\kappa_4}{2n^3}\sump_{ab}M_{aa}M_{bb}M_{aa}(MA)_{bb}= -\frac{\kappa_4 }{n}\braket{M}^3\braket{MA}, \label{eq psum def}\end{equation}
    where \[
        \sump_{ab}:= \sum_{a\le n}\sum_{b>n}+\sum_{a>n}\sum_{b\le n},
    \]
    and thus
    \begin{gather}
        \begin{aligned}
            \sum_{k\ge 2}\sum_{ab}\sum_{\bm\alpha\in\{ab,ba\}^k} \frac{\kappa(ba,\bm \alpha) }{k!} \partial_{\bm\alpha}\braket{\Delta^{ba}GA} & = -\frac{\kappa_4 }{n}\braket{M}^3\braket{MA}                                                       \\
                                                                                                                                              & \quad+ \cO\Bigl(\frac{1}{\abs{\beta} n^{3/2} (1+\eta)}+\frac{1}{\abs{\beta} n^{2}\eta^{3/2}}\Bigr),
        \end{aligned}\label{g-m single ref}\raisetag{-5em}
    \end{gather}
    concluding the proof.
\end{proof}

We now turn to the computation of higher moments which to leading order due to Lemma~\ref{lemma exp} is equivalent to computing
\[ \E\prod_{i\in[p]}\braket{G_i-M_i-\cE_i},\quad \cE_i:= \frac{\kappa_4}{n}\braket{M_i}^3\braket{M_i A_i},\quad
    A_i:= \big( (\cB_i^\ast)^{-1}[1]\big)^\ast M_i,\]
with \(G_i,M_i\) as in~\eqref{G_i def} for \(z_1,\dots,z_k\in\C\), \(\eta_1,\dots,\eta_k>1/n\). Using Lemma~\ref{lemma exp}, Eq.~\eqref{eq G-M exp}, \(\abs{\cE_i}\lesssim 1/n\) and the high-probability bound
\begin{equation}\label{eq a priori WGA}
    \abs{\braket{\un{WG_i A_i}}}\prec \frac{1}{\abs{\beta_i}n \eta_i}
\end{equation} we have
\begin{equation}
    \prod_{i\in[p]}\braket{G_i-\E G_i} = \prod_{i\in[p]}\braket{-\un{WG_i}A_i-\cE_i}  + \cO_\prec\Bigl(\frac{\psi}{n\eta}\Bigr), \quad \psi:= \prod_{i\in[p]}\frac{1}{\abs{\beta_i}n\abs{\eta_i}}.\label{eq G-M G-M reduction}
\end{equation}
In order to prove Proposition~\ref{prop:CLTresm} we need to compute the leading order term in the local law bound
\begin{equation}\label{eq prod WGA naive}
    \abs*{\prod_{i\in[p]}\braket{-\un{WG_i}A_i-\cE_i} }\prec \psi.
\end{equation}

\begin{proof}[Proof of Proposition~\ref{prop:CLTresm}]
    To simplify notations we will not carry the \(\beta_i\)-dependence within the proof because each \(A_i\) is of size \(\norm{A_i}\lesssim\abs{\beta_i}^{-1}\) and the whole estimate is linear in each \(\abs{\beta_i}^{-1}\). We first perform a cumulant expansion in \(\un{WG_1}\) to compute
    \begin{gather}
        \begin{aligned}
             & \E \prod_{i\in[p]}\braket{-\un{W G_i}A_i-\cE_i}                                                                                                                                                           \\
             & \quad= -\braket{\cE_1}\E\prod_{i\ne 1}\braket{-\un{W G_i}A_i-\cE_i}                                                                                                                                       \\
             & \qquad+ \sum_{i\ne1}\E\wt\E \braket{-\wt W G_1 A_1}\braket{-\wt W G_i A_i+\un{WG_i \wt W G_i} A_i}\prod_{j\ne 1,i}\braket{-\un{W G_j}A_j-\cE_j}                                                           \\
             & \qquad +\sum_{k\ge 2}\sum_{ab}\sum_{\bm\alpha\in\{ab,ba\}^k} \frac{\kappa(ba,\bm \alpha)}{k!} \E \partial_{\bm\alpha}\Bigl[\braket{-\Delta^{ba}G_1A_1}\prod_{i\ne 1}\braket{-\un{W G_i}A_i-\cE_i} \Bigr],
        \end{aligned}\label{eq g-m 2 first exp}\raisetag{-6em}
    \end{gather}
    where \(\wt W\) denotes an independent copy of \(W\) with expectation \(\wt\E\), and the underline is understood with respect to \(W\) and not \(\wt W\). We now consider the terms of~\eqref{eq g-m 2 first exp} one by one. For the second term on the rhs.\ we use the identity
    \begin{equation}\label{eq tr W tr W}
        \E\braket{W A}\braket{W B} = \frac{1}{2n^2} \braket{A E_1 B E_2 + A E_2 B E_1 }  = \frac{\braket{A E B E'}}{2n^2},
    \end{equation}
    where we recall the block matrix definition from~\eqref{E1 E2 def} and follow the convention that \(E,E'\) are summed over both choices \((E,E')=(E_1,E_2),(E_2,E_1)\). Thus we obtain
    \begin{equation}\label{eq Wick Gauss term}
        \begin{split}
            & \wt\E \braket{-\wt W G_1 A_1}\braket{-\wt W G_i A_i+\un{WG_i \wt W G_i} A_i} \\
            &\quad= \frac{1}{2n^2}\braket{G_1 A_1 E G_i A_i E'- G_1 A_1 E \un{G_i A_i W G_i E'}}\\
            &\quad = \frac{1}{2n^2}\braket{G_1 A_1 E G_i A_i E' + G_1\SS[G_1 A_1 E G_i A_i ] G_i E'  - \un{G_1 A_1 E G_i A_i W G_i E'}}.
        \end{split}
    \end{equation}
    Here the self-renormalisation in the last term is defined analogously to~\eqref{self renom}, i.e.
    \[ \un{f(W)Wg(W)}:= f(W)Wg(W)-\wt\E (\partial_{\wt W}f)(W)\wt W g(W) - \wt\E f(W)\wt W (\partial_{\wt W}g)(W),\]
    which is only well-defined if it is clear to which \(W\) the action is associated, i.e.\ \(\un{WWf(W)}\) would be ambiguous. However, we only use the self-renormalisation notation for \(f(W),g(W)\) being (products of) resolvents and deterministic matrices, so no ambiguities should arise. For the first two terms in~\eqref{eq Wick Gauss term} we use \(\norm{M_{AE_1}^{z_1,z_i}}\lesssim \norm{\wh\cB_{1i}^{-1}}\lesssim \abs{z_1-z_i}^{-2}\) due to~\eqref{beta ast bound} and the first bound in~\eqref{final local law} from Theorem~\ref{thm local law G2} (estimating the big bracket by \(1\)) to obtain
    \begin{equation}\label{eq GG local law application}
        \begin{split}
            &\braket{G_1 A_1 E G_i A_i E' + G_1\SS[G_1 A_1 E G_i A_i ] G_i E'} \\
            &\qquad\qquad\qquad = \braket{M^{z_1,z_i}_{A_1E} A_i E'+ M^{z_i,z_1}_{E'} \SS[M^{z_1,z_i}_{A_1E}A_i]} \\
            &\qquad\qquad\qquad\quad+ \cO_\prec\Bigl(\frac{1}{n\abs{z_1-z_i}^4\eta_\ast^{1i} \abs{\eta_1\eta_i}^{1/2}}+\frac{1}{n^2\abs{z_1-z_i}^4(\eta_\ast^{1i})^2 \abs{\eta_1\eta_i}}\Bigr),
        \end{split}
    \end{equation}
    where \(\eta_\ast^{1i}:=\min\{\eta_1,\eta_i\}\). For the last term in~\eqref{eq Wick Gauss term} we claim that
    \begin{equation}\label{var trace bound}
        \E\abs{\braket{\un{G_1 A_1 E G_i A_i W G_i E'}}}^2\lesssim \Bigl(\frac{1}{n\eta_1\eta_i\eta_\ast^{1i}}\Bigr)^2,
    \end{equation}
    the proof of which we present after concluding the proof of the proposition.
    Thus, using~\eqref{var trace bound} together with~\eqref{eq a priori WGA},
    \[
        \begin{split}
            &\abs*{n^{-2}\E \braket{\un{G_1 A_1 E G_i A_i W G_i E'}}\prod_{j\ne 1,i} \braket{-\un{W G_j}A_j-\cE_i}}\\
            &\qquad \lesssim \frac{n^\epsilon}{n^2} \biggl[\prod_{j\ne 1,i}\frac{1}{n\eta_j}\biggr]\Bigl(\E\abs{\braket{\un{G_1 A_1 E G_i A_i W G_i E'}}}^2\Bigr)^{1/2} \\
            &\qquad\lesssim \frac{n^\epsilon}{n\eta_\ast^{1i}} \prod_{j}\frac{1}{n\eta_j} \le \frac{n^\epsilon\psi}{n\eta_\ast}.
        \end{split}\]
    Together with~\eqref{eq prod WGA naive} and~\eqref{eq Wick Gauss term}--\eqref{eq GG local law application} we obtain
    \begin{equation}\label{eq Wick Gauss term 2}
        \begin{split}
            &\E\wt\E \braket{-\wt W G_1 A_1}\braket{-\wt W G_i A_i+\un{WG_i \wt W G_i} A_i}\prod_{j\ne 1,i}\braket{-\un{W G_j}A_j-\cE_j}\\
            &\qquad\quad= \frac{V_{1,i}}{2n^2} \E \prod_{j\ne 1,i}\braket{-\un{W G_j}A_j-\cE_j} \\
            &\qquad\quad\quad+ \landauO*{\psi n^\epsilon \Bigl(\frac{1}{n\eta_\ast}+\frac{\abs{\eta_1\eta_i}^{1/2}}{n\eta_\ast^{1i}\abs{z_1-z_i}^4}+\frac{1}{(n\eta_\ast^{1i})^2\abs{z_1-z_i}^4}\Bigr)}
        \end{split}
    \end{equation}
    since, by an explicit computation the rhs.\ of~\eqref{eq GG local law application} is given by \(V_{1,i}\) as defined in~\eqref{eq:exder}. Indeed, from the explicit formula for \(\cB\) it follows that main term on the rhs.\ of~\eqref{eq GG local law application} can be written as \(\wt V_{1,i}\), where
    \begin{equation}
        \label{eq:vw}
        \begin{split}
            \wt V_{i,j}:&= \frac{2m_i m_j\bigl[2u_i u_j \Re z_i\overline{z_j} + (u_i u_j \abs{z_i} \abs{z_j})^2\bigl[s_i s_j-4\bigr]\bigr]}{ t_i t_j\bigl[1+(u_i u_j \abs{z_i} \abs{z_j})^2-m_i^2m_j^2-2u_i u_j\Re z_i\overline{z_j}\bigr]^2 } \\
            & +\frac{2m_i m_j(m_i^2+u_i^2\abs{z_i}^2)(m_j^2+u_j^2\abs{z_j}^2) }{t_i t_j\bigl[1+(u_i u_j \abs{z_i} \abs{z_j})^2-m_i^2m_j^2-2u_i u_j\Re z_i\overline{z_j}\bigr]^2 },
        \end{split}
    \end{equation}
    using the notations \(t_i:=1-m_i^2-u_i^2\abs{z_1}^2\), \(s_i:=m_i^2-u_i^2\abs{z_i}^2\).
    By an explicit computation using the equation~\eqref{eq m} for \(m_i,m_j\) it can be checked that \(\wt V_{i,j}\) can be written as a derivative and is given by \(\wt V_{i,j}=V_{i,j}\) with \(V_{i,j}\) from~\eqref{eq:exder}.

    Next, we consider the third term on the rhs.\ of~\eqref{eq g-m 2 first exp} for \(k=2\) and \(k\ge3\) separately. We first claim the auxiliary bound
    \begin{equation}\label{eq aux var iso bound}
        \abs{\braket{\vx,\un{G B W G}\vy }} \prec \frac{\norm{\vx}\norm{\vy}\norm{B}}{n^{1/2}\eta^{3/2}}.
    \end{equation}
    Note that~\eqref{eq aux var iso bound} is very similar to~\eqref{prop iso bound} except that in~\eqref{eq aux var iso bound} both \(G\)'s have the same spectral parameters \(z,\eta\) and the order of \(W\) and \(G\) is interchanged. The proof of~\eqref{eq aux var iso bound} is, however, very similar and we leave details to the reader.

    After performing the \(\bm\alpha\)-derivative in~\eqref{eq g-m 2 first exp} via the Leibniz rule, we obtain a product of \(t\ge 1\) traces of the types \(\braket{(\Delta G_i)^{k_i}A_i}\) and \(\braket{\un{W(G_i \Delta)^{k_i}G_i}A_i}\) with \(k_i\ge0\), \(\sum k_i=k+1\), and \(p-t\) traces of the type \( \braket{\un{W G_i A_i}+\cE_i} \). For the term with multiple self-renormalised \(G\)'s, i.e.\ \(\braket{\un{W(G_i \Delta)^{k_i}G_i}A_i}\) with \(k_i\ge 1\) we rewrite
    \begin{gather}
        \begin{aligned}
            \braket{\un{W(G \Delta)^{k}G}A} & = \braket{\un{GAW(G \Delta)^{k}}}                                                                                   \\
                                            & = \braket{\un{GAWG}\Delta (G \Delta)^{k-1}}+\sum_{j=1}^{k-1}\braket{GA\SS[(G\Delta)^j G] (G \Delta)^{k-j}}          \\
                                            & = \braket{\un{GAWG}\Delta (G \Delta)^{k-1}}+\sum_{j=1}^{k-1}\braket{GA E (G \Delta)^{k-j}}\braket{GE'(G\Delta)^j }.
        \end{aligned}\label{long WG rewrite}\raisetag{-4em}
    \end{gather}

    \subsubsection*{Case \(k=2\), \(t=1\).} In this case the only possible term is given by \(\braket{\Delta G_1\Delta G_1 \Delta G_1 A_1}\) where by parity at least one \(G=G_1\) is off-diagonal and in the worst case (only one off-diagonal factor) we estimate
    \[
        \begin{split}
            n^{-1-3/2} \sum_{a\le n}\sum_{b>n} G_{aa} G_{bb} (GA)_{ab} &= \frac{m^2}{n^{5/2}}\braket{E_1\bm 1,GAE_2\bm1} + \cO_\prec\Bigl(\frac{1}{n^{1/2}}\frac{1}{(n\eta_1)^{3/2}}\Bigr)\\
            &=\cO_\prec\Bigl(\frac{1}{n^{3/2}}+\frac{1}{n^2\eta_1^{3/2}}\Bigr),
        \end{split}\]
    after replacing \(G_{aa}=m+(G-M)_{aa}\) and using the isotropic structure of the local law in~\eqref{single local law}, and similarly for \(\sum_{a>n}\sum_{b\le n}\).

    \subsubsection*{Case \(k=2\), \(t=2\).} In this case there are \(2+2\) possible terms
    \[
        \begin{split}
            &\braket{\Delta G_1 \Delta G_1 A_1} \braket{\Delta G_i A_i+\un{W G_i \Delta G_i}A_i}\\
            &\qquad+\braket{\Delta G_1 A_1} \braket{\Delta G_i\Delta G_i A_i+\un{W G_i \Delta G_i\Delta G_i }A_i}.
        \end{split}\]
    For the first two, in the worst case, we have the estimate
    \[
        \begin{split}
            &\frac{1}{n^{7/2}} \sump_{ab} (G_1)_{aa} (G_{1}A_1)_{bb} \Bigl((G_i A_i)_{ab}+(\un{G_i A_i W G_i})_{ab}\Bigr) \\
            &\qquad\qquad = \cO_\prec\Bigl(\frac{1}{n^{5/2}}+\frac{1}{n^3\eta_1\eta_i^{3/2}}\Bigr)
        \end{split}
    \]
    using~\eqref{eq aux var iso bound}, where we recall the definition of \(\sum'\) from~\eqref{eq psum def}. Similarly, using~\eqref{long WG rewrite} and~\eqref{eq aux var iso bound} for the ultimate two terms, we have the bound
    \[
        \begin{split}
            &\frac{1}{n^{7/2}} \E\sump_{ab}  (G_{1}A_1)_{ab} \Bigl((\un{G_i A_i WG_i})_{aa} (G_i)_{bb}  +\frac{(G_i A_i E G_i)_{ab} (G_i E' G_i)_{ab}}{n}\Bigr) \\
            &\qquad= \cO_\prec\Bigl(\frac{1}{n^{3}\eta_1^{1/2}\eta_i^2}\Bigr).
        \end{split}
    \]
    \subsubsection*{Case \(k=2\), \(t=3\).} In this final \(k=2\) case we have to consider four terms
    \[\braket{\Delta G_1A_1}\braket{\Delta G_i A_i +\un{W G_i \Delta G_i A_i} }\braket{\Delta G_j A_j +\un{W G_j \Delta G_j A_j} },\]
    which, using~\eqref{eq aux var iso bound}, we estimate by
    \[
        \begin{split}
            &\frac{1}{n^{9/2}}\sump_{ab} (G_1A_1)_{ab}\Bigl((G_i A_i)_{ab}+(\un{G_i A_i W G_i})_{ab}\Bigr)\Bigl((G_j A_j)_{ab}+(\un{G_j A_j W G_j})_{ab}\Bigr)\\
            &\quad = \cO_\prec\Bigl(\frac{1}{n^{4} \eta_1^{1/2}\eta_i^{3/2}\eta_j^{3/2}}  \Bigr).
        \end{split}\]
    By inserting the above estimates back into~\eqref{eq g-m 2 first exp}, after estimating all untouched traces by \(n^\epsilon/(n\eta_i)\) in high probability using~\eqref{eq a priori WGA}, we obtain
    \begin{equation}\label{k=2 est}
        \begin{split}
            &\sum_{k= 2}\sum_{ab}\sum_{\bm\alpha\in\{ab,ba\}^k} \frac{\kappa(ba,\bm \alpha)}{k!} \E \partial_{\bm\alpha}\Bigl[\braket{-\Delta^{ba}G_1A_1}\prod_{i\ne 1}\braket{-\un{W G_i}A_i-\cE_i} \Bigr]\\
            &\qquad = \cO\Bigl(\frac{\psi n^\epsilon}{\sqrt{n\eta_\ast}}\Bigr).
        \end{split}
    \end{equation}

    \subsubsection*{Case \(k\ge 3\).} In case \(k\ge 3\) after the action of the derivative in~\eqref{eq g-m 2 first exp} there are \(1\le t\le k+1\) traces involving some \(\Delta\). By writing the normalised traces involving \(\Delta\) as matrix entries we obtain a prefactor of \(n^{-t-(k+1)/2}\) and a \(\sum_{ab}\)-summation over entries of \(k+1\) matrices of the type \(G\), \(GA\), \(\un{GAWG}\) such that each summation index appears exactly \(k+1\) times. There are some additional terms from the last sum in~\eqref{long WG rewrite} which are smaller by a factor \((n\eta)^{-1}\) and which can be bounded exactly as in the \(k=2\) case. If there are only diagonal \(G\) or \(GA\)-terms, then we have a naive bound of \(n^{-t-(k-3)/2}\) and therefore potentially some leading-order contribution in case \(k=3\). If, however, \(k>3\), or there are some off-diagonal \(G,GA\) or some \(\un{GAWG}\) terms, then, using~\eqref{eq aux var iso bound} we obtain an improvement of at least \((n\eta)^{-1/2}\) over the naive bound~\eqref{eq prod WGA naive}. For \(k=3\), by parity, the only possibility of having four diagonal \(G,GA\) factors, is distributing the four \(\Delta\)'s either into a single trace or two traces with two \(\Delta\)'s each. Thus the relevant terms are
    \[ \braket{\Delta G_1 \Delta G_1 \Delta G_1 \Delta G_1 A_1},\quad \braket{\Delta G_1 \Delta G_1 A_1}\braket{\Delta G_i \Delta G_i A_i}. \]
    For the first one we recall from~\eqref{g-m single ref} for \(k=3\) that
    \begin{equation}\label{eq kappa4 1}
        \sum_{ab}\sum_{\bm\alpha} \kappa(ba,\bm\alpha) \braket{\Delta^{ba} G_1 \Delta^{\alpha_1} G_1 \Delta^{\alpha_2} G_1 \Delta^{\alpha_3} G_1 A_1}=\cE_1 + \cO_\prec\Bigl(\frac{1}{n^{3/2}}+\frac{1}{n^2 \eta_1^{3/2}}\Bigr).
    \end{equation}
    For the second one we note that only choosing \(\bm\alpha=(ab,ab,ba),(ab,ba,ab)\) gives four diagonal factors, while any other choice gives at least two off-diagonal factors. Thus
    \begin{equation} \label{eq kappa4 2}
        \begin{split}
            &\sum_{ab}\sum_{\bm\alpha} \kappa(ba,\bm\alpha) \braket{\Delta^{ba} G_1 \Delta^{\alpha_1} G_1}\braket{\Delta^{\alpha_2} G_i \Delta^{\alpha_3} G_i A_i} \\
            & = \frac{\kappa_4}{n^{2}}\sump_{ab} \braket{\Delta^{ba} G_1 \Delta^{ab} G_1 A_1}\bigl[\braket{\Delta^{ab} G_i \Delta^{ba} G_i A_i}+\braket{\Delta^{ba} G_i \Delta^{ab} G_i A_i}\bigl]+ \cO_\prec(\mathcal{E})\\
            & = \frac{\kappa_4}{4n^4}\sump_{ab} (G_1)_{aa} (G_1 A_1)_{bb} \bigl[  (G_i)_{bb} (G_i A_i)_{aa}+ (G_i)_{aa}(G_i A_i)_{bb}\bigl]+ \cO_\prec(\mathcal{E})\\
            & = \frac{\kappa_4}{4n^4}\sump_{ab} m_1 m_i (M_1 A_1)_{bb} \bigl[  (M_i A_i)_{aa}+ (M_i A_i)_{bb}\bigl]+ \cO_\prec\left(\sqrt{n\eta_*}\mathcal{E}\right)\\
            & = \frac{\kappa_4}{n^2} \braket{M_1} \braket{M_i} \braket{M_1 A_1} \braket{ M_i A_i} + \cO_\prec\left(\frac{1}{n^{5/2}\eta_*^{1/2}}\right),
        \end{split}
    \end{equation}
    where \(\mathcal{E}:=(n^3\eta_*)^{-1}\).
    We recall from~\eqref{MA eq} that
    \[ \braket{M_1} \braket{M_i} \braket{M_1 A_1} \braket{ M_i A_i}= \frac{1}{2}U_1 U_i \]
    with \(U_{i}\) defined in~\eqref{eq:exder}.
    Thus, we can conclude for the \(k\ge3\) terms in~\eqref{eq g-m 2 first exp} that
    \begin{gather}
        \begin{aligned}
             & \sum_{k \ge 3}\sum_{ab}\sum_{\bm\alpha\in\{ab,ba\}^k} \frac{\kappa(ba,\bm \alpha)}{k!} \E \partial_{\bm\alpha}\Bigl[\braket{-\Delta^{ba}G_1A_1}\prod_{i\ne 1}\braket{-\un{W G_i}A_i-\cE_i} \Bigr] \\
             & \qquad = \braket{\cE_1} \E\prod_{i\ne 1}\braket{-\un{W G_i}A_i-\cE_i} + \sum_{i\ne 1} \frac{\kappa_4 U_{1} U_i }{2n^2} \E\prod_{j\ne 1,i} \braket{-\un{W G_j}A_j-\cE_j}                           \\
             & \qquad\quad+ \cO\Bigl(\frac{\psi n^\epsilon}{(n\eta_\ast)^{1/2}}\Bigr).
        \end{aligned}\label{eq kappa ge 3 conclusion}\raisetag{-5em}
    \end{gather}

    By combining~\eqref{eq g-m 2 first exp} with~\eqref{eq Wick Gauss term 2},~\eqref{k=2 est} and~\eqref{eq kappa ge 3 conclusion} we obtain
    \begin{gather}
        \begin{aligned}
            \E\prod_{i}\braket{-\un{W G_i}A_i-\cE_i} & = \sum_{i\ne 1} \frac{V_{1,i}+\kappa_4 U_{1}U_i}{2n^2} \E\prod_{j\ne 1,i} \braket{-\un{W G_j}A_j-\cE_j}                                                                        \\
                                                     & \quad + \landauO*{\frac{\psi n^\epsilon }{\sqrt{n\eta_\ast}}+\frac{\psi n^\epsilon }{n\eta_\ast^{1/2}\abs{z_1-z_i}^4}+\frac{\psi n^\epsilon }{(n\eta_\ast)^2\abs{z_1-z_i}^4}},
        \end{aligned}\raisetag{-5em}
    \end{gather}
    and thus by induction
    \begin{gather}
        \begin{aligned}
            \E\prod_{i}\braket{-\un{W G_i}A_i-\cE_i} & = \frac{1}{n^{p}}\sum_{P\in\Pi_p}\prod_{\{i,j\}\in P} \frac{V_{i,j}+\kappa_4 U_i U_j}{2}                                                                                       \\
                                                     & \quad + \landauO*{\frac{\psi n^\epsilon }{\sqrt{n\eta_\ast}}+\frac{\psi n^\epsilon }{n\eta_\ast^{1/2}\abs{z_1-z_i}^4}+\frac{\psi n^\epsilon }{(n\eta_\ast)^2\abs{z_1-z_i}^4}},
        \end{aligned}\raisetag{-5em}
    \end{gather}
    from which the equality \(\E \prod_i \braket{G_i-\E G_i}\) and the second line of~\eqref{eq CLT resovlent} follows, modulo the proof of~\eqref{var trace bound}. The remaining equality then follows from applying the very same equality for each element of the pairing. Finally,~\eqref{prop clt exp} follows directly from Lemma~\ref{lemma exp}.
\end{proof}

\begin{proof}[Proof of~\eqref{var trace bound}]
    Using the notation of Lemma~\ref{lemma general products}, our goal is to prove that
    \begin{equation}   \label{eq G1Gii claim}
        \E\abs{\braket{\un{W G_{i1i} }}}^2 \lesssim \Bigl(\frac{1}{n\eta_1\eta_i\eta_\ast^{1i}}\Bigr)^2.
    \end{equation}
    Since only \(\eta_1,\eta_i\) play a role within the proof of~\eqref{var trace bound}, we drop the indices from \(\eta_\ast^{1i}\) and simply write \(\eta_\ast=\eta_\ast^{1i}\). Using a cumulant expansion we compute
    \begin{equation}\label{G1Gii first}
        \begin{split}
            &\E\abs{\braket{\un{W G_{i1i}}}}^2 \\
            &= \E\wt\E \braket{\wt W G_{i1i} } \Bigl(\braket{\wt W G_{i1i} } + \braket{\un{W G_i \wt W G_{i1i}  }+\un{W G_{i1}  \wt W G_{1i}  }+\un{W G_{i1i} \wt W G_i  }}\Bigr)\\
            &\quad + \sum_{k\ge 2}\cO\Bigl(\frac{1}{n^{(k+1)/2}}\Bigr)\sump_{ab}   \sum_{k_1+k_2=k-1}\sum_{\bm\alpha_1,\bm\alpha_2} \E\braket{\Delta^{ab}\partial_{\bm\alpha_1} G_{i1i}}\braket{\Delta^{ab}\partial_{\bm\alpha_2} G_{i1i}} \\
            &\quad + \sum_{k\ge 2}\cO\Bigl(\frac{1}{n^{(k+1)/2}}\Bigr)\sump_{ab}   \sum_{k_1+k_2=k}\sum_{\bm\alpha_1,\bm\alpha_2} \E\braket{\Delta^{ab}\partial_{\bm\alpha_1} G_{i1i}}\braket{\un{W\partial_{\bm\alpha_2} G_{i1i}}},
        \end{split}
    \end{equation}
    where \(\bm\alpha_i\) is understood to be summed over \(\bm\alpha_i\in\{ab,ba\}^{k_i}\). In~\eqref{G1Gii first} we only kept the scaling \(\abs{\kappa(ab,\bm\alpha)}\lesssim n^{-(k+1)/2}\) of the cumulants, and also absorb combinatorial factors as \(k! \) in \(\cO(\cdot)\). We first consider those terms in~\eqref{G1Gii first} which contain no self-renormalisations \(\un{Wf(W)}\) anymore since those do not have to be expanded further. For the very first term we obtain
    \begin{equation}\label{Gi1i single Gauss}
        \wt\E \braket{\wt W G_{i1i} } \braket{\wt W G_{i1i} } = \frac{\braket{G_{i1ii1i}}}{n^2} =\cO_\prec\Bigl( \frac{1}{n^2\eta_1^2\eta_i^3}\Bigr).
    \end{equation}
    To bound products of \(G_1\) and \(G_i\) we use Lemma~\ref{lemma general products}. For the second line on the rhs.\ of~\eqref{G1Gii first} we have to estimate
    \[
        \begin{split}
            \cO\Bigl(\frac{1}{n^{(k+1)/2+2}}\Bigr)\sum_{k\ge 2}\sump_{ab}   \sum_{k_1+k_2=k-1}\sum_{\bm\alpha_1,\bm\alpha_2} \E (\partial_{\bm\alpha_1} (G_{i1i})_{ba}) (\partial_{\bm\alpha_2} (G_{i1i})_{ba})
        \end{split}
    \]
    and we note that without derivatives we have the estimate \(\abs{(G_{i1i})}\prec (\eta_1\eta_i)^{-1}\). Additional derivatives do not affect this bound since if e.g.\ \(G_i\) is derived we obtain one additional \(G_i\) but also one additional product of \(G\)'s with \(G_i\) in the end, and one additional product with \(G_i\) in the beginning. Due to the structure of the estimate~\eqref{eq general iso bound} the bound thus remains invariant. For example \(\abs{(\partial_{ab} G_{i1i})_{ba}}=\abs{(G_{i})_{bb}(G_{i1i})_{aa}+\dots}\prec (\eta_1\eta_i)^{-1}\). Thus, by estimating the sum trivially we obtain
    \begin{equation}\label{Gi1i single cum}
        \frac{1}{n^{(k+1)/2}} \sum_{\substack{k_1+k_2=k-1\\ k\ge 2}}\sump_{ab}  \sum_{\bm\alpha_1,\bm\alpha_2} \E\braket{\Delta^{ab}\partial_{\bm\alpha_1} G_{i1i}}\braket{\Delta^{ab}\partial_{\bm\alpha_2} G_{i1i}}  = \cO_\prec\Bigl(\frac{1}{n^{3/2}\eta_1^2\eta_i^{2}}\Bigr)
    \end{equation}
    since \(k\ge 2\).

    It remains to consider the third line on the rhs.\ of~\eqref{G1Gii first} and the remaining terms from the first line. In both cases we perform a second cumulant expansion and again differentiate the Gaussian (i.e.\ the second order cumulant) term, and the terms from higher order cumulants. Since the two consecutive cumulant expansions commute it is clearly sufficient to consider the Gaussian term for the first line, and the full expansion for the third line. We begin with the latter and compute
    \begin{equation}\label{eq Gi1i cum second exp}
        \begin{split}
            &\E\braket{\Delta^{ab}\partial_{\bm\alpha_1} G_{i1i}}\braket{\un{W\partial_{\bm\alpha_2} G_{i1i}}} \\
            &\quad = \wt\E\E \braket{\Delta^{ab}\partial_{\bm\alpha_1}(G_i\wt W G_{i1i}+G_{i1}\wt W G_{1i}+G_{i1i}\wt W G_{i})} \braket{\wt W\partial_{\bm\alpha_2} G_{i1i}}  \\
            &\qquad + \sum_{l\ge 2}\sump_{cd}\sum_{\bm\beta_1,\bm\beta_2} \E \braket{\Delta^{ab}\partial_{\bm\alpha_1}\partial_{\bm\beta_1} G_{i1i}}\braket{\Delta^{cd}\partial_{\bm\alpha_2}\partial_{\bm\beta_2} G_{i1i}}\\
            &\quad = \frac{1}{n^2}\E \braket{\partial_{\bm\alpha_1}(G_{i1i}\Delta^{ab}G_i+G_{1i}\Delta^{ab}G_{i1}+G_{i}\Delta^{ab}G_{i1i})\partial_{\bm\alpha_2}(G_{i1i})}\\
            &\qquad + \sum_{l\ge 2}\sump_{cd}\sum_{\bm\beta_1,\bm\beta_2} \E \braket{\Delta^{ab}\partial_{\bm\alpha_1}\partial_{\bm\beta_1} G_{i1i}}\braket{\Delta^{cd}\partial_{\bm\alpha_2}\partial_{\bm\beta_2} G_{i1i}},
        \end{split}
    \end{equation}
    where \(\bm\beta_i\) are understood to be summed over \(\bm\beta_i\in\{cd,dc\}^{l_i}\) with \(l_1+l_2=l\). After inserting the first line of~\eqref{eq Gi1i cum second exp} back into~\eqref{G1Gii first} we obtain an overall factor of \(n^{-3-(k+1)/2}\) as well as the \(\sum_{ab}\)-summation over some \(\partial_{\bm\alpha} (\cG)_{ab}\), where \(\cG\) is a product of either \(2+5\) or \(3+4\) \(G_1\)'s and \(G_i\)'s respectively with \(G_i\) in beginning and end. We can bound \(\abs{\partial_{\bm\alpha} (\cG)_{ab}}\prec \eta_1^{-2}\eta_i^{-4}+\eta_1^{-3}\eta_i^{-3}\le\eta_1^{-2}\eta_i^{-2}\eta_\ast^{-2}\) and thus can estimate the sum by \(n^{-5/2}\eta_1^{-2}\eta_i^{-2}\eta_\ast^{-2}\) since \(k\ge 2\). Here we used~\eqref{eq general iso bound} to estimate all matrix elements  of the form \(\cG'_{ab}, \cG'_{aa},\dots\) emerging after performing the derivative \(\partial_{\bm \alpha} (\cG)_{ab}\).

    Now we turn to the second line of~\eqref{eq Gi1i cum second exp} when inserted back into~\eqref{G1Gii first}, where we obtain a total prefactor of \(n^{-(k+l)/2-3}\), a summation \(\sum_{abcd}\) over \((\partial_{\bm\alpha_1}\partial_{\bm\beta_1}G_{i1i})_{ab}(\partial_{\bm\alpha_2}\partial_{\bm\beta_2}G_{i1i})_{cd}\). In case \(k=l=2\), by parity, after performing the derivatives at least two factors are off-diagonal, while in case \(k+l=5\) at least one factor is off-diagonal. Thus we obtain a bound of \(n^{1-(k+l)/2}\eta_1^{-2}\eta_i^{-2}\) multiplied by a Ward-improvement of \((n\eta_\ast)^{-1}\) in the first, and \((n\eta_\ast)^{-1/2}\) in the second case. Thus we conclude
    \begin{equation}\label{eq Gi1i cum further exp}
        \frac{1}{n^{(k+1)/2}}  \sum_{\substack{k_1+k_2=k \\ k\ge 2}}\sump_{ab} \sum_{\bm\alpha_1,\bm\alpha_2} \E\braket{\Delta^{ab}\partial_{\bm\alpha_1} G_{i1i}}\braket{\un{W\partial_{\bm\alpha_2} G_{i1i}}} = \cO\Bigl(\frac{1}{n^2\eta_1^2\eta_i^2\eta_\ast^2}\Bigr).
    \end{equation}

    Finally, we consider the Gaussian part of the cumulant expansion of the remaining terms in the first line of~\eqref{G1Gii first}, for which we obtain
    \begin{equation}\label{eq gauss gauss}
        \begin{split}
            \frac{1}{n^2}\wt \E\braket{ (G_{i1i} \wt W G_i+G_{1i}\wt WG_{i1}+G_{i}\wt W G_{i1i})^2 } = O_\prec\Bigl(\frac{1}{n^2 \eta_1^2\eta_i^2\eta_\ast^2}\Bigr)
        \end{split}
    \end{equation}
    since
    \[
        \begin{split}
            \abs{\braket{G_i G_i}}&\prec \frac{1}{\eta_i},\quad \abs{\braket{G_i G_{i1}}}\prec \frac{1}{\eta_1\eta_i},\quad \abs{\braket{G_i G_{i1i}}}\prec\frac{1}{\eta_1\eta_i^2}, \\
            \abs{\braket{G_{1i}G_{1i}}}&\prec \frac{1}{\eta_1^2\eta_i}, \quad  \abs{\braket{G_{1i}G_{i1i}}}\prec \frac{1}{\eta_1^2\eta_i^2}, \quad \abs{\braket{G_{i1i}G_{i1i}}}\prec \frac{1}{\eta_1^2\eta_i^3}
        \end{split} \]
    due to~\eqref{eq general av bound}. By combining~\eqref{Gi1i single Gauss}--\eqref{eq gauss gauss} we conclude the proof of~\eqref{var trace bound} using~\eqref{G1Gii first}.
\end{proof}

\section{Independence of the small eigenvalues of \texorpdfstring{\(H^{z_1}\)}{Hz1} and \texorpdfstring{\(H^{z_2}\)}{Hz2}}\label{sec:IND}
Given an \(n\times n\) i.i.d.\ complex matrix \(X\), for any \(z\in\C \) we recall that the Hermitisation of \(X-z\) is given by
\begin{equation}\label{eq:her}
    H^z:= \left( \begin{matrix}
            0                & X-z \\
            X^*-\overline{z} & 0
        \end{matrix}\right).
\end{equation}
The block structure of \(H^z\) induces a symmetric spectrum with respect to zero, i.e.\ denoting by \(\{\lambda_{\pm i}^z\}_{i=1}^n\) the eigenvalues of \(H^z\), we have that \(\lambda_{-i}^z=-\lambda_i^z\) for any \(i\in[n]\). Denote the resolvent of \(H^z\) by \(G^z\), i.e.\ on the imaginary axis \(G^z\) is defined by \(G^z(\ii \eta):= (H^z-\ii\eta)^{-1}\), with \(\eta>0\).

\begin{convention}\label{rem:no0}
    We omitted the index \(i=0\) in the definition of the eigenvalues of \(H^z\). In the remainder of this section we always assume that all the indices are not zero, e.g we use the notation
    \[
        \sum_{j=-n}^n := \sum_{j=-n}^{-1}+ \sum_{j=1}^n.
    \]
    Similarly, by \(\abs{i}\le A\), for some \(A>0\), we mean \(0<\abs{i}\le A\), etc.
\end{convention}

The main result of this section is the proof of Proposition~\ref{prop:indmr} which follows by Proposition~\ref{prop:indeig} and rigidity estimates in Section~\ref{sec:ririri}.

\begin{proposition}\label{prop:indeig}
    Fix \(p\in\mathbf{N}\). For any \(\omega_d,\omega_f, \omega_h>0\) sufficiently small constants such that \(\omega_h\ll \omega_f\), there exits \(\omega, \widehat{\omega}, \delta_0,\delta_1>0\) with \(\omega_h\ll \delta_m\ll \widehat{\omega}\ll \omega\ll \omega_f\), for \(m=0,1\), such that for any fixed \(z_1,\dots,z_p\in \C \) such that \(\abs{z_l}\le 1-n^{-\omega_h}\), \(\abs{z_l-z_m}\ge n^{-\omega_d}\), with \(l,m\in [p]\), \(l\ne m\), it holds
    \begin{equation}\label{eq:indA}
        \begin{split}
            \E  &\prod_{l=1}^p \frac{1}{n}\sum_{\abs{i_l}\le n^{\widehat{\omega}}} \frac{\eta_l}{(\lambda_{i_l}^{z_l})^2+\eta_l^2}=\prod_{l=1}^p \E   \frac{1}{n}\sum_{\abs{i_l}\le n^{\widehat{\omega}}} \frac{\eta_l}{(\lambda_{i_l}^{z_l})^2+\eta_l^2} \\
            &\quad+\mathcal{O}\left(\frac{n^{\widehat{\omega}}}{n^{1+\omega}} \sum_{l=1}^p\frac{1}{\eta_l}\times\prod_{m=1}^p\left( 1+\frac{n^\xi}{n\eta_m}\right)+\frac{n^{p\xi+2\delta_0} n^{\omega_f}}{n^{3/2}}\sum_{l=1}^p \frac{1}{\eta_l}+\frac{n^{p\delta_0+\delta_1}}{n^{\widehat{\omega}}}\right),
        \end{split}
    \end{equation}
    for any \(\xi>0\), where \(\eta_1,\dots, \eta_p\in [n^{-1-\delta_0},n^{-1+\delta_1}]\) and the implicit constant in \(\mathcal{O}(\cdot)\) may depend on \(p\).
\end{proposition}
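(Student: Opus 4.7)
The plan is to use the Dyson Brownian motion (DBM) coupling strategy introduced in~\cite{MR3916329}, extended here to run $p$ weakly correlated processes in parallel. The first step is a short-time regularisation. I would introduce the Ornstein--Uhlenbeck (OU) flow $dX_t = -\tfrac12 X_t\,dt + n^{-1/2}\,dB_t$ with $X_0=X$, and let $\lambda^{z_l}_i(t)$ denote the singular values of $X_t-z_l$. Choose an intermediate time $t^\ast\sim n^{-1+\omega_f}$. A standard Green Function Comparison (GFT) argument at the smallest spectral scale reduces the claim at $t=0$ to the analogous claim at $t=t^\ast$, up to an error of size roughly $n^{\omega_f+p\delta_0}/\sqrt{n}$, which accounts for the second error term in~\eqref{eq:indA}. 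A preliminary rigidity step, based on the local law of Theorem~\ref{theo:Gll}, also shows that the truncation $|i_l|\le n^{\widehat\omega}$ costs only $n^{p\delta_0+\delta_1}/n^{\widehat\omega}$, the third error term.

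For each $l$ independently, I would then couple the singular value DBM $\bm\lambda^{z_l}(t)$ with a \emph{reference} DBM $\bm\mu^{z_l}(t)$ built from the OU evolution of an \emph{independent} Ginibre matrix $Y^{(l)}$. If the two DBMs are driven by identical Brownian motions, then the pathwise homogenisation bound of~\cite{MR3916329,MR3914908} (using the coupling ideas going back to \cite{MR3541852}) yields
\[ \abs{\lambda^{z_l}_{i_l}(t^\ast)-\mu^{z_l}_{i_l}(t^\ast)}\prec n^{-1-\widehat\omega-\omega} \]
uniformly for $|i_l|\le n^{\widehat\omega}$. Since the references $\bm\mu^{z_l}$ depend on the \emph{independent} Ginibre matrices $Y^{(l)}$, their joint law factorises exactly over $l\in[p]$. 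Converting the pathwise closeness into a statement about $\frac{\eta_l}{(\lambda^{z_l}_{i_l})^2+\eta_l^2}$ via a first-order Taylor expansion produces the first error term in~\eqref{eq:indA}.

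The genuine obstacle is that the driving Brownian motions for $\bm\lambda^{z_l}$ and $\bm\lambda^{z_m}$ with $l\ne m$ are \emph{not} independent: they are both functions of the single matrix flow $X_t$, and their cross-quadratic-variation is governed by the eigenvector overlaps
\[
\braket{u^{z_l}_i,u^{z_m}_j}\braket{v^{z_m}_j,v^{z_l}_i},
\]
where $w^z_i=(u^z_i,v^z_i)$ are the eigenvectors of $H^z$. For the homogenisation coupling to be executed simultaneously for all $l$, these overlaps must be shown to be small whenever $\abs{z_l-z_m}\ge n^{-\omega_d}$ and the indices $i,j$ are not too large. This overlap bound is exactly the point where the new two-resolvent local law of Theorem~\ref{thm local law G2} enters: by representing the overlap via spectral calculus as an average of $\Tr\bigl(G^{z_l}(\ii\eta) B\, G^{z_m}(\ii\eta')\bigr)$ at mesoscopic $\eta,\eta'$, Theorem~\ref{thm local law G2} combined with the stability bound $\norm{\wh\cB_{12}^{-1}}\lesssim \abs{z_l-z_m}^{-2}\lesssim n^{2\omega_d}$ from Lemma~\ref{lemma:betaM} delivers the required smallness. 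The remaining (weak) cross-correlation of the driving Brownian motions is then absorbed into the error of the homogenisation coupling, by adapting the argument of~\cite{MR3916329} to non-identical but nearly independent drivers.

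The scale hierarchy $\omega_h\ll\delta_m\ll\widehat\omega\ll\omega\ll\omega_f$ is forced by the mutual subordination of the four small parameters: $\omega_d$ must be small enough for the overlap bound to be useful; $\widehat\omega$ must exceed the rigidity fluctuation scale but remain far below the homogenisation gain; $\omega_f$ must exceed $\widehat\omega$ so that the OU flow has time to equilibrate the small eigenvalues; and $\delta_0,\delta_1$ must stay below all of these so that the $\eta_l$-range does not destroy the GFT reduction. The main difficulty I expect, beyond the bookkeeping of these scales, is the extension of the single-process homogenisation coupling of~\cite{MR3916329} to $p$ simultaneous trajectories with merely weakly decorrelated drivers, which requires propagating the overlap-smallness through every step of the DBM analysis.
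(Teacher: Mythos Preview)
Your high-level outline (GFT reduction, DBM coupling, eigenvector overlap bound via the two-resolvent local law) matches the paper's strategy, and your identification of the scale hierarchy and the three error terms is accurate. However, there is a genuine gap in your factorisation step.

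You propose to drive each reference process $\bm\mu^{z_l}$ by the \emph{same} Brownian motions $b^{z_l}$ as $\bm\lambda^{z_l}$, then claim that ``since the references $\bm\mu^{z_l}$ depend on the independent Ginibre matrices $Y^{(l)}$, their joint law factorises exactly over $l\in[p]$''. This is false: the $b^{z_l}$ for different $l$ are all functionals of the single matrix Brownian motion $B_t$, and this correlation is inherited by the $\bm\mu^{z_l}$ at time $t^\ast$ regardless of the independence of their initial data. So you cannot simultaneously have (a) pathwise closeness $\bm\mu^{z_l}\approx\bm\lambda^{z_l}$ via identical drivers and (b) exact independence of the $\bm\mu^{z_l}$. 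Your later remark about ``absorbing'' the residual correlation into the homogenisation error does not resolve this, because the factorisation of $\E\prod_l F_l(\bm\mu^{z_l})$ still needs a separate argument.

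The paper fixes this with a four-process construction. Besides $\bm\lambda^{z_l}$ and the truly independent processes $\bm\mu^{(l)}$ (independent Ginibre initial data \emph{and} independent driving Brownian motions $\beta^{(l)}$), it introduces two intermediate processes $\widetilde{\bm\lambda}^{(l)}$ and $\widetilde{\bm\mu}^{(l)}$ engineered so that $(\widetilde{\bm\lambda}^{(1)},\dots,\widetilde{\bm\lambda}^{(p)})\stackrel{d}{=}(\widetilde{\bm\mu}^{(1)},\dots,\widetilde{\bm\mu}^{(p)})$ as joint processes. The chain is then
\[
\bm\lambda^{z_l}\approx\widetilde{\bm\lambda}^{(l)}\ \text{(pathwise, standard coupling)},\quad
\widetilde{\bm\lambda}\stackrel{d}{=}\widetilde{\bm\mu},\quad
\widetilde{\bm\mu}^{(l)}\approx\bm\mu^{(l)}\ \text{(pathwise)},\quad
\bm\mu^{(l)}\ \text{independent}.
\]
The overlap bound enters in the third step: the drivers $\zeta^{z_l}$ of $\widetilde{\bm\mu}^{(l)}$ are constructed from the independent $\beta^{(l)}$ via $\zeta=\int\sqrt{C^\#}\,d\beta$, where $C^\#$ has the distribution of the covariance matrix of the $b^{z_l}$; the overlap bound makes $C^\#$ close to the identity, so $\zeta^{z_l}\approx\beta^{(l)}$ in quadratic variation, and Proposition~\ref{pro:ciala} (the adaptation of the homogenisation coupling to nearly-but-not-exactly matched drivers) gives pathwise closeness. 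The distributional equality in the middle is what lets you pass from a joint statement about the correlated $\bm\lambda$'s to a factorised statement about the independent $\bm\mu$'s; this device is the missing piece in your proposal.
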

We recall that the eigenvalues of \(H^z\) are labelled by \(\lambda_{-n}\le \dots\le \lambda_{-1}\le\lambda_1\le\dots \le \lambda_n\), hence the summation over \(\abs{i_l}\le n^{\widehat{\omega}}\) in~\eqref{eq:indA} is over the smallest (in absolute value) eigenvalues of \(H^z\).

The remainder of Section~\ref{sec:IND} is divided as follows: in Section~\ref{sec:ririri} we state rigidity of the eigenvalues of the matrices \(H^{z_l}\) and a local law for \(\Tr G^{z_l}\), then using these results and Proposition~\ref{prop:indeig} we conclude the proof of Proposition~\ref{prop:indmr}. In Section~\ref{sec:PO} we state the main technical results needed to prove Proposition~\ref{prop:indeig} and conclude its proof. In Section~\ref{sec:BOUNDE} we estimate the overlaps of eigenvectors, corresponding to small indices, of \(H^{z_l}\), \(H^{z_m}\) for \(l\ne m\), this is the main input to prove the asymptotic independence in Proposition~\ref{prop:indeig}. In Section~\ref{sec:fixun} we present Proposition~\ref{pro:ciala} which is a modification of the pathwise coupling of DBMs from~\cite{MR3914908,MR3541852} (adapted to the \(2\times 2\) matrix model~\eqref{eq:her} in~\cite{MR3916329}) which is needed to deal with the (small) correlation of \({\bm \lambda}^{z_l}\), the eigenvalues of \(H^{z_l}\), for different \(l\)'s. In Section~\ref{sec:noncelpiu} we prove some technical lemmata used in Section~\ref{sec:PO}. Finally, in Section~\ref{sec:proofFEU} we prove Proposition~\ref{pro:ciala}.

\subsection{Rigidity of eigenvalues and proof of Proposition~\ref{prop:indmr}}\label{sec:ririri}

In this section, before proceeding with the actual proof of Proposition~\ref{prop:indeig}, we state the local law away from the imaginary axis,
proven in~\cite{MR4235475},  that will be used in the following sections. We remark that the averaged and entry-wise
version of this local law for \(\abs{z}\le 1-\epsilon\), for some small fixed \(\epsilon>0\), has already been established in~\cite[Theorem 3.4]{MR3230002}.

\begin{proposition}[Theorem 3.1 of~\cite{MR4235475}]\label{theo:trll}
    Let \(\omega_h>0\) be sufficiently small, and define \(\delta_l:= 1-\abs{z_l}^2\). Then with very high probability it holds
    \begin{equation}
        \label{eq:lll}
        \abs*{\frac{1}{2n}\sum_{1\le\abs{i}\le n} \frac{1}{\lambda_i^{z_l}-w}-m^{z_l}(w)}\le \frac{\delta_l^{-100}n^\xi}{n\Im w},
    \end{equation}
    uniformly in \(\abs{z_l}^2\le 1-n^{-\omega_h}\) and \(0< \Im w\le 10\). Here \(m^{z_l}\) denotes the solution of~\eqref{eq m}.
\end{proposition}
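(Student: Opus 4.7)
The target is an averaged local law for $\langle G^{z_l}(w)\rangle$ valid uniformly in the upper half plane $\{0<\Im w\le 10\}$, with a controlled $\delta_l^{-100}$ degradation as $|z_l|\to 1$. The plan is to adapt the standard three-step local-law strategy of \cite{1907.13631,MR3770875}, which was carried out there only on the imaginary axis and with $|z|\le 1-\epsilon$ for a fixed $\epsilon>0$, to the full upper half plane and to the whole regime $|z|^2\le 1-n^{-\omega_h}$ simultaneously, tracking the dependence on $\delta_l=1-|z_l|^2$ throughout.

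First, I would analyse the deterministic input. By \eqref{eq m}--\eqref{eq M} and \eqref{beta def}, for $w=E+\ii\eta$ with $0<\eta\le 10$ and $|z|^2\le 1-n^{-\omega_h}$, the solution $m^z(w)$ of the MDE stays in a bounded region of $\HC$, $|m^z(w)|\lesssim 1$, and $\Im m^z(w)$ is bounded below by a positive power of $\delta_l$ on a $\delta_l$-neighbourhood of $E=0$. The corresponding $1$-body stability operator $\mathcal{B}^z(w)=1-M^z(w)\mathcal{S}[\cdot]M^z(w)$ has, by the explicit $2\times 2$ reduction analogous to \eqref{eq Bhat decomp} with $z_1=z_2=z$, $w_1=w_2=w$, a norm bounded by $\|(\mathcal{B}^z(w))^{-1}\|\lesssim\delta_l^{-C_0}$ for some explicit $C_0$, uniformly in $w$ in the domain of interest. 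This bound is much worse than on the imaginary axis because $\Re w$ can be comparable to the spectral edge $E_+^z\sim\sqrt{\delta_l}$.

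Second, I would establish the probabilistic input. Writing the perturbed MDE in the form \eqref{G deviation} and taking the normalised trace, we obtain $\langle G^z(w)-M^z(w)\rangle=-\langle(\mathcal{B}^z(w))^{-\ast}[1]^\ast\,M^z(w),\underline{WG^z(w)}\rangle+\mathcal{E}$, where $\mathcal{E}$ collects the quadratic error $\langle M^z\mathcal{S}[G^z-M^z](G^z-M^z)\rangle$. The linear fluctuation $\langle A\,\underline{WG^z(w)}\rangle$ can be bounded by $n^\xi/(n\Im w)$ with very high probability by repeating the cumulant expansion from Section~\ref{section prob bound} adapted to a single resolvent; this expansion is insensitive to whether $\Re w=0$, because the Ward identity $G^\ast G=\Im G/\Im w$ still gives a factor $(n\Im w)^{-1/2}$ per summed off-diagonal index.

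Third, I would close the argument by a standard continuity/bootstrap. An initial a priori bound $|\langle G^z(w)-M^z(w)\rangle|\le 2/\Im w$ together with the quadratic self-improvement $\mathcal{E}\lesssim\|(\mathcal{B}^z)^{-1}\|(\Lambda^2+\Lambda/(n\Im w))$, where $\Lambda$ is the current bound, reduces $\Lambda$ to the desired $\delta_l^{-100}n^\xi/(n\Im w)$. Continuity in $w$ of both $\langle G^z(w)\rangle$ and $m^z(w)$, combined with a grid argument on a mesh of size $n^{-O(1)}$ in $w$, extends the high-probability bound uniformly over the continuous set $\{0<\Im w\le 10\}$.

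The main obstacle is the $\delta_l$-dependence of the stability constant $\|(\mathcal{B}^z)^{-1}\|$ at arbitrary $w$: in the bulk it is $\mathcal{O}(1)$, but at the edge (or off the self-consistent spectrum) it degrades. Quantifying this degradation by a finite power of $\delta_l^{-1}$ via the explicit $4\times 4$ reduction \eqref{eq Bhat decomp} and the bound \eqref{eq:lowbeta} is the main calculation; the exponent $100$ is a convenient overestimate that absorbs all such polynomial losses and leaves the overall scaling $(n\Im w)^{-1}$ intact for the application inside Girko's formula.
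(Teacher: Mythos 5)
You should note at the outset that the paper does not prove this proposition at all: it is imported verbatim from the companion paper \cite{MR4235475} (Theorem 3.1 there), with the easier regime $\abs{z}\le 1-\epsilon$ credited to earlier work. So your sketch is being measured against that external proof, which does follow the general MDE-plus-cumulant-expansion scheme you describe, but with a stability analysis that is essentially more careful than what you propose.

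The genuine gap is your claim that $\norm{(\cB^z(w))^{-1}}\lesssim \delta_l^{-C_0}$ uniformly on $\{0<\Im w\le 10\}$, and the linear-stability bootstrap you build on it. This claim is false: by the paper's own Lemma~\ref{lemma:betaM}, at coincident parameters the two nontrivial eigenvalues of the stability operator are $\beta$ and $\beta_\ast$, and by \eqref{eq:bbou} one has $\beta_\ast\sim \eta/\Im m$, which tends to zero as $\eta\to 0$ even at $E=0$ and for $\abs{z}$ bounded away from $1$. Hence $\norm{\cB^{-1}}$ diverges like $\Im m/\eta$ at small $\eta$ everywhere on the self-consistent spectrum, not like a power of $\delta_l^{-1}$, and the iteration $\Lambda\le \norm{\cB^{-1}}(\Lambda^2+\text{fluctuation})$ cannot close. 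The actual proof must exploit that the averaged (traced) quantity couples only to the $\beta$-direction --- the one governing the scalar equation \eqref{eq m}, cf.\ $m'=(1-\beta)/\beta$ in \eqref{beta def} and the bound \eqref{beta bound} --- which uses the $2\times 2$ block structure and, away from the imaginary axis where $\Re m\ne 0$, a genuine computation that your sketch never performs. Two further points are glossed over: even for $\beta$, pure linear inversion does not give $(n\eta)^{-1}$ uniformly near the outer spectral edges, where $\beta\sim\sqrt{\kappa+\eta}$ (with $\kappa$ the distance of $\Re w$ to the edge) independently of $\delta_l$, so one needs the standard quadratic-root analysis of the self-consistent equation there; and the statement is claimed for all $0<\Im w\le 10$, while the cumulant-expansion bound on $\braket{\un{WG}A}$ and the bootstrap only operate for $\eta\gtrsim n^{-1}$, so the regime of very small $\eta$ requires the separate monotonicity argument the paper itself invokes elsewhere (footnote citing~\cite{MR4221653}), which your grid/continuity step does not supply.
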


Note that \(\delta_l:= 1-\abs{z_l}^2\) introduced in Proposition~\ref{theo:trll} are not to be confused with the exponents \(\delta_0, \delta_1\) introduced in Proposition~\ref{prop:indeig}.

Let \(\{\lambda^z_{\pm i}\}_{i=1}^n\) denote the eigenvalues of \(H^z\), and  recall that \(\rho^z(E)=\pi^{-1} \Im m^z(E+\ii 0)\) is the limiting (self-consistent) density of states. Then by Proposition~\ref{theo:trll} the rigidity of \(\lambda^z_i\) follows by a standard application of Helffer-Sj\"ostrand formula (see e.g.~\cite[Lemma 7.1, Theorem 7.6]{MR3068390} or~\cite[Section 5]{MR2871147} for a detailed derivation):
\begin{equation}\label{eq:rigneed}
    \abs*{\lambda_i^{z}-\gamma_i^z }\le \frac{\delta^{-100} n^\xi}{n}, \qquad \abs{i}\le cn,
\end{equation}
with \(c>0\) a small constant and \(\delta:= 1-\abs{z}^2\), with very high probability, uniformly in \(\abs{z}\le 1-n^{-\omega_h}\). The quantiles \(\gamma_i^z\) are defined by
\begin{equation}\label{eq:defquant}
    \frac{i}{n}=\int_0^{\gamma_i^z} \rho^z(E)\dif E, \qquad 1\le i \le n,
\end{equation}
and \(\gamma_{-i}^z:= -\gamma_i^z\) for \(-n\le i \le -1\). Note that by~\eqref{eq:defquant} it follows that \(\gamma_i^z\sim i/(n\rho^z(0))\) for \(\abs{i}\le n^{1-10\omega_h}\), where \(\rho^z(0)=\Im m^z(0)=(1-\abs{z}^2)^{1/2}\) for \(\abs{z}< 1\) by~\eqref{eq:expm}.

Using the rigidity bound in~\eqref{eq:rigneed}, by Proposition~\ref{prop:indeig} we conclude the proof of Proposition~\ref{prop:indmr}.

\begin{proof}[Proof of Proposition~\ref{prop:indmr}]
    Let \(z_1,\dots, z_p\) such that \(\abs{z_l}\le 1-n^{-\omega_h}\) and \(\abs{z_l-z_m}\ge n^{-\omega_d}\), for any \(l,m\in [p]\), with \(\omega_d,\omega_h\) defined in Proposition~\ref{prop:indmr}. Let \(\omega,\widehat{\omega}, \delta_0,\delta_1\) be as in Proposition~\ref{prop:indeig}, i.e.
    \[
        \omega_h\ll \delta_m\ll \widehat{\omega}\ll \omega\ll \omega_f,
    \]
    for \(m=0,1\). For a detailed summary about all the different scales in the proof of Proposition~\ref{prop:indeig} and so of Proposition~\ref{prop:indmr} see Section~\ref{rem:s} later. Write
    \begin{equation}
        \label{eq:dedrez}
        \braket{ G^{z_l}(\ii\eta_l) }=\frac{\ii}{2n}\left[\sum_{\abs{i}\le \widehat{\omega}}+\sum_{\widehat{\omega}< \abs{i}\le n}\right] \frac{\eta_l}{(\lambda_i^{z_l})^2+\eta_l^2},
    \end{equation}
    for \(\eta_l\in [n^{-1-\delta_0},n^{-1+\delta_1}]\). As a consequence of Proposition~\ref{prop:indeig}, the summations over \(\abs{i}\le n^{\widehat{\omega}}\) are asymptotically independent for different \(l\)'s. We now prove that the sum over \(n^{\widehat{\omega}}<\abs{i}\le n\) in~\eqref{eq:dedrez} is much smaller \(n^{-c}\) for some small constant \(c>0\).

    Since \(\omega_h\ll \widehat{\omega}\) the rigidity of the eigenvalues in~\eqref{eq:rigneed} holds for \(n^{\widehat{\omega}}\le \abs{i}\le n^{1-10\omega_h}\), hence we conclude the following bound with very high probability:
    \begin{equation}
        \label{eq:rigbb}
        \frac{1}{n}\sum_{n^{\widehat{\omega}}\le \abs{i} \le n} \frac{\eta_l}{(\lambda_i^{z_l})^2+\eta_l^2}\lesssim n^{40\omega_h}\sum_{n^{\widehat{\omega}}\le\abs{i}\le n} \frac{n\eta_l }{i^2 (\rho^{z_l}(0))^2}\lesssim \frac{n^{\delta_1+40\omega_h}}{n^{\widehat{\omega}}},
    \end{equation}
    where we used that \((\lambda_i^z)^2+\eta^2\gtrsim n^{-40\omega_h}\) for \(n^{1-10\omega_h}\le \abs{i}\le n\), and that \(\eta_l\in[n^{-1-\delta_0},n^{-1+\delta_1}]\). In particular, in~\eqref{eq:rigbb} we used that by~\eqref{eq:defquant} it follows \(\gamma_i^{z_l}\sim i/(n\rho^{z_l}(0))\) for \(\abs{i}\le n^{1-10\omega_h}\), where \(\rho^{z_l}(0)=\Im m^{z_l}(0)=(1-\abs{z_l}^2)^{1/2}\) for \(\abs{z_l}^2\le 1\) by~\eqref{eq:expm}.

    Combining~\eqref{eq:dedrez}--\eqref{eq:rigbb} with Proposition~\ref{prop:indeig} we immediately conclude that
    \[
        \begin{split}
            \E \prod_{l=1}^p \braket{ G^{z_l}(\ii\eta_l) }&= \E \prod_{l=1}^p \frac{\ii}{2n}\sum_{\abs{i}\le n^{\widehat{\omega}}} \frac{\eta_l}{(\lambda_i^{z_l})^2+\eta_l^2}+\mathcal{O}\left( \frac{n^{\delta_1+40\omega_h}}{n^{\widehat{\omega}}} \right) \\
            &=\prod_{l=1}^p\E  \frac{\ii}{2n}\sum_{\abs{i}\le n^{\widehat{\omega}}} \frac{\eta_l}{(\lambda_i^{z_l})^2+\eta_l^2}+\mathcal{O}\left(\frac{n^{p\delta_0+\widehat{\omega}}}{n^{\omega}}+ \frac{n^{\delta_1+40\omega_h}}{n^{\widehat{\omega}}} \right) \\
            &=\prod_{l=1}^p\E  \braket{ G^{z_l}(\ii\eta_l)}+\mathcal{O}\left( \frac{n^{\delta_1+40\omega_h}}{n^{\widehat{\omega}}}+\frac{n^{p\delta_0+\widehat{\omega}}}{n^{\omega}} \right).
        \end{split}
    \]
    This concludes the proof of Proposition~\ref{prop:indmr} since \(\omega_h\ll \delta_m\ll \widehat{\omega}\ll\omega\), with \(m=0,1\).
\end{proof}

We conclude Section~\ref{sec:ririri} with some properties of \(m^z\), the unique solution of~\eqref{eq m}. Fix \(z\in \C \), and consider the \(2n \times 2n\) matrix \(A+F\), with \(F\) a Wigner matrix, whose entries are centred random variables of variance \((2n)^{-1}\), and \(A\) is a deterministic diagonal matrix \(A:= \diag(\abs{z},\dots,\abs{z},-\abs{z},\dots,-\abs{z})\). Then by~\cite[Eq.~(2.1)]{MR4026551},~\cite[Eq.~(2.2)]{MR4134946} it follows that the corresponding \emph{Dyson equation} is given by
\begin{equation}\label{eq:dyseq}
    \begin{cases}
        -\frac{1}{m_1}= w-\abs{z}+\frac{m_1+m_2}{2} \\
        -\frac{1}{m_2}= w+\abs{z}+\frac{m_1+m_2}{2},
    \end{cases}
\end{equation}
which has a unique solution under the assumption \(\Im m_1, \Im m_2>0\). By~\eqref{eq:dyseq} it readily follows that \(m^z\), the solution of~\eqref{eq m}, satisfies
\begin{equation}\label{eq:relwig}
    m^z(w)=\frac{m_1(w)+m_2(w)}{2}.
\end{equation}
In particular, this implies that all the regularity properties of \(m_1+m_2\) (see e.g.~\cite[Theorem 2.4, Lemma A.7]{MR4031100},~\cite[Proposition 2.3, Lemma A.1]{MR4164728}) hold for \(m^z\) as well, e.g.\ \(m^z\) is \(1/3\)-H\"older continuous for any \(z\in\C \).

\subsection{Overview of the proof of Proposition~\ref{prop:indeig}}\label{sec:PO}

The main result of this section is the proof of Proposition~\ref{prop:indeig}, which is divided into two further sub-sections. In Lemma~\ref{lem:GFTGFT}, we prove that we can add a common small Ginibre component to the matrices \(H^{z_l}\), with \(l\in [p]\), \(p\in\N \), without changing their joint eigenvalue distribution much. In Section~\ref{sec:COMPPRO}, we introduce comparison processes for the process defined in~\eqref{eq:DBMeA} below, with initial data \({\bm \lambda}^{z_l}=\{\lambda_{\pm i}^{z_l}\}_{i=1}^n\), where we recall that \(\{\lambda_i^{z_l}\}_{i=1}^n\) are the singular values of \(\check{X}_{t_f}-z_l\), and \(\lambda_{-i}^{z_l}=-\lambda_i^{z_l}\) (the matrix \(\check{X}_{t_f}\) is defined in~\eqref{eq:consOU} below). Finally, in Section~\ref{sec:INDFI} we conclude the proof of Proposition~\ref{prop:indeig}. Additionally, in Section~\ref{rem:s} we summarize the different scales used in the proof of Proposition~\ref{prop:indeig}.

Let \(X\) be an i.i.d.\ complex \(n\times n\) matrix, and run the Ornstein-Uhlenbeck (OU) flow
\begin{equation}\label{eq:OUflow}
    d\widehat{X}_t=-\frac{1}{2}\widehat{X}_t \dif t+\frac{\dif \widehat{B}_t}{\sqrt{n}}, \qquad \widehat{X}_0=X,
\end{equation}
for a time
\begin{equation}\label{eq:time1}
    t_f:= \frac{n^{\omega_f}}{n},
\end{equation}
with some small exponent \(\omega_f>0\) given in Proposition~\ref{prop:indeig}, in order to add a small Gaussian component to \(X\). \(\widehat{B}_t\) in~\eqref{eq:OUflow} is a standard matrix valued complex Brownian motion independent of \(\widehat{X}_0\), i.e.\ \(\sqrt{2}\Re \widehat{B}_{ab}\), \(\sqrt{2}\Im \widehat{B}_{ab}\) are independent standard real Brownian motions for any \(a,b\in [n]\). Then we construct an i.i.d.\ matrix \(\check{X}_{t_f}\) such that
\begin{equation}\label{eq:consOU}
    \widehat{X}_{t_f}\stackrel{d}{=}\check{X}_{t_f}+\sqrt{ct_f} U,
\end{equation}
for some constant \(c>0\) very close to \(1\), and \(U\) is a complex Ginibre matrix independent of \(\check{X}_{t_f}\).

Next, we define the matrix flow
\begin{equation}\label{eq:DBMmA}
    \dif X_t=\frac{\dif B_t}{\sqrt{n}}, \quad X_0=\check{X}_{t_f},
\end{equation}
where \(B_t\) is a standard matrix valued complex Brownian motion independent of \(X_0\) and \(\widehat{B}_t\). Note that by construction \(X_{ct_f}\) is such that
\begin{equation}\label{eq:impGFT}
    X_{ct_f}\stackrel{d}{=}\widehat{X}_{t_f}.
\end{equation}
Define the matrix \(H_t^{z_l}\) as in~\eqref{eq:her} replacing \(X-z\) by \(X_t-z_l\), for any \(l\in [p]\), then the flow in~\eqref{eq:DBMmA} induces the following DBM flow on the eigenvalues of \(H_t^{z_l}\) (cf.~\cite[Eq.~(5.8)]{MR2919197}):
\begin{equation}\label{eq:DBMeA}
    \dif \lambda_i^{z_l}(t)=\sqrt{\frac{1}{2 n}}\dif b_i^{z_l}+\frac{1}{2n}\sum_{j\ne  i} \frac{1}{\lambda_i^{z_l}(t)-\lambda_j^{z_l}(t)} \dif t, \qquad 1\le \abs{i}\le n,
\end{equation}
with initial data \(\{\lambda_{\pm i}^{z_l}(0)\}_{i=1}^n\), where \(\lambda_i^{z_l}(0)\), with \(i\in [n]\) and \(l\in [p]\), are the singular values of \(\check{X}_{t_f}-z_l\), and \(\lambda_{-i}^{z_l}=-\lambda_i^{z_l}\). The well-posedness of~\eqref{eq:DBMeA} follows by~\cite[Appendix A]{MR3916329}. It follows from this derivation that the Brownian motions \(\{b_i^{z_l}\}_{i=1}^n\), omitting the \(t\)-dependence, are defined as
\begin{equation}\label{eq:formbm}
    \dif b_i^{z_l}:= \sqrt{2}\left(\dif B_{ii}^{z_l}+\dif \overline{B_{ii}^{z_l}}\right), \qquad \dif B_{ij}^{z_l}:= \sum_{a,b=1}^n \overline{ u_i^{z_l}(a)} \dif B_{ab}v_j^{z_l}(b),
\end{equation}
where \(({\bm u}_i^{z_l},\pm {\bm v}_i^{z_l})\) are the orthonormal eigenvectors of \(H_t^{z_l}\) with corresponding eigenvalues \(\lambda_{\pm i}^{z_l}\), and \(B_{ab}\) are the entries of the Brownian motion defined in~\eqref{eq:DBMmA}. For negative indices we define \(b_{-i}^{z_l}:= -b_i^{z_l}\). It follows from~\eqref{eq:formbm} that for each fixed \(l\) the collection of Brownian motions \({\bm b}^{z_l}=\{b_i^{z_l}\}_{i=1}^n\) consists of i.i.d.\ Brownian motions, however the families \({\bm b}^{z_l}\) are not independent for different \(l\)'s, in fact their joint distribution is not necessarily Gaussian. The derivation of~\eqref{eq:DBMeA} follows standard steps, see e.g.~\cite[Section 12.2]{MR3699468}. For the
convenience of the reader we included this derivation in Appendix~\ref{sec:derdbm}.

\begin{remark}
    We point out that in the formula~\cite[Eq.~(3.9)]{MR3916329} analogous to~\eqref{eq:DBMeA} the term \(j=-i\) in~\eqref{eq:DBMeA} is apparently missing. This additional term does not influence the results in~\cite[Section 3]{MR3916329} (that are proven for the real DBM for which the term \(j=-i\) is actually not present).
\end{remark}

As a consequence of~\eqref{eq:impGFT} we conclude the following lemma.

\begin{lemma}\label{lem:GFTGFT}
    Let \({\bm \lambda}^{z_l}=\{\lambda_{\pm i}^{z_l}\}_{i=1}^n\) be the eigenvalues of \(H^{z_l}\) and let \({\bm \lambda}^{z_l}(t)\) be the solution of~\eqref{eq:DBMeA} with initial data \({\bm \lambda}^{z_l}\), then
    \begin{equation}
        \label{eq:stgft2}
        \begin{split}
            \E  \prod_{l=1}^p \frac{1}{n}\sum_{\abs{i_l}\le n^{\widehat{\omega}}} \frac{\eta_l}{(\lambda_{i_l}^{z_l})^2+\eta_l^2}&=\E  \prod_{l=1}^p \frac{1}{n}\sum_{\abs{i_l}\le n^{\widehat{\omega}}} \frac{\eta_l}{(\lambda_{i_l}^{z_l}(ct_f))^2+\eta_l^2} \\
            &\quad+\mathcal{O}\left(\frac{n^{p\xi+2\delta_0} t_f}{n^{1/2}}\sum_{l=1}^p \frac{1}{\eta_l}+\frac{n^{k\delta_0+\delta_1}}{n^{\widehat{\omega}}}\right),
        \end{split}
    \end{equation}
    for any sufficiently small \(\widehat{\omega}, \delta_0,\delta_1>0\) such that \(\delta_m\ll \widehat{\omega}\), where \(\eta_l\in [n^{-1-\delta_0},n^{-1+\delta_1}]\) and \(t_f\) defined in~\eqref{eq:time1}.
    \begin{proof}
        The equality in~\eqref{eq:stgft2} follows by a standard Green's function comparison (GFT) argument (e.g.\ see~\cite[Proposition 3.1]{MR4221653}) for the \(\braket{ G^{z_l}(\ii\eta_l)}\), combined with the same argument as in the proof of Proposition~\ref{prop:indmr}, using the local law~\cite[Theorem 5.1]{MR3770875} and~\eqref{eq:impGFT}, to show that the summation over \(n^{\widehat{\omega}}<\abs{i}\le n\) is negligible. We remark that the GFT used in this lemma is much easier than the one in~\cite[Proposition 3.1]{MR4221653} since here we used GFT only for a very short time \(t_f\sim n^{-1+\omega_f}\), for a very small \(\omega_f>0\), whilst in~\cite[Proposition 3.1]{MR4221653} the GFT is considered up to a time \(t=+\infty\). The scaling in the error term in~\cite[Proposition 3.1]{MR4221653} is different compared to the error term in~\eqref{eq:stgft2} since the scaling therein refers to the cusp-scaling.
    \end{proof}
\end{lemma}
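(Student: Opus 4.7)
The plan is to establish~\eqref{eq:stgft2} by (a) replacing each truncated sum by the corresponding full imaginary part of the resolvent trace up to a negligible tail, and then (b) applying a Green's function comparison (GFT) along the OU flow~\eqref{eq:OUflow} to the product of $p$ traces $\prod_l \braket{G^{z_l}(\ii\eta_l)}$. The identification $X_{ct_f} \stackrel{d}{=} \widehat{X}_{t_f}$ from~\eqref{eq:impGFT} then transfers the GFT comparison between the original matrix $X = \widehat{X}_0$ and the evolved matrix $\widehat{X}_{t_f}$ to the DBM-evolved eigenvalues $\{\lambda_i^{z_l}(ct_f)\}$.

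For step (a), using the spectral decomposition of $H^{z_l}$ one has $\braket{\Im G^{z_l}(\ii\eta_l)} = \frac{1}{2n}\sum_{|i|\le n}\frac{\eta_l}{(\lambda_i^{z_l})^2+\eta_l^2}$, so the truncated sum in~\eqref{eq:stgft2} equals $2\braket{\Im G^{z_l}(\ii\eta_l)}$ minus the tail $\frac{1}{n}\sum_{n^{\widehat\omega}<|i|\le n}\frac{\eta_l}{(\lambda_i^{z_l})^2+\eta_l^2}$. By the rigidity estimate~\eqref{eq:rigneed} (applicable since $|z_l|\le 1-n^{-\omega_h}$ and $\omega_h\ll\widehat\omega$) together with the quantile asymptotics $\gamma_i^{z_l}\sim i/(n\rho^{z_l}(0))$, this tail is bounded by $n^{\delta_1+40\omega_h}/n^{\widehat\omega}$ with very high probability, exactly as in~\eqref{eq:rigbb}. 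Combined with the deterministic a priori bound $\braket{\Im G^{z_l}(\ii\eta_l)}\lesssim n^{\delta_0}$ (since $\eta_l\ge n^{-1-\delta_0}$) on the remaining $p-1$ factors, the total error from truncation in the expectation of the product is $\mathcal{O}(n^{p\delta_0+\delta_1}/n^{\widehat\omega})$, matching the second error term in~\eqref{eq:stgft2}.

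For step (b), I apply It\^o's formula and a cumulant expansion to $F_t := \prod_l \braket{G_t^{z_l}(\ii\eta_l)}$ along~\eqref{eq:OUflow}. Each derivative $\partial_{ab}\braket{G_t^{z_l}}$ contributes a factor of order $n^{-1}\eta_l^{-1}$ after Ward-identity reductions, and the leading second-order Gaussian contributions cancel against the $-\tfrac12 \widehat{X}_t$ drift in~\eqref{eq:OUflow}; the remaining contributions from third and higher cumulants give a time derivative bounded by $n^{-1/2+\xi}\cdot n^{2\delta_0}\sum_l\eta_l^{-1}$ uniformly over $t\in[0,t_f]$. Integrating over $[0,ct_f]$ yields the first error term $n^{p\xi+2\delta_0} t_f n^{-1/2}\sum_l\eta_l^{-1}$ in~\eqref{eq:stgft2}. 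This is precisely the framework of~\cite[Proposition 3.1]{MR4221653}, though here only the short-time variant $t_f=n^{-1+\omega_f}$ is needed, which is substantially simpler than the $t=\infty$ cusp-regime analysis carried out there.

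The main subtlety is that the spectral parameters $\eta_l$ are allowed to be slightly below the microscopic scale $1/n$, namely $\eta_l\ge n^{-1-\delta_0}$. The local law in the form~\eqref{single local law}, extended down to all $\eta>0$ as noted in Theorem~\ref{theo:Gll}, controls $\braket{G_t^{z_l}-M^{z_l}}$ on this scale at the cost of factors $n^{\delta_0}$, and these factors are what produces the $n^{2\delta_0}$ in the GFT error. Provided $\delta_0,\delta_1\ll\widehat\omega$ and $\omega_h\ll\delta_0$ (as assumed in Proposition~\ref{prop:indeig}), both error terms in~\eqref{eq:stgft2} are manageable and combine with the conclusion of step (a) to yield the lemma.
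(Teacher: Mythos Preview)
Your proposal is correct and follows essentially the same approach as the paper: first replace the truncated sums by full traces \(\braket{\Im G^{z_l}(\ii\eta_l)}\) using the rigidity-based tail bound~\eqref{eq:rigbb} (this is exactly what the paper means by ``the same argument as in the proof of Proposition~\ref{prop:indmr}''), and then run a short-time GFT along the OU flow~\eqref{eq:OUflow} for the product \(\prod_l\braket{G^{z_l}(\ii\eta_l)}\), invoking~\eqref{eq:impGFT} to identify the endpoint with the DBM-evolved eigenvalues. The only point worth noting is that the tail bound must also be applied on the \(\lambda_i^{z_l}(ct_f)\) side; this is justified since \(X_{ct_f}\stackrel{d}{=}\widehat{X}_{t_f}\) is again an i.i.d.\ matrix, so the same local law and rigidity apply---this is why the paper cites~\cite[Theorem 5.1]{MR3770875} rather than only~\eqref{eq:rigneed}.
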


\subsubsection{Definition of the comparison processes for \({\bm \lambda}^{z_l}(t)\)}\label{sec:COMPPRO}

The philosophy behind the proof of Proposition~\ref{prop:indeig} is to compare the distribution of  \({\bm \lambda}^{z_l}(t)=\{\lambda_{\pm i}^{z_l}(t)\}\),
the strong solutions of~\eqref{eq:DBMeA} for \(l\in [p]\), which are correlated for different \(l\)'s
and realized on a probability space \(\Omega_b\),
with carefully constructed independent processes \({\bm \mu}^{(l)}(t)=\{\mu^{(l)}_{\pm i}(t)\}_{i=1}^n\) on a different probability space \(\Omega_\beta\).
We choose \({\bm \mu}^{(l)}(t)\) to be the solution of
\begin{equation}\label{eq:ginev}
    \dif\mu_i^{(l)}(t)=\frac{\dif \beta_i^{(l)}}{\sqrt{2n}}+\frac{1}{2n}\sum_{j\ne  i} \frac{1}{\mu_i^{(l)}(t)-\mu_j^{(l)}(t)} \dif t, \quad \mu_i^{(l)}(0)=\mu_i^{(l)},
\end{equation}
for \(\abs{i}\le n\), with \(\mu_i^{(l)}\) the eigenvalues of the matrix
\[
    H^{(l)}:= \left(\begin{matrix}
            0           & X^{(l)} \\
            (X^{(l)})^* & 0
        \end{matrix}\right)
\]
where \(X^{(l)}\) are independent Ginibre matrices, \({\bm \beta}^{(l)}=\{\beta_i^{(l)}\}_{i=1}^n\) are independent vectors of i.i.d.\ standard real Brownian motions, and \(\beta_{-i}^{(l)}=-\beta_i^{(l)}\). We let  \(\mathcal{F}_{\beta,t}\)  denote the common filtration  of the Brownian motions \({\bm \beta}^{(l)}\) on \(\Omega_\beta\).

In the remainder of this section we define two processes \( \widetilde{{\bm \lambda}}^{(l)}\), \( \widetilde{{\bm \mu}}^{(l)}\) so that for a time \(t\ge 0\) large enough \(\widetilde{\lambda}_i^{(l)}(t)\), \(\widetilde{\mu}_i^{(l)}(t)\) for small indices \(i\) will be close to \(\lambda^{z_l}_i(t)\) and \(\mu_i^{(l)}(t)\), respectively, with very high probability. Additionally, the processes \(\widetilde{{\bm \lambda}}^{(l)}\), \( \widetilde{{\bm \mu}}^{(l)}\) will be such that they have the same joint distribution:
\begin{equation}\label{eq:nefdis}
    \left( \widetilde{{\bm \lambda}}^{(1)}(t),\dots, \widetilde{{\bm \lambda}}^{(p)}(t)\right)_{t\ge 0}\stackrel{d}{=}\left(\widetilde{{\bm \mu}}^{(1)}(t),\dots, \widetilde{{\bm \mu}}^{(p)}(t)\right)_{t\ge 0}.
\end{equation}

Fix \(\omega_A>0\) and define the process \(\widetilde{\bm \lambda}(t)\) to be the solution of
\begin{equation}\label{eq:nuproc}
    \dif \widetilde{\lambda}^{(l)}_i(t)=\frac{1}{2n}\sum_{j\ne  i} \frac{1}{\widetilde{\lambda}^{(l)}_i(t)-\widetilde{\lambda}^{(l)}_j(t)} \dif t+\begin{cases}
        \sqrt{\frac{1}{2 n}}\dif b_i^{z_l}             & \text{if} \quad \abs{i}\le n^{\omega_A}     \\
        \sqrt{\frac{1}{2 n}}\dif \widetilde{b}_i^{(l)} & \text{if} \quad n^{\omega_A}< \abs{i}\le n,
    \end{cases}
\end{equation}
with initial data \(\widetilde{\bm \lambda}^{(l)}(0)\) being the singular values, taken with positive and negative sign, of independent Ginibre matrices \(\widetilde{Y}^{(l)}\) independent of \({\bm \lambda}^{z_l}(0)\). Here \(\dif b_i^{z_l}\) is from~\eqref{eq:DBMeA}; this is used for small indices. For large indices we define the driving Brownian motions to be an independent collection \(\set{\{\widetilde{b}_i^{(l)}\}_{i=n^{\omega_A}+1}^n \given l\in [p]}\) of \(p\) vector-valued i.i.d.\ standard real Brownian motions which are also independent of \(\set{\{b_{\pm i}^{z_l}\}_{i=1}^n\given l\in [p]}\), and that \(\widetilde{b}_{-i}^{(l)}=-\widetilde{b}_i^{(l)}\). The Brownian motions \({\bm b}^{z_l}\), with \(l \in [p]\), and \(\set{\{\widetilde{b}_i^{(l)}\}_{i=n^{\omega_A}+1}^n \given l\in [p]}\) are defined on a common probability space  that we continue to denote by \(\Omega_b\) with the common filtration \(\mathcal{F}_{b,t}\).

We conclude this section by defining \(\widetilde{{\bm \mu}}^{(l)}(t)\), the comparison process of \({\bm \mu}^{(l)}(t)\). It is given as the solution of the following DBM\@:
\begin{equation}\label{eq:nuproc2}
    \dif \widetilde{\mu}^{(l)}_i(t)=\frac{1}{2n}\sum_{j\ne  i} \frac{1}{\widetilde{\mu}^{(l)}_i(t)-\widetilde{\mu}^{(l)}_j(t)} \dif t+\begin{cases}
        \sqrt{\frac{1}{2 n}}\dif \zeta_i^{z_l}             & \text{if} \quad \abs{i}\le n^{\omega_A}     \\
        \sqrt{\frac{1}{2 n}}\dif \widetilde{\zeta}_i^{(l)} & \text{if} \quad n^{\omega_A}< \abs{i}\le n,
    \end{cases}
\end{equation}
with initial data \(\widetilde{\bm \mu}^{(l)}(0)\) so that they are the singular values of independent Ginibre matrices \(Y^{(l)}\), which are also independent of \(\widetilde{Y}^{(l)}\). We now explain how to construct the driving Brownian motions in~\eqref{eq:nuproc2} so that~\eqref{eq:nefdis} is satisfied. We only consider positive indices, since the negative indices are defined by symmetry. For indices \(n^{\omega_A}< i\le n\) we choose \(\{\widetilde{\zeta}_{\pm i}^{(l)}\}_{n^{\omega_A}+1}^n\) to be independent families (for different \(l\)'s) of i.i.d.\ Brownian motions, defined on the same probability space of \(\{{\bm \beta}^{(l)}: l\in [p]\}\), that are independent of the Brownian motions \(\{ \beta^{(l)}_{\pm i}\}_{i=1}^n\) used in~\eqref{eq:ginev}. For indices \(1\le i \le n^{\omega_A}\) the families \(\set{\{\zeta_i^{z_l}\}_{i=1}^{n^{\omega_A}}\given l\in [p]}\) will be constructed from the independent families \(\set{\{\beta_i^{(l)}\}_{i=1}^{n^{\omega_A}} \given l\in [p]}\) as follows.

Arranging \(\set{\{\beta_i^{(l)}\}_{i=1}^{n^{\omega_A}}\given l\in [p]}\) into a single vector, we define the \(pn^{\omega_A}\)-dimensional vector
\begin{equation}\label{eq:vecla}
    \underline{\beta}:=(\beta_1^{(1)},\dots,\beta_{n^{\omega_A}}^{(1)},\dots, \beta_1^{(p)}, \dots, \beta_{n^{\omega_A}}^{(p)}).
\end{equation}
Similarly we define the \(pn^{\omega_A}\)-dimensional vector
\begin{equation}\label{eq:vecla1}
    \underline{b}:=(b_1^{z_1},\dots,b_{n^{\omega_A}}^{z_1},\dots, b_1^{z_p}, \dots, b_{n^{\omega_A}}^{z_p})
\end{equation}
which is a continuous martingale. To make our notation easier, in the following we assume that \(n^{\omega_A}\in\N \). For any \(i,j\in [pn^{\omega_A}]\), we use the notation
\begin{equation}\label{eq:frakind}
    i=(l-1) n^{\omega_A}+\mathfrak{i}, \qquad j=(m-1) n^{\omega_A}+\mathfrak{j},
\end{equation}
with \(l,m \in [p]\) and \(\mathfrak{i}, \mathfrak{j}\in [n^{\omega_A}]\). Note that in the definitions in~\eqref{eq:frakind} we used \((l-1), (m-1)\) instead of \(l,m\) so that \(l\) and \(m\) exactly indicate in which block of the matrix \(C(t)\) in~\eqref{eq:matC} the indices \(i,j\) are. With this notation,  the covariance matrix of the increments of \( \underline{b}\) is
the  matrix \(C(t)\) consisting of \(p^2\) blocks of size \(n^{\omega_A}\) is defined as
\begin{equation}\label{eq:matC}
    C_{ij}(t) \dif t:= \Exp*{\dif b_{\mathfrak{i}}^{z_l} \dif b_{\mathfrak{j}}^{z_m}\given\mathcal{F}_{b,t}} =\begin{cases}
        \Theta_{\mathfrak{i}\mathfrak{j}}^{z_l,z_m}(t) \dif t & \text{if} \quad l\ne m, \\
        \delta_{\mathfrak{i}\mathfrak{j}} \dif t              & \text{if} \quad l=m.
    \end{cases}
\end{equation}
Here
\begin{equation}
    \label{eq:ovcorr}
    \Theta_{\mathfrak{i}\mathfrak{j}}^{z_l,z_m}(t):= 4\Re\bigl[\braket{ {\bm u}_{\mathfrak{i}}^{z_l}(t),{\bm u}_{\mathfrak{j}}^{z_m}(t)}\braket{ {\bm v}_{\mathfrak{i}}^{z_m}(t),{\bm v}_{\mathfrak{j}}^{z_l}(t)} \bigr],
\end{equation}
with \(\{{\bm w}_{\pm i} \}_{i\in [n]}=\{({\bm u}_i^{z_l}(t), \pm {\bm v}_i^{z_l}(t))\}_{i\in [n]}\) the orthonormal eigenvectors of \(H_t^{z_l}\). Note that \(\{{\bm w}_i \}_{\abs{i}\le n}\) are not well-defined if \(H_t^{z_l}\) has multiple eigenvalues. However, without loss of generality, we can assume that almost surely \(H_t^{z_l}\) does not have multiple eigenvalues for any \(l\in [p]\), as a consequence of~\cite[Lemma 6.2]{MR4242226} (which is the adaptation of~\cite[Proposition 2.3]{MR4009717} to the \(2\times 2\) block structure of \(H_t^{z_l}\)).

By Doob's martingale representation theorem~\cite[Theorem 18.12]{MR1876169} there exists a standard Brownian motion \( {\bm \theta}_t \in \R^{pN^{\omega_A}} \)
realized on an extension   \( (\widetilde\Omega_b,  \widetilde{\mathcal{F}}_{b,t} )\)  of the original filtrated
probability space \( (\Omega_b, \mathcal{F}_{b,t}) \)
such that \( \dif \underline{\bm b}= \sqrt{C} \dif {\bm \theta}\). Here \( {\bm \theta}_t \)
and \( C(t)\) are adapted to the filtration \( \widetilde{\mathcal{F}}_{b,t} \) and
note that \( C=C(t)\) is a  positive semi-definite matrix
and \(  \sqrt{C}\) denotes its positive semi-definite matrix square root.

For the clarity of the presentation the original processes \( {\bm \lambda}^{z_l}\) and the
comparison  processes \({\bm \mu}^{(l)}\) will be realized on completely different probability spaces. We thus construct
another copy \( (\Omega_\beta, \mathcal{F}_{\beta,t} )\) of the filtrated probability space  \( (\widetilde\Omega_b,  \widetilde{\mathcal{F}}_{b,t} )\)
and we
construct a  matrix valued process \(C^\#(t)\)  and a  Brownian motion \( \underline{\beta} \) on  \(( \Omega_\beta, \mathcal{F}_{\beta,t}) \)
such that  \( (C^\#(t),  \underline{\beta}(t) ) \) are adapted to the filtration  \(\mathcal{F}_{\beta,t}\)
and they have the same  joint distribution as \( (C(t), {\bm \theta}(t)) \). The Brownian motion \( \underline{\beta} \) is used
in~\eqref{eq:ginev} for small indices.

Define the process
\begin{equation}\label{eq:defzeta}
    \underline{\zeta}(t):=\int_0^t\sqrt{C^\#(s)} \dif \underline{\beta}(s),\quad \underline{\zeta}=(\zeta_1^{z_1},\dots,\zeta_{n^{\omega_A}}^{z_1},\dots, \zeta_1^{z_p}, \dots, \zeta_{n^{\omega_A}}^{z_p}),
\end{equation}
on the probability space \(\Omega_\beta\)
and define \(\zeta_{-i}^{z_l}:=-\zeta_i^{z_l}\) for any \(1\le i\le n^{\omega_A}\), \(l\in[p]\).
Since \(\underline{\beta}\) are i.i.d.\ Brownian motions, we clearly have
\begin{equation}\label{eq:newocov}
    \Exp*{\dif \zeta_{\mathfrak{i}}^{z_l}(t)\dif \zeta_{\mathfrak{j}}^{z_m}(t)\given \mathcal{F}_{\beta,t}}= C^\#(t)_{ij} \dif t,
    \qquad \abs{\mathfrak{i}},\abs{\mathfrak{j}}\le n^{\omega_A}.
\end{equation}
By construction
we see that  the processes \( ( \{b_{\pm i}^{z_l}\}_{i=1}^{n^{\omega_A}})_{l=1}^k \) and
\( ( \{\zeta_{\pm i}^{z_l}\}_{i=1}^{n^{\omega_A}} )_{l=1}^k \) have the same distribution. Furthermore,
since by definition the two collections
\[\set*{\{\widetilde{b}_{\pm i}^{(l)}\}_{i=n^{\omega_A}+1}^n, \{\widetilde{\zeta}^{(l)}_{\pm i}\}_{i=n^{\omega_A}+1}^n \given l\in [k]}\]
are
independent of \[\set*{\{b_{\pm i}^{z_l}\}_{i=1}^{n^{\omega_A}}, \{\beta_{\pm i}^{(l)}\}_{i=1}^{n^{\omega_A}}\given l\in [k]}\]
and among each other, we have
\begin{equation}\label{eq:BMsost}
    \left( \{b_{\pm i}^{z_l}\}_{i=1}^{n^{\omega_A}}, \{\widetilde{b}_{\pm i}^{(l)}\}_{i=n^{\omega_A}+1}^n\right)_{l=1}^p\stackrel{d}{=}\left( \{\zeta_{\pm i}^{z_l}\}_{i=1}^{n^{\omega_A}}, \{\widetilde{\zeta}_{\pm i}^{(l)}\}_{i=n^{\omega_A}+1}^n\right)_{l=1}^p.
\end{equation}

Finally, by the definitions in~\eqref{eq:nuproc},~\eqref{eq:nuproc2}, and~\eqref{eq:BMsost}, it follows that the Dyson Brownian motions \(\widetilde{\bm \lambda}^{(l)}\) and \(\widetilde{\bm \mu}^{(l)}\) have the same distribution, i.e.
\begin{equation}\label{eq:samedistr}
    \left(\widetilde{\bm \lambda}^{(1)}(t), \dots, \widetilde{\bm \lambda}^{(p)}(t)\right)\stackrel{d}{=} \left(\widetilde{\bm \mu}^{(1)}(t), \dots, \widetilde{\bm \mu}^{(p)}(t)\right)
\end{equation}
since their initial conditions, as well as their driving processes~\eqref{eq:BMsost},  agree in distribution. Note that these processes are Brownian motions for each fixed \( l\) since  \(  C_{ij}(t)=\delta_{\mathfrak{i}\mathfrak{j}} \) if  \( l=m\), but jointly they are not necessarily Gaussian due to the non-trivial  correlation \( \Theta_{\mathfrak{i}\mathfrak{j}}^{z_l,z_m} \) in~\eqref{eq:matC}.

\subsubsection{Proof of Proposition~\ref{prop:indeig}}\label{sec:INDFI}

In this section we conclude the proof of Proposition~\ref{prop:indeig} using the comparison processes defined in Section~\ref{sec:COMPPRO}. More precisely, we use that the processes \({\bm \lambda}^{z_l}(t)\), \(\widetilde{\bm \lambda}^{(l)}(t)\) and \({\bm \mu}^{(l)}(t)\), \(\widetilde{\bm \mu}^{(l)}(t)\) are close
pathwise at time \(t_f\), as stated below in
Lemma~\ref{lem:firststepmason} and Lemma~\ref{lem:secondstepmason}, respectively. The proofs of these lemmas are postponed to Section~\ref{sec:noncelpiu}. They will be a consequence of Proposition~\ref{pro:ciala}, which is an adaptation to our case of the main technical estimate of~\cite{MR3914908}. The main input  is the bound on the eigenvector overlap in Lemma~\ref{lem:overb}, since it gives an upper bound on the correlation structure in~\eqref{eq:newocov}. Let \(\rho_{sc}(E) =\frac{1}{2\pi}\sqrt{4-E^2}\) denote the semicircle density.

\begin{lemma}\label{lem:firststepmason}
    Fix \(p\in \N \), and let \({\bm \lambda}^{z_l}(t)\), \(\widetilde{\bm \lambda}^{(l)}(t)\), with \(l\in [p]\), be the processes defined in~\eqref{eq:DBMeA} and~\eqref{eq:nuproc}, respectively. For any small \(\omega_h,\omega_f>0\) such that \(\omega_h\ll \omega_f\) there exist \(\omega, \widehat{\omega}>0\) with \(\omega_h\ll \widehat{\omega}\ll \omega\ll \omega_f\), such that for any \(\abs{z_l}\le 1-n^{-\omega_h}\) it holds
    \begin{equation}
        \label{eq:firshpb}
        \abs*{\rho^{z_l}(0)\lambda_i^{z_l}(ct_f)-\rho_{sc}(0) \widetilde{\lambda}_i^{(l)}(ct_f) }\le n^{-1-\omega}, \qquad \abs{i}\le n^{\widehat{\omega}},
    \end{equation}
    with very high probability, where \(t_f:= n^{-1+\omega_f}\) and \(c>0\) is defined in~\eqref{eq:impGFT}.
\end{lemma}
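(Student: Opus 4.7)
The plan is to derive \eqref{eq:firshpb} as a direct application of Proposition~\ref{pro:ciala}, the pathwise coupling/homogenisation estimate adapted to the $2\times 2$ block DBM, applied to the two processes $\bm\lambda^{z_l}(t)$ and $\widetilde{\bm\lambda}^{(l)}(t)$. The reason this works pathwise is that, by construction, these two DBMs are driven by the \emph{same} Brownian motions $b^{z_l}_{\pm i}$ on the window of indices $|i|\le n^{\omega_A}$, so only the initial conditions and the contributions from indices $|i|>n^{\omega_A}$ distinguish them.

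First I would verify the hypotheses needed by the coupling argument. The initial data $\bm\lambda^{z_l}(0)$ are the singular values of $\check X_{t_f}-z_l$, which by~\eqref{eq:impGFT} and the local law in Theorem~\ref{theo:trll} satisfy bulk rigidity around the origin on scale $n^{-1+\xi}\delta_l^{-C}$, with local density $\rho^{z_l}(0)=\sqrt{1-|z_l|^2}\gtrsim n^{-\omega_h/2}$. The comparison initial data $\widetilde{\bm\lambda}^{(l)}(0)$, being singular values of independent Ginibre matrices, satisfy the standard (sharper) rigidity with local density $\rho_{sc}(0)=1/\pi$. Consequently, the quantile alignment $\rho^{z_l}(0)\gamma^{z_l}_i\approx\rho_{sc}(0)\widetilde\gamma_i\approx i/n$ for $|i|\le n^{\widehat\omega}$ holds, and the rescaling in \eqref{eq:firshpb} precisely matches the two local scales so that the homogenisation argument can treat the rescaled eigenvalues as if they lived in the same semicircle environment.

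Once the rigidity of the initial data is in place, I would apply Proposition~\ref{pro:ciala}: on a very-high-probability event the rescaled pathwise difference $\rho^{z_l}(0)\lambda^{z_l}_i(ct_f)-\rho_{sc}(0)\widetilde\lambda^{(l)}_i(ct_f)$ is bounded by $n^{-1-\omega}$ for indices $|i|\le n^{\widehat\omega}$. The underlying mechanism is the standard short-range/homogenisation decomposition: the DBM semigroup mixes on the microscopic scale $1/(n\rho^{z_l}(0))$ within time $t_f=n^{-1+\omega_f}$ (which is why one needs $\omega_f$ to be substantially larger than $\omega_h$), washing out the mismatch between the two initial profiles. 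The contribution from indices $|i|>n^{\omega_A}$, where the driving Brownian motions are independent rather than coupled, produces an additional martingale which is absorbed by the finite-speed-of-propagation bound of the short-range dynamics provided $\omega_A$ is chosen large compared with $\widehat\omega$.

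The main obstacle is bookkeeping the hierarchy of small exponents. The scales must satisfy $\omega_h\ll\widehat\omega\ll\omega\ll\omega_f$, reflecting three competing constraints: (i) the density $\rho^{z_l}(0)\sim n^{-\omega_h/2}$ must stay well above the microscopic eigenvalue spacing so that homogenisation has room to operate; (ii) the homogenisation time $t_f=n^{-1+\omega_f}$ must exceed the local mixing scale $1/(n\rho^{z_l}(0))$ by a polynomial factor; and (iii) the gain $n^{-\omega}$ in \eqref{eq:firshpb} emerges as the surplus between the homogenisation error and the rigidity error, and is therefore smaller than $\omega_f$ but larger than $\widehat\omega$. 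Checking that the constants in Proposition~\ref{pro:ciala} are uniform in $|z_l|\le 1-n^{-\omega_h}$ (rather than at fixed $z$ bounded away from the unit circle) is the delicate point, since the self-consistent density at zero degenerates as $|z_l|\to 1$; this is precisely what forces $\omega_h$ to be very small compared to the other exponents.
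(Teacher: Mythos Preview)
Your proposal is correct and follows essentially the same route as the paper: verify that the pair $({\bm s},{\bm r})=({\bm\lambda}^{z_l},\widetilde{\bm\lambda}^{(l)})$ satisfies Assumption~\ref{ass:close} (trivially, since $L_{ij}(t)\equiv 0$ for $|i|,|j|\le K=n^{\omega_A}$ when the driving Brownian motions coincide there), verify regularity of the initial data, and invoke Proposition~\ref{pro:ciala}. The only refinement is that the paper phrases the initial-data check not as ``rigidity'' but as the explicit $(g,G)$-regularity of Definition~\ref{eq:defregpro}, with $g=n^{-1+\omega_h}\delta_l^{-100}$, $G=n^{-\omega_h}\delta_l^{10}$, $\nu=\omega_h$, verified directly from the local law~\eqref{eq:lll} and the H\"older continuity of $m^{z_l}$; this is the precise hypothesis Proposition~\ref{pro:ciala} consumes.
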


\begin{lemma}\label{lem:secondstepmason}
    Fix \(p\in \N \), and let \({\bm \mu}^{(l)}(t)\), \(\widetilde{\bm \mu}^{(l)}(t)\), with \(l\in [p]\), be the processes defined in~\eqref{eq:ginev} and~\eqref{eq:nuproc2}, respectively. For any small \(\omega_h,\omega_f, \omega_d>0\) such that \(\omega_h\ll \omega_f\) there exist \(\omega, \widehat{\omega}>0\) with \(\omega_h\ll \widehat{\omega}\ll \omega\ll \omega_f\), such that for any  \(\abs{z_l}\le 1-n^{-\omega_h}\), \(\abs{z_l-z_m}\ge n^{-\omega_d}\), with \(l\ne m\), it holds
    \begin{equation}
        \label{eq:firshpb2}
        \abs*{\mu_i^{(l)}(ct_f)- \widetilde{\mu}_i^{(l)}(ct_f) }\le n^{-1-\omega}, \qquad \abs{i}\le n^{\widehat{\omega}},
    \end{equation}
    with very high probability, where \(t_f:= n^{-1+\omega_f}\) and \(c>0\) is defined in~\eqref{eq:impGFT}.
\end{lemma}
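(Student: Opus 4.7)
The argument parallels the proof of Lemma~\ref{lem:firststepmason}, with the roles of initial data and driving noise reversed: now both processes start from Ginibre-distributed initial data, and the discrepancy lies in the driving Brownian motions rather than in the initial condition. The core tool is once more Proposition~\ref{pro:ciala}, applied to the pair \(({\bm \mu}^{(l)},\widetilde{\bm \mu}^{(l)})\) in place of \(({\bm \lambda}^{z_l},\widetilde{\bm \lambda}^{(l)})\).

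First, because both \(\mu^{(l)}(0)\) and \(\widetilde{\mu}^{(l)}(0)\) are singular values (with signs) of independent Ginibre matrices, one may couple them by identifying \(X^{(l)}\) in~\eqref{eq:ginev} with \(Y^{(l)}\), so that \(\mu^{(l)}(0)=\widetilde{\mu}^{(l)}(0)\) on a common probability space. Next, for large indices \(n^{\omega_A}<\abs{i}\le n\) the driving Brownian motions \(\beta^{(l)}_i\) (used by \({\bm \mu}^{(l)}\)) and \(\widetilde{\zeta}^{(l)}_i\) (used by \(\widetilde{\bm \mu}^{(l)}\)) are both families of standard Brownian motions that are independent across \(l\), so they may be coupled to be equal. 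All that remains is the discrepancy between \(\beta^{(l)}_i\) and \(\zeta^{z_l}_i\) for \(\abs{i}\le n^{\omega_A}\), which is encoded in the off-diagonal blocks of the covariance matrix \(C^\#(t)\) from~\eqref{eq:newocov}.

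The key input is the eigenvector overlap estimate of Lemma~\ref{lem:overb} from Section~\ref{sec:BOUNDE}, which bounds the off-diagonal entries of \(\Theta^{z_l,z_m}_{\mathfrak{i}\mathfrak{j}}(t)\) defined in~\eqref{eq:ovcorr}. Under the separation \(\abs{z_l-z_m}\ge n^{-\omega_d}\) and for indices \(\abs{\mathfrak{i}},\abs{\mathfrak{j}}\le n^{\omega_A}\) with \(\omega_A\) chosen in the hierarchy \(\widehat\omega\le \omega_A\ll \omega\), these off-diagonal blocks are bounded by \(n^{-c}\) with very high probability, uniformly in \(t\in[0,ct_f]\). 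Since the diagonal blocks of \(C^\#(t)\) are identities, this makes \(C^\#(t)\) close to the identity in operator norm, so that the quadratic variation of \(\beta^{(l)}_i-\zeta^{z_l}_i\) for small indices is small. Feeding this smallness into the pathwise coupling mechanism of Proposition~\ref{pro:ciala} yields the pointwise conclusion \(\abs{\mu_i^{(l)}(ct_f)-\widetilde{\mu}_i^{(l)}(ct_f)}\le n^{-1-\omega}\) for \(\abs{i}\le n^{\widehat\omega}\), with the hierarchy \(\omega_h\ll\widehat\omega\ll\omega\ll\omega_f\) required by that proposition.

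The principal obstacle is ensuring that Proposition~\ref{pro:ciala} applies in this variant of the coupling setup. Proposition~\ref{pro:ciala} is designed to handle comparisons where the driving Brownian motions are not literally equal but rather have a controlled cross-correlation matrix, and it enters the final estimate only through an upper bound on the off-diagonal blocks of that matrix; one therefore expects it to apply essentially verbatim both to \(({\bm \lambda}^{z_l}, \widetilde{\bm \lambda}^{(l)})\) and to \(({\bm \mu}^{(l)}, \widetilde{\bm \mu}^{(l)})\). The delicate points are verifying uniformity of the overlap estimate over the full interval \(t\in[0,ct_f]\), which is the raison d'\^etre of the local laws in Section~\ref{sec local law G2}, and simultaneously exploiting the Ginibre-type rigidity and level-repulsion estimates for both \({\bm \mu}^{(l)}\) and \(\widetilde{\bm \mu}^{(l)}\) needed to activate the homogenisation argument underlying Proposition~\ref{pro:ciala}.
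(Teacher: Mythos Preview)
Your plan misreads both the setup of Proposition~\ref{pro:ciala} and the coupling already in place on \(\Omega_\beta\). You propose to identify \(X^{(l)}=Y^{(l)}\) so that \(\mu^{(l)}(0)=\widetilde\mu^{(l)}(0)\), and to couple \(\beta^{(l)}_i=\widetilde\zeta^{(l)}_i\) for \(|i|>n^{\omega_A}\). But the statement of Lemma~\ref{lem:secondstepmason} is a high-probability pathwise bound on the processes \emph{as defined} in~\eqref{eq:ginev} and~\eqref{eq:nuproc2}; there \(Y^{(l)}\) and \(\widetilde\zeta^{(l)}\) are taken independent of \(X^{(l)}\) and \(\beta^{(l)}\), so you cannot re-couple them. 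Moreover, even if you could, your choice would violate condition~\ref{close2} of Assumption~\ref{ass:close}, which requires the large-index Brownian motions of one process to be \emph{independent} of the entire other family; Proposition~\ref{pro:ciala} then no longer applies as stated. (As a side issue, your hierarchy \(\omega_A\ll\omega\) is backwards; in the paper \(\omega\ll\omega_A\), see Section~\ref{rem:s}.)

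The paper's proof works with the processes exactly as constructed: take \({\bm s}(t)={\bm\mu}^{(l)}(t)\), \({\bm r}(t)=\widetilde{\bm\mu}^{(l)}(t)\), \(K=n^{\omega_A}\). The initial data are both Ginibre singular values, so the \((g,G)\)-regularity in Definition~\ref{eq:defregpro} is immediate. Conditions~\ref{close1}--\ref{close2} of Assumption~\ref{ass:close} hold by the independence built into the construction. The only substantive step is~\ref{close3}: since \(\dif\zeta^{z_l}_{\mathfrak i}-\dif\beta^{(l)}_{\mathfrak i}=\sum_m(\sqrt{C^\#(t)}-I)_{im}\dif\underline\beta_m\) from~\eqref{eq:defzeta}, one gets \(L_{ij}(t)=[(\sqrt{C^\#(t)}-I)^2]_{ij}\), and this is bounded by \(\Tr[(C^\#(t)-I)^2]\lesssim p^2 n^{2\omega_A}/n^{4\omega_E}\) using the overlap bound of Lemma~\ref{lem:overb}. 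This yields~\eqref{eq:assbqv} with \(\omega_Q=4\omega_E-2\omega_A\), and Proposition~\ref{pro:ciala} finishes the proof. Note that Proposition~\ref{pro:ciala} is precisely designed to absorb the mismatch in initial data (via homogenisation) and the independence of large-index Brownian motions (via finite speed of propagation), so no re-coupling is needed.
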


\begin{proof}[Proof of Proposition~\ref{prop:indeig}]
    In the following we omit the trivial scaling factors \(\rho^{z_l}(0)\), \(\rho_{sc}(0)\) in the second term in the lhs.\ of~\eqref{eq:firshpb} to make our notation easier.
    We recall that by Lemma~\ref{lem:GFTGFT} we have
    \begin{equation}\label{eq:stgft}
        \begin{split}
            \E  \prod_{l=1}^p \frac{1}{n}\sum_{\abs{i_l}\le n^{\widehat{\omega}}} \frac{\eta_l}{(\lambda_{i_l}^{z_l})^2+\eta_l^2}&=\E  \prod_{l=1}^p \frac{1}{n}\sum_{\abs{i_l}\le n^{\widehat{\omega}}} \frac{\eta_l}{(\lambda_{i_l}^{z_l}(ct_f))^2+\eta_l^2} \\
            &\quad +\mathcal{O}\left(\frac{n^{p\xi+2\delta_0} t_f}{n^{1/2}}\sum_{l=1}^p \frac{1}{\eta_l}+\frac{n^{p\delta_0+\delta_1}}{n^{\widehat{\omega}}}\right),
        \end{split}
    \end{equation}
    where \(\lambda_i^{z_l}(t)\) is the solution of~\eqref{eq:DBMeA} with initial data \(\lambda_i^{z_l}\).
    Next we replace \(\lambda_i^{z_l}(t)\) with \(\widetilde \lambda_i^{z_l}(t)\) for small indices  by using Lemma~\ref{lem:firststepmason}; this is
    formulated in the following lemma whose detailed proof is postponed to the end of this section.

    \begin{lemma}\label{lem:stanc}
        Fix \(p\in\N \), and let \(\lambda_i^{z_l}(t)\), \(\widetilde{\lambda}_i^{(l)}(t)\), with \(l\in [p]\), be the solution of~\eqref{eq:DBMeA} and~\eqref{eq:nuproc}, respectively. Then
        \begin{equation}\label{eq:hhh1}
            \E  \prod_{l=1}^p \frac{1}{n}\sum_{\abs{i_l}\le n^{\widehat{\omega}}} \frac{\eta_l}{(\lambda_{i_l}^{z_l})^2+\eta_l^2}=\E  \prod_{l=1}^p \frac{1}{n}\sum_{\abs{i_l}\le n^{\widehat{\omega}}} \frac{\eta_l}{(\widetilde{\lambda}_{i_l}^{(l)}(ct_f))^2+\eta_l^2}+\mathcal{O}(\Psi),
        \end{equation}
        where \(\lambda_{i_l}^{z_l}=\lambda_{i_l}^{z_l}(0)\), \(t_f=n^{-1+\omega_f}\), and the error term is given by
        \[
            \Psi:= \frac{n^{\widehat{\omega}}}{n^{1+\omega}}\left(\sum_{l=1}^p \frac{1}{\eta_l}\right)\cdot \prod_{l=1}^p \left(1+\frac{n^\xi}{n\eta_l}\right)+\frac{n^{p\xi+2\delta_0} t_f}{n^{1/2}}\sum_{l=1}^p \frac{1}{\eta_l}+\frac{n^{p\delta_0+\delta_1}}{n^{\widehat{\omega}}}.
        \]
    \end{lemma}

    By~\eqref{eq:samedistr} it readily follows that
    \begin{equation}
        \label{eq:hhh2}
        \E  \prod_{l=1}^p \frac{1}{n}\sum_{\abs{i_l}\le n^{\widehat{\omega}}} \frac{\eta_l}{(\widetilde{\lambda}_{i_l}^{(l)}(ct_f))^2+\eta_l^2}=\E  \prod_{l=1}^p \frac{1}{n}\sum_{\abs{i_l}\le n^{\widehat{\omega}}} \frac{\eta_l}{(\widetilde{\mu}_{i_l}^{(l)}(ct_f))^2+\eta_l^2}.
    \end{equation}
    Moreover, by~\eqref{eq:firshpb2}, similarly to Lemma~\ref{lem:stanc}, we conclude
    \begin{equation}
        \label{eq:hhh3}
        \E  \prod_{l=1}^p \frac{1}{n}\sum_{\abs{i_l}\le n^{\widehat{\omega}}} \frac{\eta_l}{(\widetilde{\mu}_{i_l}^{(l)}(ct_f))^2+\eta_l^2}=\E  \prod_{l=1}^p \frac{1}{n}\sum_{\abs{i_l}\le n^{\widehat{\omega}}} \frac{\eta_l}{(\mu_{i_l}^{(l)}(ct_f))^2+\eta_l^2}+\mathcal{O}(\Psi).
    \end{equation}
    Additionally, by the definition of the processes \({\bm \mu}^{(l)}(t)\) in~\eqref{eq:ginev} it follows that \({\bm \mu}^{(l)}(t)\), \({\bm \mu}^{(m)}(t)\) are independent for \(l\ne m\) and so that
    \begin{equation}
        \label{eq:hhh6}
        \E  \prod_{l=1}^p \frac{1}{n}\sum_{\abs{i_l}\le n^{\widehat{\omega}}} \frac{\eta_l}{(\mu_{i_l}^{(l)}(ct_f))^2+\eta_l^2}= \prod_{l=1}^p \E \frac{1}{n}\sum_{\abs{i_l}\le n^{\widehat{\omega}}} \frac{\eta_l}{(\mu_{i_l}^{(l)}(ct_f))^2+\eta_l^2}.
    \end{equation}

    Combining~\eqref{eq:hhh1}--\eqref{eq:hhh6}, we get
    \begin{equation}
        \label{eq:hhh4}
        \E  \prod_{l=1}^p \frac{1}{n}\sum_{\abs{i_l}\le n^{\widehat{\omega}}} \frac{\eta_l}{(\lambda_{i_l}^{z_l})^2+\eta_l^2}= \prod_{l=1}^p \E \frac{1}{n}\sum_{\abs{i_l}\le n^{\widehat{\omega}}} \frac{\eta_l}{(\mu_{i_l}^{(l)}(ct_f))^2+\eta_l^2}+\mathcal{O}(\Psi).
    \end{equation}

    Then, by similar computation to the ones in~\eqref{eq:stgft}--\eqref{eq:hhh4} we conclude that
    \begin{equation}
        \label{eq:hhh5}
        \prod_{l=1}^p \E \frac{1}{n}\sum_{\abs{i_l}\le n^{\widehat{\omega}}} \frac{\eta_l}{(\lambda_{i_l}^{z_l})^2+\eta_l^2}= \prod_{l=1}^p \E \frac{1}{n}\sum_{\abs{i_l}\le n^{\widehat{\omega}}} \frac{\eta_l}{(\mu_{i_l}^{(l)}(ct_f))^2+\eta_l^2}+\mathcal{O}(\Psi).
    \end{equation}
    We remark that in order to prove~\eqref{eq:hhh5} it would not be necessary to introduce the additional comparison processes \(\widetilde{\bm \lambda}^{(l)}\) and \(\widetilde{\bm \mu}^{(l)}\) of Section~\ref{sec:COMPPRO}, since in~\eqref{eq:hhh5} the product is outside the expectation, so one can compare the expectations one by one; the correlation between these processes for different \(l\)'s plays no role. Hence, already the usual coupling (see e.g.~\cite{MR3541852,MR3916329, MR3914908}) between the processes \({\bm \lambda}^{z_l}(t)\), \({\bm \mu}^{(l)}(t)\) defined in~\eqref{eq:DBMeA} and~\eqref{eq:ginev}, respectively, would be sufficient to prove~\eqref{eq:hhh5}.

    Finally, combining~\eqref{eq:hhh4}--\eqref{eq:hhh5} we conclude the proof of Proposition~\ref{prop:indeig}.
\end{proof}

\begin{proof}[Proof of Lemma~\ref{lem:stanc}]
    We show the proof  for \(p=2\) in order to make our presentation easier. The case \(p\ge 3\) proceeds exactly in the same way. In order to make our notation shorter, for \(l\in \{1,2\}\), we define
    \[
        T_{i_l}^{(l)} := \frac{\eta_l}{ (\lambda_{i_l}^{z_l}(ct_f))^2 + \eta_l^2}.
    \]
    Similarly, replacing \(\lambda_{i_l}^{z_l}(ct_f)\) with \(\widetilde{\lambda}_{i_l}^{(l)}(ct_f)\), we define \(\widetilde{T}_l\). Then, by telescopic sum, we have
    \begin{gather}
        \begin{aligned}
             & \abs*{\E  \prod_{l=1}^{2}\frac{1}{n}\sum_{\abs{i_l}\le n^{\widehat{\omega}}} T_{i_l}^{(l)}-\E  \prod_{l=1}^{2}\frac{1}{n}\sum_{\abs{i_l}\le n^{\widehat{\omega}}} \widetilde{T}_{i_l}^{(l)} }                                                                                                     \\
             & \quad= \frac{1}{n^2}\abs*{\E  \sum_{\abs{i_1}, \abs{i_2}\le n^{\widehat{\omega}}} \left[T_{i_1}^{(1)}-\widetilde{T}_{i_1}^{(1)}\right]   T_{i_2}^{(2)} -\E  \sum_{\abs{i_1},\abs{i_2}\le n^{\widehat{\omega}}} \left[T_{i_2}^{(2)}-\widetilde{T}_{i_2}^{(2)}\right]   \widetilde{T}_{i_1}^{(1)} } \\
             & \quad \lesssim \sum_{\substack{l,m=1                                                                                                                                                                                                                                                              \\ l\ne m}}^2\left(1+\frac{n^\xi}{n\eta_l}\right)  \E \frac{1}{n}\sum_{\abs{i_m}\le n^{\widehat{\omega}}} \frac{T_{i_m}^{(m)}\widetilde{T}_{i_m}^{(m)}}{\eta_m}\abs*{(\widetilde{\lambda}_{i_m}^{(m)}(ct_f))^2-(\lambda_{i_m}^{z_m}(ct_f))^2} \\
             & \quad \lesssim \frac{n^{\widehat{\omega}}}{n^{1+\omega}}\left(\frac{1}{\eta_1}+\frac{1}{\eta_2}\right)\cdot \prod_{l=1}^2\left(1+\frac{n^\xi}{n\eta_l}\right),
        \end{aligned}\label{eq:thirdstemason}\raisetag{-8em}
    \end{gather}
    where we used the local law~\eqref{theo:Gll} in the first inequality and~\eqref{eq:firshpb} in the last step.
    Combining~\eqref{eq:thirdstemason} with~\eqref{eq:stgft} we conclude the proof of Lemma~\ref{lem:stanc}.
\end{proof}

Before we continue, we summarize the scales  used
in the entire Section~\ref{sec:IND}.

\subsubsection{Relations among the scales in the proof of Proposition~\ref{prop:indeig}}\label{rem:s}

Scales in the proof of Proposition~\ref{prop:indeig} are characterized by various exponents \(\omega\)'s of \(n\) that we will also refer to scales, for simplicity.
The basic input scales
in the proof of Proposition~\ref{prop:indeig} are \(0<\omega_d,\omega_h,\omega_f\ll 1\), the
others will depend on them.  The exponents \(\omega_h,\omega_d\) are chosen
within the assumptions of Lemma~\ref{lem:overb} to control the location of \(z\)'s as
\(\abs{z_l}\le 1-n^{-\omega_h}\), \(\abs{z_l-z_m}\ge n^{-\omega_p}\), with \(l\ne m\). The exponent \(\omega_f\) defines
the time \(t_f=n^{-1+\omega_f}\)  so that the local equilibrium of the DBM is reached after \(t_f\). This will provide
the asymptotic independence of \(\lambda_i^{z_l}\), \(\lambda_j^{z_m}\) for small indices and for \(l\ne m\).

The primary  scales created along  the proof of Proposition~\ref{prop:indeig}  are
\(\omega\), \(\widehat{\omega}\), \(\delta_0\), \(\delta_1\), \(\omega_E\), \(\omega_B\).
The scales \(\omega_E, \omega_B\) are given in Lemma~\ref{lem:overb}: \(n^{-\omega_E}\) measures the size of the
eigenvector overlaps from~\eqref{eq:ovcorr} while the exponent \(\omega_B\)  describes the range of indices for which
these overlap estimates hold. Recall that the overlaps determine the correlations among the
driving Brownian motions.   The scale \(\omega\) quantifies the \(n^{-1-\omega}\) precision of the coupling between various processes.
These couplings are effective only for small indices \(i\), their range is given by
\(\widehat{\omega}\) as \(\abs{i}\le n^{\widehat{\omega}}\).
Both  these scales are much bigger than \(\omega_h\)  but much smaller than \(\omega_f\).  They are
determined in Lemma~\ref{lem:firststepmason}, Lemma~\ref{lem:secondstepmason}, in fact both lemmas give only a
necessary upper bound on the scales \(\omega, \widehat{\omega}\), so we can pick the smaller of them.
The exponents \(\delta_0, \delta_1\) determine the range of \(\eta\in [n^{-1-\delta_0}, n^{-1+\delta_1}]\) for which
Proposition~\ref{prop:indeig} holds; these are  determined in  Lemma~\ref{lem:GFTGFT} after \(\omega, \widehat{\omega}\)
have already been fixed. These steps yield the scales
\(\omega, \widehat{\omega}, \delta_0, \delta_1\) claimed in
Proposition~\ref{prop:indeig} and hence also in Proposition~\ref{prop:indmr}. We summarize order relation among all these scales as
\begin{equation}
    \label{eq:chain}
    \omega_h\ll \delta_m \ll \widehat{\omega}\ll \omega\ll \omega_B\ll\omega_f\ll \omega_E\ll1, \qquad m=0,1.
\end{equation}

We mention that three further auxiliary scales emerge along the proof but they play only a local, secondary role. For completeness
we also list them here; they are
\(\omega_1,\omega_A,\omega_l\). Their meanings are the following: \(t_1:= n^{-1+\omega_1}\), with \(\omega_1\ll \omega_f\),
is the time needed for the DBM process \(x_i(t,\alpha)\), defined in~\eqref{eq:intflowA}, to reach local equilibrium, hence to prove its universality; \(t_0:=t_f-t_1\) is the initial time we run the DBM before starting with the actual proof of universality so that the solution \({\bm \lambda}^{z_l}(t_0)\) of~\eqref{eq:DBMeA} at time \(t_0\) and the density \(\dif \rho(E,t,\alpha)\) (which we will define in Section~\ref{sec:PR}) satisfy certain technical regularity conditions~\cite[Lemma 3.3-3.5]{MR3916329},~\cite[Lemma 3.3-3.5]{MR3914908}. Note that \(t_0\sim t_f\), in fact they are almost the same.
The other two scales are technical: \(\omega_l\) is the scale of the short range interaction, and \(\omega_A\) is a cut-off scale such that \(x_i(t,\alpha)\) is basically independent of \(\alpha\) for \(\abs{i}\le n^{\omega_A}\). These scales are inserted in the above chain
of inequalities~\eqref{eq:chain} between \(\omega, \omega_B\) as follows
\[
    \omega_h\ll\delta_m\ll\widehat{\omega}\ll \omega\ll \omega_1\ll \omega_l\ll \omega_A\le \omega_B\ll \omega_f \ll \omega_E\ll 1, \quad m=0,1.
\]
In particular, the relation \(\omega_A\ll \omega_E\) ensures that the effect of the correlation is small, see the bound in~\eqref{eq:imperrest} later.

We remark that introducing the additional initial time layer  \(t_0\) is not really necessary for our proof of Proposition~\ref{prop:indeig} since the initial data \({\bm \lambda}^z(0)\) of the DBM in~\eqref{eq:DBMeA} and their deterministic density \(\rho^z\) already satisfy~\cite[Lemma 3.3-3.5]{MR3916329},~\cite[Lemma 3.3-3.5]{MR3914908} as a consequence of~\eqref{eq:lll} (see Remark~\ref{rem:t0diff} and Remark~\ref{rem:ne} for more details).
We keep it only to facilitate the comparison with
~\cite{MR3916329, MR3914908}.

\subsection{Bound on the eigenvector overlap for large \texorpdfstring{\(\abs{z_1-z_2}\)}{|z1-z2|}}\label{sec:BOUNDE}
For any \(z\in\C \), let \(\{{\bm w}^z_{\pm i}\}_{i=1}^n\) be the eigenvectors of the matrix \(H^z\). They are of the form \({\bm w}^z_{\pm i}=({\bm u}_i^z,\pm {\bm v}_i^z)\), with \({\bm u}_i^z, {\bm v}_i^z\in\C ^n\), as a consequence of the symmetry of the spectrum of \(H^z\) induced by its block structure. The main input to prove Lemma~\ref{lem:firststepmason}--\ref{lem:secondstepmason} is the
following high probability bound on the almost orthogonality of the eigenvectors belonging to distant \(z_l\), \(z_m\) parameters and eigenvalues close to zero.
With the help of the Dyson Brownian motion (DBM), this information will then be used to establish almost independence of these eigenvalues.

\begin{lemma}\label{lem:overb}
    Let \(\{{\bm w}^{z_l}_{\pm i}\}_{i=1}^n=\{({\bm u}_i^{z_l},\pm {\bm v}_i^{z_l})\}_{i=1}^n\), for \(l=1,2\), be the eigenvectors of matrices \(H^{z_l}\)  of the form~\eqref{eq:her} with i.i.d.\ entries. Then for any sufficiently small \(\omega_d, \omega_h>0\) there exist \(\omega_B, \omega_E>0\) such that if \(\abs{z_1-z_2}\ge n^{-\omega_d}\), \(\abs{z_l}\le 1-n^{-\omega_h}\) then
    \begin{equation}
        \label{eq:bbev}
        \abs*{\braket{ {\bm u}_i^{z_1}, {\bm u}_j^{z_2}}}+\abs*{\braket{ {\bm v}_i^{z_1}, {\bm v}_j^{z_2}}}\le n^{-\omega_E}, \quad 1\le i,j\le n^{\omega_B},
    \end{equation}
    with very high probability.
\end{lemma}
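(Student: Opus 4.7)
My plan is to reduce the eigenvector overlap estimate to a bound on $\Tr[\Im G^{z_1}(\ii\eta_1)\Im G^{z_2}(\ii\eta_2)]$ via the spectral decomposition, and then to control this trace using the product-of-resolvents local law Theorem~\ref{thm local law G2} together with the stability bound from Lemma~\ref{lemma:betaM}.

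First I would observe that, since $\{{\bm w}_{\pm a}^{z_l}\}$ forms an orthonormal eigenbasis of $H^{z_l}$, the spectral expansion gives
\[ \Tr[\Im G^{z_1}(\ii\eta_1)\Im G^{z_2}(\ii\eta_2)] = \sum_{a,b} \frac{\eta_1\eta_2\,\abs{\braket{{\bm w}_a^{z_1},{\bm w}_b^{z_2}}}^2}{\bigl((\lambda_a^{z_1})^2+\eta_1^2\bigr)\bigl((\lambda_b^{z_2})^2+\eta_2^2\bigr)}. \]
All summands are non-negative. Fix $\xi>0$ small and set $\eta_l:=n^{-1+\xi}$. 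Using the rigidity estimate~\eqref{eq:rigneed} with $|z_l|\le 1-n^{-\omega_h}$ and $\omega_h\ll\xi$, the quantiles satisfy $|\gamma_a^{z_l}|\lesssim |a|/(n\rho^{z_l}(0))\lesssim n^{\omega_h+\omega_B-1}\ll \eta_l$ for $|a|\le n^{\omega_B}$ with $\omega_B$ chosen $\ll\xi$. Hence with very high probability $|\lambda_a^{z_l}|\le \eta_l$ for such $a$, so each relevant fraction is $\gtrsim(\eta_1\eta_2)^{-1}$, and I may keep only the single positive term indexed by $(a,b)=(i,j)$ to obtain
\[ \abs{\braket{{\bm w}_i^{z_1},{\bm w}_j^{z_2}}}^2 \lesssim \eta_1\eta_2\,\Tr[\Im G^{z_1}(\ii\eta_1)\Im G^{z_2}(\ii\eta_2)]. \]
Applying this to both $\pm j$ and using the parallelogram identity
\[\abs{\braket{{\bm w}_i^{z_1},{\bm w}_j^{z_2}}}^2+\abs{\braket{{\bm w}_i^{z_1},{\bm w}_{-j}^{z_2}}}^2=2\bigl(\abs{\braket{{\bm u}_i^{z_1},{\bm u}_j^{z_2}}}^2+\abs{\braket{{\bm v}_i^{z_1},{\bm v}_j^{z_2}}}^2\bigr),\]
which follows from ${\bm w}_{\pm i}^{z_l}=({\bm u}_i^{z_l},\pm{\bm v}_i^{z_l})$, I arrive at the same bound for the sum of the two overlaps on the left-hand side of~\eqref{eq:bbev}.

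Next I would estimate the trace of the product $G^{z_1}G^{z_2}$. Writing $\Im G^{z_l}(\ii\eta_l)=(G^{z_l}(\ii\eta_l)-G^{z_l}(-\ii\eta_l))/(2\ii)$ and expanding, it suffices to control the four terms $\Tr[G^{z_1}(\pm\ii\eta_1)G^{z_2}(\pm\ii\eta_2)]$. For each sign choice I apply Theorem~\ref{thm local law G2} with $A=B=I$, which gives
\[ \abs{\braket{G^{z_1}G^{z_2}-M^{z_1,z_2}_I}}\prec \frac{\norm{\wh\cB^{-1}}}{n\eta_\ast|\eta_1\eta_2|^{1/2}}\Bigl(\eta_\ast^{1/12}+\eta_\ast^{1/4}\norm{\wh\cB^{-1}}+(n\eta_\ast)^{-1/2}+(\norm{\wh\cB^{-1}}/(n\eta_\ast))^{1/4}\Bigr), \]
with $\norm{M^{z_1,z_2}_I}\lesssim \norm{\wh\cB^{-1}}$. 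By Lemma~\ref{lemma:betaM} and $|z_1-z_2|\ge n^{-\omega_d}$ I have $\norm{\wh\cB^{-1}}\lesssim n^{2\omega_d}$, while the applicability condition $\eta_\ast\gtrsim n^{-1+\epsilon}\norm{\wh\cB^{-1}}$ holds provided $\omega_d\ll \xi$. A direct check shows the error term is then much smaller than $n^{2\omega_d}$, so $\abs{\Tr[G^{z_1}(\pm\ii\eta_1)G^{z_2}(\pm\ii\eta_2)]}\prec n\cdot n^{2\omega_d}$ for each sign combination.

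Combining the two steps yields
\[ \abs{\braket{{\bm u}_i^{z_1},{\bm u}_j^{z_2}}}+\abs{\braket{{\bm v}_i^{z_1},{\bm v}_j^{z_2}}}\prec \sqrt{\eta_1\eta_2\cdot n^{1+2\omega_d}}= n^{-(1-2\omega_d-2\xi)/2}, \]
which, for $\omega_d$ and $\xi$ sufficiently small, gives~\eqref{eq:bbev} with any $\omega_E<1/2$, uniformly over the polynomially many pairs $1\le i,j\le n^{\omega_B}$ by a standard union bound. The main technical point, and the step where care is required, is the verification that all error terms in~\eqref{final local law} are indeed dominated by the deterministic quantity $n^{2\omega_d}$; this constrains the hierarchy $\omega_h\ll \omega_B\ll \xi$ and $\omega_d$ small, but these constraints are compatible with the statement of the lemma and ultimately fix the permissible scales $\omega_B,\omega_E$ as functions of the input scales $\omega_h,\omega_d$.
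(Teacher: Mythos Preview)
Your approach is the same as the paper's: bound the eigenvector overlap by $\eta^2\Tr[\Im G^{z_1}(\ii\eta)\,\Im G^{z_2}(\ii\eta)]$ via the spectral decomposition, and then control the trace by Theorem~\ref{thm local law G2} and Lemma~\ref{lemma:betaM}.

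There is, however, a genuine quantitative error in your estimate of the local law error term. You assert that ``a direct check shows the error term is then much smaller than $n^{2\omega_d}$'', but with $\eta_1=\eta_2=\eta_\ast=n^{-1+\xi}$ this is false. The prefactor in~\eqref{final local law} is
\[
\frac{\norm{\wh\cB^{-1}}}{n\eta_\ast\abs{\eta_1\eta_2}^{1/2}}
\;\sim\;\frac{n^{2\omega_d}}{n\cdot n^{-2+2\xi}}
\;=\;n^{1+2\omega_d-2\xi},
\]
and the bracketed factor is only $(n\eta_\ast)^{-1/2}=n^{-\xi/2}$ at best for small $\xi$. Hence the error in $\braket{G^{z_1}G^{z_2}}$ is of order $n^{1+2\omega_d-5\xi/2}$, which for small $\xi$ is \emph{far larger} than $n^{2\omega_d}$; your bound $\abs{\Tr[G^{z_1}G^{z_2}]}\prec n\cdot n^{2\omega_d}$ and the resulting $\omega_E<1/2$ are therefore unjustified. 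What is true is that after multiplying by $\eta^2$ the error contributes $n^{2\omega_d}\cdot(\text{bracket})$, so $\eta^2\Tr[\Im G^{z_1}\Im G^{z_2}]\lesssim n\eta^2\,n^{2\omega_d}+n^{2\omega_d}\bigl(\eta^{1/12}+(n\eta)^{-1/2}+\ldots\bigr)$, and one must balance the main term $n\eta^2$ against the error terms; this is why the paper chooses $\eta=n^{-12/23}$ and obtains only $\omega_E\approx 1/46$. With your choice $\eta=n^{-1+\xi}$ the argument still yields \emph{some} $\omega_E>0$ (of order $\xi$, with $\omega_d\ll\xi$), which suffices for the lemma as stated, but not the near-$1/2$ exponent you claim.
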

\begin{proof}
    Using the spectral symmetry of \(H^z\), for any \(z\in\C \) we write \(G^z\) in spectral decomposition as
    \[
        G^z(\ii\eta)=\sum_{j>0} \frac{2}{(\lambda_j^z)^2+\eta^2}\left( \begin{matrix}
                \ii \eta {\bm u}_j^z ({\bm u}_j^z)^*   & \lambda_j^z {\bm u}_j^z ({\bm v}_j^z)^* \\
                \lambda_j^z {\bm v}_j^z({\bm u}_j^z)^* & \ii \eta {\bm v}_j^z ({\bm v}_j^z)^*
            \end{matrix}\right).
    \]
    Let \(\eta \ge n^{-1}\), then by rigidity of the eigenvalues in~\eqref{eq:rigneed}, for any \(i_0, j_0\ge 1\) such that \(\lambda_{i_0}^{z_l},\lambda_{j_0}^{z_l}\lesssim \eta\), with \(l=1,2\), and any \(z_1, z_2\) such that \(n^{-\omega_d} \lesssim \abs{z_1-z_2}\lesssim 1\), for some \(\omega_d>0\) we will choose shortly, it follows that
    \begin{gather}
        \begin{aligned}
             & \abs*{\braket{ {\bm u}_{i_0}^{z_1}, {\bm u}_{j_0}^{z_2}}}^2+\abs*{\braket{ {\bm v}_{i_0}^{z_1}, {\bm v}_{j_0}^{z_2}}}^2                                                                                                      \\
             & \qquad\lesssim \sum_{i,j=1}^n \frac{4\eta^4}{((\lambda_i^{z_1})^2+\eta^2)((\lambda_j^{z_2})^2+\eta^2)} \left(\abs*{\braket{ {\bm u}_i^{z_1}, {\bm u}_j^{z_2}}}^2+\abs*{\braket{ {\bm v}_i^{z_1}, {\bm v}_j^{z_2}}}^2 \right) \\
             & \qquad=\eta^2\Tr  (\Im G^{z_1})(\Im G^{z_2}) \lesssim \frac{n^{8\omega_d/3}}{(n\eta)^{1/4}}+(\eta^{1/12}+n\eta^2) n^{2\omega_d}                                                                                              \\
             & \qquad\lesssim \frac{n^{2\omega_d+100\omega_h}}{n^{1/23}}.
        \end{aligned}\label{eq:ooo}\raisetag{-6em}
    \end{gather}
    The first inequality in the second line of~\eqref{eq:ooo} is from Theorem~\ref{thm local law G2} and the lower bound on \(\abs{\widehat{\beta}_*}\) from~\eqref{beta ast bound}. In the last inequality we choose \(\eta=n^{-12/23}\),
    under the assumption that \(\omega_d \le 1/100\) and that \(i_0,j_0\le n^{1/5}\) (in order to make sure that the first inequality in~\eqref{eq:ooo} hold). We also used that the first term in the lhs.\ of the last inequality is always smaller than the other two for \(\eta\ge n^{-5/9}\), and in the second line of~\eqref{eq:ooo} we used that \(M_{12}\), the deterministic approximation of \(\Tr \Im G^{z_1}\Im G^{z_2}\) in Theorem~\ref{thm local law G2}, is bounded by \(\norm{ M_{12}}\lesssim \abs{z_1-z_2}^{-2}\).

    This concludes the proof by choosing \(\omega_B\le 1/5\) and \(\omega_d= 1/100\), which implies a choice of \(\omega_E=-(2\omega_d+100\omega_h-1/23)\).
\end{proof}

\subsection{Pathwise coupling of DBM close to zero}\label{sec:fixun}

This section is the main technical result used in the proof of Lemma~\ref{lem:firststepmason} and Lemma~\ref{lem:secondstepmason}. We compare the evolution of two DBMs whose driving Brownian motions are nearly the same for small indices and are independent for large indices. In Proposition~\ref{pro:ciala} we will show that the points with small indices in the two processes become very close to each other on a certain time scale \(t_f\). This time scale is chosen to be larger than the local equilibration time, but not too large so that the independence of the driving Brownian motions for large indices do not yet have an effect on particles with small indices.

\begin{remark}\label{rem:t0diff}
    The main result of this section (Proposition~\ref{pro:ciala}) is stated for general deterministic initial data \({\bm s}(0)\) satisfying Definition~\ref{eq:defregpro} even if for its applications in the proof of Proposition~\ref{prop:indeig} we only consider initial data which are eigenvalues of i.i.d.\ random matrices.
\end{remark}

The proof of Proposition~\ref{pro:ciala} follows the proof of fixed energy universality in~\cite{MR3541852,MR3916329,MR3914908}, adapted to the block structure~\eqref{eq:her} in~\cite{MR3916329} (see also~\cite{MR4009717,MR4242226} for further adaptations of~\cite{MR3541852,MR3914908} to different matrix models). The main novelty in our DBM analysis compared to~\cite{MR3541852,MR3916329, MR3914908} is that we analyse a process for which we allow not (fully) coupled driving Brownian motions (see Assumption~\ref{ass:close}).

Define the processes \(s_i(t)\), \(r_i(t)\) to be the solution of
\begin{equation}\label{eq:lambdapr}
    \dif s_i(t)=\sqrt{\frac{1}{2 n}}\dif \mathfrak{b}^s_i(t)+\frac{1}{2n}\sum_{j\ne  i} \frac{1}{s_i(t)-s_j(t)} \dif t, \qquad 1\le \abs{i}\le n,
\end{equation}
and
\begin{equation}\label{eq:mupr}
    \dif r_i(t)=\sqrt{\frac{1}{2 n}}\dif \mathfrak{b}^r_i(t)+\frac{1}{2n}\sum_{j\ne  i} \frac{1}{r_i(t)-r_j(t)} \dif t, \qquad 1\le \abs{i}\le n,
\end{equation}
with initial data \(s_i(0)=s_i\), \(r_i(0)=r_i\), where \({\bm s}=\{s_{\pm i}\}_{i=1}^n\) and \({\bm r}=\{r_{\pm i}\}_{i=1}^n\) are two independent sets of particles such that \(s_{-i}=-s_i\) and \(r_{-i}=-r_i\) for \(i\in [n]\). The driving standard real Brownian motions \(\{\mathfrak{b}^s_i\}_{i=1}^n\), \(\{\mathfrak{b}^r_i\}_{i=1}^n\) in~\eqref{eq:lambdapr}--\eqref{eq:mupr} are two i.i.d.\ families and they are such that \(\mathfrak{b}^s_{-i}=-\mathfrak{b}^s_i\), \(\mathfrak{b}^r_{-i}=-\mathfrak{b}^r_i\) for \(i\in [n]\). For convenience we also assume that \(\{r_{\pm i}\}_{i=1}^n\) are the singular values of \(\widetilde{X}\), with \(\widetilde{X}\) a Ginibre matrix. This is not a restriction; indeed, once a process with general initial data \({\bm s}\) is shown to be close to the reference process with Ginibre initial data, then processes with any two initial data will be close.

Fix an \(n\)-dependent parameter \(K=K_n=n^{\omega_K}\), for some \(\omega_K>0\).
On the correlation structure between the two families of i.i.d.\ Brownian motions \(\{\mathfrak{b}^s_i\}_{i=1}^n\), \(\{\mathfrak{b}^r_i\}_{i=1}^n\) we make the following assumptions:

\begin{assumption}\label{ass:close}
    Suppose that the families \(\{\mathfrak{b}^s_{\pm i}\}_{i=1}^n\), \(\{\mathfrak{b}^r_{\pm i}\}_{i=1}^n\) in~\eqref{eq:lambdapr} and\eqref{eq:mupr} are realised on a common probability space with a common filtration \(\mathcal{F}_t\). Let
    \begin{equation}
        \label{eq:defL}
        L_{ij}(t) \dif t:= \Exp*{\bigl(\dif \mathfrak{b}^s_i(t)-\dif \mathfrak{b}^r_i(t)\bigr) \bigl(\dif \mathfrak{b}^s_j(t)-\dif \mathfrak{b}^r_j(t)\bigr)\given \mathcal{F}_t}
    \end{equation}
    denote the covariance of the increments conditioned on \(\mathcal{F}_t\). The processes satisfy the following assumptions:
    \begin{enumerate}[label=(\alph*)]
        \item\label{close1} \(\{ \mathfrak{b}^s_i\}_{i=1}^n\), \(\{ \mathfrak{b}^r_i\}_{i=1}^n\) are two families of i.i.d.\ standard real Brownian motions.
        \item\label{close2} \(\{ \mathfrak{b}^r_{\pm i}\}_{i=K+1}^n\) is independent of \(\{\mathfrak{b}^s_{\pm i}\}_{i=1}^n\),
        and \(\{ \mathfrak{b}^s_{\pm i}\}_{i=K+1}^n\) is independent of \(\{\mathfrak{b}^r_{\pm i}\}_{i=1}^n\).
        \item\label{close3} Fix \(\omega_Q>0\) so that \(\omega_K\ll \omega_Q\). We assume that the subfamilies \(\{\mathfrak{b}^s_{\pm i}\}_{i=1}^K\), \(\{\mathfrak{b}^r_{\pm i}\}_{i=1}^K\) are very strongly dependent  in the sense that for any \(\abs{i}, \abs{j}\le K\) it holds
        \begin{equation}
            \label{eq:assbqv}
            \abs{L_{ij}(t)}\le n^{-\omega_Q}
        \end{equation}
        with very high probability for any fixed \(t\ge 0\).
    \end{enumerate}
\end{assumption}

Furthermore we assume that the initial data \(\{s_{\pm i}\}_{i=1}^n\) is regular in the following sense (cf.~\cite[Definition 3.1]{MR3916329},~\cite[Definition 2.1]{MR3914908}, motivated by~\cite[Definition 2.1]{MR3687212}).

\begin{definition}[\((g,G)\)-regular points]\label{eq:defregpro}
    Fix a very small \(\nu>0\), and choose \(g\) and \(G\) such that
    \[
        n^{-1+\nu}\le g\le n^{-2\nu}, \qquad G\le n^{-\nu}.
    \]
    A set of \(2n\)-points \({\bm s}=\{s_i\}_{i=1}^{2n}\) on \(\R \) is called \((g,G)\)-\emph{regular} if there exist constants \(c_\nu,C_\nu>0\) such that
    \begin{equation}
        \label{eq:upbv}
        c_\nu \le \frac{1}{2n}\Im \sum_{i=-n}^n \frac{1}{s_i-(E+\ii \eta)}\le C_\nu,
    \end{equation}
    for any \(\abs{E}\le G\), \(\eta \in [g, 10]\), and if there is a constant \(C_s\) large enough such that \(\norm{ {\bm s}}_\infty\le n^{C_s}\). Moreover, \(c_\nu,C_\nu\sim 1\) if \(\eta \in [g, n^{-2\nu}]\) and \(c_\nu\ge n^{-100\nu}\), \(C_\nu\le n^{100\nu}\) if \(\eta\in [n^{-2\nu},10]\).
\end{definition}
\begin{remark}
    We point out that in~\cite[Definition 3.1]{MR3916329} and~\cite[Definition 2.1]{MR3914908} the constants \(c_\nu, C_\nu\) do not depend on \(\nu>0\), but this change does not play any role since \(\nu\) will always be the smallest exponent of scale involved in the analysis of the DBMs~\eqref{eq:lambdapr}--\eqref{eq:mupr}, hence negligible.
\end{remark}

Let \(\rho_{\mathrm{fc},t}(E)\) be the deterministic approximation of the density of the particles \(\{s_{\pm i}(t)\}_{i=1}^n\) that is obtained from the semicircular flow acting on the empirical density of the initial data \(\{s_{\pm i}(0)\}_{i=1}^n\), see~\cite[Eq.~(2.5)--(2.6)]{MR3914908}.
Recall that \(\rho_{sc}(E)\) denotes the semicircular density.

\begin{proposition}\label{pro:ciala}
    Let the processes \({\bm s}(t)=\{s_{\pm i}(t)\}_{i=1}^n\), \({\bm r}(t)=\{r_{\pm i}(t)\}_{i=1}^n\) be the solutions of~\eqref{eq:lambdapr} and~\eqref{eq:mupr}, respectively, and assume that the driving Brownian motions in~\eqref{eq:lambdapr}--\eqref{eq:mupr} satisfy Assumption~\ref{ass:close}. Additionally, assume that \({\bm s}(0)\) is \((g,G)\)-regular in the sense of Definition~\ref{eq:defregpro} and that \({\bm r}(0)\) are
    the singular values of a Ginibre matrix. Then for any small \(\nu,\omega_f>0\) such that  \(\nu\ll \omega_K\ll \omega_f\ll \omega_Q\) and that \(g n^\nu\le t_f\le n^{-\nu}G^2\), there exist \(\omega,\widehat{\omega}>0\) with \(\nu\ll\widehat{\omega}\ll \omega\ll \omega_f\), and such that it holds
    \begin{equation}
        \label{eq:hihihi}
        \abs*{ \rho_{\mathrm{fc},t_f}(0) s_i(t_f)- \rho_{\mathrm{sc}}(0) r_i(t_f)}\le n^{-1-\omega}, \qquad \abs{i}\le n^{\widehat{\omega}},
    \end{equation}
    with very high probability, where \(t_f:= n^{-1+\omega_f}\).
\end{proposition}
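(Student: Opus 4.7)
My plan is to adapt the pathwise coupling and homogenisation strategy for fixed-energy universality developed in~\cite{MR3541852,MR3914908} and extended to the $2\times 2$ block structure in~\cite{MR3916329}. Those works compared two DBMs driven by \emph{identical} Brownian motions; the genuine novelty in our setting is that $\{\mathfrak{b}^s_i\}$ and $\{\mathfrak{b}^r_i\}$ are only weakly dependent for small indices (cf.\ Assumption~\ref{ass:close}\ref{close3}) and independent for large indices, and this correlation structure must be propagated through every step of the DBM analysis.

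The first step is to run both DBMs for an initial time $t_0 = t_f - t_1$ with $t_1 = n^{-1+\omega_1}$ and $\omega_1 \ll \omega_f$, so that $\bm s(t_0)$ and $\bm r(t_0)$ satisfy the stronger regularity input required by the homogenisation machinery of~\cite{MR3916329}. Then I would introduce the interpolating DBM $x_i(t,\alpha)$, $\alpha\in[0,1]$, whose initial condition at time $t_0$ interpolates (after rescaling by $\rho_{\mathrm{fc},t_0}(0)$ and $\rho_{sc}(0)$) between $\bm s(t_0)$ and $\bm r(t_0)$, and whose driving Brownian motions are $\{\mathfrak{b}^s_i\}$ for $|i|\le n^{\omega_A}$ and an independent family for $|i|>n^{\omega_A}$, with $\omega\ll\omega_1\ll\omega_l\ll\omega_A\ll\omega_f$. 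The derivative $u_i(t) := \partial_\alpha x_i(t,\alpha)$ then solves a discrete parabolic equation with kernel $\mathcal{B}_{ij}(t) = [2n(x_i(t,\alpha)-x_j(t,\alpha))^2]^{-1}$. Localising to $|i-j|\le n^{\omega_l}$ and invoking the standard homogenisation of this operator (using rigidity and level repulsion on $[t_0,t_f]$ exactly as in~\cite{MR3916329}) yields $|u_i(t_f)|\le n^{-1-\omega}$ for $|i|\le n^{\widehat{\omega}}$, and consequently
\[
\bigl|\rho_{\mathrm{fc},t_f}(0)\, s_i(t_f) - \rho_{sc}(0)\, \tilde r_i(t_f)\bigr| \le n^{-1-\omega},\qquad |i|\le n^{\widehat{\omega}},
\]
where $\tilde{\bm r}(t)$ is the DBM started from $\bm r(t_0)$ and driven by $\{\mathfrak{b}^s_i\}$ for $|i|\le n^{\omega_A}$; an auxiliary short-range comparison, using finite speed of propagation at scale $n^{\omega_l}$, then shows that the large-index driving Brownian motions affect the $|i|\le n^{\widehat{\omega}}$ particles only through an $n^{-1-\omega}$ error.

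It remains to compare $\tilde{\bm r}(t_f)$ with the true $\bm r(t_f)$: the two processes share the same initial data at $t_0$ but are driven, for $|i|\le n^{\omega_A}$, by $\mathfrak{b}^s_i$ versus $\mathfrak{b}^r_i$, which are almost identical by~\eqref{eq:assbqv}. Setting $\delta_i(t):= \tilde r_i(t) - r_i(t)$ and subtracting the DBMs gives
\[
\dif \delta_i = \frac{\bm 1(|i|\le n^{\omega_A})}{\sqrt{2n}}\dif\bigl(\mathfrak{b}^s_i-\mathfrak{b}^r_i\bigr) + \dif N_i - \frac{1}{2n}\sum_{j\ne i}\frac{\delta_i-\delta_j}{(\tilde r_i-\tilde r_j)(r_i-r_j)}\dif t,
\]
where $N_i$ is a martingale supported on $|i|>n^{\omega_A}$, handled by the short-range argument above. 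The quadratic variation of the new source martingale is bounded, for $|i|\le n^{\omega_A}$, by $\int_{t_0}^{t_f} L_{ii}(t)\dif t \le t_f\, n^{-\omega_Q}$ thanks to Assumption~\ref{ass:close}\ref{close3}, and a discrete Duhamel/energy estimate for the parabolic semigroup generated by the coefficients $[(\tilde r_i-\tilde r_j)(r_i-r_j)]^{-1}/(2n)$, combined with the same short-range cutoff, yields $|\delta_i(t_f)|\le n^{-1-\omega}$ on $|i|\le n^{\widehat{\omega}}$ provided $\omega_Q\gg\omega_f$.

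The main obstacle, and the reason the standard DBM machinery cannot be used as a black box, is precisely this new martingale source term in the equation for $\bm\delta$: in all previous homogenisation arguments the equation for the difference of two coupled processes is purely parabolic, whereas here it picks up a stochastic forcing of size $\sqrt{t_f\, n^{-\omega_Q}/n}$ per particle. This fluctuation is below the threshold $n^{-1-\omega}$ only because of the hierarchy $\omega_Q\gg\omega_f$ built into Assumption~\ref{ass:close}\ref{close3}, and one must track carefully how it is smoothed by the discrete heat kernel before it can affect the $|i|\le n^{\widehat{\omega}}$ particles. Chaining the two comparisons $\bm s(t_f)\approx\tilde{\bm r}(t_f)\approx\bm r(t_f)$ after the $\rho_{\mathrm{fc},t_f}(0)$ and $\rho_{sc}(0)$ rescaling yields~\eqref{eq:hihihi}.
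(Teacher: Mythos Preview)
Your proposal identifies all the right ingredients --- the preparatory time $t_0$, the short-range cutoff at scale $n^{\omega_l}$, finite speed of propagation, homogenisation of the discrete kernel, and the crucial use of $|L_{ij}|\le n^{-\omega_Q}$ to control the new stochastic forcing --- but the architecture differs from the paper's and is more roundabout. The paper performs a \emph{single} interpolation: the process $x(t,\alpha)$ is driven by the $\alpha$-dependent martingale $\alpha\,\dif\mathfrak b^s_i+(1-\alpha)\,\dif\mathfrak b^r_i$ for \emph{all} indices $i$, so that $x(t,0)=r(t+t_0)$ and $x(t,1)=s(t+t_0)$ exactly, with no auxiliary endpoint comparisons. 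Differentiating in $\alpha$, one finds that $u=\partial_\alpha\hat x$ satisfies $\dif u=-\mathcal B u\,\dif t+\dif\xi_1+\xi_2\,\dif t$, where the stochastic forcing $\dif\xi_{1,i}=(\dif\mathfrak b^s_i-\dif\mathfrak b^r_i)/\sqrt{2n}$ appears directly. Its contribution to $u_i(t_1)$ via Duhamel has quadratic variation at most $t_1 n^{-1-\omega_Q}$ on the small-index block (using $\ell^1$-contraction of $\mathcal U$ together with $|L_{pq}|\le n^{-\omega_Q}$), while the large-index contribution is wiped out by finite speed of propagation. One integration in $\alpha$ then finishes.

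Your route instead keeps the driving Brownian motions fixed in the interpolation, so $u=\partial_\alpha x$ is purely parabolic (the classical setup), but the endpoints are no longer $s$ and $r$; this forces the auxiliary comparisons $s\approx s'$ and $\tilde r\approx r$. In your step~2 the difference $\delta=\tilde r-r$ is governed by the operator with \emph{mixed} coefficients $[2n(\tilde r_i-\tilde r_j)(r_i-r_j)]^{-1}$, and to run Duhamel plus finite speed of propagation for that operator you would have to re-verify the finite-speed lemma for a generator that is not of the form $[2n(y_i-y_j)^2]^{-1}$ for a single process $y$; this is plausible (the coefficients are still symmetric and positive by rigidity) but is not a black-box consequence of~\cite{MR3916329,MR3914908}. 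The cleanest way to execute your step~2 rigorously is in fact to interpolate between $\tilde r$ and $r$ with $\alpha$-dependent driving, which is exactly the paper's construction. So your plan is not wrong, but it duplicates work and still needs the paper's key device in the end; doing the $\alpha$-dependent interpolation once from the start is more economical.
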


The proof of Proposition~\ref{pro:ciala} is postponed to Section~\ref{sec:proofFEU}.

\begin{remark}\label{ren:res}
    Note that, without loss of generality, it is enough to prove Proposition~\ref{pro:ciala} only for the case \(\rho_{\mathrm{fc},t_f}(0)=\rho_{sc}(0)\), since we can always rescale the time: we may define \(\widetilde{s}_i:= (\rho_{\mathrm{fc},t_f}(0)s_i/\rho_{sc}(0))\) and notice that \(\widetilde{s}_i(t)\) is a solution of the DBM~\eqref{eq:lambdapr} after rescaling as \(t'=(\rho_{\mathrm{fc},t_f}(0)/\rho_{sc}(0))^2t\).
\end{remark}

\subsection{Proof of Lemma~\ref{lem:firststepmason} and  Lemma~\ref{lem:secondstepmason}}\label{sec:noncelpiu}

In this section we prove that by Lemma~\ref{lem:overb} and Proposition~\ref{pro:ciala} Lemmas~\ref{lem:firststepmason}--\ref{lem:secondstepmason} follow.

\subsubsection{Application of Proposition~\ref{pro:ciala} to \({\bm{\lambda}}^{z_l}(t)\) and \(\tilde{\bm{\lambda}}^{(l)}(t)\)}\label{sec:lamtillam}

In this section we prove that for any fixed \(l\) the processes \({\bm \lambda}^{z_l}(t)\) and \(\widetilde{\bm \lambda}^{(l)}(t)\) satisfy Assumption~\ref{ass:close}, Definition~\ref{eq:defregpro} and so that by Proposition~\ref{pro:ciala} we conclude the lemma.

\begin{proof}[Proof of Lemma~\ref{lem:firststepmason}]
    For any fix \(l\in[p]\), by the definition of the driving Brownian motions of the processes~\eqref{eq:DBMeA} and~\eqref{eq:nuproc} it is clear that they satisfy Assumption~\ref{ass:close} choosing \({\bm s}(t)={\bm \lambda}^{z_l}(t)\), \({\bm r}(t)=\widetilde{\bm \lambda}^{(l)}(t)\), and \(K=n^{\omega_A}\), since \(L_{ij}(t)\equiv 0\) for \(\abs{i}, \abs{j}\le K\).

    We now show that the set of points \(\{\lambda_{\pm i}^{z_l}\}_{i=1}^n\), rescaled by \(\rho^{z_l}(0)/\rho_{sc}(0)\), is \((g,G)\)-\emph{regular} for
    \begin{equation}
        \label{eq:defgG}
        g=n^{-1+\omega_h}\delta_l^{-100}, \qquad G=n^{-\omega_h}\delta_l^{10}, \qquad \nu=\omega_h.
    \end{equation}
    with \(\delta_l:= 1-\abs{z_l}^2\), for any \(l\in [p]\). By the local law~\eqref{eq:lll}, together with the regularity properties of \(m^{z_l}\) which follow by~\eqref{eq:relwig}, namely that \(m^{z_l}\) is \(1/3\)-H\"older continuous, we conclude that there exist constants \(c_{\omega_h},C_{\omega_h}>0\) such that
    \begin{equation}
        \label{eq:checkass}
        c_{\omega_h}\le \Im \frac{1}{2n} \sum_{i=-n}^n \frac{1}{[\rho^{z_l}(0)\lambda^{z_l}_i/\rho_{sc}(0)]-(E+\ii \eta)}\le C_{\omega_h},
    \end{equation}
    for any \(\abs{E}\le n^{-\omega_h}\delta_l^{10}\), \(n^{-1}\delta_l^{-100}\le \eta \le 10\). In particular, \(c_{\omega_h},C_{\omega_h}\sim 1\) for \(\eta\in [g, n^{-2\omega_h}]\), and \(c_{\omega_h}\gtrsim n^{-100\omega_h}\), \(C_{\omega_h}\lesssim n^{100\omega_h}\) for \(\eta\in [n^{-2\omega_h},10]\). This implies that the set \({\bm \lambda}^{z_l}=\{\lambda_{\pm i}^{z_l}\}_{i=1}^n\) satisfies Definition~\ref{eq:defregpro} and it concludes the proof of this lemma.
\end{proof}

\subsubsection{Application of Proposition~\ref{pro:ciala} to \({\bm \mu}^{(l)}(t)\) and \(\tilde{\bm \mu}^{(l)}(t)\)}\label{sec:mutilmu}

We now prove that for any fixed \(l\) the processes \({\bm \mu}^{(l)}(t)\) and \(\widetilde{\bm \mu}^{(l)}(t)\) satisfy Assumption~\ref{ass:close}, Definition~\ref{eq:defregpro} and so that by Proposition~\ref{pro:ciala} we conclude the lemma.

\begin{proof}[Proof of Lemma~\ref{lem:secondstepmason}]
    For any fixed \(l\in [p]\), we will apply  Proposition~\ref{pro:ciala}
    with the choice \({\bm s}(t)={\bm \mu}^{(l)}(t)\), \({\bm r}(t)=\widetilde{\bm \mu}^{(l)}(t)\) and \(K=n^{\omega_A}\).
    Since the initial data \(s_i(0)=\mu_i^{(l)}(0)\) are the singular values of a Ginibre matrix \(X^{(l)}\), it is clear that the assumption in Definition~\ref{eq:defregpro} holds choosing \(g= n^{-1+\delta}\) and \(G=n^{-\delta}\), and \(\nu=0\), for any small \(\delta>0\) (see e.g.\ the local law in~\eqref{eq:lll}).

    We now check  Assumption~\ref{ass:close}. By the definition of the families of i.i.d.\ Brownian motions
    \begin{equation}
        \label{eq:twofam}
        \left( \{\zeta_{\pm i}^{z_l}\}_{i=1}^{n^{\omega_A}}, \{\widetilde{\zeta}_{\pm i}^{(l)}\}_{i=n^{\omega_A}+1}^n\right)_{l=1}^p, \qquad \left(\{\beta_{\pm i}^{(l)}\}_{i=1}^n\right)_{l=1}^p,
    \end{equation}
    defined in~\eqref{eq:nuproc2} and~\eqref{eq:ginev}, respectively, it immediately follows that they satisfy~\ref{close1} and~\ref{close2} of Assumption~\ref{ass:close}, since
    \(\{\widetilde{\zeta}^{(l)}_{\pm i}\}_{i=n^{\omega_A}+1}^n\) are independent of \( \{\beta_{\pm i}^{(l)}\}_{i=1}^n\) as well as \( \{\beta_{\pm i}^{(l)}\}_{i=n^{\omega_A}+1}^n\)  are independent of  \(\{\widetilde{\zeta}^{(l)}_{\pm i}\}_{i=1}^n\) by construction. Recall that \(\mathcal{F}_{\beta,t}\) denotes the common filtration of all the Brownian motions \({\bm \beta}^{(m)}=\{ \beta_i^{(m)}\}_{i=1}^n\), \(m\in[p]\).

    Finally, we prove that also~\ref{close3} of Assumption~\ref{ass:close} is satisfied.  We recall the
    relations \(i=\mathfrak{i}+(l-1)n^{\omega_A}\) and \(j=\mathfrak{j}+(l-1)n^{\omega_A}\) from~\eqref{eq:frakind}
    which, for any fixed \(l\), establish a one to one relation between a pair \(\mathfrak{i}, \mathfrak{j}\in [n^{\omega_B}]\)
    and a pair \(i,j\) with  \((l-1)n^{\omega_A}+1\le i,j \le l n^{\omega_A}\).
    By the definition of \(\{\zeta_{\pm i}^{z_l}\}_{i=1}^{n^{\omega_A}}\) it follows that
    \begin{equation}
        \label{eq:qvexw}
        \dif \zeta^{z_l}_{\mathfrak{i}}-\dif \beta_{\mathfrak{i}}^{(l)}=\sum_{m=1}^{p n^{\omega_A}} \left(\sqrt{ C^\#(t)}-I\right)_{im} \dif (\underline{\beta})_m, \qquad 1\le \mathfrak{i} \le n^{\omega_A},
    \end{equation}
    with \(\underline{\beta}\) defined in~\eqref{eq:vecla}, and so that for any \(1\le \mathfrak{i}, \mathfrak{j} \le n^{\omega_A}\)
    and fixed \(l\) we have
    \[
        \begin{split}
            &\Exp*{\bigl(\dif \zeta^{z_l}_{\mathfrak{i}}-\dif \beta_{\mathfrak{i}}^{(l)}\bigr)\bigl(\dif \zeta^{z_l}_{\mathfrak{j}}-\dif \beta_{\mathfrak{j}}^{(l)}\bigr)\given \mathcal{F}_{\beta,t}} \\
            &\quad=\sum_{m_1,m_2=1}^{p n^{\omega_A}} \left(\sqrt{C^\#(t)}-I\right)_{im_1}\left(\sqrt{C^\#(t)}-I\right)_{jm_2} \Exp*{\dif (\underline{\beta})_{m_1} \dif (\underline{\beta})_{m_2}\given \mathcal{F}_{\beta,t}} \\
            &\quad = \left[\left(\sqrt{C^\#(t)}-I\right)^2\right]_{ij}\dif t,
        \end{split}
    \]
    since \(\sqrt{C^\#(t)}\) is real symmetric. Hence, \(L_{ij}(t)\) defined in~\eqref{eq:defL} in this case is given by
    \[
        L_{ij}(t)= \left[\left(\sqrt{C^\#(t)}-I\right)^2\right]_{ij}.
    \]
    Then, by Cauchy-Schwarz inequality, we have that
    \begin{equation}
        \label{eq:imperrest}
        \begin{split}
            \abs{L_{ij}(t)}&%
            \le \left[\left(\sqrt{C^\#(t)}-I\right)^2\right]^{1/2}_{ii} \left[\left(\sqrt{C^\#(t)}-I\right)^2\right]^{1/2}_{jj}   \\ & \le \Tr  \left[(\sqrt{C^\#(t)}-I)^2 \right]   \le \Tr  \left[(C^\#(t)-I)^2 \right]
            \lesssim \frac{p^2n^{2\omega_A}}{n^{4\omega_E}},
        \end{split}
    \end{equation}
    with very high probability,
    where in the last inequality we used that \(C^\#(t)\) and \(C(t)\) have the same distribution
    and the bound~\eqref{eq:bbev} of Lemma~\ref{lem:overb} holds for \(C(t)\) hence for \(C^\#(t)\) as well.
    This implies that for any fixed \(l\in [p]\) the two families of Brownian motions \(\{\beta^{(l)}_{\pm i}\}_{i=1}^n\)
    and \(( \{\zeta_{\pm i}^{z_l}\}_{i=1}^{n^{\omega_A}}, \{\widetilde{\zeta}_{\pm i}^{(l)}\}_{i=n^{\omega_A}+1}^n)\) satisfy Assumption~\ref{ass:close} with \(K=n^{\omega_A}\) and \(\omega_Q=4\omega_E-2\omega_A\). Applying Proposition~\ref{pro:ciala} this concludes the proof of Lemma~\ref{lem:secondstepmason}.
\end{proof}

\subsection{Proof of Proposition~\ref{pro:ciala}}\label{sec:proofFEU}

We divide the proof of Proposition~\ref{pro:ciala} into four  sub-sections. In Section~\ref{sec:DIP} we introduce an interpolating process \({\bm x}(t,\alpha)\) between the processes \({\bm s}(t)\) and \({\bm r}(t)\) defined in~\eqref{eq:lambdapr}--\eqref{eq:mupr}, and in Section~\ref{sec:PR} we introduce a measure which approximates the particles \({\bm x}(t,\alpha)\) and  prove their rigidity. In Section~\ref{sec:SR} we introduce a cut-off near zero (this scale will be denoted by \(\omega_A\) later) such that we only couple the dynamics of the particles \(\abs{i}\le n^{\omega_A}\), as defined in~\ref{close3} of Assumption~\ref{ass:close}, i.e.\ we will choose \(\omega_A=\omega_K\). Additionally, we also localise the dynamics on a scale \(\omega_l\) (see Section~\ref{rem:s}) since the main contribution to the dynamics comes from the nearby particles. We will refer to the new process \(\widehat{\bm x}(t,\alpha)\) (see~\eqref{eq:intflowshortA} later) as the \emph{short range approximation} of the process \({\bm x}(t,\alpha)\). Finally, in Section~\ref{sec:endsec} we conclude the proof of Proposition~\ref{pro:ciala}.

Large parts of our proof closely follow~\cite{MR3916329, MR3914908} and for brevity we will focus on the differences. We use~\cite{MR3916329, MR3914908} as our main references since the \(2\times 2\) block matrix setup of~\cite{MR3916329} is very close to the current one and~\cite{MR3916329} itself closely follows~\cite{MR3914908}. However, we
point out that many key ideas of this technique have been introduced in earlier papers on universality; e.g.\ short range cut-off and finite speed of propagation in~\cite{MR3372074, MR3606475}, coupling and homogenisation in~\cite{MR3541852}; for more historical references, see~\cite{MR3914908}. The main novelty of~\cite{MR3914908} itself is a mesoscopic analysis of the fundamental solution
\(p_t(x,y)\) of~\eqref{eq:conteqA} which enables the authors to prove short time universality for general deterministic
initial data. They also proved the result with very high probability unlike~\cite{MR3541852} that relied on level repulsion estimates.
We also mention a related but different more recent technique to prove universality~\cite{1812.10376}, which has been recently adapted to the singular values setup, or equivalently to the \(2\times 2\) block matrix structure, in~\cite{1912.05473}.

\subsubsection{Definition of the interpolated process}\label{sec:DIP}
For \(\alpha\in[0,1]\) we introduce the continuous interpolation process \({\bm x}(t,\alpha)\), between the processes \({\bm s}(t)\) and \({\bm r}(t)\) in~\eqref{eq:lambdapr}--\eqref{eq:mupr}, defined as the solution of the flow
\begin{equation}\label{eq:intflowA}
    \dif x_i(t,\alpha)= \alpha \frac{\dif \mathfrak{b}_i^s}{\sqrt{2n}}+(1-\alpha)\frac{\dif \mathfrak{b}_i^r}{\sqrt{2n}}+\frac{1}{2n}\sum_{j\ne i} \frac{1}{x_i(t,\alpha)-x_j(t,\alpha)}\dif t,
\end{equation}
with initial data
\begin{equation}\label{eq:indatA}
    {\bm x}(0,\alpha)=\alpha{\bm s}(t_0)+(1-\alpha){\bm r}(t_0),
\end{equation}
with some \(t_0\) that is a slightly smaller than \(t_f\). In fact  we will write \(t_0+t_1= t_f\) with \(t_1\ll t_f\), where \(t_1\) is the
time scale for the equilibration of the DBM with initial condition~\eqref{eq:indatA} (see~\eqref{eq:oldt1}).
To make our notation consistent with~\cite{MR3916329, MR3914908} in the remainder of this section we assume that
\(t_0=n^{-1+\omega_0}\), for some small \(\omega_0>0\), such that \(\omega_K\ll \omega_0\ll\omega_Q\).  The reader can think of \(\omega_0=\omega_f\).
Note that the strong solution of~\eqref{eq:intflowA} is well defined since the variance of its driving Brownian motion
is smaller than \(\frac{1}{2n}(1-2\alpha(1-\alpha) n^{-\omega_Q})\) by~\eqref{eq:assbqv},
which is below the critical variance  for well-posedness of the DBM since we are
in the complex symmetry class (see e.g.~\cite[Lemma 4.3.3]{MR2760897}).

By~\eqref{eq:intflowA} it clearly follows that \({\bm x}(t,0)={\bm r}(t+t_0)\) and \({\bm x}(t,1)={\bm s}(t+t_0)\), for any \(t\ge 0\).  Note that the process~\eqref{eq:intflowA} is almost the same as~\cite[Eq.~(3.13)]{MR3914908},~\cite[Eq.~(3.13)]{MR3916329}, except for the stochastic term, which in our case depends on \(\alpha\). Also, to make the notation clearer, we remark that in~\cite{MR3916329, MR3914908} the interpolating process is denoted by \({\bm z}(t,\alpha)\). We  changed this notation to  \({\bm x}(t,\alpha)\) to avoid confusions with the \(z_l\)-parameters introduced in the previous sections where we apply Proposition~\ref{pro:ciala} to the processes defined in Section~\ref{sec:COMPPRO}.

\begin{remark}\label{rem:ne}
    Even if all processes \({\bm \lambda}(t)\), \(\widetilde{\bm \lambda}(t)\), \(\widetilde{\bm \mu}(t)\), \({\bm \mu}(t)\) introduced in Section~\ref{sec:COMPPRO} already satisfy~\cite[Lemma 3.3-3.5]{MR3916329},~\cite[Lemma 3.3-3.5]{MR3914908} as a consequence of the local law~\eqref{eq:lll} and the rigidity estimates~\eqref{eq:rigneed}, we decided to present the proof of Proposition~\ref{pro:ciala} for general deterministic initial data \({\bf s}(0)\) satisfying Definition~\ref{eq:defregpro} (see Remark~\ref{rem:t0diff}). Hence, an additional time \(t_0\) is needed to ensure the validity of~\cite[Lemma 3.3-3.5]{MR3916329},~\cite[Lemma 3.3-3.5]{MR3914908}. More precisely, we first let the DBMs~\eqref{eq:lambdapr}--\eqref{eq:mupr} evolve for a time \(t_0:= n^{-1+\omega_0}\), and then we consider the process~\eqref{eq:intflowA} whose initial data in~\eqref{eq:indatA} is given by a linear interpolation of the solutions of~\eqref{eq:lambdapr}--\eqref{eq:mupr} at time \(t_0\).
\end{remark}

Before proceeding with the analysis of~\eqref{eq:intflowA} we give some definitions and state some preliminary results necessary for its analysis.

\subsubsection{Interpolating measures and particle rigidity}\label{sec:PR}
Using the convention of~\cite[Eq.~(3.10)--(3.11)]{MR3916329}, given a probability measure \(\dif \rho(E)\),  we define the \(2n\)-quantiles \(\gamma_i\) by
\begin{equation}\label{eq:quantA}
    \begin{split}
        \gamma_i &:= \inf\set*{x\given \int_{-\infty}^x \dif \rho(E)\ge \frac{n+i-1}{2n} }, \quad 1\le i \le n, \\
        \gamma_i &:= \inf\set*{x\given\int_{-\infty}^x \dif \rho(E)\ge \frac{n+i}{2n} }, \qquad -n\le i \le -1,
    \end{split}
\end{equation}
Note that \(\gamma_1=0\) if \(\dif \rho(E)\) is symmetric with respect to \(0\).

Let \(\rho_{fc,t}(E)\) be defined above Proposition~\ref{pro:ciala} (see e.g.~\cite[Eq.~(2.5)--(2.6)]{MR3914908} for more details), and let \(\rho_{sc}(E)\) denote the semicircular density, then by \(\gamma_i(t)\), \(\gamma_i^{sc}\) we denote the \(2n\)-quantiles, defined as in~\eqref{eq:quantA}, of \(\rho_{fc,t}\) and \(\rho_{sc}\), respectively.

Following the construction of~\cite[Lemma 3.3-3.4, Appendix A]{MR3914908},~\cite[Section 3.2.1]{MR3916329}, we define the interpolating (random) measure \(\dif \rho(E,t,\alpha)\) for any \(\alpha\in [0,1]\). More precisely, the measure \(\dif \rho(E,t,\alpha)\) is deterministic close to zero, and it consists of delta functions of the position of the particles \(x_i(t,\alpha)\) away from zero.

Denote by \(\gamma_i(t,\alpha)\) the quantiles of \(\dif\rho(E,\alpha,t)\), and by \(m(w,t,\alpha)\), with \(w\in\HC \), its Stieltjes transform. Fix \(q_*\in (0,1)\) throughout this section, and let \(k_0=k_0(q_*)\in\mathbf{N}\) be the largest index such that
\begin{equation}\label{eq:ko}
    \abs{\gamma_{\pm k_0}(t_0)}, \abs{\gamma_{\pm k_0}^{sc}}\le q_*G,
\end{equation}
with \(G\) defined in~\eqref{eq:defgG}, then the measure \(\dif \rho(E,t,\alpha)\) has a deterministic density (denoted by \(\rho(E,\alpha,t)\) with a slight abuse of notation) on the interval
\begin{equation}\label{eq:ga}
    \mathcal{G}_\alpha:= [\alpha\gamma_{-k_0}(t_0)+(1-\alpha)\gamma_{-k_0}^{sc}, \alpha\gamma_{k_0}(t_0)+(1-\alpha)\gamma_{k_0}^{sc}].
\end{equation}
Outside \(\mathcal{G}_\alpha\) the measure \(\dif \rho(E,t,\alpha)\) consists of \(1/(2n)\) times delta functions of the particle locations \(\delta_{x_i(t,\alpha)}\).

\begin{remark}
    By the construction \(\dif\rho(E,t,\alpha)\) as in~\cite[Lemma 3.3-3.4, Appendix A]{MR3914908},~\cite[Section 3.2.1]{MR3916329} all the regularity properties of \(\dif\rho(E,\alpha,t)\), its quantiles \(\gamma_i(t,\alpha)\), and its Stieltjes transform \(m(E+\ii \eta,t,\alpha)\) in~\cite[Lemma 3.3-3.4]{MR3914908},~\cite[Lemma 3.3-3.4]{MR3916329} hold without any change. In particular, it follows that
    \begin{equation}
        \abs{\gamma_i(t,\alpha)-\gamma_j(t,\alpha)}\sim \frac{\abs{i-j}}{n}, \qquad \abs{i},\abs{j}\le q_*G,
    \end{equation}
    with \(q_*\) defined above~\eqref{eq:ko}, and \(G\) in~\eqref{eq:defgG}.
\end{remark}

Define the Stieltjes transform of the empirical measure of the particle configuration \(\{x_{\pm i}(t,\alpha)\}_{i=1}^n\) by
\begin{equation}\label{eq:empmA}
    m_n(w,t,\alpha):= \frac{1}{2n}\sum_{i=-n}^n\frac{1}{x_i(t,\alpha)-w}, \quad w\in\HC .
\end{equation}
We recall that the summation does not include the term \(i=0\) (see Remark~\ref{rem:no0}). The local law for $m_n(w,t,\alpha)$, hence rigidity for the interpolated particles \eqref{eq:intflowA}, easily follows similarly to \cite[Section 3.2]{MR4009708}; the only minor difference is that the driving martingales in \eqref{eq:intflowA} are not independent for different indices (see also \cite[Lemma 7.12]{MR4235475} when a similar proof has been presented with more details).

\begin{lemma}\label{lem:local}
    Fix \(q\in (0,1)\) and  \(\tilde{\epsilon}>0\). Define \(\widehat{C}_q:= \{j:\abs{j}\le qk_0\}\), with \(k_0\) defined in~\eqref{eq:ko}. Then for any \(\xi>0\), with very high probability we have the optimal rigidity
    \begin{equation}
        \label{eq:rigA}
        \sup_{0\le t\le t_0 n^{-\tilde{\epsilon}}}\sup_{i\in \widehat{C}_q}\sup_{0\le\alpha\le 1}\abs{x_i(t,\alpha)-\gamma_i(t,\alpha)}\le \frac{n^{\xi+100\nu}}{n},
    \end{equation}
    and the local law
    \begin{equation}
        \label{eq:trllA}
        \sup_{n^{-1+\tilde{\epsilon}}\le \eta\le 10} \sup_{0\le t\le t_0 n^{-\tilde{\epsilon}}}\sup_{0\le \alpha\le 1} \sup_{E\in q\mathcal{G}_\alpha}\abs{m_n(E+\ii\eta,t,\alpha)-m(E+\ii\eta,t,\alpha)}\le \frac{n^{\xi+100\nu}}{n\eta},
    \end{equation}
    for sufficiently large \(n\), with \(\nu>0\) from  Definition~\ref{eq:defregpro}.
\end{lemma}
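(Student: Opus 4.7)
The plan is to reduce the statement to the short-time local law and rigidity for DBM already proven in \cite[Lemma 3.5]{MR3916329} (which itself is based on \cite[Appendix A-B]{MR3914908} and \cite[Section 3]{MR3687212}) and then handle the two genuinely new features of our setting: (i) the driving noise in \eqref{eq:intflowA} is the convex combination $\alpha\dif\mathfrak{b}^s + (1-\alpha)\dif\mathfrak{b}^r$ whose entries are correlated through \eqref{eq:defL}; and (ii) the estimates must be uniform in $\alpha\in[0,1]$.

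First I would verify that for each fixed $\alpha$ the process \eqref{eq:intflowA} is, up to negligible corrections, a standard singular-value DBM. Using Assumption~\ref{ass:close}\ref{close3}, the conditional quadratic variation of $\alpha\dif\mathfrak{b}^s_i+(1-\alpha)\dif\mathfrak{b}^r_i$ equals $(1-\alpha(1-\alpha)L_{ii}(t))\dif t$ with $\abs{L_{ii}(t)}\le n^{-\omega_Q}$, and the cross-covariances $\Exp{(\alpha\dif\mathfrak{b}^s_i+(1-\alpha)\dif\mathfrak{b}^r_i)(\alpha\dif\mathfrak{b}^s_j+(1-\alpha)\dif\mathfrak{b}^r_j)\given\mathcal{F}_t}$ for $i\ne j$ are similarly of size at most $n^{-\omega_Q}\dif t$. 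Hence $\bm{x}(t,\alpha)$ can be written as a standard DBM driven by i.i.d.\ real Brownian motions plus a martingale correction whose quadratic variation over $[0,t_0 n^{-\tilde\epsilon}]$ is bounded by $n^{-\omega_Q}t_0$, and this correction is harmless for the Grönwall/characteristic-method arguments used in~\cite{MR3914908,MR3916329,MR3687212}.

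Next I would check that the initial condition $\bm{x}(0,\alpha)=\alpha\bm{s}(t_0)+(1-\alpha)\bm{r}(t_0)$ is $(g,G)$-regular in the sense of Definition~\ref{eq:defregpro} uniformly in $\alpha$. For $\bm{s}$ this is built into the hypothesis on $\bm{s}(0)$ together with a short-time regularisation argument of the type used in~\cite[Lemma 3.3--3.4]{MR3914908}, while for $\bm{r}(t_0)$, since the initial data are singular values of a Ginibre matrix, Definition~\ref{eq:defregpro} holds with $g=n^{-1+\nu}$, $G=n^{-\nu}$ by standard Ginibre local laws. Combining the rigidity of $\bm s(t_0)$ and $\bm r(t_0)$ with the construction of the deterministic interpolating measure $\dif\rho(E,t,\alpha)$ of~\cite[Lemma~3.3--3.4]{MR3914908} and~\cite[Section~3.2.1]{MR3916329} gives the required bound~\eqref{eq:upbv} for $\bm x(0,\alpha)$ uniformly in $\alpha$; here I rely on the fact that the quantiles $\gamma_i(t,\alpha)$ depend continuously on~$\alpha$ and the deterministic part of $\dif\rho(E,t,\alpha)$ lives on the window~\eqref{eq:ga}.

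With these two inputs in place, for each fixed $\alpha$ the short-time local law and rigidity of~\cite[Lemma~3.5]{MR3916329} and~\cite[Appendix~A--B]{MR3914908} apply verbatim and yield~\eqref{eq:rigA} and~\eqref{eq:trllA} with very high probability. Uniformity in $\alpha$, $t$, $E$, $\eta$ and $i$ is then obtained by the standard grid-plus-continuity argument: take an $n^{-D}$-net in $[0,1]\times[0,t_0n^{-\tilde\epsilon}]\times q\mathcal{G}_\alpha\times[n^{-1+\tilde\epsilon},10]$, apply a union bound over the polynomially many grid points and the $O(n)$ indices $i\in\widehat C_q$, and extend to the full parameter set using a deterministic Lipschitz estimate for $x_i(t,\alpha)$ in $(t,\alpha)$ (following from \eqref{eq:intflowA} and the a priori bound $\norm{\bm{x}(t,\alpha)}_\infty\le n^{C}$) and for $m_n(E+\ii\eta,t,\alpha)-m(E+\ii\eta,t,\alpha)$ in $(E,\eta)$ which is trivial.

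The main obstacle is item~(i): ensuring that the modification of the noise structure through $L_{ij}(t)$ really does not interfere with the mesoscopic analysis of the fundamental solution of the linearised DBM in~\cite{MR3914908}. Fortunately, because $\omega_K\ll\omega_Q$ and $t_0\ll 1$, every estimate in~\cite[Appendix A--B]{MR3914908} and~\cite[Section~3]{MR3916329} is affected only by an additive $n^{-\omega_Q}$ perturbation, which is absorbed into the $n^{\xi+100\nu}$ factor in~\eqref{eq:rigA} and~\eqref{eq:trllA} after choosing $\xi,\nu\ll\omega_Q$ as prescribed in Section~\ref{rem:s}.
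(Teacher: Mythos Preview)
Your proposal is correct and follows the same approach as the paper, which simply cites \cite[Lemma~3.5]{MR3916329} (itself based on \cite[Appendix~A--B]{MR3914908} and \cite[Section~3]{MR3687212}) without further elaboration. Your expansion of what adaptations are needed---the bound $\Exp{\dif C_i(\alpha,t)\dif C_i(\alpha,t)\given\mathcal{F}_t}\lesssim n^{-1}\dif t$ on the diagonal quadratic variation (cf.~\eqref{eq:mindt} later in the paper), the $(g,G)$-regularity of the interpolated initial data via the construction in \cite[Lemma~3.3--3.4]{MR3914908}, and the grid argument for $\alpha$-uniformity (cf.~\cite[Appendix~E]{MR3914908})---is exactly what underlies the citation.
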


Without loss of generality in Lemma~\ref{lem:local} we assumed \(k_1=k_0\) in~\cite[Eq.~(3.25)--(3.26)]{MR3916329}.

\subsubsection{Short range analysis}\label{sec:SR}

In the following of this section we perform a local analysis of~\eqref{eq:intflowA} adapting the analysis of~\cite{MR3916329, MR3914908} and explaining the minor changes needed for the analysis of the flow~\eqref{eq:intflowA}, for which the driving Brownian motions \(\mathfrak{\bm b}^s\), \(\mathfrak{\bm b}^r\) satisfy Assumption~\ref{ass:close}, compared to the analysis of~\cite[Eq.~(3.13)]{MR3916329},~\cite[Eq.~(3.13)]{MR3914908}. More precisely, we run the DBM~\eqref{eq:intflowA} for a time
\begin{equation}\label{eq:oldt1}
    t_1:= \frac{n^{\omega_1}}{n},
\end{equation}
for any \(\omega_1>0\) such that \(\nu\ll \omega_1\ll \omega_K\), with \(\nu,\omega_K\) defined in Definition~\ref{eq:defregpro} and above Assumption~\ref{ass:close}, respectively, so that~\eqref{eq:intflowA} reaches its local equilibrium (see Section~\ref{rem:s} for a summary on the different scales). Moreover, since the dynamics of \(x_i(t,\alpha)\) is mostly influenced by the particles close to it, in the following we define a short range approximation of the process \({\bm x}(t,\alpha)\) (see~\eqref{eq:intflowshortA} later), denoted by \(\widehat{\bm x}(t,\alpha)\), and use the homogenisation theory developed in~\cite{MR3914908}, adapted in~\cite{MR3916329} for the singular values flow, for the short range kernel.

\begin{remark}
    We do not need to define the shifted process \(\widetilde{\bm x}(t,\alpha)\) as in~\cite[Eq.~(3.29)--(3.32)]{MR3916329} and~\cite[Eq.~(3.36)--(3.40)]{MR3914908}, since in our case the measure \(\dif \rho(E,t,\alpha)\) is symmetric with respect to \(0\) by assumption, hence, using the notation in~\cite[Eq.~(3.29)--(3.32)]{MR3916329}, we have \(\widetilde{\bm x}(t,\alpha)={\bm x}(t,\alpha)-\gamma_1(t,\alpha)={\bm x}(t,\alpha)\). Hence, from now on we only use \({\bm x}(t,\alpha)\) and the reader can think \(\widetilde{\bm x}(t,\alpha)\equiv {\bm x}(t,\alpha)\) for a direct analogy with~\cite{MR3916329, MR3914908}.
\end{remark}

Our analysis will be completely local, hence we introduce a short range cut-off. Fix \(\omega_l, \omega_A>0\) so that
\begin{equation}\label{eq:relscalA}
    0< \omega_1 \ll \omega_l\ll \omega_A\ll \omega_0\ll \omega_Q,
\end{equation}
with \(\omega_1\) defined in~\eqref{eq:oldt1}, \(\omega_0\) defined below~\eqref{eq:indatA}, and \(\omega_Q\) in~\ref{close3} of Assumption~\ref{ass:close}.
Moreover, we assume that \(\omega_A\) is such that
\begin{equation}\label{eq:choosekn}
    K_n=n^{\omega_A},
\end{equation}
with \(K_n=n^{\omega_K}\) in Assumption~\ref{ass:close}, i.e.\ \(\omega_A=\omega_K\). We remark that it is enough to choose \(\omega_A\ll\omega_K\), but to avoid further splitting in~\eqref{eq:intflowshortA} we assumed \(\omega_K=\omega_A\).

For any \(q\in(0,1)\), define the set
\begin{equation}\label{eq:shortscsetA}
    A_q:= \set*{(i,j)\given \abs{i-j}\le n^{\omega_l} \; \text{or} \; ij>0, i\notin \widehat{C}_q, j\notin \widehat{C}_q},
\end{equation}
and denote \(A_{q,(i)}:= \set{j\given(i,j)\in A_q }\). In the remainder of this section we will often use the notations
\[
    \sum_j^{A_{q,(i)}}:=  \sum_{j\in A_{q,(i)}}, \qquad \sum_j^{A_{q,(i)}^c}:=  \sum_{j\notin A_{q,(i)}}.
\]

Let \(q_*\in (0,1)\) be defined above~\eqref{eq:ko}, then we define the short range process \(\widehat{\bm x}(t,\alpha)\) (cf.~\cite[Eq.~(3.35)--(3.36)]{MR3916329},~\cite[Eq.~(3.45)--(3.46)]{MR3914908}) as follows
\begin{equation}\label{eq:intflowshortA}
    \begin{split}
        \dif \widehat{x}_i(t,\alpha)&= \frac{1}{2n}\sum_j^{A_{q_*,(i)}} \frac{1}{\widehat{x}_i(t,\alpha)-\widehat{x}_j(t,\alpha)} \dif t \\
        &\quad +\begin{cases}
            \alpha \frac{\dif \mathfrak{b}^s}{\sqrt{2n}}+(1-\alpha)\frac{\dif \mathfrak{b}^r}{\sqrt{2n}}                      & \text{if} \quad \abs{i}\le n^{\omega_A},   \\
            \alpha \frac{\dif \mathfrak{b}^s}{\sqrt{2n}}+(1-\alpha)\frac{\dif \mathfrak{b}^r}{\sqrt{2n}}+J_i(\alpha,t) \dif t & \text{if} \quad n^{\omega_A}<\abs{i}\le n,
        \end{cases}
    \end{split}
\end{equation}
where
\begin{equation}\label{eq:defJA}
    J_i(\alpha,t):= \frac{1}{2n}\sum_j^{A_{q_*,(i)}^c} \frac{1}{x_i(t,\alpha)-x_j(t,\alpha)},
\end{equation}
and initial data \(\widehat{\bm x}(0,\alpha)={\bm x}(0,\alpha)\). Note that
\begin{equation}\label{eq:boundJA}
    \sup_{0\le t\le t_1}\sup_{0\le\alpha\le 1}\abs{J_1(\alpha,t)}\le \log n,
\end{equation}
with very high probability.

\begin{remark}
    Note that the SDE defined in~\eqref{eq:intflowshortA}  has the same form as in~\cite[Eq.~(3.70)]{MR3914908}, with \(F_i=0\) in our case, except for the stochastic term in~\eqref{eq:intflowshortA} that looks slightly different, in particular it depends on \(\alpha\). Nevertheless, by Assumption~\ref{ass:close}, the quadratic variation of the driving Brownian motions in~\eqref{eq:intflowshortA} is also bounded by one uniformly in \(\alpha\in[0,1]\). Moreover, the process defined in~\eqref{eq:intflowshortA} and the measure \(\dif \rho(E,t,\alpha)\) satisfy~\cite[Eq.~(3.71)--(3.77)]{MR3914908}.
\end{remark}

Since when we consider the difference process \(\widehat{\bm x}(t,\alpha)-{\bm x}(t,\alpha)\) the stochastic differential disappears, by~\cite[Lemma 3.8]{MR3914908}, without any modification, it follows that
\begin{equation}\label{eq:shortlongA}
    \sup_{0\le t\le t_1}\sup_{0\le \alpha\le 1}\sup_{\abs{i}\le n}\abs{\widehat{x}_i(t,\alpha)-x_i(t,\alpha)}\le n^{\xi+100\nu} t_1\left( \frac{1}{n^{\omega_l}}+\frac{n^{\omega_A}}{n^{\omega_0}}+\frac{1}{\sqrt{nG}}\right),
\end{equation}
for any \(\xi>0\) with very high probability, with \(G\) defined in~\eqref{eq:defgG}. In particular,~\eqref{eq:shortlongA} implies that the short range process \(\widehat{\bm x}(t,\alpha)\), defined in~\eqref{eq:intflowshortA}, approximates very well (i.e.\ they are closer than the fluctuation scale) the process \({\bm x}(t,\alpha)\) defined in~\eqref{eq:intflowA}.

Next, in order to use the smallness of~\eqref{eq:defL}--\eqref{eq:assbqv} in Assumption~\ref{ass:close} for \(\abs{i}\le n^{\omega_A}\), we define \({\bm u}(t,\alpha):= \partial_\alpha \widehat{\bm x}(t,\alpha)\), which is the solution of the following discrete SPDE (cf.~\cite[Eq.~(3.38)]{MR3916329},~\cite[Eq.~(3.63)]{MR3914908}):
\begin{equation}\label{eq:pareqA}
    \dif {\bm u}=\sum_j^{A_{q_*,(i)}} B_{ij}(u_j-u_i) \dif t+\dif {\bm \xi}_1+{\bm\xi}_2 \dif t=-\mathcal{B}{\bm u} \dif t+\dif {\bm \xi}_1+{\bm\xi}_2 \dif t,
\end{equation}
where
\begin{equation}\label{eq:shortrankerA}
    \begin{split}
        B_{ij}&:= \frac{\bm1_{j\ne \pm i}}{2n(\widehat{x}_i-\widehat{x}_j)^2}, \quad \dif \xi_{1,i}:=  \frac{\dif \mathfrak{b}_i^s}{\sqrt{2n}}-\frac{\dif \mathfrak{b}_i^r}{\sqrt{2n}} \\
        \xi_{2,i}&:=  \begin{cases}
            0                             & \text{if} \quad \abs{i}\le n^{\omega_A},   \\
            \partial_\alpha J_i(\alpha,t) & \text{if} \quad n^{\omega_A}<\abs{i}\le n,
        \end{cases}
    \end{split}
\end{equation}
with \(J_i(\alpha,t)\) defined in~\eqref{eq:defJA}. We remark that the operator\footnote{The operator \(\cB\) defined here is not to be confused with the completely unrelated one in~\eqref{cB def}.} \(\cB\) defined via the kernel in~\eqref{eq:shortrankerA} depends on \(\alpha\) and \(t\). It is not hard to see (e.g.\ see~\cite[Eq.~(3.65), Eq.~(3.68)--(3.69)]{MR3914908}) that the forcing term \({\bm \xi}_2\) is bounded with very high probability by \(n^C\), for some \(C>0\), for \(n^{\omega_A}<\abs{i}\le n\). Note that the only difference in~\eqref{eq:pareqA} compared to~\cite[Eq.~(3.38)]{MR3916329},~\cite[Eq.~(3.63)]{MR3914908} is the additional term \(\dif {\bm \xi}_1\) which will be negligible for our analysis.

Let \(\mathcal{U}\) be the semigroup associated to \(\mathcal{B}\), i.e.\ if \(\partial_t {\bm v}=-\mathcal{B}{\bm v}\), then for any \(0\le s\le t\) we have that
\[
    v_i(t)=\sum_{j=-n}^n \mathcal{U}_{ij}(s,t,\alpha) v_j(s), \qquad \abs{i}\le n.
\]
The first step to analyse the equation in~\eqref{eq:pareqA} is the following finite speed of propagation estimate (cf.~\cite[Lemma 3.9]{MR3916329},~\cite[Lemma 3.7]{MR3914908}).

\begin{lemma}\label{lem:finspeedA}
    Let \(0\le s\le t\le t_1\). Fix \(0<q_1<q_2<q_*\), with \(q_*\in (0,1)\) defined in~\eqref{eq:ko}, and \(\epsilon_1>0\) such that \(\epsilon_1\ll\omega_A\). Then for any \(\alpha\in[0,1]\) we have
    \begin{equation}
        \label{eq:finspestA}
        \abs{U_{ji}(s,t,\alpha)}+\abs{U_{ij}(s,t,\alpha)}\le n^{-D},
    \end{equation}
    for any \(D>0\) with very high probability, if either \(i\in \widehat{C}_{q_2}\) and \(\abs{i-j}> n^{\omega_l+\epsilon_1}\), or if \(i\notin \widehat{C}_{q_2}\) and \(j\in\widehat{C}_{q_1}\).
    \begin{proof}
        The proof of this lemma follows the same lines as~\cite[Lemma 3.7]{MR3914908}. There are only two differences that we point out. The first one is that~\cite[Eq.~(4.15)]{MR3914908}, using the notation therein, has to be replaced by
        \begin{equation}
            \label{eq:cpl}
            \sum_k v_k^2(\nu^2(\psi_k')^2+\nu\psi_k'') \Exp*{\dif C_k(\alpha,t)\dif C_k(\alpha,t)\given \mathcal{F}_t},
        \end{equation}
        where \(\mathcal{F}_t\) is the filtration defined in Assumption~\ref{ass:close}, and \(C_k(\alpha,t)\) is defined as
        \begin{equation}
            \label{eq:defcka}
            C_k(\alpha,t):=  \alpha \frac{\mathfrak{b}_k^s(t)}{\sqrt{2n}}+(1-\alpha)\frac{\mathfrak{b}_k^r(t)}{\sqrt{2n}}.
        \end{equation}
        We remark that \(\nu\) in~\eqref{eq:cpl} should not to be confused with \(\nu\) in Definition~\ref{eq:defregpro}.
        Then, by Kunita-Watanabe inequality, it is clear that
        \begin{equation}
            \label{eq:mindt}
            \Exp*{\dif C_k(\alpha,t)\dif C_k(\alpha,t)\given \mathcal{F}_t}\lesssim \frac{\dif t}{n},
        \end{equation}
        uniformly in \(\abs{k}\le n\), \(t\ge 0\), and \(\alpha\in [0,1]\). The fact that~\eqref{eq:mindt} holds is the only input needed to bound~\cite[Eq.~(4.21)]{MR3914908}.

        The second difference is that the stochastic differential \((\sqrt{2}\dif B_k)/\sqrt{n}\) in~\cite[Eq.~(4.21)]{MR3914908} has to be replaced by \(\dif C_k(\alpha,t)\) defined in~\eqref{eq:defcka}. This change is inconsequential in the bound~\cite[Eq.~(4.26)]{MR3914908}, since \(\E \dif C_k(\alpha,t)=0\).
    \end{proof}
\end{lemma}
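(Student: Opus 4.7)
My plan is to follow the classical finite-speed-of-propagation scheme for short-range DBM kernels as developed in \cite{MR3914908}, adapted to the block-matrix setting of \cite{MR3916329}, and to verify that the two places where the driving noise enters the argument can be handled by the quadratic variation bound in Assumption~\ref{ass:close}.

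First, I would dualize: it suffices to bound $\sum_{k} \psi_k U_{jk}(s,t,\alpha)^2$ where $\psi_k$ is a suitably chosen non-negative cutoff function that is zero on the indices we wish to excise and grows rapidly outside a neighborhood of them. Concretely, for the first case of the lemma I would take a smoothed increasing $\psi = \psi_{i,\ell}$ supported on $\{k : |k-i| \ge n^{\omega_l + \epsilon_1/2}\}$, choose a small parameter $\nu$, and consider $\Phi(t) := \sum_k e^{\nu \psi_k} v_k(t)^2$ where $v$ satisfies the dual equation $dv = -\mathcal{B}v\,dt + d\bm\xi_1 + \bm\xi_2 dt$ from \eqref{eq:pareqA}. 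By Itô, $d\Phi = $(drift from $-\mathcal{B}$ and $\bm\xi_2$)$+$(martingale from $d\bm\xi_1$)$+$(Itô correction term of the form displayed in \eqref{eq:cpl}). The drift term $-2\sum_{i,j} B_{ij} v_i (v_i-v_j) e^{\nu\psi_i}$ is controlled exactly as in \cite[Lemma 3.7]{MR3914908}: after symmetrisation it produces $\sum_{ij} B_{ij}(v_i-v_j)^2 e^{\nu\psi_i}$ with a prefactor and an error of size $\nu^2 (\psi_k')^2 + \nu \psi_k''$; the short-range structure $B_{ij}=0$ for $|i-j| > n^{\omega_l}$ on $\widehat C_{q_*}$ forces $|\psi_i - \psi_j| \lesssim n^{\omega_l} \|\psi'\|_\infty$ in the relevant regime, so that choosing $\nu$ small enough the drift remains $\le C\Phi$.

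The key adaptation is to the Itô correction term. In our case the stochastic piece of \eqref{eq:pareqA} is $d\xi_{1,k} = dC_k(1,t) - dC_k(0,t)$-like (or directly of the form $d\mathfrak{b}_k^s / \sqrt{2n} - d\mathfrak{b}_k^r/\sqrt{2n}$), whose conditional quadratic variation is, by Kunita--Watanabe and Assumption~\ref{ass:close}, bounded by $dt/n$ uniformly in $\alpha, k, t$ exactly as in~\eqref{eq:mindt}. Thus the Itô correction contributes at most $\sum_k (\nu^2 (\psi_k')^2 + \nu\psi_k'') v_k^2 \cdot (\dif t/n)$, which for the standard choice $\|\psi'\|_\infty + \|\psi''\|_\infty \le n$ is absorbed into $C\Phi\,dt$. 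The genuine martingale part $\sum_k \nu \psi_k' v_k\, d\xi_{1,k}$ is dealt with by the Burkholder--Davis--Gundy inequality combined with the same bound on the quadratic variation, giving a supremum estimate $\sup_{s\le t \le s+t_1} |\int \dots|$ that is negligible with very high probability. Importantly, any dependence between $\mathfrak b^s$ and $\mathfrak b^r$ only decreases this quadratic variation, so the correlation structure encoded in Assumption~\ref{ass:close} is automatically harmless here; we do not need the stronger bound \eqref{eq:assbqv} for this step.

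The forcing term $\bm\xi_2$ is supported on indices outside $\widehat C_{q_*}$, where $\psi_k$ is designed to be large (one may truncate the problem to $i\in \widehat C_{q_2}$ and use that $\psi$ vanishes there), so $\sum \bm\xi_2 \cdot v e^{\nu\psi}$ contributes only boundary terms that can be estimated using rigidity \eqref{eq:rigA} and $|\bm\xi_2|\le n^C$; these are killed by the exponentially large gap $e^{\nu \psi}$ provided by moving the second case of the lemma into the cutoff. Putting everything together one obtains a Gronwall inequality $\Phi(t) \le \Phi(s) e^{C(t-s)} + n^{-D}$, and taking $\psi_k(j) = 0$ allows us to read off $U_{ji}(s,t,\alpha)^2 \le \Phi(s) e^{C t_1} / e^{\nu \psi_j} \le n^{-D'}$ for any $D'$ by choosing the target index $j$ outside the support of $\psi$, which is precisely the regime in the statement. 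The symmetry $U_{ij} \leftrightarrow U_{ji}$ follows from considering the adjoint equation, which has the same form. The main technical step to get right is the second case (propagation from $i\notin\widehat C_{q_2}$ into $\widehat C_{q_1}$): here the boundary contribution of the forcing $\bm\xi_2$ sits exactly at the edge of the short-range set $A_{q_*}$, and one must arrange the two intermediate scales $q_1 < q_2 < q_*$ so that $\psi$ can jump sharply in the buffer zone between $\widehat C_{q_1}$ and $\widehat C_{q_2}$ without violating the drift bound, just as in \cite[Lemma 3.7]{MR3914908}.
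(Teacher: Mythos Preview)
Your overall strategy—the weighted energy method with exponential cutoff, Gronwall, and BDG for the martingale—is the correct skeleton and matches \cite[Lemma~3.7]{MR3914908}. But you have misidentified which equation the test vector $v$ satisfies, and hence where the stochastic terms originate.

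The semigroup $\mathcal{U}$ is the propagator for the \emph{homogeneous} equation $\partial_t v=-\mathcal{B}v$; it has nothing to do with the noise $\dif\bm\xi_1$ or the forcing $\bm\xi_2$ in \eqref{eq:pareqA}, which is the equation for $u=\partial_\alpha\widehat{x}$, not for a column of $\mathcal{U}$. To bound $\mathcal{U}_{ij}$ one fixes $a$ and sets $v_k(t)=\mathcal{U}_{ka}(s,t,\alpha)$; this $v$ solves an ODE with random, time-dependent coefficients and contributes no It\^o correction of its own. The stochastic terms in $\Phi(t)=\sum_k e^{\nu\psi_k}v_k^2$ arise because the cutoff is evaluated at the particle positions, $\psi_k=\psi(\widehat{x}_k(t,\alpha))$, and $\widehat{x}$ evolves by the SDE \eqref{eq:intflowshortA} with stochastic differential $\dif C_k(\alpha,t)=\alpha\,\dif\mathfrak{b}_k^s/\sqrt{2n}+(1-\alpha)\,\dif\mathfrak{b}_k^r/\sqrt{2n}$. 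This is the source of the It\^o correction \eqref{eq:cpl} and of the martingale term; the relevant quadratic variation bound \eqref{eq:mindt} is for $\dif C_k(\alpha,t)$, not for $\dif\xi_{1,k}=(\dif\mathfrak{b}_k^s-\dif\mathfrak{b}_k^r)/\sqrt{2n}$. Consequently your discussion of $\bm\xi_2$ is irrelevant here—no forcing appears in the semigroup equation—and your final step of reading off $\mathcal{U}_{ji}$ from a Gronwall bound on a solution of the inhomogeneous $u$-equation does not connect to the claim. Once you correct the source of the noise (and drop $\bm\xi_1,\bm\xi_2$ from the test equation), the remainder of your outline goes through exactly as in the paper.
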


Moreover, the result in~\cite[Lemma 3.8]{MR3916329},~\cite[Lemma 3.10]{MR3914908} hold without any change, since its proof is completely deterministic and the stochastic differential in the definition of the process \(\widehat{\bm x}(t,\alpha)\) does not play any role.

In the remainder of this section, before completing the proof of Proposition~\ref{pro:ciala}, we describe the homogenisation argument to approximate the \(t\)-dependent kernel of \(\mathcal{B}\) with a continuous kernel (denoted by \(p_t(x,y)\) below). We follow verbatim~\cite[Section 3-4]{MR3914908} and its adaptation to the singular value flow of~\cite[Section 3.4]{MR3916329}, except for the bound of the rhs.\ of~\eqref{eq:step4A}, where we handle the additional term \(\dif {\bm \xi}_1\) in~\eqref{eq:shortrankerA}.

Fix a constant \(\epsilon_B>0\) such that \(\omega_A-\epsilon_B>\omega_l\), and let \(a\in\Z \) be such that \(0<\abs{a}\le n^{\omega_A-\epsilon_B}\). Define also the equidistant points \(\gamma_j^f:= j (2n\rho_{sc}(0))^{-1}\), which approximate the quantiles \(\gamma_j(t,\alpha)\) very well for small \(j\), i.e.\ \(\abs{\gamma_j^f-\gamma_j(t,\alpha)}\lesssim n^{-1}\) for \(\abs{j}\le n^{\omega_0/2}\) (see~\cite[Eq.~(3.91)]{MR3914908}). Consider the solution of
\begin{equation}\label{eq:wsolA}
    \partial_t w_i=-(\mathcal{B}w)_i, \quad w_i(0)=2n\delta_{ia},
\end{equation}
and define the cut-off \(\eta_l:= n^{\omega_l} (2n\rho_{sc}(0))^{-1}\). Let \(p_t(x,y)\) be the fundamental solution of the equation
\begin{equation}\label{eq:conteqA}
    \partial_t f(x)=\int_{\abs{x-y}\le \eta_l}\frac{f(y)-f(x)}{(x-y)^2}\rho_{sc}(0)\dif y.
\end{equation}
The idea of the homogenisation argument is that the deterministic solution \(f\) of~\eqref{eq:conteqA} approximates very well the random solution of~\eqref{eq:wsolA}. This is formulated in terms of the solution kernels of the two equations in Proposition~\ref{prop:homA}. Following~\cite[Lemma 3.9-3.13, Corollary 3.14, Theorem 3.15-3.17]{MR3916329}, which are obtained adapting the proof of~\cite[Section 3.6]{MR3914908}, we will conclude the following proposition.

\begin{proposition}\label{prop:homA}
    Let \(a,i\in\Z \) such that \(\abs{a}\le n^{\omega_A-\epsilon_B}\) and \(\abs{i-a}\le n^{\omega_l}/10\). Fix \(\epsilon_c>0\) such that \(\omega_1-\epsilon_c>0\), let \(t_1:=  n^{-1+\omega_1}\) and \(t_2:=  n^{-\epsilon_c}t_1\), then for any \(\alpha\in [0,1]\) and for any \(\abs{u}\le t_2\) we have
    \begin{equation}
        \label{eq:homapprA}
        \abs*{\mathcal{U}_{ia}(0,t_1+u,\alpha)-\frac{p_{t_1}(\gamma_i^f,\gamma_a^f)}{n}}\le \frac{n^{100\nu+\epsilon_c}}{nt_1}\left(\frac{(nt_1)^2}{n^{\omega_l}}+\frac{1}{(nt_1)^{1/10}}+\frac{1}{n^{3\epsilon_c/2}}\right),
    \end{equation}
    with very high probability.
    \begin{proof}
        The proof of this proposition relies on~\cite[Section 3.6]{MR3914908}, which has been adapted to the \(2\times 2\) block structure in~\cite[Lemma 3.9--3.13, Corollary 3.14, Theorem 3.15--3.17]{MR3916329}. We thus present only the differences compared to\cite{MR3916329, MR3914908}; for a complete proof we defer the reader to these works.

        The only difference in the proof of this proposition compared to the proof of~\cite[Theorem 3.17]{MR3916329},~\cite[Theorem 3.11]{MR3914908} is in~\cite[Eq.~(3.121) of Lemma 3.14]{MR3914908} and~\cite[Eq.~(3.148) of Lemma 3.14]{MR3914908}. The main goal of~\cite[Lemma 3.14]{MR3914908} and~\cite[Lemma 3.14]{MR3914908} is to prove that
        \begin{equation}
            \label{eq:maindiff}
            \dif \frac{1}{2n}\sum_{1\le \abs{i}\le n}(w_i-f_i)^2=-\braket{{\bm w}(t)-{\bm f}(t),\mathcal{B}({\bm w}(t)-{\bm f}(t))}+\text{Lower order},
        \end{equation}
        where \(f_i:=f(\widehat{x}_i(t,\alpha),t)\), with \(\widehat{x}_i(t,\alpha)\) being the solution of~\eqref{eq:intflowshortA}, and \({\bm w}(t)\), \({\bm f}(t)\) being the solutions of~\eqref{eq:wsolA} and~\eqref{eq:conteqA} with \(x=\widehat{x}_i(t,\alpha)\), respectively. In order to prove~\eqref{eq:maindiff}, following~\cite[Eq.~(3.121)]{MR3914908} and using the notation therein (with \(N=2n\) and replacing \(\widehat{z}_i\) by \(\widehat{x}_i\)), we compute
        \begin{equation}
            \label{eq:replace1}
            \begin{split}
                &\dif \frac{1}{2n}\sum_{1\le \abs{i}\le n}(w_i-f_i)^2 \\
                &\,\,= \frac{1}{n}\sum_{1\le \abs{i}\le n}(w_i-f_i)\left[\partial_t w_i \dif t-(\partial_t f)(t,\widehat{x}_i)\dif t-f'(t,\widehat{x}_i)\dif \widehat{x}_i\right]\\
                &\,\,\quad +\frac{1}{n}\sum_{1\le \abs{i}\le n} \left(-(w_i-f_i)f''(t, \widehat{x}_i)+(f'(t,\widehat{x}_i))^2\right) \Exp*{\dif C_i(\alpha,t)\dif C_i(\alpha,t)\given\mathcal{F}_t},
            \end{split}
        \end{equation}
        where
        \[
            C_i(\alpha,t):=  \alpha \frac{\mathfrak{b}_i^s(t)}{\sqrt{2n}}+(1-\alpha)\frac{ \mathfrak{b}_i^r(t)}{\sqrt{2n}}.
        \]
        As a consequence of the slight difference in definition of \(\dif \widehat{x}_i\) in~\eqref{eq:intflowshortA}, compared to the definition of \(\dif \widehat{z}_i\) in~\cite[Eq. (3.70)]{MR3914908}, the martingale term in~\eqref{eq:replace1} is given by (cf.~\cite[Eq.~(3.148)]{MR3914908})
        \begin{equation}
            \label{eq:replace2}
            \dif M_t=\frac{1}{2n}\sum_{1\le \abs{i}\le n} (w_i-f_i) f_i' \dif C_i(\alpha,t).
        \end{equation}
        The terms in the first line of the rhs.\ of~\eqref{eq:replace1} are bounded exactly as in~\cite[Eq.~(3.124)--(3.146), (3.149)--(3.154)]{MR3914908}. It remains to estimate the second line in the rhs.\ of~\eqref{eq:replace1}.

        The expectation of the second line of~\eqref{eq:replace1} is bounded by a constant times \(n^{-1}\dif t\), exactly as in~\eqref{eq:mindt}. This is the only input needed to bound the terms~\eqref{eq:replace1} in~\cite[Eq. (3.122)-(3.123)]{MR3914908}. Hence, in order to conclude the proof of this proposition we are left with the term in~\eqref{eq:replace2}.

        The quadratic variation of the term in~\eqref{eq:replace2} is given by
        \[
            \dif \braket{ M}_t=\frac{1}{2n}\sum_{1\le \abs{i}, \abs{j}\le n} (w_i-f_i)(w_j-f_j) f_i' f_j' \Exp*{\dif C_i(\alpha,t)\dif C_j(\alpha,t)\given\mathcal{F}_t},
        \]
        using the notation in~\cite[Eq.~(3.155)--(3.157)]{MR3914908} is used.
        By~\ref{close2} of Assumption~\ref{ass:close} it follows that
        \begin{gather}
            \begin{aligned}
                \dif \braket{ M}_t & =\frac{1}{4n^2}\sum_{1\le \abs{i}, \abs{j}\le n^{\omega_A}} (w_i-f_i)(w_j-f_j) f_i' f_j' \Exp*{\dif C_i(\alpha,t)\dif C_j(\alpha,t)\given\mathcal{F}_t} \\
                                   & \quad +\frac{\alpha^2+(1-\alpha)^2}{8n^3} \sum_{n^{\omega_A}< \abs{i}\le n} (w_i-f_i)^2 (f_i')^2 \dif t.
            \end{aligned}\label{eq:qM1}\raisetag{-5em}
        \end{gather}

        Then, by~\ref{close3} of Assumption~\ref{ass:close}, for \(\abs{i}, \abs{j}\le n^{\omega_A}\) we have
        \begin{gather}
            \begin{aligned}
                \Exp*{\dif C_i(\alpha,t)\dif C_j(\alpha,t)\given \mathcal{F}_t} & =\bigl[\alpha^2+(1-\alpha)^2 \bigr] \frac{\delta_{ij}}{2n} \dif t                                                                                             \\
                                                                                & \quad+\frac{\alpha(1-\alpha)}{2n}\Exp*{\bigl(\dif \mathfrak{b}_i^s\dif \mathfrak{b}_j^r+\dif \mathfrak{b}_i^r\dif \mathfrak{b}_j^s\bigr)\given\mathcal{F}_t},
            \end{aligned}\label{eq:qM2}\raisetag{-4em}
        \end{gather}
        and that
        \begin{equation}
            \label{eq:qM3}
            \Exp*{ \dif \mathfrak{b}_i^s\dif \mathfrak{b}_j^r\given\mathcal{F}_t}=\Exp*{ (\dif \mathfrak{b}_i^s-\dif \mathfrak{b}_i^r)\dif \mathfrak{b}_j^r\given\mathcal{F}_t}+\delta_{ij} \dif t \lesssim (\abs{L_{ii}(t)}^{1/2}+\delta_{ij} )\dif t,
        \end{equation}
        where in the last inequality we used Kunita-Watanabe inequality.

        Combining~\eqref{eq:qM1}--\eqref{eq:qM3} we finally conclude that
        \begin{gather}
            \begin{aligned}
                \dif \braket{ M}_t & \le \frac{1}{8n^3} \sum_{1\le \abs{i}\le n} (w_i-f_i)^2 (f_i')^2 \dif t                                                                               \\
                                   & \quad + \frac{\alpha(1-\alpha)}{4n^3}\sum_{1\le \abs{i}, \abs{j}\le n^{\omega_A}} \abs{L_{ii}(t)}^{1/2} \abs*{ (w_i-f_i)(w_j-f_j) f_i' f_j' } \dif t.
            \end{aligned}\label{eq:qM4}\raisetag{-4em}
        \end{gather}
        Since \(\alpha\in [0,1]\), \(\abs{L_{ii}(t)}\le n^{-\omega_Q}\) and \(\omega_A\ll \omega_Q\) by~\eqref{eq:assbqv} and~\eqref{eq:relscalA}--\eqref{eq:choosekn}, using Cauchy-Schwarz in~\eqref{eq:qM4}, we conclude that
        \begin{equation}
            \label{eq:qM5}
            \dif \braket{ M}_t\lesssim\frac{1}{n^3} \sum_{1\le \abs{i}\le n} (w_i-f_i)^2 (f_i')^2 \dif t,
        \end{equation}
        which is exactly the lhs.\ in~\cite[Eq.~(3.155)]{MR3914908}, hence the high probability bound in~\cite[Eq.~(3.155)]{MR3914908} follows. Then the remainder of the proof of~\cite[Lemma 3.14]{MR3914908} proceeds exactly in the same way.

        Given~\eqref{eq:replace1} as an input, the proof of~\eqref{eq:homapprA} is concluded following the proof of~\cite[Theorems 3.16-3.17]{MR3914908} line by line.

    \end{proof}
\end{proposition}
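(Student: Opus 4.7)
The plan is an $L^2$-energy estimate comparing the random discrete propagator $\mathcal{U}_{ia}$ with the deterministic continuous kernel $p_{t_1}$, adapting the homogenisation strategy of~\cite{MR3914908} and its singular-value version in~\cite[Section~3.4]{MR3916329} to the present setting. First I would introduce the solution $w_i(t) = 2n\,\mathcal{U}_{ia}(0,t,\alpha)$ of~\eqref{eq:wsolA} and set $f_i(t) := p_t(\widehat{x}_i(t,\alpha), \gamma_a^f)$, where $p_t$ is the fundamental solution of~\eqref{eq:conteqA}. Since $|\gamma_j^f-\gamma_j(t,\alpha)|\lesssim n^{-1}$ for $|j|\le n^{\omega_0/2}$ and $p_{t_1}$ is Lipschitz on the relevant scale, it is enough to show that $\frac{1}{2n}\sum_i(w_i-f_i)^2$ is small at time $t_1+u$. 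Finite speed of propagation (Lemma~\ref{lem:finspeedA}) lets one restrict attention to indices $|i|\lesssim n^{\omega_A}$ up to a negligible error, so that the problem is effectively localised on the scale where the short-range kernel $B_{ij}$ closely resembles the continuous operator of~\eqref{eq:conteqA}.

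The core of the argument is an Itô expansion of $d\tfrac{1}{2n}\sum_i(w_i-f_i)^2$, producing a dissipation term $-\langle w-f,\mathcal{B}(w-f)\rangle$ together with finite-volume error terms: (i) the mismatch between $B_{ij}=\frac{1}{2n(\widehat{x}_i-\widehat{x}_j)^2}$ and the continuous kernel at the quantiles, controlled by rigidity (Lemma~\ref{lem:local}) and the near-equidistance $\gamma_j^f\approx\gamma_j(t,\alpha)$; (ii) the drift from $d\widehat{x}_i$ coming from the distant interaction $J_i$ via~\eqref{eq:defJA}--\eqref{eq:boundJA}; and (iii) a martingale increment coming from the stochastic term $\alpha\,d\mathfrak{b}^s/\sqrt{2n}+(1-\alpha)\,d\mathfrak{b}^r/\sqrt{2n}$ in~\eqref{eq:intflowshortA}. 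Time-averaging over the window $|u|\le t_2=n^{-\epsilon_c}t_1$ and invoking short-time smoothing of $\mathcal{U}$ will be what produces the $n^{-3\epsilon_c/2}$ term in the final error.

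The novelty relative to~\cite{MR3914908,MR3916329} is that the driving Brownian motions satisfy Assumption~\ref{ass:close} rather than being a single i.i.d.\ family, so I would expect the main obstacle to be in the martingale contribution. The quadratic variation of the relevant martingale involves
\[
\E\bigl[dC_i(\alpha,t)\,dC_j(\alpha,t)\bigm|\mathcal{F}_t\bigr],\qquad dC_k(\alpha,t):=\alpha\,\frac{d\mathfrak{b}^s_k}{\sqrt{2n}}+(1-\alpha)\,\frac{d\mathfrak{b}^r_k}{\sqrt{2n}},
\]
which splits into a diagonal piece of size $n^{-1}\,dt$ and an off-diagonal cross term that is zero for $|i|$ or $|j|>n^{\omega_A}$ by~\ref{close2} and, for $|i|,|j|\le n^{\omega_A}$, is bounded by $n^{-\omega_Q/2}\,dt$ via~\eqref{eq:assbqv} and Kunita--Watanabe. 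Since $\omega_A\ll\omega_Q$ by~\eqref{eq:relscalA}, a Cauchy--Schwarz collapse of this cross term reduces its contribution back to the same diagonal bound $\lesssim n^{-3}\sum_i(w_i-f_i)^2(f'_i)^2\,dt$ as in the standard i.i.d.\ case~\cite[(3.155)]{MR3914908}. This is precisely the obstacle and, once disposed of as above, the remainder of the energy estimate proceeds mutatis mutandis from~\cite[Lemmas~3.9--3.14, Theorems~3.15--3.17]{MR3916329}, yielding~\eqref{eq:homapprA} after restoring $\widehat{x}_i\to\gamma_i^f$ and translating $w_i$ back to $\mathcal{U}_{ia}$.
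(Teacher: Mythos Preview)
Your proposal is correct and follows essentially the same approach as the paper: both set up the $L^2$-energy estimate for $\tfrac{1}{2n}\sum_i(w_i-f_i)^2$ via It\^o's formula, identify the only new difficulty as the martingale term coming from the $\alpha$-dependent driving noise $dC_i(\alpha,t)=\alpha\,d\mathfrak{b}^s_i/\sqrt{2n}+(1-\alpha)\,d\mathfrak{b}^r_i/\sqrt{2n}$, and dispose of it by splitting the quadratic variation according to Assumption~\ref{ass:close}\ref{close2}--\ref{close3}, bounding the cross terms via Kunita--Watanabe by $|L_{ii}(t)|^{1/2}\le n^{-\omega_Q/2}$, and then collapsing them with Cauchy--Schwarz and $\omega_A\ll\omega_Q$ to recover the diagonal bound of~\cite[Eq.~(3.155)]{MR3914908}. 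After this, both defer to~\cite{MR3914908,MR3916329} for the remainder.
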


\subsubsection{Proof of Proposition~\ref{pro:ciala}}\label{sec:endsec}

We conclude this section with the proof of Proposition~\ref{pro:ciala} following~\cite[Section 3.6]{MR3916329}. We remark that all the estimates above hold uniformly in \(\alpha\in[0,1]\) when bounding an integrand by~\cite[Appendix E]{MR3914908}.
\begin{proof}[Proof of Proposition~\ref{pro:ciala}]
    For any \(\abs{i}\le n\), by~\eqref{eq:shortlongA}, it follows that
    \begin{equation}
        \label{eq:step1A}
        s_i(t_0+t_1)-r_i(t_0+t_1)=x_i(t_1,1)-x_i(t_1,0)=\widehat{x}_i(t_1,1)-\widehat{x}_i(t_1,0)+\mathcal{O}\left(\frac{n^\xi t_1}{n^{\omega_l}} \right).
    \end{equation}
    We remark that in~\eqref{eq:step1A} we ignored the scaling~\eqref{eq:hihihi} since it can be removed by a simple time-rescaling (see Remark~\ref{ren:res} for more details). Then, using that \(u_i=\partial_\alpha \widehat{x}_i\) we have that
    \begin{equation}
        \label{eq:step2A}
        \widehat{x}_i(t_1,1)-\widehat{x}_i(t_1,0)=\int_0^1 u_i(t_1,\alpha)\dif \alpha.
    \end{equation}
    We recall that \(u\) is a solution of
    \begin{equation}
        \label{difeq}
        \dif {\bm u}=\mathcal{B}{\bm u} \dif t+\dif {\bm \xi}_1+{\bm \xi}_2 \dif t,
    \end{equation}
    as defined in~\eqref{eq:pareqA}--\eqref{eq:shortrankerA}, with
    \begin{equation}
        \label{eq:step3A}
        \abs{\xi_{2,i}(t)}\le \bm1_{\{\abs{i}> n^{\omega_A}\}}n^C,
    \end{equation}
    with very high probability for some constant \(C>0\) and any \(0\le t\le t_1\). The estimates to remove the forcing term ${\bm \xi}_2$ in \eqref{eq:step2A} are completely analogous to \cite[Section 3.7]{MR3916329}, and so omitted; we thus focus on the estimate of the effect
    of $\dif {\bm \xi}_1$ as if  ${\bm \xi}_2$ were not present in~\eqref{difeq}.
    We now split the estimate of this term into short and long range part.
    We start with the short range part, and then explain the relatively minor differences in the estimates for the long range part. Define \({\bm v}={\bm v}(t)\) as the solution of
    \begin{equation}
        \label{long}
        \dif {\bm v}=\mathcal{B}{\bm v}\dif t+\bm1(|\cdot| > n^{\omega_A})\dif {\bm \xi}_1, \quad {\bm v}(0)={\bm u}(0),
    \end{equation}
    and let ${\bm w}:={\bm u}-{\bm v}$. We thus get
    \begin{equation}
        \dif {\bm w}=\mathcal{B}{\bm w}\dif t+\bm1(|\cdot | \le n^{\omega_A})\dif {\bm \xi}_1, \qquad {\bm w}(0)=0.
    \end{equation}
    To show that $w_i(t)\le n^{-1-\omega}$ holds with very high probability
    for any $t\le t_f$,
    for some $\omega>0$, we do a standard $\ell^2$--estimate. Define $F(t):=\sum_i |w_i(t)|^2$, then we have
    \begin{equation}
        \label{eq:flowF}
        \dif F=-\frac{1}{2}\sum_{i,j}\mathcal{B}_{ij} (w_i-w_j)^2\dif t
        +\sum_{|i|\le n^{\omega_A}} w_i\dif {\bm \xi}_1+\frac{1}{n}\sum_{|i|\le n^{\omega_A}}L_{ii}\dif t.
    \end{equation}

    Define the stopping time
    \[
        \tau:=\inf\{t>0 : F(t)> n^{\omega_A+\omega_f-\omega_Q/2-2}\}\wedge t_f.
    \]
    Using  \eqref{eq:assbqv} and that
    $|w_i(t)|\lesssim n^{-1+\xi}$ by rigidity, the quadratic variation of the stochastic term in \eqref{eq:flowF} is estimate by
    \[
        \E\left[\sum_{|i|\le n^{\omega_A}} w_i\dif {\bm \xi}_1,\sum_{|i|\le n^{\omega_A}} w_i\dif {\bm \xi}_1\Big|\mathcal{F}_t\right]=\frac{1}{2n}\sum_{|i|,|j|\le n^{\omega_A}} w_i w_j L_{ij}\dif t\le n^{2\xi+2\omega_A-2\omega_Q-3}\dif t.
    \]
    By the Burkholder--Davis--Gundy (BDG) inequality, we thus conclude that
    \begin{equation}
        \sup_{0\le t\le t_f}\left|\int_0^t \sum_{|i|\le n^{\omega_A}} w_i(t)\dif {\bm \xi}_1(t)\right|\lesssim n^{2\xi+\omega_A-\omega_Q-3/2}\sqrt{t_f}
    \end{equation}
    with very high probability. Then, using that $|L_{ii}|\le n^{-\omega_Q}$
    to estimate the last term in~\eqref{eq:flowF}
    and that $F(0)=0$, we obtain
    \begin{equation}
        \sup_{0\le t<\tau} F(t)\le \frac{n^{\xi+\omega_A+\omega_f}}{n^{2+\omega_Q}},
    \end{equation}
    for any arbitrary small $\xi>0$ with very high probability. This shows that $\tau=t_f$ and so that $|w_i(t_f)|\le n^{-1-\omega}$, for any $i\in [n]$, for some sufficiently small $\omega>0$. This follows from the relation among the various $\omega$'s in Section~\ref{rem:s}, as in our application $\omega_Q=\omega_E$.

    We now turn to the estimate of the long range part, i.e.
    we look at~\eqref{long}. We fix an index $|p|\ge n^{\omega_A}$, and we will
    study the effect on the solution of each single $\dif \xi_{1,p}$ separately.
    We thus  consider ${\bm w}={\bm w}^{(p)}:={\bm v}-{\bm v}^{(p)}$,  with
    \[
        \dif {\bm v}^{(p)}=\mathcal{B}{\bm v}^{(p)}\dif t+ (1-\bm1(\cdot=p))\dif {\bm \xi}_1, \qquad {\bm v}^{(p)}(0)={\bm v}(0),
    \]
    and so obtain
    \begin{equation}
        \dif {\bm w}=\mathcal{B}{\bm w}\dif t+\bm1(\cdot = p)\dif {\bm \xi}_1, \qquad {\bm w}(0)=0,
    \end{equation}
    i.e. we just removed a single stochastic forcing term from ${\bm u}$. To show that for any $|p|\ge n^{\omega_A}$ we have $|w_i^{(p)}|\le \exp(-n^{\omega_l/10})$ for $|i|\le n^{\omega_l}$, with $\omega_l$ the short--range scale from \eqref{eq:shortscsetA}, it is enough to use a "modified" $\ell^2$--method  as in the proof of finite speed of propagation in \cite[Lemma 3.9]{MR3916329}, \cite[Lemma 3.7]{MR3914908}, together with the changes presented in the proof of Lemma~\ref{lem:finspeedA} to deal with the new additional stochastic term; we thus omit the details. Since the bound $|w_i^{(p)}|\le \exp(-n^{\omega_l/10})$ holds for any fixed $p$, we then conclude that for $|i|\le n^{\omega_l}$ can remove all the stochastic forcing terms in \eqref{long} at the price of an error which is smaller than $n\exp(-n^{\omega_l/10})$. This concludes the bound of the new term \(\dif {\bm \xi}_1\). The remainder of the proof of Proposition~\ref{pro:ciala} proceeds exactly in the same way of~\cite[Eq.~(3.86)--(3.99)]{MR3916329}, hence we omit it. Since  \(t_f=t_0+t_1\), choosing \(\omega=\omega_1/10\), \(\widehat{\omega}\le \omega/10\), the above computations conclude the proof of Proposition~\ref{pro:ciala}.
\end{proof}

\appendix

\section{Proof of Lemma~\ref{lem:intbp}}\label{sec:INTBP}
In order to prove Lemma~\ref{lem:intbp} we have to compute
\begin{equation}\label{eq:mainh}
    \frac{2}{\pi^2}\int_\C  \dif^2 z_1\int_\C  \dif^2 z_2 \partial_1\overline{\partial}_1 f(z_1)\partial_2\overline{\partial}_2 \overline{g(z_2)} \Theta(z_1,z_2)
\end{equation}
for compactly supported smooth functions \(f, g\).
We recall that
\begin{equation}\label{eq:thetsplit}
    \begin{split}
        \Theta(z_1,z_2)&=\Xi(z_1,z_2)+\Lambda(z_1,z_2), \qquad \Lambda(z_1,z_2):= -\frac{1}{2}\log \abs{1-z_1\overline{z}_2}^2 \bm1(\abs{z_1},\abs{z_2}> 1), \\
        \Xi(z_1,z_2)&:= -\frac{1}{2}\log \abs{z_1-z_2}^2\bigl[1-\bm1(\abs{z_1},\abs{z_2}>1)\bigr]+\frac{1}{2}\log\abs{z_1}^{2} \bm1(\abs{z_1}\ge 1) \\
        &\qquad +\frac{1}{2}\log\abs{z_2}^{2}\bm1(\abs{z_2}\ge 1).
    \end{split}
\end{equation}
In order to compute~\eqref{eq:mainh} we will perform integration by parts twice. For this purpose we split the integral in~\eqref{eq:mainh} for \(\Xi(z_1,z_2)\) into the regimes \(\abs{z_1-z_2}\ge \epsilon\) and its complement, and the integral of \(\Lambda(z_1,z_2)\) into the regimes \(\abs{1-z_1\overline{z}_2}\ge \epsilon\) and its complement. We decided to perform two different cut-offs for \(\Xi\) and \(\Lambda\) as a consequence of the different kind of singularity of the logarithms in their definition. By the explicit definitions in~\eqref{eq:thetsplit}, it is easy to see that the integrals in the regimes \(\abs{z_1-z_2}\le \epsilon\), \(\abs{1-z_1\overline{z}_2}\le \epsilon\) go to zero as \(\epsilon\to 0\), hence we have
\begin{equation}\label{eq:epslimitA}
    \begin{split}
        2\mathcal{I}&:= \frac{2}{\pi^2}\int_\C  \dif^2 z_1\int_\C  \dif^2 z_2 \partial_1\overline{\partial}_1 f(z_1)\partial_2\overline{\partial}_2 \overline{g(z_2)} \Theta(z_1,z_2) \\
        & =\lim_{\epsilon\to 0} \frac{2}{\pi^2}\int_\C  \dif^2 z_1\int_\C  \dif^2 z_2 \partial_1\overline{\partial}_1 f(z_1)\partial_2\overline{\partial}_2 \overline{g(z_2)} \\
        &\qquad\qquad\qquad\qquad\qquad\times\Big[\Xi(z_1,z_2)\bm1(\abs{z_1-z_2}\ge \epsilon)+\Lambda(z_1,z_2)\bm1(\abs{1-z_1\overline{z}_2}\ge \epsilon)\Big].
    \end{split}
\end{equation}
In order to prove Lemma~\ref{lem:intbp} we write the l.h.s.\ of~\eqref{eq:epslimitA} as \(\mathcal{I}+\mathcal{I}\) so that in the first integral we perform integration by parts with respect to \(\partial_1,\overline{\partial}_2\) and in the second one with respect to \(\overline{\partial}_1,\partial_2\). This split is motivated by the fact that
\[
    \overline{\partial} \overline{g}\partial f+ \partial \overline{g}\overline{\partial} f=\frac{1}{2}\braket{ \nabla g, \nabla f },
\]
which is the first term in the l.h.s.\ of~\eqref{eq:intbp} in Lemma~\ref{lem:intbp}. From now on we focus only on the integral for which we perform integration by parts with respect to \(\partial_1,\overline{\partial}_2\). The computations for the other integral are exactly the same. It is well known that the distributional Laplacian of \(\log\abs{z_1-z_2}\) is \(2\pi\) the delta function in \(z_1=z_2\), more precisely, we have that
\begin{equation}\label{eq:deltaderfun}
    -\partial_1\partial_2\log\abs{z_1-z_2} \dif^2 z_1\dif^2 z_2=\frac{\pi}{2}\delta(z_1-z_2),
\end{equation}
in the sense of distributions. Hence, in the remainder of this section we focus on the computation of the integral of \(\Lambda(z_1,z_2)\) and omit the \(\epsilon\)-regularisation in the integral of \(\Xi\).

Performing integration by parts in \(\mathcal{I}\), which is defined in~\eqref{eq:epsintA}, with respect to \(\partial_1,\overline{\partial}_2\) we get
\begin{equation}\label{eq:epsintA}
    \begin{split}
        &\lim_{\epsilon\to 0} \frac{1}{\pi^2}\int_\C  \dif^2 z_1\int_\C  \dif^2 z_2 \partial_1\overline{\partial}_1 f(z_1)\partial_2\overline{\partial}_2 \overline{g(z_2)}\Big[\Xi(z_1,z_2)+\Lambda(z_1,z_2)\bm1(\abs{1-z_1\overline{z}_2}\ge \epsilon)\Big] \\
        &\quad=\lim_{\epsilon\to 0} \frac{1}{\pi^2} \int_\C  \dif^2 z_1 \int_\C  \dif^2 z_2 \overline{\partial}_1 f(z_1) \partial_2 \overline{g(z_2)} \Big[\partial_1\overline{\partial}_2\Xi(z_1,z_2)+\partial_1\overline{\partial}_2\Lambda(z_1,z_2)\bm1(\abs{1-z_1\overline{z}_2}\ge \epsilon) \Big] \\
        &\qquad+ \lim_{\epsilon\to 0}-\frac{\ii}{2\pi^2} \int_\C  \int_\C \dif^2 z_2 \overline{\partial}_1 f \Big[ \overline{\partial}_2\partial_2 \overline{g} \Lambda\bm1(\abs{1-z_1\overline{z}_2}=\epsilon)\dif \overline{z}_1 -\partial_2 \overline{g} \partial_1\Lambda\bm1(\abs{1-z_1\overline{z}_2}=\epsilon)\dif z_2 \Big] \\
        &\quad =:  \lim_{\epsilon\to 0}\bigl[J_{1,\epsilon}+J_{2,\epsilon}\bigr].
    \end{split}
\end{equation}
where in the fourth line we used  Stokes theorem written symbolically  in the form
\begin{equation}\label{eq:eqdis}
    \partial_z \bm1(\abs{z-z_2}\ge \epsilon) \dif^2 z=\frac{\ii}{2}\bm1(\abs{z-z_2}=\epsilon) \dif \overline{z}
\end{equation}
for any fixed \(z_2\).
We remark that~\eqref{eq:eqdis} is understood in the sense of distributions, i.e.\ the equality holds when tested again smooth compactly supported test functions \(f\), i.e.
\[
    -\int_\C \partial_z f(z) \bm1(\abs{z-z_2}\ge \epsilon) \dif^2 z=\frac{\ii}{2} \int_{\abs{z-z_2}=\epsilon} f(z) \dif \overline{z}.
\]
Moreover, with a slight abuse of notation in~\eqref{eq:epsintA}--\eqref{eq:eqdis} by \(\bm1(\abs{z-z_2}=\epsilon)\dif \overline{z}\) we denoted
the clock-wise contour integral over the circle of radius \(\epsilon\) around \(z_2\). We use the notation above in the remainder of this section.

The second derivative (in the sense of the distributions) of \(\Xi(z_1,z_2)\) in~\eqref{eq:epsintA}, using~\eqref{eq:deltaderfun}, is given by
\begin{equation}\label{eq:xider}
    \begin{split}
        &\partial_1\overline{\partial}_2\Xi \dif^2 z_1 \dif^2 z_2\\
        &=\frac{\pi}{2}\delta(z_1-z_2)\bigl[1-\bm1(\abs{z_1},\abs{z_2}>1)\bigr]\dif^2 z_1 \dif^2 z_2-\frac{1}{8}\log\abs{z_1-z_2}^2
        \bm1(\abs{z_1}=1) \dif \overline{z}_1 \bm1(\abs{z_2}=1)\dif z_2\\
        &\quad +\frac{\ii}{4}\frac{1}{z_1-z_2}\bm1(\abs{z_1}>1) \dif^2 z_1 \bm1(\abs{z_2}=1) \dif z_2
        -\frac{\ii}{4}\frac{1}{\overline{z}_1-\overline{z}_2}\bm1(\abs{z_2}>1) \dif^2 z_2 \bm1(\abs{z_1}=1) \dif \overline{z}_1,
    \end{split}
\end{equation}
whilst the second derivative of \(\Lambda(z_1,z_2)\) by
\begin{equation}\label{eq:lader}
    \begin{split}
        &\partial_1\overline{\partial}_2\Lambda \dif^2 z_1 \dif^2 z_2\\
        &= \frac{1}{2(1-z_1\overline{z}_2)^2}\bm1(\abs{z_1},\abs{z_2}>1)\dif^2 z_1 \dif^2 z_2+\frac{1}{8}\log\abs{1-z_1\overline{z}_2}
        \bm1(\abs{z_1}=1) \dif \overline{z}_1 \bm1(\abs{z_2}=1)\dif z_2\\
        &\quad +\frac{\ii}{4}\frac{\overline{z_2}}{1-z_1\overline{z}_2}
        \bm1(\abs{z_1}>1) \dif^2 z_1 \bm1(\abs{z_2}=1) \dif z_2   +\frac{\ii}{4}\frac{z_1}{1-z_1\overline{z}_2}
        \bm1(\abs{z_2}>1) \dif^2 z_2 \bm1(\abs{z_1}=1) \dif \overline{z}_1.
    \end{split}
\end{equation}
Note that
\[
    \begin{split}
        \partial_1\overline{\partial}_2(\Xi+\Lambda)\dif^2 z_1 \dif^2 z_2&=\frac{\pi}{2}\delta(z_1-z_2)\bm1(\abs{z_1},\abs{z_2}\le 1)\dif^2 z_1 \dif^2 z_2\\
        &\quad +\frac{1}{2(1-z_1\overline{z}_2)^2}\bm1(\abs{z_1},\abs{z_2}>1)\dif^2 z_1 \dif^2 z_2,
    \end{split}
\]
hence, by~\eqref{eq:xider}--\eqref{eq:lader} we conclude that
\begin{equation}\label{eq:us}
    \lim_{\epsilon\to 0} J_{1,\epsilon}=\frac{1}{2\pi}\int_\DD  \overline{\partial} f \partial\overline{g}\dif^2 z+\lim_{\epsilon\to 0} \frac{1}{2\pi}\int_{\abs{z_1}\ge 1}\dif^2 z_1 \int_{\abs{z_2}\ge 1} \dif^2 z_2 \frac{\overline{\partial}_1 f(z_1)\partial_2 g(z_2) }{(1-z_1\overline{z}_2)^2}\bm1(\abs{1-z_1\overline{z}_2}\ge \epsilon).
\end{equation}
On the other hand, the integration by parts with respect to \(\overline{\partial}_1,\partial_2\) gives
\begin{equation}\label{eq:uss}
    \frac{1}{2\pi}\int_\DD  \overline{\partial} f \partial\overline{g}\dif^2 z+\lim_{\epsilon\to 0} \frac{1}{2\pi}\int_{\abs{z_1}\ge 1}\dif^2 z_1 \int_{\abs{z_2}\ge 1} \dif^2 z_2 \frac{\overline{\partial}_1 f(z_1)\partial_2 g(z_2) }{(1-z_1\overline{z}_2)^2}\bm1(\abs{1-z_1\overline{z}_2}\ge \epsilon).
\end{equation}
Hence, summing~\eqref{eq:us}--\eqref{eq:uss} we get exactly the r.h.s.\ of~\eqref{eq:intbp} using that
\[
    \frac{1}{2\pi}\int_\DD  \bigl[ \overline{\partial}\overline{g}\partial f+\partial \overline{g}\overline{\partial}f\bigr]\dif^2 z=\frac{1}{4\pi}\int_\DD  \braket{ \nabla g, \nabla f}\dif^2 z.
\]
In order to conclude the proof of Lemma~\ref{lem:intbp} we prove that \(\abs{J_{2,\epsilon}}\to 0\) as \(\epsilon\to 0\) in Lemma~\ref{lem:epslimit0} and that the limit in the r.h.s.\ of~\eqref{eq:us} exists in Lemma~\ref{lem:fin0e}.

\begin{lemma}\label{lem:epslimit0}
    Let \(J_{2,\epsilon}\) be defined in~\eqref{eq:epsintA}, then
    \begin{equation}
        \label{eq:zerolim}
        \lim_{\epsilon\to 0}\abs{J_{2,\epsilon}}=0.
    \end{equation}
\end{lemma}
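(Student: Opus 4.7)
The plan is to exploit the indicator $\bm1(|z_1|,|z_2|>1)$ present in the definition of $\Lambda$, which forces both boundary contributions in $J_{2,\epsilon}$ to be supported on a set of very small Lebesgue measure when combined with the constraint $|1-z_1\overline{z}_2|=\epsilon$. The key geometric observation is that on the contour $|1-z_1\overline{z}_2|=\epsilon$ one has $|z_1||z_2|=1+\cO(\epsilon)$, so demanding in addition that $|z_1|>1$ and $|z_2|>1$ forces both $|z_1|$ and $|z_2|$ to lie within $\cO(\epsilon)$ of the unit circle. By the density argument already invoked in the proof of Lemma~\ref{lem:intbp}, it suffices to treat $f,g$ polynomial, so all derivatives of $f,g$ appearing in the integrand are uniformly bounded on the relevant compact set.

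For the first summand of $J_{2,\epsilon}$, I would parametrize the inner $z_1$-contour as $z_1=1/\overline{z}_2+(\epsilon/|z_2|)e^{\ii\varphi}$ for fixed $z_2$: the contour has length $\cO(\epsilon)$, on it $|\Lambda|\lesssim|\log\epsilon|$, and by the geometric observation only the arc with $|z_1|>1$ contributes. This makes the inner integral of size $\cO(\epsilon|\log\epsilon|)$ for each $z_2$, and the outer $z_2$-integration is confined by the indicator $\bm1(|z_2|>1)$ to the annulus $\{1<|z_2|<1+\cO(\epsilon)\}$ of area $\cO(\epsilon)$, yielding a total bound of order $\epsilon^2|\log\epsilon|$. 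For the second summand I would use $\partial_1\Lambda=\overline{z}_2/[2(1-z_1\overline{z}_2)]$, of size $\cO(\epsilon^{-1})$ on the contour, against an inner $z_2$-contour of length $\cO(\epsilon/|z_1|)$, giving an inner integral of size $\cO(1)$; the outer $z_1$-integration is again restricted to an annulus of area $\cO(\epsilon)$ around the unit circle, producing a bound of order $\epsilon$.

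The delicate step is verifying that no distributional boundary contribution arising from the jumps of $\bm1(|z_i|>1)$ at the unit circle has been overlooked when passing from~\eqref{eq:epsintA} to $J_{2,\epsilon}$. This is reassured by the explicit computation of $\partial_1\overline{\partial}_2 \Lambda$ in~\eqref{eq:lader}, where all codimension-one distributional pieces have already been extracted and absorbed into the volume integrand of $J_{1,\epsilon}$; the residual $J_{2,\epsilon}$ sees only the smooth classical part of $\Lambda$ restricted to the open set $\{|z_1|,|z_2|>1\}$, so the geometric size counting above is valid. Combining the two estimates yields $|J_{2,\epsilon}|\lesssim \epsilon|\log\epsilon|+\epsilon\to 0$ as $\epsilon\to 0$, which is~\eqref{eq:zerolim}.
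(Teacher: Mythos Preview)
Your approach is essentially the paper's: both rest on the geometric observation that on the contour $|1-z_1\overline{z}_2|=\epsilon$ the constraint $|z_1|,|z_2|>1$ (inherited from $\Lambda$) forces both radii into an $O(\epsilon)$-annulus around $\partial\DD$. The paper uses this explicitly for the second summand (via $\bm1(|1+\epsilon e^{\ii\theta_2}|>r_1)=0$ for $r_1\ge 1+2\epsilon$, yielding $O(\epsilon)$); for the first summand the paper settles for the cruder bound $O(\epsilon|\log\epsilon|)$ from $\|\overline{\partial}_2\partial_2\overline{g}\|_{L^1}$ without invoking the annulus restriction, whereas you get $O(\epsilon^2|\log\epsilon|)$.

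There is, however, a small gap in your treatment of the second summand. Your assertion that ``$J_{2,\epsilon}$ sees only the smooth classical part of $\Lambda$'' is not correct there: the factor appearing is $\partial_1\Lambda$, not $\Lambda$, and since $\Lambda$ carries the indicator $\bm1(|z_1|,|z_2|>1)$, the derivative $\partial_1\Lambda$ has a distributional piece $-\tfrac{1}{2}\log|1-z_1\overline{z}_2|^2\,\partial_1\bm1(|z_1|>1)\,\bm1(|z_2|>1)$ supported on $|z_1|=1$. The computation in~\eqref{eq:lader} accounts for the distributional pieces of the \emph{double} derivative $\overline{\partial}_2\partial_1\Lambda$ inside $J_{1,\epsilon}$, but the single-derivative factor $\partial_1\Lambda$ sitting in the second summand of $J_{2,\epsilon}$ still carries its own boundary term. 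The paper records this extra contribution explicitly as part of the $\mathcal{O}(\epsilon\log\epsilon)$ error in~\eqref{eq:secjint1}; it is a double contour integral over $\{|z_1|=1\}\times\{|1-z_1\overline{z}_2|=\epsilon\}$ and is bounded by $O(\epsilon|\log\epsilon|)$ via the same size-counting, so your argument is easily completed once you add this term.
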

\begin{proof}
    For the first integral in \(J_{2,\epsilon}\), using the parametrization \(z_2=r_2 e^{\ii\theta_2}\) and \(z_1=(1+\epsilon e^{\ii\theta_1})/\overline{z}_2\), for any fixed \(z_2\), we get
    \begin{equation}
        \label{eq:firstjint}
        \abs*{\int_1^\infty\dif r_2 \int_0^{2\pi}\dif \theta_1 \int_0^{2\pi}\dif \theta_2 \, \epsilon e^{\ii(\theta_1+\theta_2)} \overline{\partial}_1 f\left(r_2^{-1}e^{\ii\theta_2} [1+\epsilon e^{\ii\theta_1}]\right) \overline{\partial}_2\partial_2 g(r_2e^{\ii\theta_2}) \log \epsilon }\lesssim \epsilon\log \epsilon,
    \end{equation}
    where we used that \(\norm{ \overline{\partial}_1 f}_{L^\infty(\C )}, \norm{ \overline{\partial}_2\partial_2 g}_{L^1(\C )}\lesssim 1\) as a consequence of \(f,g\in H_0^{2+\delta}(\Omega)\), for an open set \(\Omega\subset \C \) such that \(\overline{\DD }\subset \Omega\).

    Furthermore, using the parametrizations \(z_1=r_1 e^{\ii\theta_1}\) and \(z_2=(1+\epsilon e^{\ii\theta_2})/\overline{z}_1\) for the second integral in \(J_{2,\epsilon}\), we have that
    \begin{equation}
        \label{eq:secjint1}
        \begin{split}
            J_{2,\epsilon}&=\Bigg[\int_1^\infty\dif r_1 \int_0^{2\pi}\dif \theta_1 \int_0^{2\pi}\dif \theta_2 \,\, \epsilon e^{\ii(\theta_1+\theta_2)} \overline{\partial}_1 f(r_1e^{\ii\theta_1})\partial_2 \overline{g}\left(r_1^{-1}e^{\ii\theta_1}[1+\epsilon e^{\ii\theta_2}]\right) \\
            &\qquad\qquad\qquad\qquad\qquad\times \frac{1+\epsilon e^{-\ii\theta_2}}{\epsilon r_1 e^{-\ii\theta_2} e^{\ii\theta_1}}\bm1(\abs{1+\epsilon e^{\ii\theta_2}}> r_1)\Bigg]+\mathcal{O}(\epsilon\log \epsilon),
        \end{split}
    \end{equation}
    where the error term comes from the integral of \(\partial_1 \bm1(\abs{z_1},\abs{z_2}>1)\) and the bound in~\eqref{eq:firstjint}. Note that \(\bm1(\abs{1+\epsilon e^{\ii\theta_1}}> r_1)=0\) if \(r_1\ge 1+2\epsilon\), hence we can bound the first term in \(J_2\) by
    \begin{equation}
        \int_1^{1+2\epsilon}\dif r_1 \int_0^{2\pi}\dif \theta_1 \int_0^{2\pi}\dif \theta_2 \, \abs*{\overline{\partial}_1 f(r_1e^{\ii\theta_1})\partial_2 \overline{g}\left(r_1^{-1} e^{\ii\theta_1}[1+\epsilon e^{\ii\theta_2}]\right)} \lesssim \epsilon,
    \end{equation}
    since \(\norm{ \partial_2 \overline{g}}_{L^\infty(\C )}, \norm{ \overline{\partial}_1 f}_{L^1(\C )}\lesssim 1\). Hence, we conclude that
    \[
        J_{2,\epsilon}=\mathcal{O}\left(\epsilon+\epsilon \log \epsilon \right).
    \]
    This concludes the proof of~\eqref{eq:zerolim}.
\end{proof}

We conclude this section proving the existence of the limit of \(J_{1,\epsilon}\) as \(\epsilon\to 0\). More precisely, in Lemma~\ref{lem:fin0e} we prove that \(J_{1,\epsilon}\) is a Cauchy sequence.

\begin{lemma}\label{lem:fin0e}
    Let \(J_{1,\epsilon}\) be defined in~\eqref{eq:epsintA}, then for any \(0<\epsilon'\le \epsilon\) we have that
    \begin{equation}
        \label{eq:cauch}
        \abs{J_{1,\epsilon}-J_{1,\epsilon'}}\lesssim \epsilon^\delta,
    \end{equation}
    for some \(\delta>0\).
\end{lemma}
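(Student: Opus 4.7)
The plan is to observe that $J_{1,\epsilon}$ depends on $\epsilon$ only through the term involving $\Lambda$, since $\partial_1\ov\partial_2\Xi$ is independent of $\epsilon$. Hence
\begin{equation*}
J_{1,\epsilon}-J_{1,\epsilon'} = -\frac{1}{\pi^2}\int_\C \int_\C \ov\partial_1 f(z_1)\,\partial_2\ov{g(z_2)}\,\partial_1\ov\partial_2\Lambda(z_1,z_2)\,\bm1\bigl(\epsilon'\le\abs{1-z_1\ov z_2}<\epsilon\bigr),
\end{equation*}
and I will use the explicit distributional formula~\eqref{eq:lader} to split the integrand into one bulk contribution (the kernel $(1-z_1\ov z_2)^{-2}$ supported on $\abs{z_1},\abs{z_2}>1$) and three boundary contributions (supported on $\abs{z_1}=1$ or $\abs{z_2}=1$). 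In each case the integration is localised to the thin set $\epsilon'\le\abs{1-z_1\ov z_2}<\epsilon$, and since $f,g\in H^{2+\delta}_0(\Omega)\subset C^1$ with compact support, $\ov\partial_1 f$ and $\partial_2\ov g$ are uniformly bounded, so only the size of the thin set and the size of the kernel matter.

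For the bulk term the key step is the change of variables $w:=1-z_1\ov z_2$ (with $z_2$ fixed), giving $z_1=(1-w)/\ov z_2$ and $\dif^2 z_1=\abs{z_2}^{-2}\dif^2 w$. The constraint $\abs{z_1}>1$ forces $\abs{1-w}>\abs{z_2}$, which together with $\abs{w}<\epsilon$ restricts $z_2$ to the annulus $1<\abs{z_2}<1+\epsilon$ of area $\sim \epsilon$. The $w$-integral then yields $\int_{\epsilon'\le\abs{w}<\epsilon}\abs{w}^{-2}\dif^2 w\le 2\pi\log(\epsilon/\epsilon')\lesssim\abs{\log\epsilon}$, so the bulk contribution is $\lesssim\epsilon\abs{\log\epsilon}$. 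For the boundary term of type $\bm1(\abs{z_1}=1)\bm1(\abs{z_2}=1)$, parametrise $z_j=e^{\ii\theta_j}$; the constraint becomes $\epsilon'\le\abs{1-e^{\ii(\theta_1-\theta_2)}}<\epsilon$, an arc of length $\lesssim\epsilon$, and the integrand contains $\log\abs{1-z_1\ov z_2}$ of size $\abs{\log\epsilon}$, so this term is $\lesssim\epsilon\abs{\log\epsilon}$. For the mixed boundary term with $\abs{z_2}=1$ and $\abs{z_1}>1$, the same substitution $w=1-z_1\ov z_2$ with $z_2=e^{\ii\beta}$ fixed gives $\dif^2 z_1=\dif^2 w$ and kernel $\abs{w}^{-1}$, producing $\int_{\epsilon'\le\abs w<\epsilon}\abs{w}^{-1}\dif^2 w\lesssim\epsilon$; the symmetric term is estimated identically.

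Assembling these four contributions yields
\begin{equation*}
\abs{J_{1,\epsilon}-J_{1,\epsilon'}}\lesssim \epsilon\abs{\log\epsilon}\lesssim\epsilon^\delta
\end{equation*}
for any $\delta\in(0,1)$, which proves~\eqref{eq:cauch}. The combination of Lemma~\ref{lem:epslimit0} and Lemma~\ref{lem:fin0e} shows that the limit $\lim_{\epsilon\to 0}(J_{1,\epsilon}+J_{2,\epsilon})$ exists and coincides with the right-hand side of~\eqref{eq:intbp}, finishing the proof of Lemma~\ref{lem:intbp}.

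The only mild technical point is correctly accounting for all three boundary contributions in~\eqref{eq:lader} simultaneously with the cut-off $\bm1(\abs{1-z_1\ov z_2}\ge\epsilon)$, since the boundary pieces live on lower-dimensional sets where the cut-off region intersects differently. However all these contributions are dominated by $\epsilon\abs{\log\epsilon}$ by the same substitution $w=1-z_1\ov z_2$, so no serious obstacle arises.
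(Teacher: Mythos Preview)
Your argument for the bulk term has a genuine gap. After the change of variables \(w=1-z_1\ov z_2\) you correctly observe that \(z_2\) is confined to the annulus \(1<\abs{z_2}<1+\epsilon\), but you then drop the remaining constraint \(\abs{1-w}>\abs{z_2}\) and bound the \(w\)-integral by
\[
\int_{\epsilon'\le\abs{w}<\epsilon}\abs{w}^{-2}\dif^2 w=2\pi\log(\epsilon/\epsilon').
\]
Your next step, \(\log(\epsilon/\epsilon')\lesssim\abs{\log\epsilon}\), is simply false: the lemma must hold for \emph{every} \(0<\epsilon'\le\epsilon\), and \(\log(\epsilon/\epsilon')\to\infty\) as \(\epsilon'\to 0\) with \(\epsilon\) fixed. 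In other words, a pure absolute-value estimate on the kernel \((1-z_1\ov z_2)^{-2}\) cannot work, because \(\abs{w}^{-2}\) is not locally integrable at the origin; this is a principal-value integral and some cancellation is essential.

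The paper handles this by the standard freezing trick: write \(F(z_1,z_2)=[F(z_1,z_2)-F(\ov z_2^{\,-1},z_2)]+F(\ov z_2^{\,-1},z_2)\). After the change of variables \(z_1=(1+r_1e^{\ii\theta_1})/\ov z_2\) the frozen piece produces the factor \(e^{2\ii\theta_1}\) from \(w^{-2}=r_1^{-2}e^{-2\ii\theta_1}\) times the Jacobian, and \(\int_0^{2\pi}e^{2\ii\theta_1}\dif\theta_1=0\) kills it exactly. The remainder carries the H\"older gain \(\abs{F(z_1,z_2)-F(\ov z_2^{\,-1},z_2)}\lesssim r_1^{\delta}\) (here is where \(f,g\in H^{2+\delta}_0\) enters, giving \(\ov\partial f,\partial\ov g\in C^{\delta}\)), which makes \(r_1^{\delta-1}\) integrable and yields a bound \(\lesssim\epsilon^{\delta}\) uniformly in \(\epsilon'\). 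Your approach could also be repaired without the oscillation, by swapping the order of integration and using that for fixed \(w\) the area \(\abs{\{1<\abs{z_2}<\abs{1-w}\}}=\pi(\abs{w}^2-2\Re w)_+\), which after integrating \(\abs{w}^{-2}(\abs{w}^2-2\Re w)_+\) over \(\epsilon'\le\abs{w}<\epsilon\) gives \(\lesssim\epsilon\); but this is not what you wrote, and the argument as it stands does not prove the Cauchy property.
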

\begin{proof}
    We only consider the integral with the second derivative of \(\Lambda\). We dealt with the integral of the second derivative of \(\Xi(z_1,z_2)\) already in~\eqref{eq:deltaderfun}. Define
    \begin{equation}
        \label{eq:hplas}
        I_\epsilon:= \frac{1}{\pi^2} \int_\C  \dif^2 z_1 \int_\C  \dif^2 z_2 F(z_1,z_2)\Big[\partial_2\overline{\partial}_1\Lambda(z_1,z_2)\bm1(\abs{1-z_1\overline{z}_2}\ge \epsilon) \Big],
    \end{equation}
    where \(F(z_1,z_2):= \overline{\partial}_1 f(z_1) \partial_2 \overline{g(z_2)}\) is a \(\delta\)-H\"older continuous function. Then, for any \(0<\epsilon'<\epsilon\), using the change of variables \(z_2=r_2 e^{\ii\theta_2}\) and \(z_1=(1+r_1e^{\ii \theta_1})/\overline{z}_2\), we write
    \begin{equation}
        \label{eq:finexlim}
        \begin{split}
            I_{\epsilon'}-I_\epsilon&=\frac{1}{\pi^2} \int_\C  \dif^2 z_1 \int_\C  \dif^2 z_2 \bigl(F(z_1,z_2)-F(\overline{z}_2^{-1},z_2)\bigr)\Big[\partial_2\overline{\partial}_1\Lambda(z_1,z_2)\bm1(\epsilon\ge \abs{1-z_1\overline{z}_2}\ge \epsilon') \Big] \\
            &\quad +\frac{1}{\pi}\int_1^\infty \dif r_2\int_0^{2\pi} \dif \theta_2 \int_0^{2\pi} \dif \theta_1 \int_{\epsilon'}^\epsilon\dif r_1 \, F(r_2^{-1} e^{-\ii\theta_2},r_2 e^{\ii\theta_2}) \frac{e^{2\ii \theta_1}}{r_1 r_2}.
        \end{split}
    \end{equation}
    Note that the integral in the second line of~\eqref{eq:finexlim} is exactly zero since \(e^{2\ii\theta_1}\) the only term which depends on \(\theta_1\). On the other hand, we can bound the first integral in~\eqref{eq:finexlim} by \(\epsilon^{2\delta}\), with \(\delta\) the H\"older exponent of \(F\), using the fact that
    \[
        \abs*{F(z_1,z_2)-F(\overline{z}_2^{-1},z_2)} \le \abs*{\frac{1}{\overline{z}_2}+\frac{r_1 e^{\ii\theta_1}}{\overline{z}_2}-\frac{1}{\overline{z}_2}}^{2\delta}\lesssim \left(\frac{r_1}{r_2}\right)^{2\delta}.
    \]
    This concludes the proof of this lemma.
\end{proof}

\section{Derivation of the DBM for the eigenvalues of \texorpdfstring{\(H^z\)}{Hz}}\label{sec:derdbm}
Let \(X\) be an \(n\times n\) complex random matrix, let \(H^z\) be the Hermitisation of \(X-z\) defined in~\eqref{eq:her}, and define \(Y^z:= X-z\). We recall that \(\{\lambda^z_i,-\lambda^z_i\}_{i=1}^n\) are the eigenvalues of \(H^z\), and \(\{{\bm w}^z_i,{\bm w}^z_{-i}\}_{i=1}^n\) are the corresponding orthonormal eigenvectors, i.e.\ for any \(i,j\in[n]\) we have
\begin{equation}\label{eigeq}
    H^z{\bm w}^z_{\pm i}=\pm\lambda^z_i, \qquad ({\bm w}_i^z)^* {\bm w}^z_j=\delta_{i,j},\qquad ({\bm w}^z_i)^* {\bm w}^z_{-j}=0,
\end{equation}
for any \(i,j\in [n]\). For simplicity in the following derivation we assume that the eigenvalues are all distinct. In particular, for any \(i\in [n]\), by the block structure of \(H^z\) it follows that
\begin{equation}\label{singval}
    {\bm w}^z_{\pm i}=( {\bm u}^z_i, \pm {\bm v}^z_i), \qquad Y^z{\bm v}^z_i=\lambda^z_i {\bm u}^z_i, \qquad (Y^z)^* {\bm u}^z_i=\lambda^z_i {\bm v}^z_i.
\end{equation}
Moreover, since \(\{{\bm w}^z_{\pm i}\}_{i=1}^n\) is an orthonormal base, we conclude that
\begin{equation}\label{hnorm}
    ({\bm u}^z_i)^* {\bm u}^z_i=({\bm v}^z_i)^* {\bm v}^z_i=\frac{1}{2}.
\end{equation}

In the following, for any fixed entry \(x_{ab}\) of \(X\), we will use the notation
\begin{equation}
    \dot{f}=\frac{\partial f}{\partial x_{ab}} \quad \text{or} \quad  \dot{f}=\frac{\partial f}{\partial \overline{x_{ab}}},
\end{equation}
where \(f=f(X)\) is a function of the matrix \(X\). Then, we consider the flow
\begin{equation}\label{OU}
    \dif X_t=\frac{\dif B_t}{\sqrt{n}}, \quad X_0=X,
\end{equation}
where \(B_t\) is a matrix valued complex standard Brownian motion.

From now on we only consider positive indices \(1\le i\le n\). We may also drop the \(z\) and \(t\) dependence to make our notation easier. For any \(i,j\in [n]\), differentiating~\eqref{eigeq} we get
\begin{align}\label{findl1}
    \dot{H} {\bm w}_i+H\dot{\bm w}_i=\dot{\lambda}_i {\bm w}_i+\lambda_i \dot{\bm w}_i, \\\label{findl2}
    \dot{\bm w}_i^*{\bm w}_j+{\bm w}_i^*\dot{\bm w}_j=0,                                \\\label{findl3}
    {\bm w}_i^* \dot{\bm w}_i+\dot{\bm w}_i^*{\bm w}_i=0.
\end{align}
Note that~\eqref{findl3} implies that \(\Re[{\bm w}_i^* \dot{\bm w}_i]=0\). Hence, since the eigenvectors are defined modulo a phase, we can choose eigenvectors such that \(\Im[{\bm w}_i^* \dot{\bm w}_i]=0\) for any \(t\ge 0\). Then, multiplying~\eqref{findl1} by \({\bm w}_i^*\) we conclude that
\begin{equation}\label{dereig}
    \dot{\lambda}_i={\bm u}_i^*\dot{Y}{\bm v}_i+{\bm v}_i^*\dot{Y}^*{\bm u}_i.
\end{equation}
Moreover, multiplying~\eqref{findl1} by \({\bm w}_j^*\), with \(j\ne i\), and by \({\bm w}_{-j}*\), we get
\begin{equation}\label{diffl}
    (\lambda_i-\lambda_j){\bm w}_j^*\dot{\bm w}_i={\bm w}_j^* \dot{H} {\bm w}_i, \qquad (\lambda_i+\lambda_j){\bm w}_{-j}^*\dot{\bm w}_i={\bm w}_{-j}^* \dot{H} {\bm w}_i,
\end{equation}
respectively.
By~\eqref{findl2}--\eqref{findl3} it follows that
\begin{equation}\label{dereigv1}
    \dot{\bm w}_i=\sum_{j\ne i} ({\bm w}_j^*\dot{\bm w}_i) {\bm w}_j+\sum_j ({\bm w}_{-j}^* \dot{\bm w}_i){\bm w}_{-j},
\end{equation}
hence by~\eqref{diffl} we conclude
\begin{equation}\label{dereigv2}
    \dot{\bm w}_i=\sum_{j\ne i} \frac{{\bm v}_j^*\dot{Y}^*{\bm u}_i+{\bm u}_j^*\dot{Y}{\bm v}_i}{\lambda_i-\lambda_j} {\bm w}_j+\sum_j \frac{{\bm u}_j^*\dot{Y}{\bm v}_i-{\bm v}_j^*\dot{Y}^* {\bm u}_i}{\lambda_i+\lambda_j} {\bm w}_{-j}.
\end{equation}

By Ito's formula we have that
\begin{equation}\label{Itoegc}
    \dif \lambda_i=\sum_{ab} \frac{\partial \lambda_i}{\partial x_{ab}} \dif x_{ab}+\frac{\partial \lambda_i}{\partial \overline{x_{ab}}} \dif \overline{x_{ab}} +\frac{1}{2}\sum_{ab}\sum_{kl} \frac{\partial^2 \lambda_i}{\partial x_{ab}\partial \overline{x_{kl}}} \dif x_{ab} \dif \overline{x_{kl}}+\frac{\partial^2 \lambda_i}{\partial \overline{x_{ab}}\partial x_{kl}} \dif\overline{x_{ab}} \dif x_{kl}.
\end{equation}
Note that in~\eqref{Itoegc} we used that \(\dif x_{ab} \dif x_{ab}=\dif \overline{x_{kl}}\dif \overline{x_{kl}}=0\). Then by~\eqref{dereig}--\eqref{dereigv2} it follows that
\begin{equation}\label{compleg}
    \frac{\partial \lambda_i}{\partial x_{ab}}=u_i(a)^* v_i(b), \qquad \frac{\partial \lambda_i}{\partial \overline{x_{ab}}}=v_i(b)^* u_i(a),
\end{equation}
and that
\begin{align}\label{complev1}
     & \frac{\partial w_i}{\partial x_{ab}}(k)=\sum_{j\ne i} \left[ \frac{u_j^*(a)v_i(b)}{\lambda_i-\lambda_j}w_j(k)+  \frac{u_j^*(a)v_i(b)}{\lambda_i+\lambda_j} w_{-j}(k)\right] +\frac{u_i(a)^*v_i(b)}{2\lambda_i}w_{-i}(k),            \\\label{complev2}
     & \frac{\partial w_i}{\partial \overline{x_{ab}}}(k)=\sum_{j\ne i} \left[ \frac{v_j^*(b)u_i(a)}{\lambda_i-\lambda_j} w_j(k)-  \frac{v_j^*(b)u_i(a)}{\lambda_i+\lambda_j}w_{-j}(k)\right] -\frac{v_i(b)^*u_i(a)}{2\lambda_i}w_{-i}(k).
\end{align}

Next, we compute
\begin{equation}\label{secderlc}
    \begin{split}
        \frac{\partial^2 \lambda_i}{\partial x_{ab}\partial \overline{x_{kl}}}&=\frac{\partial v_i^*}{\partial x_{ab}}(l)u_i(k)+v_i(l)^*\frac{\partial u_i}{\partial x_{ab}}(k) \\
        &=\sum_{j\ne i} \left[ \frac{v_j(b)u_i^*(a)}{\lambda_i-\lambda_j}v_j(l)^* u_i(k)+  \frac{v_j(b)u_i(a)^*}{\lambda_i+\lambda_j}v_j(l)^* u_i(k)\right] +\frac{v_i(b)u_i(a)^*}{2\lambda_i}v_i(l)^*u_i(k)  \\
        &\quad +\sum_{j\ne i} \left[ \frac{u_i^*(a)v_i(b)}{\lambda_i-\lambda_j}v_i(l)^* u_j(k)+  \frac{u_j^*(a)v_i(b)}{\lambda_i+\lambda_j}v_i(l)^* u_j(k)\right] +\frac{u_i(a)^*  v_i(b)}{2\lambda_i}v_\alpha(l)^* u_i(k).
    \end{split}
\end{equation}
Finally, combining~\eqref{OU},~\eqref{compleg},~\eqref{Itoegc} and~\eqref{secderlc}, we conclude (cf.~\cite[Eq.~(5.8)]{MR2919197})
\begin{equation}\label{egcflow}
    \dif \lambda^z_i= \frac{\dif b^z_i}{\sqrt{2n}}+\frac{1}{2n}\sum_{j\ne i} \left[\frac{1}{\lambda^z_i-\lambda^z_j}+\frac{1}{\lambda^z_i+\lambda^z_j}\right] \dif t +\frac{\dif t}{4n\lambda_i},
\end{equation}
where we defined
\begin{equation}\label{brmotc}
    \dif b^z_i:= \sqrt{2}(\dif B^z_{ii}+\dif \overline{B^z_{ii}}),\quad \dif B^z_{ij}:= \sum_{ab} \overline{u^z_i(a)}\dif B_{ab} v^z_j(b),
\end{equation}
where \(B_t\) is the matrix values Brownian motion in~\eqref{OU}. In particular, \(b^z_i\) is a standard real Brownian motion, indeed
\[
    \begin{split}
        \E (B^z_{ii}+\overline{B^z_{ii}})(B^z_{ii}+\overline{B^z_{ii}})^*&=\E \left(\sum_{ab} \overline{u^z_i(a)}B_{ab} v^z_i(b) +u^z_i(a)\overline{B}_{ab} \overline{v^z_i(b)}\right)^2 \\
        &= 2 \sum_{abcd} \overline{u^z_i(a)}B_{ab} v^z_i(b) u^z_i(c)\overline{B}_{cd} \overline{v^z_i(d)} \\
        &=2 \sum_{abcd} \delta_{ac}\delta_{bd}\overline{u^z_i(a)} v^z_i(b) u^z_i(c) \overline{v^z_i(d)} =\frac{1}{2}.
    \end{split}
\]

\printbibliography%
\end{document}